


\documentclass{amsart}
\usepackage{subfigure}
\usepackage{graphicx,verbatim, amsmath, amssymb, amsfonts, ifthen,mathtools,enumerate,
placeins,longtable,hyperref,commath,caption, leftidx}	

\newtheorem{proposition}{Proposition}[section]
\newtheorem{theorem}[proposition]{Theorem}

\newtheorem{lemma}[proposition]{Lemma}
\newtheorem{prop}[proposition]{Proposition}

\newtheorem{conj}[proposition]{Conjecture}

\theoremstyle{definition}
\newtheorem{example}[proposition]{Example}

\newtheorem{remark}[proposition]{Remark}

\numberwithin{equation}{section}


\usepackage{color}

\newcommand{\margincolor}{red}      
\definecolor{darkgreen}{rgb}{0,0.7,0}

\addtolength{\marginparwidth}{5mm}

\newcounter{margincounter}
\setcounter{margincounter}{0}

\newcommand{\marginnum}{
\ifnum\value{margincounter}<10
\textcolor{\margincolor}{\begin{picture}(0,0)\put(2.2,2.4){\circle{9}}\end{picture}\footnotesize\arabic{margincounter}}
\else\ifnum\value{margincounter}<100
\textcolor{\margincolor}{\begin{picture}(0,0)\put(4.256,2.5){\circle{11}}\end{picture}\footnotesize\arabic{margincounter}}
\else
\textcolor{\margincolor}{\begin{picture}(0,0)\put(6.8,2.5){\circle{14}}\end{picture}\footnotesize\arabic{margincounter}}
\fi\fi
}




\newcommand{\newword}[1]{\textbf{\emph{#1}}}

\newcommand{\R}{\mathbb R}
\newcommand{\g}{\mathbf{g}}
\newcommand{\x}{\mathbf{x}}

\renewcommand{\Join}{\bigvee}
\newcommand{\Meet}{\bigwedge}
\newcommand{\inv}{\operatorname{inv}}
\newcommand{\rev}{\operatorname{rev}}
\newcommand{\des}{\operatorname{des}}
\newcommand{\Can}{\operatorname{Can}}
\newcommand{\Con}{\operatorname{Con}}
\newcommand{\supp}{\operatorname{supp}}
\renewcommand{\top}{\operatorname{top}}
\newcommand{\bottom}{\operatorname{bottom}}
\newcommand{\Cat}{\operatorname{Cat}}
\newcommand{\pp}{^{+\!\!+}}
\newcommand{\biCat}{\operatorname{biCat}}
\newcommand{\biNar}{\operatorname{biNar}}
\newcommand{\Nar}{\operatorname{Nar}}

\newcommand{\meet}{\wedge}
\newcommand{\join}{\vee}

\newcommand{\E}{\mathcal{E}}
\newcommand{\F}{\mathcal{F}}

\newcommand{\C}{\operatorname{\mathbf{Camb}}}
\newcommand{\bC}{\operatorname{\mathbf{biCamb}}}
\newcommand{\pidown}{\pi_\downarrow}
\newcommand{\piup}{\pi^\uparrow}
\newcommand{\covers}{{\,\,\,\cdot\!\!\!\! >\,\,}}
\newcommand{\covered}{{\,\,<\!\!\!\!\cdot\,\,\,}}
\newcommand{\cm}{\parallel}
\newcommand{\cov}{\mathrm{cov}}
\newcommand{\br}[1]{{\langle #1 \rangle}}

\newcommand{\reals}{\mathbb{R}}

\author{Emily Barnard}
\author{Nathan Reading}
\title{Coxeter-biCatalan combinatorics}
\address{Department of Mathematics, North Carolina State University, Raleigh, NC, USA}
\thanks{Emily Barnard was supported in part by NSF grants DMS-0943855, DMS-1101568, and DMS-1500949.  Nathan Reading was supported in part by NSF grants DMS-1101568 and DMS-1500949.}

\begin{document}
\begin{abstract}
We pose counting problems related to the various settings for Coxeter-Catalan combinatorics (noncrossing, nonnesting, clusters, Cambrian).
Each problem is to count ``twin'' pairs of objects from a corresponding problem in Coxeter-Catalan combinatorics.
We show that the problems all have the same answer, and, for a given finite Coxeter group $W$, we call the common solution to these problems  the $W$-biCatalan number.
We compute the $W$-biCatalan number for all $W$ and take the first steps in the study of Coxeter-biCatalan combinatorics.
\end{abstract}
\maketitle

\setcounter{tocdepth}{2}
\tableofcontents

\section{Introduction}\label{intro sec}  
This paper considers enumeration problems closely related to Coxeter-Catalan combinatorics.
(For background on Coxeter-Catalan combinatorics, see for example \cite{Armstrong,rsga}).
Each enumeration problem can be thought of as counting pairs of ``twin'' Coxeter-Catalan objects---twin sortable elements or twin nonnesting partitions, etc.  
Many of the terms used in this introductory section are new to this paper and will be explained in Section~\ref{defs sec}.

In the setting of sortable elements and Cambrian lattices/fans, the enumeration problem is to count the following families of objects:
\begin{itemize}
\item maximal cones in the bipartite biCambrian fan (the common refinement of two bipartite Cambrian fans); 
\item pairs of twin $c$-sortable elements for bipartite $c$;  
\item classes in the bipartite biCambrian congruence (the meet of two bipartite Cambrian congruences); 
\item elements of the bipartite biCambrian lattice;
\item $c$-bisortable elements for bipartite $c$.  
\end{itemize}
In type A, $c$-bisortable elements for bipartite $c$ are in bijection with permutations avoiding a set of four bivincular patterns in the sense of \cite[Section~2]{BCDK} and with alternating arc diagrams, as will be explained in Sections~\ref{pat av sec}--\ref{alt sec}.  
In type B, similar bijections exist with certain signed permutations and with centrally symmetric alternating arc diagrams, as described in Section~\ref{type B sec}. 

In the setting of nonnesting partitions (antichains in the root poset), the enumeration problem is to count two families of objects:
\begin{itemize}
\item antichains in the doubled root poset; 
\item pairs of twin nonnesting partitions.
\end{itemize}

In the setting of clusters of almost positive roots (in the sense of \cite{ga}), the problem is to count two families of objects:  
\begin{itemize}
\item maximal cones in the bicluster fan (the common refinement of the cluster fan, in the original bipartite sense of Fomin and Zelevinsky, and its antipodal opposite);
\item pairs of twin clusters, again in the bipartite sense.
\end{itemize}

In the setting of noncrossing partitions, the problem is to count the following families of objects:
\begin{itemize}
\item pairs of twin bipartite $c$-noncrossing partitions;
\item pairs of twin bipartite $(c,c^{-1})$-noncrossing partitions.
\end{itemize}
 
The main result of this paper is the following.
\begin{theorem}\label{main thm}
For each finite Coxeter group/root system, all of the enumeration problems posed above have the same answer.
\end{theorem}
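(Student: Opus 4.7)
The plan is to prove Theorem~\ref{main thm} by establishing bijections (or at least equinumerosities) within each of the four settings listed, and then connecting the settings to a common count. These correspondences will be developed in the subsequent sections of the paper; the theorem itself is assembled from them.

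I would begin with the Cambrian/sortable setting, where five quantities must be matched. Two general inputs do most of the work: first, a lattice congruence $\Theta$ on a finite lattice $L$ has its congruence classes in bijection with the elements of the quotient $L/\Theta$; second, the standard theory of Cambrian lattices and fans provides, for any Coxeter element $c$, a canonical bijection between $c$-sortable elements, maximal cones of the $c$-Cambrian fan, and classes of the $c$-Cambrian congruence. Applying these to the meet of the bipartite Cambrian congruences for $c$ and $c^{-1}$ directly matches classes in the biCambrian congruence with elements of the biCambrian lattice, and these in turn with bisortable elements, since bisortable elements are to be defined as canonical representatives of the biCambrian classes. The remaining item, pairs of twin $c$-sortable elements, is matched to maximal cones of the bipartite biCambrian fan via the defining property that this fan is the common refinement of the two bipartite Cambrian fans: a top-dimensional cone of the refinement is the intersection of a $c$-Cambrian cone with a $c^{-1}$-Cambrian cone, and the twin condition records precisely when that intersection is full-dimensional.

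In the nonnesting setting, the correspondence between antichains in the doubled root poset and twin pairs of nonnesting partitions should follow essentially from the construction of the doubled poset, with the order relations between the two copies encoding the compatibility required of twin antichains. In the cluster setting, a parallel argument handles the two enumerations: maximal cones of the bicluster fan are intersections of cones of the bipartite cluster fan with cones of its antipodal opposite, and these intersections correspond to pairs of twin bipartite clusters. In the noncrossing setting, the two enumerations of twin noncrossing pairs (for $c$ versus for $(c,c^{-1})$) ought to be related by an involution arising from Kreweras-type complementation combined with the bipartite symmetry.

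The main obstacle is the cross-setting identification. Unlike the within-setting matches above, which are largely formal consequences of standard Coxeter-Catalan machinery, equating the Cambrian/sortable count with the nonnesting, cluster, and noncrossing counts requires genuine bijective work. The cluster and noncrossing sides can likely be handled by lifting the known uniform sortable-to-cluster and sortable-to-noncrossing bijections to their bipartite doublings, using that both constructions are equivariant under the involution swapping $c$ and $c^{-1}$. The nonnesting side is the most resistant to a uniform argument, as in the ordinary Coxeter-Catalan story, and the safest route is to compute the $W$-biCatalan number by type (including the dihedral case as a base for type~$A$ induction) and verify that the doubled-antichain count agrees. This type-by-type verification, together with the uniform bijective work above, is what I expect to occupy the bulk of the paper.
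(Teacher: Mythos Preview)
Your within-setting arguments are largely on target and match the paper's Section~\ref{defs sec}: the five Cambrian/sortable counts agree by general lattice-congruence facts, and the nonnesting and cluster within-setting matches are essentially definitional (Proposition~\ref{anti twin nn} and the discussion in Section~\ref{clus sec}). For the noncrossing setting, though, the paper does not use a Kreweras-type involution; both flavors of twin noncrossing partitions are defined geometrically via the Brady--Watt bijection $F_c$ from $[1,c]_T$ to maximal Cambrian cones, and both are matched directly to bisortable elements through the biCambrian fan (Theorem~\ref{bi nc}). Similarly, the cluster connection (Theorem~\ref{bi cl}) relies on the specific fact that for \emph{bipartite} $c$ the Cambrian fan is linearly isomorphic to the cluster fan, together with Proposition~\ref{biCamb alt def}.

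The substantive gap is your plan for the nonnesting-to-sortable link. You propose computing both counts type by type and checking agreement. The paper takes a different and more structured route: it introduces double-positive Catalan numbers $\Cat\pp(W_J)$ and proves a single formula (Theorem~\ref{biCat GF d-p}) that counts both antichains in the doubled root poset (Proposition~\ref{q antichain}) and bipartite $c$-bisortable elements (Proposition~\ref{q bisortable formula}) as a sum over triples of pairwise disjoint subsets of $S$. The antichain side is easy; the bisortable side needs real structural work on canonical join representations, culminating in Proposition~\ref{disjoint_simple_support}, which shows that the canonical joinands of a bipartite bisortable element split into a $c$-sortable part, a $c^{-1}$-sortable part, and a set of simple reflections, with pairwise disjoint supports. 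Your type-by-type plan underestimates the difficulty: computing the bisortable count directly in type~$D$ or the exceptional types, without such a formula, would be hard---indeed the paper's explicit value of $\biCat(D_n)$ (Theorem~\ref{D biCat}) is \emph{derived from} the common formula rather than verified independently against it.
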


In all of the settings above except the nonnesting setting, the objects described above can be defined for arbitrary choices of a Coxeter element.
However, the enumerations depend on the choice of Coxeter element, and we emphasize that Theorem~\ref{main thm} is an assertion about the enumeration in the case where the Coxeter element is chosen to be bipartite.
See Section~\ref{bicamb sec} for the definition of Coxeter elements and bipartite Coxeter elements.

The enumeration problems in the nonnesting setting require a crystallographic root system, but outside of the nonnesting setting, Theorem~\ref{main thm} still holds for noncrystallographic types.
For each noncrystallographic type except $H_4$, one can define a root poset, and Theorem~\ref{main thm} holds; see Remark~\ref{H I remark}.

We will see in Section~\ref{defs sec} that within each group of bullet points above, the various enumeration problems have the same answer essentially by definition.
Using known uniform correspondences from the usual Coxeter-Catalan combinatorics, it is straightforward to give (in Theorems~\ref{bi cl} and~\ref{bi nc}) uniform bijections connecting the Cambrian/sortable setting to the noncrossing and cluster settings.
The difficult part of the main result is the following theorem which connects the nonnesting setting to the other settings.

\begin{theorem}\label{hard part}  
For crystallographic $W$, $c$-bisortable elements for bipartite $c$ are in bijection with antichains in the doubled root poset.
\end{theorem}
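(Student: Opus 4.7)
The plan is to build on the classical Catalan-level bijection between $c$-sortable elements (for bipartite $c$) and antichains in the positive root poset $\Phi^+$, and to show that both sides extend coherently to the ``doubled'' setting. The classical bijection can be realized through the $c$-Cambrian fan, or equivalently through $c$-clusters in the bipartite case: each $c$-sortable $v$ determines a maximal cone whose defining root hyperplanes yield an antichain $A(v)\subseteq\Phi^+$, and every antichain arises uniquely in this way. I would first record this bijection together with its equivariance under the bipartite symmetry $w_0 c w_0 = c^{-1}$, which lets us pass freely between the bijection with respect to $c$ and with respect to $c^{-1}$.

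Next, I would unpack the $c$-bisortable data as a pair of twin $c$-sortables $(v^+,v^-)$ satisfying the compatibility condition spelled out in Section~\ref{defs sec}. Applying the classical bijection to each component gives antichains $A^+$ and $A^-$ in $\Phi^+$, and I would define the candidate map to the doubled root poset by sending $(v^+,v^-)\mapsto A^+\sqcup A^-$, viewed as a subset of the two copies of $\Phi^+$ that make up the doubled root poset. The target is to show that the twin compatibility on sortables matches precisely the antichain condition across the two copies, where the relations linking the copies encode the antipodal identification $\alpha\leftrightarrow -w_0\alpha$ (or equivalent) that turns the doubled root poset into a genuine poset rather than a disjoint union.

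The main obstacle is exactly this last matching: showing that comparability across the two copies in the doubled root poset corresponds precisely to the failure of the twin condition on $(v^+,v^-)$. I expect to proceed by induction on the total $c$-sorting length of the pair, using the standard recursive characterization of $c$-sortability: peel off an initial simple reflection $s$ from whichever word can accommodate it, pass to the parabolic subgroup generated by the remaining simples or to the group with Coxeter element $scs$, and check that the corresponding local modification on each side of the bijection agrees. The bipartite hypothesis enters crucially here, because it partitions the simple generators into two cleanly-swappable classes and yields the antipodal symmetry between $c$ and $c^{-1}$ needed to align the two halves of the doubled picture. A secondary obstacle is that the sortable-to-antichain bijection, while statable uniformly, may need to be verified type-by-type in the crystallographic classification, and the doubled root poset's cover relations must be checked against the twin condition in each type.
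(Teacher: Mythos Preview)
Your proposal rests on a ``classical Catalan-level bijection between $c$-sortable elements and antichains in $\Phi^+$'' that does not exist in the form you need. The map you sketch---reading off an antichain from the defining hyperplanes of a maximal cone in the $c$-Cambrian fan---produces a $c$-cluster (a set of $n$ almost-positive roots), not an antichain in the root poset. Antichains in $\Phi^+$ have varying cardinality and there is no known explicit, well-behaved bijection from $c$-sortables to nonnesting partitions; the equality of their cardinalities is established only numerically. The paper makes this explicit (see Remark~\ref{uniform uniform}): a uniform bijective proof of Theorem~\ref{hard part} would \emph{imply} a uniform bijection at the ordinary Catalan level, which is an open problem. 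So the foundation you want to build on is missing.

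A secondary issue is your description of the doubled root poset: the two copies of $\Phi^+$ are identified along the simple roots, and a root $\beta$ in the top copy is comparable to $\gamma$ in the bottom copy exactly when their supports overlap (Proposition~\ref{anti twin nn}). The antipodal map $\alpha\mapsto -w_0\alpha$ plays no role here.

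The paper's actual argument is not bijective but enumerative. It introduces double-positive Catalan numbers $\Cat\pp(W_J)$ by inclusion-exclusion and proves (Proposition~\ref{q antichain}) that antichains in the doubled root poset decompose into triples $(B,C,M)$---non-simple roots in the top, non-simple roots in the bottom, and simple roots---whose supports $I,J,M$ are pairwise disjoint, yielding the formula $\sum q^{|M|}\Cat\pp(W_I;q)\Cat\pp(W_J;q)$. It then proves (Proposition~\ref{q bisortable formula}) that bipartite $c$-bisortable elements satisfy the \emph{same} formula, by analyzing their canonical join representations: the key structural result (Proposition~\ref{disjoint_simple_support}) is that the canonical joinands of a bipartite $c$-bisortable $w$ split into $c$-sortable non-simple joinands, $c^{-1}$-sortable non-simple joinands, and simple joinands, again with pairwise disjoint supports. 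The bipartite hypothesis is used exactly to force this disjoint-support decomposition (Lemmas~\ref{only ji} and~\ref{simpler_disjoint_support}), not to run an induction on sorting length.
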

More specifically, we have the following refined version of Theorem~\ref{hard part}.
\begin{theorem}\label{hard part finer}  
For crystallographic $W$ and for any $k$, the number of bipartite $c$-bisortable elements with $k$ descents equals the number of $k$-element antichains in the doubled root poset.
\end{theorem}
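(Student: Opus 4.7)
My plan is to establish Theorem~\ref{hard part finer} by leveraging the known uniform bijection, valid in crystallographic types, between $c$-sortable elements and antichains in the root poset $\Phi^+$, suitably refined to account for the doubled structure on both sides. Under this bijection $\phi$, the number of descents of a $c$-sortable element equals the cardinality of the corresponding antichain; I would begin by recalling $\phi$ and verifying this descent-to-size correspondence in the bipartite case.

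Using the equivalence between bipartite $c$-bisortable elements and pairs of twin sortable elements (one $c$-sortable and one $c^{-1}$-sortable) established among the bulleted enumerations in Section~\ref{defs sec}, I would push each twin pair through $\phi$ to obtain a pair of antichains $(A_+, A_-)$ in $\Phi^+$. The twin condition translates into a compatibility relation between $A_+$ and $A_-$, and the total size $|A_+| + |A_-|$ should match the descent count of the corresponding bisortable element, using that the bipartite structure of $c$ splits the descents naturally into two halves, one contributed by each sortable factor.

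It then remains to identify the compatibility relation on pairs $(A_+, A_-)$ with the antichain relation in the doubled root poset. The doubled root poset, by construction, consists of two copies of $\Phi^+$ with additional comparabilities between the copies; the aim is to show that these comparabilities are chosen so that pairs of antichains in $\Phi^+$ compatible in the twin sense are exactly antichains in the doubled poset. Once this identification is made, an antichain of size $k$ in the doubled poset corresponds to a compatible pair with $|A_+| + |A_-| = k$, matching the $k$ descents of the bisortable element and completing the refined enumeration.

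The main obstacle will be this last identification: pinning down the cross-copy comparabilities and checking that the combinatorial twin condition agrees exactly with the antichain condition in the doubled poset. A uniform argument is delicate because the sortable/antichain bijection is sensitive to the bipartition of simple reflections and treats the two halves asymmetrically. I would try to reduce this to a root-theoretic description of the twin condition, perhaps via the Panyushev--Cambrian rotation relating $\phi$ for $c$ with $\phi$ for $c^{-1}$, or via the geometry of the bipartite biCambrian fan; a type-by-type verification of the exceptional types remains a fallback if a fully uniform argument resists.
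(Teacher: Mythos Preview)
Your plan rests on a bijection that does not exist. There is no known uniform bijection between $c$-sortable elements and antichains in the root poset; this is precisely the gap in Coxeter--Catalan combinatorics that the paper works around. The introduction and Remark~\ref{uniform uniform} say explicitly that the proof ``would be uniform if a uniform proof were known connecting the nonnesting setting to the other settings,'' and that conversely a well-behaved bijection proving Theorem~\ref{hard part finer} would \emph{imply} the missing sortable--nonnesting bijection. So you are assuming the answer to an open problem as your starting point.

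Even setting that aside, the descent bookkeeping in your second paragraph is wrong. For a bipartite $c$-bisortable $w$ with twin pair $(u,v)=(\pidown^c(w),\pidown^{c^{-1}}(w))$, it is not true that $\des(w)=\des(u)+\des(v)$. Proposition~\ref{disjoint_simple_support} shows that $\Can(u)$ and $\Can(v)$ share the simple joinands $\Can(w)\cap S$, and moreover each of $u$ and $v$ picks up extra simple canonical joinands coming from the \emph{other} one's non-simple joinands (Lemma~\ref{pidown}). The correct decomposition is $\des(w)=|\Can(u)\setminus S|+|\Can(v)\setminus S|+|\Can(w)\cap S|$, matching the decomposition of an antichain in the doubled root poset into non-simple top roots, non-simple bottom roots, and simple roots (the two copies being \emph{identified} on the simples, not merely related by added comparabilities).

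The paper's actual argument avoids any bijection between the sortable and nonnesting worlds. It introduces double-positive Catalan numbers $\Cat\pp(W_J)$ and proves independently (Propositions~\ref{q antichain} and~\ref{q bisortable formula}) that both the antichain generating function and the bisortable descent generating function equal the same sum $\sum q^{|M|}\Cat\pp(W_I;q)\Cat\pp(W_J;q)$ over triples $(I,J,M)$ of pairwise disjoint subsets of $S$. On the antichain side this is an easy support decomposition; on the bisortable side it requires the structural results on canonical join representations in Section~\ref{canonical bisort sec}. Matching the two formulas gives the theorem without ever constructing a direct bijection.
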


Our proof of Theorems~\ref{hard part} and~\ref{hard part finer} in Section~\ref{double pos sec} would be uniform if a uniform proof were known connecting the nonnesting setting to the other settings of the usual Coxeter-Catalan combinatorics.  
Indeed, the opposite is true:
A well-behaved uniform bijection proving Theorem~\ref{hard part} or Theorem~\ref{hard part finer} would imply a uniform proof of the analogous Coxeter-Catalan statement.
(See Remark~\ref{uniform uniform} for details.)
However, the proofs of these theorems are far from a trivial recasting of Coxeter-biCatalan combinatorics in terms of Coxeter-Catalan combinatorics.
Instead, it requires a count of antichains in the doubled root poset indirectly in terms of the Coxeter-Catalan numbers and a nontrivial proof that the same formula holds for bipartite $c$-bisortable elements.   
The formula uses a notion of ``double-positive'' Catalan and Narayana numbers, which already appeared in~\cite{Ath-Sav} as the local $h$-polynomials of the positive cluster complex.
(See Remark~\ref{Ath connection} and Theorem~\ref{Cat++ thm}.)

We propose the terms \newword{$W$-biCatalan number} and \newword{$W$-biNarayana number} and the symbols $\biCat(W)$ and $\biNar_k(W)$ for the numbers appearing in Theorems~\ref{main thm} and~\ref{hard part finer}.

\begin{theorem}\label{enum thm}
The $W$-biCatalan numbers for irreducible finite Coxeter groups are: {\small
\[\begin{array}{c||c|c|c|c|c|c|c|c|c|c}
\!W\!&\!A_n\!&\!B_n\!&\!D_n\!&\!E_6\!&\!E_7\!&\!E_8\!&\!F_4\!&\!H_3\!&\!H_4\!&\!I_2(m)\!\\\hline
&&&&&&&&&&\\[-9pt]
\!\biCat(W)\!&\!\binom{2n}n\!&\!2^{2n-1}\!&\!6\cdot4^{n-2} - 2\binom{2n-4}{n-2}\!&\!1700\!&\!8872\!&\!54066\!&\!196\!&\!56\!&\!550\!&\!2m\!
\end{array}\,.\]}
\end{theorem}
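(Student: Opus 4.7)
The plan is to use Theorem~\ref{main thm} to choose, for each irreducible $W$, the most tractable formulation of the counting problem. For the dihedral case $I_2(m)$, all the ambient structures (the Coxeter group, the biCambrian lattice, the doubled root poset) are small enough that the count $2m$ can be verified by direct inspection. For the six exceptional types $E_6, E_7, E_8, F_4, H_3, H_4$, the set of bipartite $c$-bisortable elements (or equivalently the bipartite biCambrian congruence classes) is finite and of moderate size, so each count can be verified by computer, for instance by generating all bipartite $c$-bisortable elements one descent at a time.

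For the infinite families $A_n$ and $B_n$, I would use the concrete bijections introduced in Sections~\ref{pat av sec}--\ref{alt sec} and Section~\ref{type B sec}: bipartite $c$-bisortable elements correspond to alternating arc diagrams, and in type $B$ to centrally symmetric alternating arc diagrams. From these models the counts $\binom{2n}{n}$ and $2^{2n-1}$ should follow by direct bijection: in type $A$ one reads off a balanced two-letter word of length $2n$ from the alternating structure, and in type $B$ a ``halved diagram plus center-of-symmetry'' argument produces $2\cdot 2^{2n-2}$. Small-rank coincidences serve as useful sanity checks: $A_2 \cong I_2(3)$ gives $\binom{4}{2} = 6 = 2\cdot 3$, $B_2 \cong I_2(4)$ gives $2^3 = 8 = 2\cdot 4$, and the isomorphism $D_3 \cong A_3$ gives $6\cdot 4 - 2\cdot 2 = 20 = \binom{6}{3}$.

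The main obstacle is the formula for $D_n$. The presence of a subtraction in $6\cdot 4^{n-2} - 2\binom{2n-4}{n-2}$ strongly suggests an inclusion-exclusion or ``folding'' argument from $B_n$ (or $B_{n-1}$) to $D_n$: one realizes the $D$-objects as a subset of a $B$-type family, where a straightforward $B$-count produces $6\cdot 4^{n-2}$ but overcounts by the $2\binom{2n-4}{n-2}$ objects that fail a $D$-specific symmetry condition. My plan would be to implement this folding explicitly on alternating arc diagrams or on bisortable elements, then interpret $\binom{2n-4}{n-2}$ as enumerating ``defective'' diagrams supported on an inner interval of $2n-4$ points, with the factor of $2$ accounting for the two branches of the $D_n$ diagram. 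Identifying the correct folding and giving a clean combinatorial meaning to the correction term is where I expect the real difficulty of the proof to live; by contrast, $A_n$, $B_n$, $I_2(m)$, and the exceptional types should all be essentially routine once the framework of Theorem~\ref{main thm} is in place.
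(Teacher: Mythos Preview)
Your plan for $I_2(m)$, the exceptional types, and types $A$ and $B$ is essentially what the paper does (the paper actually first proves $A_n$ and $B_n$ in the nonnesting setting via lattice paths, and only later via alternating arc diagrams, but either route is fine). Your sketches of the $A$ and $B$ bijections are imprecise --- in the paper the type-$A$ map sends an alternating arc diagram to a pair of equal-size subsets of $[n]$, giving $\sum_k\binom{n}{k}^2=\binom{2n}{n}$, and in type $B$ it sends a centrally symmetric diagram to a single subset of a $(2n-1)$-element set --- but these are details you could fill in.

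The genuine gap is type $D$. You correctly intuit that the formula decomposes as $\biCat(D_n)=3\,\biCat(B_{n-1})-2\,\biCat(A_{n-2})$, but the paper does \emph{not} prove this by any folding or bijection on arc diagrams, and no such argument is known. Instead the paper builds a substantial algebraic apparatus: it introduces double-positive Catalan numbers $\Cat\pp(W;q)$, proves the master identity $\biCat(W;q)=\sum q^{|M|}\Cat\pp(W_I;q)\Cat\pp(W_J;q)$ over triples of disjoint subsets of $S$, extracts from it a recursion for $\biCat(D_n;q)$ in terms of $\biCat(D_k;q)$ for $k<n$ with coefficients $\Cat\pp(A_i;q)$, and then proves a long chain of identities (relating $\Cat\pp(A_n;q)$, $\Cat\pp(B_n;q)$, $\Cat\pp(D_n;q)$ to ordinary $q$-Catalan numbers) in order to show that the quantity $Z_n=\biCat(D_n;q)-X\,\biCat(B_{n-1};q)+Y\,\biCat(A_{n-2};q)$ satisfies a homogeneous recursion that forces $Z_n(1)=0$. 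Your proposal contains no analogue of this machinery, and you yourself flag the $D$-step as ``where I expect the real difficulty to live'' without supplying a mechanism. A direct folding proof would be lovely, but as it stands the $D_n$ case of your proposal is a hope, not an argument.
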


The type-A and type-B cases of Theorem~\ref{enum thm} are proved, in the nonnesting setting, in Section~\ref{nn sec} by recasting the antichain count as a count of lattice paths.
The same cases can also be established in the setting of $c$-bisortable elements by recasting the problem in terms of alternating arc diagrams.
Although the latter approach is more difficult, we carry out the type-A and type-B enumeration by the latter approach in Section~\ref{type A sec}, because the combinatorial models for bipartite $c$-bisortable elements in types A and B are of independent interest, and because the enumeration of alternating arc diagrams provides the crucial insight which leads to the recursive proof of Theorem~\ref{main thm}.
(See Remark~\ref{type A insight}.)  
The type-D case of Theorem~\ref{enum thm} is much more difficult, and involves solving the type-D case of the recursion used in the proof of Theorem~\ref{hard part}.
The formula in type D was first guessed using the package \texttt{GFUN} \cite{GFUN}.
The enumerations in the exceptional types were obtained using Stembridge's \texttt{posets} and \texttt{coxeter/weyl} packages~\cite{StembridgePackages}.

We also obtain formulas for the $W$-biNarayana numbers outside of type D.
In Section~\ref{double pos sec}, we write $\biCat(W, q)$ for the polynomial in the second column in the tables below.

\begin{theorem}\label{biNar thm}
The biNarayana numbers of irreducible finite Coxeter groups, except in type D, are given by the following generating functions.
{\small
\begin{longtable}{l|l}
$W$&$\sum_{k=0}^n\biNar_k(W)\,q^k$\\[2pt]\hline\\[-9pt]\hline\\[-10pt]
$A_n$&\raisebox{0pt}[10pt][5pt]{$\sum_{k=0}^n\binom n k^2q^k$}\\\hline
$B_n$&\raisebox{0pt}[10pt][5pt]{$\sum_{k=0}^n\binom{2n}{2k}q^k$}\\\hline
$E_6$&\raisebox{0pt}[10pt][4pt]{$1+66q+415q^2+736q^3+415q^4+66q^5+q^6$}\\\hline
$E_7$&\raisebox{0pt}[10pt][4pt]{$1+119q+1139q^2+3177q^3+3177q^4+1139q^5+119q^6+q^7$}\\\hline
$E_8$&\raisebox{0pt}[10pt][4pt]{$1+232q+3226q^2+13210q^3+20728q^4+13210q^5+3226q^6+232q^7+q^8$}\\\hline
$F_4$&\raisebox{0pt}[10pt][4pt]{$1+44q+106q^2+44q^3+q^4$}\\\hline
$G_2$&\raisebox{0pt}[10pt][4pt]{$1+10q+q^2$}\\\hline
$H_3$&\raisebox{0pt}[10pt][4pt]{$1+27q+27q^2+q^3$}\\\hline
$H_4$&\raisebox{0pt}[10pt][4pt]{$1+116q+316q^2+116q^3+q^4$}\\\hline
$I_2(m)$&\raisebox{0pt}[10pt][4pt]{$1+(2m-2)q+q^2.$}
\end{longtable}}
\end{theorem}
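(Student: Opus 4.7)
The plan splits by type: handle the infinite families $A_n$, $B_n$, and $I_2(m)$ combinatorially, and verify the exceptional types by direct computer enumeration in the style of Theorem~\ref{enum thm}.

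For type $A_n$, by Theorem~\ref{hard part finer} the quantity $\biNar_k(A_n)$ counts $k$-element antichains in the doubled root poset of type $A_n$. Since the unrefined count $\binom{2n}{n}$ is obtained in Section~\ref{nn sec} by encoding antichains as N/E lattice paths from $(0,0)$ to $(n,n)$, I would refine that bijection and show that the antichain cardinality $k$ corresponds to a natural statistic on the path whose generating function is $\sum_k \binom{n}{k}^2 q^k$. The obvious candidate is a ``pair of $k$-subsets of $[n]$'' statistic — for instance, the number of E-steps in the first $n$ steps paired with the number of N-steps in the last $n$ — both of which give $\binom{n}{k}^2$ directly. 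An alternative route, using the alternating arc diagram bijection of Sections~\ref{pat av sec}--\ref{alt sec}, would track descents of the bisortable element against a pair-of-subsets statistic on the diagram; Remark~\ref{type A insight} suggests this is essentially the insight behind the general recursion for Theorem~\ref{main thm}.

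For type $B_n$, I would run the analogous argument using antichains in the doubled root poset of type $B_n$ (or the centrally symmetric alternating arc diagrams of Section~\ref{type B sec}). The target $\binom{2n}{2k}$ is transparently the count of $2k$-element subsets of a $2n$-element set, so the core step is to match antichain size (equivalently, bipartite descent number) with ``half the size of an even subset of $[2n]$'' under the central symmetry. Type $I_2(m)$ is handled by direct inspection of the dihedral weak order, whose $2m$ elements split as one identity (with $0$ descents), $2m-2$ non-identity proper elements (each with exactly one descent), and one longest element (with $2$ descents), yielding $1 + (2m-2)q + q^2$; since $\biCat(I_2(m)) = 2m$, every element of the weak order is bipartite $c$-bisortable and so contributes.

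For the exceptional groups $E_6, E_7, E_8, F_4, G_2, H_3, H_4$, I would compute the biNarayana polynomials directly using Stembridge's \texttt{coxeter/weyl} package, enumerating bipartite $c$-bisortable elements graded by descent number exactly as was done for the unrefined counts in Theorem~\ref{enum thm}, with an independent check in the crystallographic exceptional cases by running the \texttt{posets} package on antichains of the doubled root poset. The main obstacle is pinning down, in types $A$ and $B$, the precise descent-tracking statistic on lattice paths or arc diagrams that matches the antichain-size/descent statistic under the bijection; once this correspondence is established, the enumerations collapse to elementary binomial identities, and the exceptional cases are routine machine computations.
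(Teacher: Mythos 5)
Your proposal follows the paper's route: types A and B are handled by the refined lattice-path count of antichains in the doubled root poset in Section~\ref{nn sec} combined with Theorem~\ref{hard part finer} (equivalently, by the alternating arc diagram bijections of Section~\ref{type A sec}), $I_2(m)$ by direct inspection of the rank-two case where the biCambrian fan is the full Coxeter fan, and the exceptional types by machine computation with Stembridge's packages. One small correction: the statistic giving $\binom{n}{k}^2$ in type A is not your proposed count of E-steps among the first $n$ steps (a single integer, not a $k$-subset), but rather the pair of $k$-subsets formed by the $x$-coordinates (in $\{0,\dots,n-1\}$) and the $y$-coordinates (in $\{1,\dots,n\}$) of the $k$ right turns, which may be chosen independently.
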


Generating functions for biNarayana numbers for some type-D Coxeter groups are shown here.
At present we have no conjectured formula for the $D_n$-biNarayana numbers.
See Section~\ref{type D biNar sec} for a modest conjecture.

{\small
\begin{longtable}{l|l}
$D_4$&\raisebox{0pt}[10pt][4pt]{$1+20q+42q^2+20q^3+q^4$}\\\hline
$D_5$&\raisebox{0pt}[10pt][4pt]{$1+35q+136q^2+136q^3+35q^4+q^5$}\\\hline
$D_6$&\raisebox{0pt}[10pt][4pt]{$1+54q+343q^2+600q^3+343q^4+54q^5+q^6$}\\\hline
$D_7$&\raisebox{0pt}[10pt][4pt]{$1+77q+731q^2+2011q^3+2011q^4+731q^5+77q^6+q^7$}\\\hline
$D_8$&\raisebox{0pt}[10pt][4pt]{$1+104q+1384q^2+5556q^3+8638q^4+5556q^5+1384q^6+104q^7+q^8$}\\\hline
$D_9$&\raisebox{0pt}[10pt][4pt]{$1+135q+2402q^2+13314q^3+29868q^4$}\\
&\hspace{130pt}\raisebox{0pt}[10pt][4pt]{$+29868q^5+13314q^6+2402q^7+135q^8+q^9$}\\\hline
$D_{10}$&\raisebox{0pt}[10pt][4pt]{$1+170q+3901q^2+28624q^3+87874q^4+126336q^5$}\\
&\hspace{130pt}\raisebox{0pt}[10pt][4pt]{$+87874q^6+28624q^7+3901q^8+170q^9+q^{10}$}\\
\end{longtable}}

Naturally, one would like a uniform formula for the $W$-biCatalan number, but we have not found one.
A tantalizing near-miss is the \emph{non-formula} $\prod_{i=1}^n\frac{h+e_i-1}{e_i}$, where $h$ is the Coxeter number and the $e_i$ are the exponents.
This expression captures the $W$-biCatalan numbers for $W$ of types $A_n$, $B_n$, $H_3$, and $I_2(m)$---the ``coincidental types'' of \cite{Williams}---but fails to even be an integer in some other types.
In every case, the expression is a surprisingly good estimate of the $W$-biCatalan number.

Section~\ref{defs sec} is devoted to filling in definitions and details for the discussion above and proving the easy parts of Theorem~\ref{main thm}.
In Section~\ref{type A sec}, we explain why, in type A, the bipartite bisortable elements are in bijection with alternating arc diagrams and carry out the enumeration of alternating arc diagrams.
We carry out a similar enumeration in type B, in terms of centrally symmetric alternating arc diagrams.  
We conjecture that the bipartite biCambrian fan is simplicial (and thus that its dual polytope is simple), and prove the conjecture in types A and~B.
In Section~\ref{double pos sec}, we discuss double-positive Coxeter-Catalan numbers and establish a formula counting antichains in the doubled root poset in terms of double-positive Coxeter-Catalan numbers.  
We then show that bipartite $c$-bisortable elements satisfy the same recursion, thus proving Theorem~\ref{hard part finer} and completing the proof of Theorem~\ref{main thm}.  Finally, we establish some additional formulas involving double-positive Coxeter-Catalan numbers, Coxeter-Catalan numbers, and Coxeter-biCatalan numbers and use them to prove the formula for $\biCat(D_n)$ and thus complete the proof of Theorem~\ref{enum thm}.

\section{BiCatalan objects}\label{defs sec}
In this section, we fill in the definitions and details behind the enumeration problems discussed in the introduction.
An exposition in full detail would require reviewing Coxeter-Catalan combinatorics in full detail, so we leave some details to the references.

\subsection{Antichains in the doubled root poset and twin nonnesting partitions}\label{nn sec}
The \newword{root poset} of a finite crystallographic root system $\Phi$ is the set of positive roots in $\Phi$, partially ordered by setting $\alpha\le\beta$ if and only if $\beta-\alpha$ is in the nonnegative span of the simple roots.
Recall that the dual of a poset $(X,\le)$ is the poset $(X,\ge)$.
That is, the dual has the same ground set, with $x\leq y$ in the dual poset if and only if $x\ge y$ in the original poset.
The \newword{doubled root poset} consists of the root poset, together with a disjoint copy of the dual poset, identified on the simple roots.
Figure~\ref{doubled} shows some doubled root posets.  
\begin{figure}
\begin{tabular}{ccccccccc}
\includegraphics{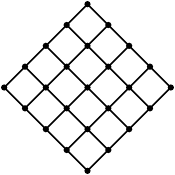}&&\includegraphics{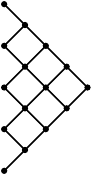}&&\includegraphics{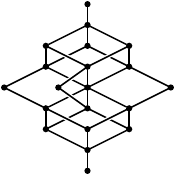}&&&&\\
$A_5$&&$B_3$\qquad\qquad\qquad&&$D_4$
\end{tabular}
\begin{tabular}{cccc}
\includegraphics{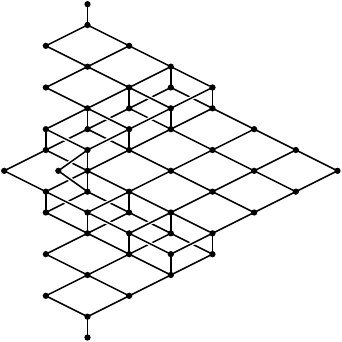}&&&\raisebox{0pt}[190pt][0pt]{\includegraphics{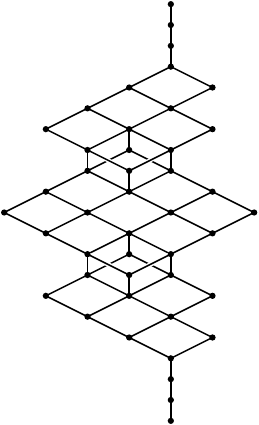}}\\
$D_6$\qquad\qquad\qquad\qquad\qquad&&&\qquad\qquad$F_4$
\end{tabular}
\caption{Some doubled root posets}
\label{doubled}
\end{figure}  

The antichain counts in types A and B are easy and known, in the guise of lattice path enumeration.
Antichains in the doubled root poset of type $A_n$ are in an easy bijection with lattice paths from $(0,0)$ to $(n,n)$ with steps $(1,0)$ and $(0,1)$.
The bijection can be made so that the number of elements in the antichain corresponds to the number of right turns in the path (the number of times a $(1,0)$-step immediately follows a $(0,1)$-step).
To specify a path with $k$ right turns, we need only specify where the right turns are. 
This means choosing $0\le x_1<\cdots<x_k\le n-1$ and $1\le y_1<\cdots<y_k\le n$ and placing right turns at $(x_1,y_1),\ldots,(x_k,y_k)$.
Thus, as is well-known, there are $\binom nk^2$ paths with $k$ right turns.

Antichains in the doubled root poset of type $B_n$ are similarly in bijection with lattice paths from $(-2n+1,-2n+1)$ to $(2n-1,2n-1)$ with steps $(2,0)$ and $(0,2)$ that are symmetric with respect to the reflection through the line $y=-x$.
 The $k$-element antichains correspond to paths with either $2k$ right turns, ($k$ of which are to the left of the line $y=-x$) or $2k-1$ right turns ($k-1$ of which are left of the line $y=-x$ and one of which is on the line $y=-x$). 
Each path is uniquely determined by its first $2n-1$ steps, whereupon the path intersects the line $y=-x$.
Thus, the paths map bijectively to words of length $2n-1$ in the letters $N$ and $E$ (for North steps $(0,2)$ and East steps $(2,0)$).
Appending the letter $E$ to the end of each word, the $k$-element antichains correspond to the words having exactly $k$ positions where an $E$ appears immediately after an $N$.
(The number of right turns in the path is odd if and only if one of these is position $2n$.)
The $2n$-letter words ending in $E$ and having exactly $k$ instances of an $E$ following an $N$ are in bijection with $2k$-element subsets of $\set{1,\ldots,2n}$.
(Given such a word, take the set of positions where the letter changes, with the convention that an $N$ in the first position is a change but an $E$ in the first position is not.
So, for example, $ENNEEE$ gives the subset $\set{2,4}$ and $NEEENE$ gives $\set{1,2,5,6}$.)
We see that there are $\binom{2n}{2k}$ $k$-element antichains, and $2^{2n-1}$ total antichains, in the doubled root poset of type~$B_n$.

\begin{remark}\label{H I remark}
It is not clear in general how one should define a ``root poset'' for a noncrystallographic root system.
See \cite[Section~5.4.1]{Armstrong} for a discussion.
In type $I_2(m)$, there is an obvious way to define an unlabeled poset generalizing the root posets of types $A_2$, $B_2$, and $G_2$.
We say ``unlabeled'' here because it is obvious how the poset should look but not obvious how the poset elements should correspond to roots.
There is also a type-$H_3$ root poset suggested in \cite[Section~5.4.1]{Armstrong}.  
For these choices of root posets, one can verify that Theorem~\ref{main thm} holds in these types as well.
\end{remark}

\begin{remark}\label{distr}
The doubled root poset, and similar posets, were probably first considered by Proctor (see \cite[Remark~4.8(a)]{StembridgeQuasi}) and then by Stembridge, as a tool for counting reduced expressions for certain elements of finite Coxeter groups. 
In the simply-laced types (A, D, and E), the doubled root poset corresponds to the \newword{smashed Cayley order} defined by Stembridge in \cite[Section~4]{StembridgeQuasi}.
In the non-simply laced types, the smashed Cayley order is disconnected and is a strictly weaker partial order than the doubled root poset.
Stembridge \cite[Theorem~4.6]{StembridgeQuasi} shows that the component whose elements are short roots is a distributive lattice.
Thus in particular the doubled root posets of types A, D, and E are distributive lattices.
One can easily check distributivity in the remaining crystallographic types B, F, and G (and in fact in types $H_3$ and $I_2(m)$).
By the Fundamental Theorem of Distributive Lattices \cite[Theorem~3.4.1]{EC1}, the doubled root poset is isomorphic to the poset of order ideals in its subposet of join-irreducible elements.
These posets of join-irreducible elements are shown in Figure~\ref{IrrDRP} for several types.
An explicit root-theoretic description of the poset of join-irreducible elements in the simply-laced types also appears in \cite[Theorem~4.6]{StembridgeQuasi}.

\end{remark}
\begin{figure}
\begin{tabular}{ccccccccccccccccccc}
\includegraphics{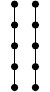}&\includegraphics{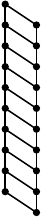}&\,\,\includegraphics{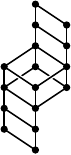}\,\,&\includegraphics{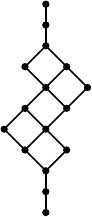}\\
$A_6$&$B_6$&$D_6$&$H_3$&&&&&&&&&&&&&&\\
\end{tabular}
\begin{tabular}{ccccccccc}
\includegraphics{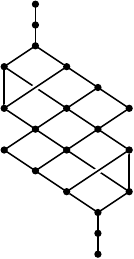}&\includegraphics{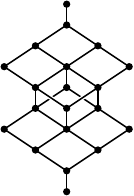}&\!\!\!\!\!\includegraphics{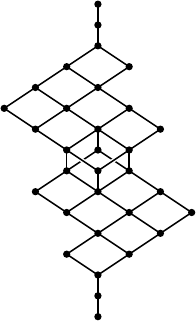}\!\!\!\!\!&\!\!\!\!\!\!\!\!\!\!\!\raisebox{0pt}[0pt][0pt]{\includegraphics{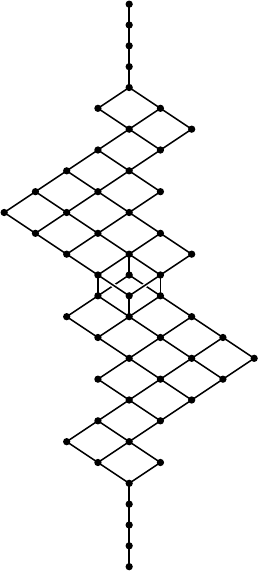}}\\
$F_4$\!\!\!\!\!\!\!\!\!\!\!\!\!\!\!\!\!\!&$E_6$&$E_7$&\!\!\!\!\!\!\!\!\!$E_8$
\end{tabular}
\caption{Some posets of join-irreducibles of doubled root posets}
\label{IrrDRP}
\end{figure}  

The \newword{support} of a root $\beta$ is the set of simple roots appearing with nonzero coefficient in the expansion of $\beta$ in the basis of simple roots.
The support of a set of roots is the union of the supports of the roots in the set.
We write $\Delta$ for the simple roots and, given a set $A$ of roots, we write $A^\circ$ for the set of non-simple roots in $A$.
If $A_1$ and $A_2$ are nonnesting partitions (i.e.\ antichains in the root poset), then $(A_1,A_2)$ is a pair of \newword{twin nonnesting partitions} if and only if $A_1\cap\Delta=A_2\cap\Delta$, and $\supp(A_1^\circ)\cap\supp(A_2^\circ)=\emptyset$. 

Given an antichain $A$ in the doubled root poset, define $\top(A)$ to be the intersection of $A$ with the root poset that forms the top of the doubled root poset.
Define $\bottom(A)$ to be the intersection of $A$ with the dual root poset that forms the bottom of the doubled root poset.  
Both $\top(A)$ and $\bottom(A)$ are sets of positive roots.
The following proposition is an immediate consequence of the observation that a root $\beta$ in the top part of the doubled root poset is related to a root $\gamma$ in the bottom part of the doubled root poset if and only if the supports of $\beta$ and $\gamma$ overlap.
\begin{prop}\label{anti twin nn}
The map $A\mapsto(\top(A),\bottom(A))$ is a bijection from antichains in the doubled root poset to pairs of twin nonnesting partitions.
\end{prop}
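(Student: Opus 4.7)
The plan is to verify that $A\mapsto(\top(A),\bottom(A))$ is well-defined, injective, and surjective. Essentially every step follows mechanically from the support observation stated just before the proposition: a non-simple root $\beta$ in the top copy and a non-simple root $\gamma$ in the bottom copy of the doubled root poset are comparable if and only if $\supp(\beta)\cap\supp(\gamma)\neq\emptyset$.

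For well-definedness, given an antichain $A$ in the doubled root poset, $\top(A)$ is automatically an antichain in the root poset, as a restriction of $A$ to the top copy. Dually, $\bottom(A)$ is an antichain in the dual root poset, hence also in the root poset since the antichain condition is self-dual. The simple roots sit at the identification, so $\top(A)\cap\Delta=A\cap\Delta=\bottom(A)\cap\Delta$, giving the first twin condition. The disjoint-support condition follows at once from the support observation: any non-simple $\beta\in\top(A)$ and non-simple $\gamma\in\bottom(A)$ are incomparable in the doubled root poset and therefore must have disjoint supports.

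For the inverse direction, given a pair $(A_1,A_2)$ of twin nonnesting partitions, I would place $A_1$ in the top copy and $A_2$ in the bottom copy, identifying the simple roots common to both (which coincide by the twin condition $A_1\cap\Delta=A_2\cap\Delta$), to form a subset $A$ of the doubled root poset. Then $\top(A)=A_1$ and $\bottom(A)=A_2$ by construction, so this gives a two-sided inverse to the original map once we check that $A$ is an antichain. Incomparability within each copy is inherited from $A_1$ and $A_2$ being antichains in the root poset. Cross-copy incomparability between non-simple elements follows from the support observation together with the twin condition $\supp(A_1^\circ)\cap\supp(A_2^\circ)=\emptyset$. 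The remaining case pairs a shared simple root $\alpha\in A_1\cap A_2\cap\Delta$ with a non-simple root $\gamma$ in $A_1^\circ$ or $A_2^\circ$: comparability in the doubled root poset would force $\alpha\in\supp(\gamma)$, hence $\alpha\leq\gamma$ in the root poset, contradicting the antichain property of the relevant $A_i$.

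The only genuine content of the argument is the support observation itself, which the authors have already stated as the motivation for the proposition; everything else amounts to a routine structural verification, and I expect no real obstacle.
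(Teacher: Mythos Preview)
Your proposal is correct and follows exactly the approach the paper intends: the paper states that the proposition is an immediate consequence of the support observation, and you have carefully spelled out the routine verification of well-definedness and the two-sided inverse that this observation entails. The only cosmetic difference is that the paper phrases the support observation for arbitrary roots in the top and bottom parts (which automatically covers the simple-root cases), whereas you restrict it to non-simple roots and handle simples separately; both formulations work.
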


We pause to observe that the first biNarayana number (the number of elements of the doubled root poset) is the number of roots minus the rank of $W$.

\begin{prop}\label{biNar 1}
If $W$ is an irreducible finite Coxeter group with Coxeter number $h$ and rank $n$, then $\biNar_1(W)=n(h-1)$.
\end{prop}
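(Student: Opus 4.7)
The plan is to exploit the observation (noted in the paragraph preceding the proposition) that $\biNar_1(W)$ equals the number of elements of the doubled root poset $\mathcal{D}$, since a one-element antichain is just an element. So the task reduces to computing $|\mathcal{D}|$.

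By construction, $\mathcal{D}$ is the union of the root poset and its dual, identified along the simple roots. Each non-simple positive root therefore contributes two elements to $\mathcal{D}$ while each simple root contributes one, yielding
\begin{equation*}
|\mathcal{D}| \;=\; 2|\Phi^+| - |\Delta| \;=\; 2\cdot\tfrac{nh}{2} - n \;=\; n(h-1),
\end{equation*}
where I invoke the standard fact that an irreducible finite Coxeter group has $nh/2$ positive roots and $n$ simple roots. This disposes of all crystallographic types immediately, and the identical argument goes through for $I_2(m)$ and $H_3$ using the unlabeled root posets described in Remark~\ref{H I remark}. The only remaining type is $H_4$, where no doubled root poset has been defined; there one reads off $\biNar_1(H_4) = 116 = 4\cdot 29 = n(h-1)$ directly from the $H_4$ entry of Theorem~\ref{biNar thm}.

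There is no significant obstacle: the substantive content is the identification $\biNar_1(W) = |\mathcal{D}|$ together with the classical positive-root count. The only care required is treating the noncrystallographic types separately, which amounts either to repeating the argument for $I_2(m)$ and $H_3$ verbatim or to a direct numerical check for $H_4$.
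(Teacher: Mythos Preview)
Your argument is correct and is exactly the reasoning the paper intends: the sentence immediately preceding the proposition states that $\biNar_1(W)$ is the number of elements of the doubled root poset, which is the number of roots minus the rank, and the proposition simply records that this equals $n(h-1)$. Your added care with the noncrystallographic types (repeating the count for $I_2(m)$ and $H_3$, and checking $H_4$ numerically against Theorem~\ref{biNar thm}) is more than the paper provides, but entirely appropriate.
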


\subsection{BiCambrian fans}  \label{bicamb sec}
The Cambrian fan is a complete simplicial fan whose maximal faces are naturally in bijection \cite{sortable,camb_fan} with seeds in an associated cluster algebra of finite type and with noncrossing partitions.
Furthermore, the Cambrian fan is the normal fan \cite{HohLan,HLT} to a simple polytope called the \newword{generalized associahedron} \cite{gaPoly,ga}, which encodes much of the combinatorics of the associated cluster algebra. 
More directly, the Cambrian fan is the $\g$-vector fan of the cluster algebra.
(This was conjectured, and proved in a special case, in \cite[Section~10]{camb_fan} and then proved in general in~\cite{YZ}.)

The defining data of a Cambrian fan is a finite Coxeter group $W$ and a Coxeter element $c$ of $W$.
We emphasize that the results discussed in Section~\ref{intro sec} concern a special ``bipartite'' choice of $c$, as explained below, but for now we proceed with a discussion for general $c$.
A \newword{Coxeter element} is the product of a permutation of the simple generators of $W$ and may be specified by an orientation of the Coxeter diagram.
Given a choice of $W$, we will assume the usual representation of $W$ as a reflection group acting with trivial fixed subspace.
The collection of reflecting hyperplanes in this representation is the \newword{Coxeter arrangement} of $W$.
The hyperplanes in the Coxeter arrangement cut space into cones, which constitute a fan called the \newword{Coxeter fan} $\F(W)$.
The maximal cones of the Coxeter fan are in bijection with the elements of $W$.
The Cambrian fan $\C(W,c)$ is the coarsening of the Coxeter fan obtained by gluing together maximal cones according to an equivalence relation on $W$ called the $c$-Cambrian congruence.
Further details on the $c$-Cambrian congruence appear in Section~\ref{twin sec}.    
For fixed $W$, all choices of $c$ give distinct but combinatorially isomorphic Cambrian fans.

For each Coxeter element $c$, the inverse element $c^{-1}$ is also a Coxeter element, corresponding to the opposite orientation of the diagram.
We define the \newword{biCambrian fan} $\bC(W,c)$ to be the coarsest common refinement of the Cambrian fans $\C(W,c)$ and $\C(W,c^{-1})$.  
Since $\C(W,c)$ and $\C(W,c^{-1})$ are coarsenings of $\F(W)$, so is $\bC(W,c)$.
Naturally, $\bC(W,c^{-1})=\bC(W,c)$.

\begin{example}\label{B2 example} 
To illustrate the definition, take $W$ of type $B_2$ with simple generators $s_1$ and $s_2$.
Figure~\ref{boring} shows, from left to right, the $s_1s_2$-Cambrian fan, the $s_2s_1$-Cambrian fan, and the $s_1s_2$-biCambrian fan.
Observe that the $s_1s_2$-biCambrian fan coincides with the $B_2$ Coxeter fan.
In general, when $W$ is rank 2, the $c$-biCambrian fan for any choice of Coxeter element $c$ is equal to the Coxeter fan $\F(W)$.
\end{example}
\begin{figure}
\scalebox{1}{\includegraphics{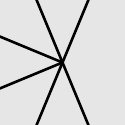}\qquad\qquad\includegraphics{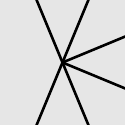}\qquad\qquad\includegraphics{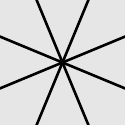}}
\caption{Cambrian fans and the biCambrian fan in type $B_2$}
\label{boring}
\end{figure}

\begin{example}\label{A3 biCamb example}  
For $W$ of type $A_3$, there are two non-isomorphic $c$-biCambrian fans, shown in Figures~\ref{A3 biCamb linear} and~\ref{A3 biCamb bipartite} respectively.
Each figure can be understood as follows:
Intersecting the $c$-biCambrian fan with a unit sphere about the origin, we obtain a decomposition of the sphere into spherical convex polygons.
The picture shows a stereographic projection of this polygonal decomposition to the plane.
In each case, the walls of one Cambrian fan are shown in red and the walls of the opposite Cambrian fan are shown in blue.
Walls that are in both Cambrian fans are shown dashed red and blue.  

\end{example}
\begin{figure}[p]
\scalebox{1.05}{\includegraphics{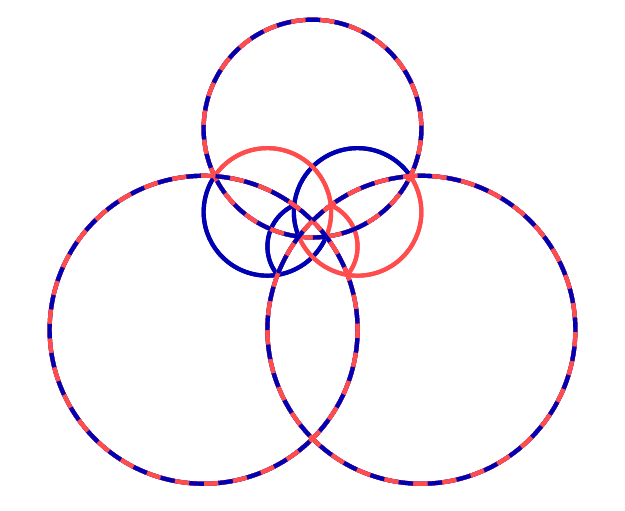}}
\caption{The linear biCambrian fan in type $A_3$}
\label{A3 biCamb linear}
\end{figure}
\begin{figure}[p]
\scalebox{1.05}{\includegraphics{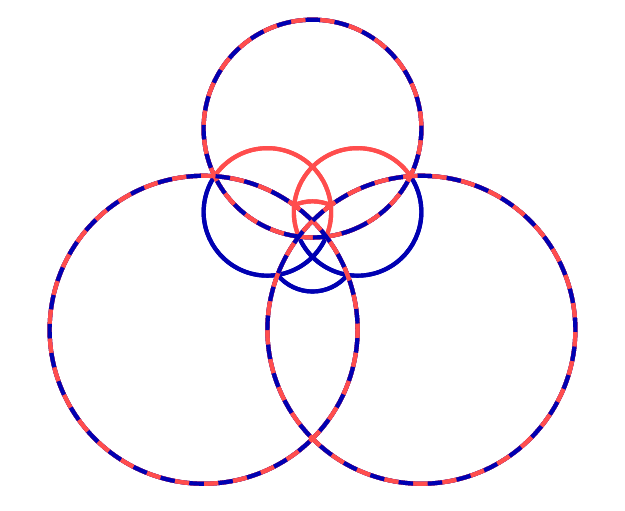}}
\caption{The bipartite biCambrian fan in type $A_3$}
\label{A3 biCamb bipartite}
\end{figure}

\begin{remark}\label{common walls}  
We observe that in Examples~\ref{B2 example} and~\ref{A3 biCamb example} that the common walls of $\C(W,c)$ and $\C(W,c^{-1})$ are exactly the reflecting hyperplanes for the simple generators of~$W$.
This fact true in general, and the simplest proof involves \newword{shards}.
We will not define shards here, but definitions and results can be found, for example, in~\cite{shardint}.
Assuming for a moment that background, we sketch a proof.
First, recast \cite[Theorem~8.3]{typefree} as the statement that the $c$-Cambrian congruence removes all but one shard from each reflecting hyperplane of $W$.
As explained in the argument for \cite[Proposition~1.3]{sort_camb} (located in \cite[Section~3]{sort_camb} just after the proof of \cite[Theorem~1.1]{sort_camb}), the antipodal map sends the shard that is not removed by the $c$-Cambrian congruence to the shard that is not removed by the $c^{-1}$-Cambrian congruence.
The only shards that are fixed by the antipodal map are shards that consist of an entire reflecting hyperplane, and \cite[Lemma~3.11]{shardint} says that these are exactly the reflecting hyperplanes for the simple generators.
\end{remark}

The $c^{-1}$-Cambrian fan $\C(W,c^{-1})$ coincides with $-\C(W,c)$, the image of the $c$-Cambrian fan under the antipodal map.
This is an immediate corollary of \cite[Proposition~1.3]{sort_camb}, which is a statement about the $c$-Cambrian congruence.
See also \cite[Remark~3.27]{afframe}.    
Thus we have the following proposition which amounts to an alternate definition of the biCambrian fan.
\begin{prop}\label{biCamb alt def} 
The biCambrian fan $\bC(W,c)$ is the coarsest common refinement of $\C(W,c)$ and $-\C(W,c)$.
\end{prop}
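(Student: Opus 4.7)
The plan is to deduce the proposition by direct substitution. The biCambrian fan $\bC(W,c)$ was defined as the coarsest common refinement of $\C(W,c)$ and $\C(W,c^{-1})$, and the paragraph immediately preceding the statement asserts that $\C(W,c^{-1}) = -\C(W,c)$, citing \cite[Proposition~1.3]{sort_camb}. Replacing $\C(W,c^{-1})$ by $-\C(W,c)$ in the definition yields exactly the claim, so no further argument is required beyond the cited identity.

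The only real content therefore sits in the identity $\C(W,c^{-1}) = -\C(W,c)$. I would not reprove it from scratch. If pressed to sketch its origin, I would argue at the level of Cambrian congruences: $\C(W,c)$ is a coarsening of the Coxeter fan $\F(W)$ whose maximal cones correspond to classes of the $c$-Cambrian congruence on $W$, and the antipodal map on $\F(W)$, viewed as a permutation of maximal cones (sending the cone of $w$ to the cone of $w_0 w$), intertwines the $c$-Cambrian congruence with the $c^{-1}$-Cambrian congruence. This intertwining is exactly the content of \cite[Proposition~1.3]{sort_camb}; it is proved there using the sortable-elements machinery rather than any direct manipulation of fans. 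Reproducing that intertwining would be the main obstacle in a self-contained approach, which is precisely why the paper isolates it as an external citation rather than carrying it out in this section.
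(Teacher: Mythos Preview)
Your proposal is correct and matches the paper's approach exactly: the paper states the identity $\C(W,c^{-1}) = -\C(W,c)$ as a consequence of \cite[Proposition~1.3]{sort_camb} and then presents the proposition as an immediate restatement of the definition, without a separate proof environment.
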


Since $\C(W,c)$ and $\C(W,c^{-1})$ are the normal fans of two generalized associahedra, a standard fact (see \cite[Proposition~7.12]{Ziegler}) yields the following result.

\begin{prop}\label{is poly}
For any $W$ and $c$, the fan $\bC(W,c)$ is the normal fan of a polytope, specifically, the Minkowski sum of the generalized associahedra dual to $\C(W,c)$ and $\C(W,c^{-1})$.
\end{prop}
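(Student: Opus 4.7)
The plan is to invoke the standard fact that the normal fan of a Minkowski sum of polytopes is the common refinement of the normal fans of the summands (this is exactly \cite[Proposition~7.12]{Ziegler}, cited in the statement). All the serious work is already upstream in the literature.

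First I would recall that for any finite Coxeter group $W$ and any Coxeter element $c$, the Cambrian fan $\C(W,c)$ is the normal fan of a generalized associahedron; this is the content of the realization theorem of Hohlweg--Lange--Thomas (and, in type~A, earlier of Chapoton--Fomin--Zelevinsky), cited in the excerpt as \cite{HohLan,HLT,gaPoly,ga}. Let $P_c$ and $P_{c^{-1}}$ denote generalized associahedra whose normal fans are $\C(W,c)$ and $\C(W,c^{-1})$, respectively. (These are polytopes in the ambient reflection representation, so it makes sense to take their Minkowski sum there.)

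Next I would apply \cite[Proposition~7.12]{Ziegler}: the normal fan of $P_c + P_{c^{-1}}$ equals the coarsest common refinement of the normal fans of $P_c$ and $P_{c^{-1}}$, that is, of $\C(W,c)$ and $\C(W,c^{-1})$. By the definition of the biCambrian fan given just above the proposition, this common refinement is $\bC(W,c)$. Hence $\bC(W,c)$ is the normal fan of $P_c+P_{c^{-1}}$, which proves the proposition.

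There is really no obstacle here beyond citing the right ingredients: the polytopality of each Cambrian fan on one hand, and the Minkowski-sum/normal-fan identity on the other. The only minor point to check is that both summands live in the same ambient space (they do, as both are realized inside the reflection representation of $W$), so that their Minkowski sum is well defined and its normal fan is literally $\bC(W,c)$ rather than a fan that must be transported by some identification. One could optionally remark that Proposition~\ref{biCamb alt def} gives an alternative description of the same polytope as $P_c + (-P_c)$, the Minkowski sum of an associahedron with its antipode, which is a centrally symmetric polytope; this mirrors the antipodal symmetry already noted for $\bC(W,c)$.
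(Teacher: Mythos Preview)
Your proof is correct and matches the paper's approach exactly: the paper does not give a separate proof but simply notes that since $\C(W,c)$ and $\C(W,c^{-1})$ are normal fans of generalized associahedra, the result follows from \cite[Proposition~7.12]{Ziegler}. Your write-up just makes this explicit.
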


The definition of $\bC(W,c)$ seems strange \textit{a priori}, but it is well-motivated \textit{a~posteriori} by enumerative results.
The first such result is Theorem~\ref{Baxter thm} below.
When $W$ is the symmetric group $S_n$ (i.e.\ when $W$ is of type $A_{n-1}$), the Coxeter diagram of $W$ is a path.
A \newword{linear Coxeter element} of $S_n$ is the product of the generators in order along the path.

\begin{theorem}\label{Baxter thm}
When $W$ is the symmetric group $S_n$ and $c$ is the linear Coxeter element, the number of maximal cones in $\bC(W,c)$ is the Baxter number
\[B(n)= {\binom{n+1}{1}}^{-1}{\binom{n+1}{2}}^{-1}\sum_{k=1}^n \binom{n+1}{k-1}\binom{n+1}{k}\binom{n+1}{k+1}.
\]
\end{theorem}
For more on the Baxter number, see \cite{Baxter,CGHK,FFNO}.  
Theorem~\ref{Baxter thm} was observed empirically (in the language of lattice congruences) in \cite[Section~10]{con_app} and then proven by J. West~\cite{West pers}. 
See also~\cite{Giraudo,rectangle}.  
The theorem is also related to the observation by Dulucq and Guibert \cite{DGStack} that pairs of twin binary trees are counted by the Baxter number.

Once one sees that the Baxter number counts maximal cones of $\bC(W,c)$ for $W$ of type~A and for a particular $c$, it is natural to look at other types of finite Coxeter group $W$, with the idea of defining a ``$W$-Baxter number'' for each finite Coxeter group $W$.
Indeed, there is a good notion of a ``type-B Baxter number'' discovered by Dilks~\cite{Dilks}.
The Coxeter diagram of type~B is also a path, and taking $c$ to be a linear Coxeter element, the maximal cones of $\bC(W,c)$ are counted by the type-B Baxter number.  
Despite the nice type-B result, there seems to be little hope for a reasonable definition of the $W$-Baxter number, because some types of Coxeter diagrams are not paths and thus it is not clear how to generalize the notion of a \emph{linear} Coxeter element.

There is, however, a choice of Coxeter element that can be made uniformly for all finite Coxeter groups.
Since the Coxeter diagram of any finite Coxeter group is acyclic, the diagram is in particular bipartite. 
Thus we can fix a bipartition $S_+\cup S_-$ of the diagram and orient each edge of the diagram from its vertex in $S_-$ to its vertex in $S_+$.
The resulting Coxeter element is called a \newword{bipartite Coxeter element}, and if $c$ is a bipartite Coxeter element of $W$, we call $\bC(W,c)$ a \newword{bipartite biCambrian fan}.
We emphasize that the case of bipartite $c$ is very special.
In particular, many of our results explicitly require that $c$ is bipartite.

Proposition~\ref{is poly} says that $\bC(W,c)$ is the normal fan of a polytope, but does not guarantee that this polytope is simple (equivalently, that this fan is simplicial).
In fact, simpleness fails for the linear Coxeter element of $S_n$, and this failure can be seen already in $S_4$. 
(See Figure~\ref{A3 biCamb linear}, and also \cite[Figure~13]{rectangle}.  The latter shows the $1$-skeleton of this polytope disguised as the Hasse diagram of a certain lattice.)  
We conjecture that the situation is better in the bipartite case.

\begin{conj}\label{simple poly}
If $W$ is a bipartite Coxeter element, then $\bC(W,c)$ is a simplicial fan.
(Equivalently, its dual polytope is simple.)
\end{conj}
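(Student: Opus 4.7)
The conjecture is local, so the plan is to reformulate it in terms of the bipartite biCambrian congruence $\Theta = \Theta_c \wedge \Theta_{c^{-1}}$ on the weak order of $W$. Since $\bC(W,c)$ coarsens the Coxeter fan and its maximal cones correspond to $\Theta$-classes, the facets of each maximal cone correspond to \emph{external walls} of the associated class $C$: Hasse-diagram edges $u \lessdot v$ of the weak order with exactly one endpoint in $C$. A full-dimensional cone in $\R^n$ is simplicial if and only if it has exactly $n$ facets, so simpliciality is equivalent to the statement that every $\Theta$-class has exactly $n$ external walls. Because $\Theta$ is the meet of two congruences, an edge is external for $\Theta$ if and only if it is external for at least one of $\Theta_c$ or $\Theta_{c^{-1}}$, so in principle the walls of $\bC(W,c)$ can be controlled from the (simplicial) Cambrian fans $\C(W,c)$ and $\C(W,c^{-1})$.

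In type A, I would use the bijection developed in Section~\ref{type A sec} between bipartite bisortable elements and alternating arc diagrams on $n+1$ nodes, which identifies each $\Theta$-class with an arc diagram via its minimum element. External walls should correspond to local moves on the diagram (sliding an arc endpoint to an adjacent node, or creating or destroying an arc in a permitted position), and a direct combinatorial argument should give exactly $n$ such moves per diagram, regardless of how many arcs it contains. The type-B case proceeds analogously with centrally symmetric alternating arc diagrams, each symmetric pair of moves counted once. For a uniform approach I would instead work shard-theoretically: for each $\Theta$-class $C$ with minimum $m$ and maximum $M$, I would try to show that the canonical join representation of $m$ and the canonical meet representation of $M$, after passage to the quotient, together furnish exactly $n$ covers of $C$ in $W/\Theta$, exploiting Remark~\ref{common walls} to describe which shards of $\F(W)$ survive in $\bC(W,c)$.

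The main obstacle is that the common refinement of two simplicial fans is not in general simplicial, so bipartiteness of $c$ must enter the argument crucially; indeed, the linear Coxeter element already fails to give a simple polytope in $S_4$ (Figure~\ref{A3 biCamb linear}). It is not obvious how to convert bipartiteness into a structural constraint on the canonical representations of bipartite bisortable elements, and naive induction on rank fails because the restriction of a bipartite Coxeter element to a standard parabolic is typically not bipartite with respect to the parabolic's own Coxeter diagram. Even granting the type-A and type-B base cases, a uniform argument likely requires new input, most plausibly a delicate analysis of shard combinatorics specific to the bipartite case.
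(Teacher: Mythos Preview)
First, note that this statement is a \emph{conjecture} in the paper: it is not proved in general. The paper verifies it computationally through rank~6 and proves it only in types~A and~B (Theorem~\ref{simple A B}). Your proposal correctly identifies the uniform case as open, so the relevant comparison is between your sketched type-A and type-B arguments and the paper's actual proofs in Section~\ref{A B simp sec}.

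Your type-A plan is to count, for each alternating arc diagram, the ``local moves'' corresponding to external walls of the associated $\Theta$-class and show there are exactly~$n$. This framework is sound---simpliciality is equivalent to every class having exactly $n$ external walls, i.e.\ to the Hasse diagram of the quotient being $n$-regular---but your description of the moves (``sliding an arc endpoint to an adjacent node'') is not accurate. Lower walls correspond to removing an arc from $\delta(w)$ (one per descent of the minimum~$w$), while upper walls correspond to ascents of the \emph{maximum} of the class, which is a different permutation whose arc diagram you have not described. Making this direct count rigorous would require controlling both the minimum and the maximum of each class simultaneously. The paper sidesteps this entirely: it invokes Theorem~\ref{one} (from~\cite{IRRT}), which says $\F_\Theta(A_n)$ is simplicial if and only if $\Theta$ is generated by contracting \emph{double join-irreducible} elements, and then checks that the bipartite biCambrian congruence is generated by contracting the explicit permutations $1\cdots(i+1)(i+2)(i+3)i\cdots(n+1)$ and $1\cdots(i+3)i(i+1)(i+2)\cdots(n+1)$, which are double join-irreducible. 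This is a much shorter route, but it relies on nontrivial representation-theoretic input.

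For type~B, you propose repeating the type-A combinatorics with centrally symmetric diagrams. The paper instead proves a general folding principle (Proposition~\ref{typeB simplicial}): if $\Theta$ on $A_{2n-1}$ is symmetric under conjugation by $w_0$ and $\F_\Theta(A_{2n-1})$ is simplicial, then the restricted congruence on $B_n$ also gives a simplicial fan. This reduces type~B to type~A with no new combinatorics. Your shard-theoretic remarks for the uniform case are in the right spirit (cf.\ Remark~\ref{common walls}), but as you yourself note, they do not yet constitute an argument; the paper makes no further progress on the general conjecture either.
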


We have verified Conjecture~\ref{simple poly}, with the aid of Stembridge's packages~\cite{StembridgePackages}, up to rank 6.
Also, in Section~\ref{A B simp sec}, we prove the following theorem using alternating arc diagrams, by appealing to some results of \cite{DIRRT} linking the lattice theory of the weak order to the representation theory of finite-dimensional algebras, and then applying a folding argument.  
\begin{theorem}\label{simple A B}
Conjecture~\ref{simple poly} holds in types A and B.
\end{theorem}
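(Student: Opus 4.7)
The plan is to prove that every maximal cone of $\bC(W,c)$ is simplicial by showing that each such cone has exactly $n$ facets, where $n$ is the rank of $W$. Equivalently, in the lattice-theoretic picture, I show that each class $C$ of the bipartite biCambrian congruence is separated from exactly $n$ other classes by cover relations of the weak order (counting both upward and downward covers from $C$), so that the corresponding cone in the biCambrian fan inherits exactly $n$ walls from the ambient Coxeter arrangement.

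First I would handle type A. Maximal cones of $\bC(W,c)$ for $W=S_{n+1}$ are in bijection with bipartite $c$-bisortable elements, which Sections~\ref{pat av sec}--\ref{alt sec} put in bijection with alternating arc diagrams on $n+1$ nodes. The results of \cite{IRRT} identify, for a congruence-uniform quotient of the weak order, the upward covers of a quotient element with the canonical joinands of its canonical join representation, and dually for downward covers and canonical meet representations. The bipartite biCambrian congruence is congruence-uniform (as a meet of two congruence-uniform congruences), so the number of walls of the cone corresponding to an alternating arc diagram $D$ is the sum of the sizes of the canonical join and canonical meet representations of the associated bisortable element. The central combinatorial lemma is then: for every alternating arc diagram $D$ on $n+1$ nodes, this sum equals $n$. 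This lemma can be proved by describing the canonical joinands and meetands directly as certain distinguished subarcs of $D$ and checking that each arc of $D$ contributes exactly once, so the total equals the number of arcs, which for an alternating arc diagram on $n+1$ nodes is controlled by the rank.

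For type B, I would proceed by folding. The Coxeter group $B_n$ arises as the subgroup of $A_{2n-1}$ fixed by the order-two diagram automorphism, and the bipartition of the $B_n$ diagram lifts to a compatible bipartition of the $A_{2n-1}$ diagram, so that the bipartite Coxeter element of $B_n$ corresponds to the bipartite Coxeter element of $A_{2n-1}$ fixed by the folding. Under this lift, bipartite $c$-bisortable elements of $B_n$ correspond to centrally symmetric alternating arc diagrams on $2n$ nodes. The bipartite biCambrian fan of $B_n$ is the restriction of the type-A bipartite biCambrian fan to the fixed hyperplane arrangement of the involution, and the walls of a symmetric maximal cone come either from symmetric walls of the type-A cone (contributing one wall each) or from pairs of non-symmetric walls swapped by the involution (contributing one wall per pair). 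The type-A wall count of $2n-1$ then descends to a type-B wall count of exactly $n$, as required.

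The main obstacle is the type-A combinatorial lemma: identifying the canonical joinands and meetands of each bipartite bisortable element as explicit arcs of its alternating arc diagram and verifying that each of the $n$ arcs contributes exactly once to the combined count. This requires controlling how the two bipartite Cambrian congruences interact, since the canonical join representation in the biCambrian quotient is the union of representations from the two Cambrian quotients, with redundancies removed. Once the type-A lemma is established, the folding argument for type B is largely formal, with the only subtlety being verification that no symmetric wall is spuriously created or lost under restriction to the fixed subspace.
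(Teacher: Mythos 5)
Your type-B half is essentially the paper's argument (Proposition~\ref{typeB simplicial}): realize $B_n$ as the $w_0$-conjugation-fixed sublattice of $A_{2n-1}$, check the biCambrian congruence is symmetric, and observe that a symmetric simplicial cone upstairs has an odd number ($2n-1$) of walls, hence exactly one centrally symmetric wall, the rest pairing off under the involution to give $n$ walls downstairs. Granting type A, that part is sound.

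The type-A half has a genuine gap. You propose to verify regularity of the Hasse diagram of the quotient by showing that the canonical join representation of the bottom element of each class and the canonical meet representation of the top element together have $n$ parts (note, incidentally, that you have the correspondence backwards: canonical joinands of the bottom element give the \emph{downward} covers, canonical meetands of the top element the upward ones). But your justification of the ``central combinatorial lemma'' --- that each arc contributes once, so the total equals the number of arcs --- cannot be correct: the number of arcs of an alternating arc diagram is exactly the number of descents of the corresponding bisortable element, i.e.\ only the number of lower walls, and it ranges from $0$ to $n$. The empty diagram corresponds to the identity, whose cone is a Weyl chamber with $n$ walls, all upper. The upper walls are governed by the canonical meet representation of the \emph{top} element of the class, which is not visible in the arc diagram of the bottom element; proving that lower plus upper walls always total $n$ is precisely the simpliciality assertion and is not obviously easier than the problem you started with. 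The paper avoids this entirely by invoking a different equivalence from \cite{IRRT} (Theorem~\ref{one}): a quotient of the weak order on $A_n$ is simplicial if and only if the congruence is generated by contracting \emph{double} join-irreducible elements. The bipartite biCambrian congruence is generated by removing the non-alternating arcs joining $i$ to $i+3$, and the corresponding join-irreducible permutations $1\cdots(i-1)(i+1)(i+2)(i+3)\,i\,(i+4)\cdots(n+1)$ and $1\cdots(i-1)(i+3)\,i\,(i+1)(i+2)(i+4)\cdots(n+1)$ are double join-irreducible, so simpliciality follows with no wall-counting. To complete your argument you would need either to carry out this generator computation or to actually prove your wall-count lemma; as written, neither is done.
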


In Section~\ref{twin sec}, we will prove the following theorem.

\begin{theorem}\label{simple h}
If Conjecture~\ref{simple poly} holds for a Coxeter group $W$, then the $h$-vector of the simplicial sphere underlying $\bC(W,c)$, for $c$ bipartite,  has entries $\biNar_k(W)$.
\end{theorem}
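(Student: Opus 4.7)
The approach is to invoke the standard description of the $h$-vector of a simple polytope via a generic linear functional. Assuming Conjecture~\ref{simple poly}, the polytope $P$ supplied by Proposition~\ref{is poly}, whose normal fan is $\bC(W,c)$, is simple, and its vertices are in bijection with the maximal cones of $\bC(W,c)$, hence by Theorem~\ref{main thm} with the bipartite $c$-bisortable elements of $W$. Recall that for such a $P$ and a generic $\lambda$, the $h$-vector of the dual simplicial boundary is $h_k=\#\{v:d_\lambda(v)=k\}$, where $v$ ranges over vertices of $P$ and $d_\lambda(v)$ counts edges at $v$ along which $\lambda$ decreases. I would choose $\lambda$ in the interior of the antidominant chamber $-C_0$, so that on the $W$-permutohedron (the simple polytope realizing $\F(W)$) the edge from $w$ to $ws$ descends from $w$ exactly when $s$ is a descent of $w$---the classical setup yielding $h_k(\F(W))=\eulerian{n}{k}$.

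Since $P$ is obtained from the $W$-permutohedron by contracting precisely those edges whose endpoints share a biCambrian class, each vertex $v([w])$ of $P$ corresponds to a biCambrian class whose unique weak-order minimum is the bipartite $c$-bisortable element $w$. The core step is to establish $d_\lambda(v([w]))=\des(w)$. For each descent $s$ of $w$, the cover relation $ws\lessdot w$ in weak order projects to a strict relation $[ws]<[w]$ in the biCambrian lattice (strict because $w$ is the minimum of $[w]$), so the simple-reflection hyperplane $H_s$ is not interior to the maximal cone of $\bC(W,c)$ indexed by $[w]$ but rather forms one of its walls, producing a descending edge at $v([w])$. Distinct descents produce distinct hyperplanes (the simple roots being linearly independent) and hence distinct descending edges, which gives $d_\lambda(v([w]))\ge\des(w)$. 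Under Conjecture~\ref{simple poly}, $v([w])$ has exactly $n$ edges, so combining with an analogue for the biCambrian lattice of the canonical-join-representation property of Cambrian lattices---namely that every lower cover of $[w]$ in the biCambrian lattice arises from a descent of $w$---will force equality.

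Summing $d_\lambda(v([w]))=\des(w)$ over all bipartite $c$-bisortable elements and using the definition of $\biNar_k(W)$ from Theorem~\ref{hard part finer} then gives $h_k=\#\{w:w\text{ is bipartite $c$-bisortable with }\des(w)=k\}=\biNar_k(W)$, as claimed. The main obstacle is the equality portion of the core step: one must rule out extra descending edges at $v([w])$ arising from non-simple walls of the cone contributed by Weyl chambers $w'C_0$ with $w'\in[w]$ and $w'\neq w$. I expect this to be handled by the general congruence-theoretic observation that in any simplicial quotient of the weak order, the lower covers of a bottom-of-class element $[w]$ are indexed by the descents of $w$. The Cambrian case is classical, and the biCambrian case should reduce to it via Proposition~\ref{biCamb alt def}, using that the meet of two congruences interacts cleanly with lower covers at class minima.
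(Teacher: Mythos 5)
Your argument is essentially the paper's: the paper proves this by citing the general facts that (i) the bottom element of a congruence class has the same number of lower covers in the quotient as in the weak order, so the class of a bipartite $c$-bisortable $w$ has exactly $\des(w)$ lower covers in the biCambrian lattice, and (ii) for a simplicial fan arising from a lattice quotient of the weak order, the $h$-polynomial equals the generating function for lower covers \cite[Proposition~3.5]{con_app} --- which is precisely your sweeping argument, and the ``expected'' lemma you flag at the end is exactly fact (i), known for arbitrary finite lattice quotients. One small correction: the lower wall of the cone of $[w]$ produced by a descent $s$ of $w$ lies in the reflecting hyperplane of the cover reflection $wsw^{-1}$, not in $H_s$ itself (the common walls of the two Cambrian fans are the simple hyperplanes, but the lower walls of an individual chamber $wC_0$ are not); distinct descents still yield distinct cover reflections and hence distinct walls, so your count is unaffected.
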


In light of the evidence for Conjecture~\ref{simple poly} and in light of Theorem~\ref{simple h}, we propose the term \newword{simplicial $W$-biassociahedron} for the polytope whose face fan is $\bC(W,c)$ \emph{for $c$ bipartite}, and \newword{simple $W$-biassociahedron} for the polytope whose normal fan is $\bC(W,c)$ \emph{for $c$ bipartite}.

\begin{remark}\label{A biassoc}
Theorems~\ref{biNar thm}, \ref{simple A B}, and~\ref{simple h} imply that the $A_n$-biassociahedron has the same $h$-vector as the $B_n$-associahedron (also known as the cyclohedron).
One is naturally led to ask whether these two polytopes are combinatorially isomorphic.  
The answer is no already for $n=3$.
The normal fan to the $A_3$-biassociahedron is shown in Figure~\ref{A3 biCamb bipartite}.
The dual graph to this fan has a vertex that is incident to two hexagons and a quadrilateral.
The graph of the $B_3$-associahedron (shown for example in \cite[Figure~3.9]{rsga}) has no such vertex.
\end{remark}

\subsection{The biCambrian congruence, twin sortable elements, and bisortable elements}\label{twin sec}  
A \newword{congruence} $\Theta$ on a lattice $L$ is an equivalence relation respecting the meet and join operations.
We now quote some combinatorial facts about lattice congruences.  
Proofs can be found in \cite[Section~9-5]{regions9}.
In this paper, we consider only finite lattices, and some results quoted in this section can fail for infinite lattices.
On a finite lattice, congruences are characterized by three properties:
congruence classes are intervals;
the projection \raisebox{0pt}[0pt][0pt]{$\pidown^\Theta$}, mapping each element to the bottom element of its congruence class, is order preserving;
and the projection \raisebox{0pt}[0pt][0pt]{$\piup_\Theta$}, mapping each element to the top element of its congruence class, is order preserving.
The $\Theta$-classes are exactly the fibers of $\pidown^{\Theta}$.  
The quotient $L/\Theta$ of a finite lattice $L$ modulo a congruence $\Theta$ is a lattice isomorphic to the subposet induced by the set \raisebox{0pt}[0pt][0pt]{$\pidown^\Theta(L)$} of elements that are the bottoms of their congruence classes.
The congruence $\Theta$ is determined by the set $\pidown^\Theta(L)$: Specifically $x\equiv y$ modulo $\Theta$ if and only if the unique maximal element of $\pidown^\Theta(L)$ below $x$ equals the unique maximal element of $\pidown^\Theta(L)$ below $y$.

The map $\pidown^{\Theta}$ is a lattice homomorphism from $L$ onto the subposet \raisebox{0pt}[0pt][0pt]{$\pidown^\Theta(L)$}, but care must be taken to avoid misinterpreting this fact.
Literally, the fact that $\pidown^{\Theta}$ is a lattice homomorphism means that for any $U\subseteq L$, we have $\pidown^{\Theta}(\Join U) = \Join_{x\in U} \pidown^{\Theta}(x)$ and $\pidown^{\Theta}(\Meet U) = \Meet_{x\in U} \pidown^{\Theta}(x)$, but in each identity, the join on the left side occurs in $L$ while the join on the right side occurs in $\pidown^\Theta(L)$.
It is easy to check that $\pidown^\Theta(L)$ is also a join-sublattice of $L$, so the distinction between the join in $L$ and the join in $\pidown^\Theta(L)$ is unnecessary.
However, in general, $\pidown^{\Theta}(L)$ need not be a meet-sublattice of $L$, so in interpreting the identity $\pidown^{\Theta}(\Meet U) = \Meet_{x\in U} \pidown^{\Theta}(x)$, it is crucial to be clear on where the meets occur.

The maximal cones of the Coxeter fan $\F(W)$, partially ordered according to a suitable linear functional, form a lattice isomorphic to the weak order on $W$.
(This fact is true either for the right or left weak order.  We will work with the right weak order.)  
Any lattice congruence $\Theta$ on the weak order on $W$ defines a fan $\F_\Theta(W)$ coarsening $\F(W)$.
(See \cite[Theorem~1.1]{con_app} and \cite[Section~5]{con_app}.)
Specifically, for each $\Theta$-class, the union of the corresponding maximal cones in $\F(W)$ is itself a convex cone, and the collection of all these convex cones and their faces is the fan $\F_\Theta(W)$.
Each Coxeter element $c$ specifies a congruence $\Theta_c$ on the weak order called the \newword{$c$-Cambrian congruence}.
(See \cite{cambrian} for the definition.)
The fan $\F_{\Theta_c}(W)$ is the $c$-Cambrian fan $\C(W,c)$ described earlier.

The set $\Con(L)$ of all congruences on a given lattice $L$ is itself a sublattice of the lattice of set partitions of $L$.
In particular, the meet of two congruences is the coarsest set partition of $L$ refining both congruences.   
We define the \newword{$c$-biCambrian congruence} to be the meet, in $\Con(W)$, of the Cambrian congruences $\Theta_c$ and $\Theta_{c^{-1}}$.
The fan $\F_\Theta(W)$ for $\Theta=\Theta_c\meet\Theta_{c^{-1}}$ is the coarsest common refinement of $\F(\Theta_c(W))$ and $\F(\Theta_{c^{-1}}(W))$.
Thus the $c$-biCambrian fan $\bC(W,c)$ is the fan $\F_\Theta(W)$ for $\Theta=\Theta_c\meet\Theta_{c^{-1}}$.
In particular, the $c$-biCambrian congruence classes are in bijection with the maximal cones of $\bC(W,c)$.
We define the \newword{$c$-biCambrian lattice} to be the quotient of the weak order modulo the $c$-biCambrian congruence.
The elements of the $c$-biCambrian lattice are thus in bijection with the maximal cones of $\bC(W,c)$.

We write \raisebox{0pt}[0pt][0pt]{$\pidown^c$} for the projection taking each element of $W$ to the bottom element of its $c$-Cambrian congruence class, and similarly $\pidown^{c^{-1}}$.
(That is, $\pidown^c$ stands for $\pidown^\Theta$ where $\Theta=\Theta_c$.)
Consider the map that sends each $c$-biCambrian congruence class to the pair $(\pidown^c(w),\pidown^{c^{-1}}(w))$, where $w$ is any representative of the class.
Because the $c$-biCambrian congruence $\Theta$ is the meet $\Theta_c\meet \Theta_{c^{-1}}$, two elements $u$ and $v$ are congruent in the $c$-biCambrian congruence if and only if \raisebox{0pt}[0pt][0pt]{$\pidown^c(u)=\pidown^c(v)$} and \raisebox{0pt}[0pt][0pt]{$\pidown^{c^{-1}}(u)=\pidown^{c^{-1}}(v)$}.
Thus, the map from classes to pairs is a well-defined bijection from $c$-biCambrian congruence classes to its image.

The bottom elements of the $c$-Cambrian congruence are called \newword{$c$-sortable elements}.
(In fact $c$-sortable elements have an independent combinatorial definition \cite[Section~2]{sortable}, but were shown to be the bottom elements of $c$-Cambrian congruences in \cite[Theorems 1.1 and 1.4]{sort_camb}.)  
Given elements $u$ and $v$ of $W$, we define the pair $(u,v)$ to be a pair of \newword{twin $(c,c^{-1})$-sortable elements} of $W$ if there exists $w\in W$ such that $u=\pidown^c(w)$ and $v=\pidown^{c^{-1}}(w)$.
The map considered in the previous paragraph is a bijection between $c$-biCambrian congruence classes and pairs of twin $(c,c^{-1})$-sortable elements of $W$.
The twin sortable elements are similar in spirit to the twin binary trees of \cite{DGStack}, which were already mentioned in connection with Theorem~\ref{Baxter thm}.
Indeed, for $W$ of type~A and $c$ linear, the connection is implicit in the construction in \cite{rectangle} of a diagonal rectangulation from a pair of binary trees.
(See also \cite[Remark~6.6]{rectangle}.)
Also in type A, but for general $c$, the twin binary trees are generalized in~\cite{ChaPil} to \newword{twin Cambrian trees}, which correspond explicitly to pairs of twin $(c,c^{-1})$-sortable elements. 
Indeed, \cite[Proposition~36]{ChaPil} amounts to another computation of the type-A biCatalan number, quite different from the two given here (in Sections~\ref{nn sec} and~\ref{alt_arc_diagrams}).

Another set of objects naturally in bijection with $c$-biCambrian congruence classes are the bottom elements of $c$-biCambrian congruence classes.
We coin the term \newword{$c$-bisortable} elements for these bottom elements.
Although the $c$-sortable elements have a direct combinatorial characterization \cite[Section~2]{sortable}, we currently have no direct combinatorial characterization of $c$-bisortable elements.
We do offer the following indirect characterization of $c$-bisortable elements in terms of $c$-sortable elements and $c^{-1}$-sortable elements.
\begin{prop}\label{c join cinv}
For any $c$, an element $w\in W$ is $c$-bisortable if and only if there exists a $c$-sortable element $u$ and a $c^{-1}$-sortable element $v$ such that $w=u\join v$ in the weak order.
When $w$ is $c$-bisortable, we can take $u=\pidown^c(w)$ and $v=\pidown^{c^{-1}}(w)$.
\end{prop}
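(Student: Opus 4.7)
The plan is to establish the stronger identity $\pidown^\Theta(w) = \pidown^c(w) \vee \pidown^{c^{-1}}(w)$ for every $w \in W$, where $\Theta = \Theta_c \wedge \Theta_{c^{-1}}$ is the $c$-biCambrian congruence. Granted this identity, the proposition follows quickly: $w$ is $c$-bisortable iff $w = \pidown^\Theta(w) = \pidown^c(w) \vee \pidown^{c^{-1}}(w)$, which is the ``only if'' direction with the asserted choice of $u$ and $v$. For the ``if'' direction, if $w = u' \vee v'$ for some $c$-sortable $u'$ and $c^{-1}$-sortable $v'$, then $u' = \pidown^c(u') \leq \pidown^c(w)$ and symmetrically $v' \leq \pidown^{c^{-1}}(w)$, since $\pidown^c$ and $\pidown^{c^{-1}}$ are order-preserving and fix the $c$-sortable and $c^{-1}$-sortable elements respectively; hence $w = u' \vee v' \leq \pidown^c(w) \vee \pidown^{c^{-1}}(w) \leq w$, forcing $w = \pidown^\Theta(w)$.

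To prove the identity, set $u = \pidown^c(w)$, $v = \pidown^{c^{-1}}(w)$, and $z = u \vee v$. The relation $z \leq w$ is immediate from $u, v \leq w$. I would next verify that $z \equiv w$ modulo both $\Theta_c$ and $\Theta_{c^{-1}}$; by the characterization of meets of congruences recalled in the excerpt, this is equivalent to $z \equiv w$ modulo $\Theta$. Using that $\pidown^c$ is a lattice homomorphism and that $\pidown^c(W)$ is a join-sublattice of $W$, one has
\[\pidown^c(z) = \pidown^c(u) \vee \pidown^c(v) = u \vee \pidown^c(v).\]
Since $v \leq w$ yields $\pidown^c(v) \leq \pidown^c(w) = u$, this collapses to $\pidown^c(z) = u = \pidown^c(w)$; the symmetric calculation with $c^{-1}$ handles the $\Theta_{c^{-1}}$-congruence. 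To conclude that $z$ is the bottom of the $\Theta$-class of $w$, I would observe that any $x \equiv w \pmod \Theta$ has $\pidown^c(x) = u \leq x$ and $\pidown^{c^{-1}}(x) = v \leq x$, hence $z = u \vee v \leq x$. This forces $\pidown^\Theta(w) = z$, giving the desired identity.

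The one delicate point is the distinction, flagged in the excerpt, between joins computed in $W$ and in $\pidown^c(W)$ when exploiting the fact that $\pidown^c$ is a lattice homomorphism. Here there is no real obstacle because $\pidown^c(W)$ is a join-sublattice of $W$, so the identity $\pidown^c(u \vee v) = \pidown^c(u) \vee \pidown^c(v)$ may be read unambiguously with all joins taken in $W$; this is the step most deserving of care in the write-up. Apart from that, the argument is a routine manipulation of the projections associated with a meet of two lattice congruences, and does not require anything specific to the Cambrian setting beyond the characterization of $c$-sortable elements as the fixed points of $\pidown^c$.
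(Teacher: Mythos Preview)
Your argument is correct. It differs from the paper's proof mainly in emphasis: you first establish the general identity $\pidown^\Theta(w)=\pidown^c(w)\vee\pidown^{c^{-1}}(w)$ for all $w$, and you verify $\pidown^c(z)=\pidown^c(w)$ by invoking the fact that $\pidown^c$ is a join-homomorphism. The paper instead argues the forward direction directly from the interval property of congruence classes: if $u,v\le w'\le w$ then $w'$ lies in the $\Theta_c$-class of $u$ and the $\Theta_{c^{-1}}$-class of $v$, hence in the $\Theta$-class of $w$, forcing $w'=w$ when $w$ is the bottom of that class. Your homomorphism computation and the paper's interval argument are two faces of the same characterization of congruences on finite lattices, so the proofs are morally equivalent; what your route buys is the explicit formula for $\pidown^\Theta$ in terms of $\pidown^c$ and $\pidown^{c^{-1}}$, valid for every $w$ rather than only the bisortable ones, while the paper's route avoids the care you rightly flag about where the joins are taken. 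The backward directions are essentially identical in both proofs.
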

\begin{proof}
Given $c$-bisortable $w$, take $u=\pidown^c(w)$ and $v=\pidown^{c^{-1}}(w)$.
Then $u\le w$ and $v\le w$.
Since Cambrian congruence classes are intervals, any upper bound $w'$ for $u$ and $v$ with $w'\le w$ is congruent to $u$ modulo $\Theta_c$ and congruent to $v$ modulo $\Theta_{c^{-1}}$.
Thus $w'$ is congruent to $w$ in the $c$-biCambrian congruence.
Since $w$ is the bottom element of its $c$-biCambrian congruence class, we conclude that $w'=w$.
We have shown that $w=u\join v$.

Suppose $w=u\join v$ for some $c$-sortable element $u$ and some $c^{-1}$-sortable element $v$.
Since $\pidown^c(w)$ is the unique maximal $c$-sortable element below $w$, we have $\pidown^c(w)\ge u$.
Similarly, $\pidown^{c^{-1}}(w)\ge v$.
If there exists $w'<w$ in the same $c$-biCambrian congruence class as $w$, then $w'\ge\pidown^c(w')=\pidown^c(w)\ge u$ and $w'\ge\pidown^{c^{-1}}(w')=\pidown^{c^{-1}}(w)\ge v$.
This contradicts the fact that $w=u\join v$, and we conclude that $w$ is $c$-bisortable.
\end{proof}

Recall that for any congruence $\Theta$ on a finite lattice $L$, the set $\pidown^\Theta(L)$ is a join-sublattice of $L$.
The Cambrian congruences have a stronger property:
For any Coxeter element $c$, the $c$-sortable elements constitute a sublattice \cite[Theorem~1.2]{sort_camb} of the weak order on $W$.
It is natural to ask whether the same is true for $c$-bisortable elements, but the answer is no.
We give an example for $W=S_5$ and bipartite $c$:
The permutations $45312$ and $53142$ are both $c$-bisortable but their meet $31452$ is not.
(To check this example, Proposition~\ref{avoidance} will be very helpful.)

Each $c$-bisortable element $v$ covers some number of elements in the $c$-biCambrian lattice.
By a general fact on lattice quotients (see for example \cite[Proposition~6.4]{shardint}), $v$ covers the same number of elements in the weak order on $W$.
This number is $\des(v)$, the number of descents of $v$.  
(We will define descents in Section~\ref{sortable formula sec}.)
The \newword{descent generating function of $c$-bisortable elements} is the sum $\sum x^{\des(v)}$ over all $c$-bisortable elements $v$.
For bipartite $c$, its coefficients are the $W$-biNarayana numbers.
A general fact about lattice quotients of the weak order \cite[Proposition~3.5]{con_app} implies that, when $\bC(W,c)$ is simplicial, the descent generating function of $c$-bisortable elements equals the $h$-polynomial of $\bC(W,c)$.
In the bipartite case, Theorem~\ref{simple h} follows immediately. 

\subsection{Twin clusters and bicluster fans}\label{clus sec}
Clusters of almost positive roots were introduced in \cite{ga}, where they were used to define generalized associahedra.
In \cite{ca2}, clusters of almost positive roots were used to model cluster algebras of finite type.
Here, we will not need the cluster-algebraic background, which can be found in \cite{ca2}.
Instead, we define almost positive roots and $c$-compatibility and quote some results about $c$-clusters and their relationship to $c$-sortable elements.
We will also not need the more refined notion of ``compatibility degree.''

In a finite root system, the \newword{almost positive roots} are those roots which either are positive, or are the negatives of simple roots.  
The definition of compatibility in \cite{ga} is a special case (namely the bipartite case) of what we here call $c$-compatibility.
The general definition was given in \cite{MRZ}, but here we give a rephrasing found in \cite[Section~7]{sortable}, translated into the language of almost positive roots.

We write $\set{\alpha_1,\ldots,\alpha_n}$ for the simple roots and $\set{s_1,\dots,s_n}$ for the simple reflections.
For each $i$ in $\set{1,\ldots,n}$, we define an involution $\sigma_i$ on the set of almost positive roots by
\begin{equation}\label{sigma def}
\sigma_i(\beta):=\left\lbrace\begin{array}{ll}
\beta&\text{if }\beta=-\alpha_j\text{ with }j\neq i,\mbox{ or}\\
s_i\beta&\text{otherwise}.
\end{array}\right.
\end{equation}
We write $[\beta:\alpha_i]$ for the coefficient of $\alpha_i$ in the expansion of $\beta$ in the basis of simple roots.
A simple reflection $s_i$ is \newword{initial} in a Coxeter element $c$ if $c$ has a reduced word starting with $s_i$.
If $s_i$ is initial in $c$, then $s_ics_i$ is another Coxeter element.

The \newword{$c$-compatibility} relations are a family of symmetric binary relations $\cm_c$ on the almost positive roots.
They are the unique family of relations with 
\begin{enumerate}
\item[(i) ]For any $i$ in $\set{1,\ldots,n}$, and Coxeter element~$c$,
\[-\alpha_i\cm_c\beta\mbox{ if and only if }[\beta:\alpha_i]=0.\]
\item[(ii) ]For each pair of almost positive roots $\beta_1$ and $\beta_2$, each Coxeter element $c$, and each $s_i$ initial in $c$,   
\[\beta_1\cm_c \beta_2\mbox{ if and only if }\sigma_i(\beta_1)\,\cm_{s_ics_i}\,\sigma_i(\beta_2).\]
\end{enumerate}

The \newword{$c$-clusters} are the maximal sets of pairwise $c$-compatible almost positive roots.
By \cite[Theorem~1.8]{ga} and \cite[Proposition~3.5]{MRZ}, for fixed $W$, all $c$-clusters are of the same size, and furthermore, each is a basis for the root space (the span of the roots).
Write $\reals_{\ge0} C$ for the nonnegative linear span of a $c$-cluster $C$.
Then \cite[Theorem~1.10]{ga} and \cite[Theorem~3.7]{MRZ} state that the cones $\reals_{\ge0} C$, for all $c$-clusters $C$, are the maximal cones of a complete simplicial fan.
We call this fan the \newword{$c$-cluster fan}.

We define the \newword{$c$-bicluster fan} to be the coarsest common refinement of the $c$-cluster fan and its antipodal opposite.
A pair $(C_1,C_2)$ of $c$-clusters is called a pair of \newword{twin $c$-clusters} if the cones $\reals_{\ge0} C_1$ and $-\reals_{\ge0} C_2$ (the nonpositive linear span of $C_2$) intersect in a full-dimensional cone.
It is immediate that maximal cones in the $c$-bicluster fan are in bijection with pairs of twin $c$-clusters.

\begin{example}\label{A3 biclus example}  
For $W$ of type $A_3$, up to symmetry there are two different $c$-bicluster fans:  one for linear $c$ and one for bipartite $c$, shown in Figures~\ref{A3 biclus linear} and~\ref{A3 biclus bipartite} respectively.
These are again stereographic projections as explained in Example~\ref{A3 biCamb example}.
\end{example}
\begin{figure}[p]
\scalebox{1.05}{\includegraphics{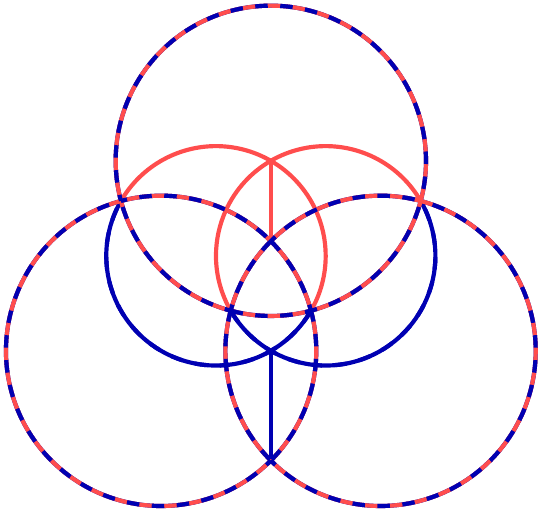}
\begin{picture}(0,0)(130,-135)
\put(-19,-36){\small$\alpha_1$}
\put(72,16){\small$-\alpha_1$}
\put(-8,-122){\small$\alpha_2$}
\put(-2,-8){\small$-\alpha_2$}
\put(19,-44){\small$\alpha_3$}
\put(-97,18){\small$-\alpha_3$}
\end{picture}
}
\caption{The linear bicluster fan in type $A_3$}
\label{A3 biclus linear}
\end{figure}
\begin{figure}[p]
\scalebox{1.05}{\includegraphics{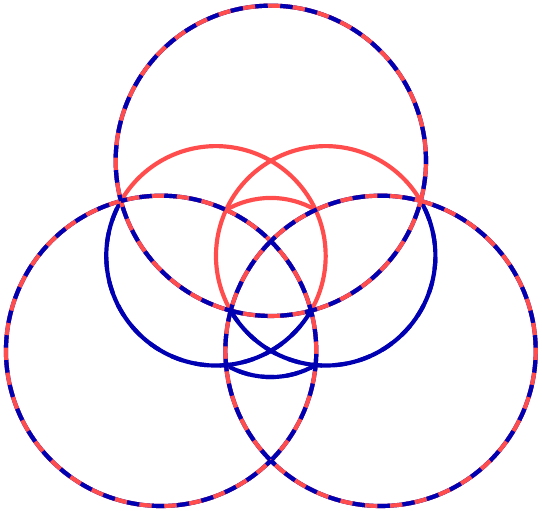}
\begin{picture}(0,0)(130,-135)
\put(-19,-36){\small$\alpha_1$}
\put(72,16){\small$-\alpha_1$}
\put(-8,-122){\small$\alpha_2$}
\put(-2,-8){\small$-\alpha_2$}
\put(19,-44){\small$\alpha_3$}
\put(-97,18){\small$-\alpha_3$}
\end{picture}
}
\caption{The bipartite bicluster fan in type $A_3$}
\label{A3 biclus bipartite}
\end{figure}

The two $c$-bicluster fans in Example~\ref{A3 biclus example} are combinatorially isomorphic.
Despite this tantalizing fact, in this paper, we only explore bicluster fans in the special case of bipartite Coxeter elements (the original setting of \cite{ga,ca2}), where they are easily related to biCambrian fans.  
For the bipartite choice of $c$, \cite[Theorem~9.1]{camb_fan} says that the $c$-Cambrian fan is \emph{linearly} isomorphic to the cluster fan.
Combining this fact with Proposition~\ref{biCamb alt def}, we have the following theorem.
\begin{theorem}\label{bi cl fans}
For all finite Coxeter groups $W$ and for bipartite $c$, the $c$-bicluster fan is linearly isomorphic to the $c$-biCambrian fan.
\end{theorem}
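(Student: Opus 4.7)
The plan is to chain together two facts already assembled in the excerpt: by \cite[Theorem~9.1]{camb_fan}, for bipartite $c$ the $c$-Cambrian fan $\C(W,c)$ is linearly isomorphic to the $c$-cluster fan, and by Proposition~\ref{biCamb alt def} the biCambrian fan $\bC(W,c)$ is the coarsest common refinement of $\C(W,c)$ and $-\C(W,c)$. Meanwhile, the $c$-bicluster fan is by definition the coarsest common refinement of the $c$-cluster fan and its antipodal opposite. So the entire proof amounts to checking that the linear isomorphism of the first ingredient intertwines these two refinement operations.

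Concretely, I would fix a linear automorphism $\phi$ of the ambient space realizing the isomorphism of \cite[Theorem~9.1]{camb_fan}, sending $\C(W,c)$ to the $c$-cluster fan. Because $\phi$ is linear, it commutes with the antipodal map, and so it also sends $-\C(W,c)$ to the antipodal opposite of the $c$-cluster fan. Because $\phi$ is a linear isomorphism, it preserves intersections of cones and the dimensions of those intersections; hence it carries the coarsest common refinement of two complete fans to the coarsest common refinement of their images. Applying this with the two fans $\C(W,c)$ and $-\C(W,c)$, the source is $\bC(W,c)$ by Proposition~\ref{biCamb alt def} and the target is the $c$-bicluster fan by definition, yielding the claimed linear isomorphism.

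There is no real obstacle beyond the routine bookkeeping just described; the substantive content is contained in the cited theorem from \cite{camb_fan} together with Proposition~\ref{biCamb alt def}. The one point that deserves a line of justification is the claim that coarsest common refinement commutes with linear isomorphism, which can be made crisp by recalling that for two complete fans in the same space the maximal cones of the coarsest common refinement are exactly the full-dimensional intersections $\sigma_1\cap\sigma_2$ with $\sigma_i$ a maximal cone of the $i$th fan, a description manifestly preserved by any linear automorphism.
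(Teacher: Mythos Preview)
Your proposal is correct and follows exactly the same approach as the paper: the paper's proof is simply the sentence preceding the theorem, namely that \cite[Theorem~9.1]{camb_fan} gives a linear isomorphism from $\C(W,c)$ to the $c$-cluster fan for bipartite $c$, and combining this with Proposition~\ref{biCamb alt def} yields the result. Your additional remarks about linear maps commuting with the antipodal map and preserving coarsest common refinements make explicit the routine bookkeeping the paper leaves implicit.
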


\begin{remark}
We emphasize that Theorem~\ref{bi cl fans} requires the hypothesis that $c$ is bipartite.
In contrast, when $W$ is of type $A_3$ and $c$ is the linear Coxeter element, the $c$-bicluster fan and the $c$-biCambrian fan don't even have the same number of regions.
\end{remark}

Because of the bijection between $c$-bisortable elements and maximal cones in $\bC(W,c)$ and the bijection between maximal cones in the $c$-bicluster fan and pairs of twin $c$-clusters, we have the following immediate consequence of Theorem~\ref{bi cl fans}.

\begin{theorem}\label{bi cl}
For all finite Coxeter groups $W$, $c$-bisortable elements for \emph{bipartite} $c$ are in bijection with pairs of twin $c$-clusters.
\end{theorem}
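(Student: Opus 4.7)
The plan is to obtain the bijection by composing three correspondences, all of which are either established earlier in the paper or follow directly from definitions. The entire nontrivial content lives in Theorem~\ref{bi cl fans}; once that is in hand, what remains is bookkeeping.

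First, by the discussion in Section~\ref{twin sec}, a $c$-bisortable element is by definition the bottom element of its $c$-biCambrian congruence class, so the $c$-bisortable elements are in natural bijection with the $c$-biCambrian congruence classes. These classes are in turn in bijection with the maximal cones of $\bC(W,c)$, via the general correspondence (recalled in Section~\ref{twin sec}) between congruence classes of the weak order and maximal cones of the corresponding coarsening of the Coxeter fan.

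Second, Theorem~\ref{bi cl fans} supplies a linear isomorphism from $\bC(W,c)$ to the $c$-bicluster fan in the bipartite case, and any such linear isomorphism of fans restricts to a bijection of maximal cones. Third, by the definition of twin $c$-clusters given in Section~\ref{clus sec}, a pair $(C_1,C_2)$ of $c$-clusters is a pair of twin $c$-clusters exactly when $\reals_{\ge 0} C_1 \cap (-\reals_{\ge 0} C_2)$ is full-dimensional, which is precisely the condition that this intersection be a maximal cone of the coarsest common refinement of the $c$-cluster fan and its antipode, i.e., a maximal cone of the $c$-bicluster fan. Thus maximal cones of the $c$-bicluster fan biject with pairs of twin $c$-clusters.

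Composing these three bijections yields the desired bijection between $c$-bisortable elements (for bipartite $c$) and pairs of twin $c$-clusters. There is no substantive obstacle beyond Theorem~\ref{bi cl fans} itself, whose proof is already in hand via \cite[Theorem~9.1]{camb_fan} and Proposition~\ref{biCamb alt def}; the restriction to bipartite $c$ is inherited from Theorem~\ref{bi cl fans}, since that is precisely the case in which the Cambrian fan is linearly isomorphic to the cluster fan.
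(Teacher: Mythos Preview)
Your proof is correct and follows essentially the same route as the paper. The paper states the result as an immediate consequence of Theorem~\ref{bi cl fans}, invoking exactly the two bijections you name: between $c$-bisortable elements and maximal cones of $\bC(W,c)$, and between maximal cones of the $c$-bicluster fan and pairs of twin $c$-clusters.
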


Combining Theorems~\ref{simple h} and~\ref{bi cl fans}, we obtain the following theorem.
\begin{theorem}\label{simple h cl}
If Conjecture~\ref{simple poly} holds for a Coxeter group $W$, then the bipartite $c$-bicluster fan is simplicial and the $h$-vector of the underlying simplicial sphere has entries $\biNar_k(W)$.
\end{theorem}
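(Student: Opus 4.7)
The plan is to deduce the theorem by directly combining Theorems~\ref{bi cl fans} and~\ref{simple h}, with the observation that a linear isomorphism of fans preserves all the relevant structure. First I would invoke Theorem~\ref{bi cl fans} to transport the hypothesis: for bipartite $c$, the $c$-bicluster fan is linearly isomorphic to $\bC(W,c)$, so if we assume Conjecture~\ref{simple poly} holds for $W$ then the biCambrian fan is simplicial and hence so is the $c$-bicluster fan. In particular, both fans have a well-defined underlying simplicial sphere, and these two spheres are combinatorially isomorphic (via the linear isomorphism restricted to the unit sphere).

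Next, I would recall that the $h$-vector of a simplicial sphere depends only on its combinatorial (face-lattice) data, so the linear isomorphism identifies the $h$-vector of the bipartite $c$-bicluster sphere with the $h$-vector of the bipartite biCambrian sphere. Applying Theorem~\ref{simple h}, the latter has entries $\biNar_k(W)$, so the same holds for the former.

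There is essentially no obstacle here, since both ingredients have already been established; the only point requiring any care is making explicit that ``linearly isomorphic'' for complete fans means the ambient linear map carries one fan to the other cone-by-cone, which is immediately enough to conclude that simpliciality and the $h$-vector are transferred. Given the brevity, I would present the argument as a single short paragraph in the paper, cross-referencing Theorems~\ref{bi cl fans} and~\ref{simple h}.
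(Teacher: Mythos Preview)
Your proposal is correct and matches the paper's approach exactly: the paper states the theorem immediately after the sentence ``Combining Theorems~\ref{simple h} and~\ref{bi cl fans}, we obtain the following theorem,'' with no further argument. Your additional remark that a linear isomorphism of complete fans transfers simpliciality and the $h$-vector is the only content needed, and it is routine.
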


\subsection{Twin noncrossing partitions}\label{nc sec}  
The absolute order on a finite Coxeter group $W$ is the prefix order (or equivalently the subword order) on $W$ relative to the generating set $T$, the set of reflections in $W$.
(By contrast, the prefix order relative to the simple reflections $S$ is the weak order, while the subword order relative to $S$ is the Bruhat order.)
We will use the symbol $\le_T$ for the absolute order.
The \newword{$c$-noncrossing partitions} in a finite Coxeter group $W$ are the elements of $W$ contained in the interval $[1,c]_T$ in the absolute order on $W$.
For details on the absolute order and noncrossing partitions, see for example \cite[Chapter~2]{Armstrong}.
For our purposes, the key fact is a theorem of Brady and Watt.

Let $W$ be a finite Coxeter group of rank $n$ represented as a reflection group in $\reals^n$ and let $T$ be the set of reflections of $W$.
For each reflection $t\in T$, let $\beta_t$ be the corresponding positive root.
Given $w\in[1,c]_T$, define a cone 
\[F_c(w)=\set{\x\in\reals^n:\x\cdot\beta_t\le0\,\,\forall\,t\le_T w,\,\,\x\cdot\beta_t\geq0\,\,\forall\,t\le_T cw^{-1}}.\]
The following theorem combines \cite[Theorem~1.1]{BWassoc} with  \cite[Theorem~5.5]{BWassoc}.
\begin{theorem}\label{BWthm}
For $c$ bipartite, the map $F_c$ is a bijection from $[1,c]_T$ to the set of maximal cones in the $c$-Cambrian fan.
\end{theorem}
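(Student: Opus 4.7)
The plan is to establish three facts: each $F_c(w)$ is a full-dimensional simplicial cone, the collection $\{F_c(w):w\in[1,c]_T\}$ together with their faces forms a complete fan in $\reals^n$, and this fan coincides with $\C(W,c)$. I would begin with the first fact by using the standard property that any reduced $T$-word for $w\in[1,c]_T$ has length equal to the absolute length $\ell_T(w)$, and that the reflections appearing in such a word have linearly independent positive roots. Since $\ell_T(w)+\ell_T(cw^{-1})=\ell_T(c)=n$, the positive roots arising from reflections below $w$ together with those below $cw^{-1}$ span $\reals^n$, so $F_c(w)$ is a full-dimensional simplicial cone bounded by reflecting hyperplanes, and its interior lies inside a single chamber of $\F(W)$.

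For the second fact, I would show disjointness of interiors by arguing that the sign pattern of $\x\cdot\beta_t$ across $t\in T$ determines the pair $(w,cw^{-1})$ uniquely among factorizations of $c$ in $[1,c]_T$, and then match covering relations in $[1,c]_T$ to facet adjacencies among the cones, giving a candidate simplicial fan structure. A counting argument using $|[1,c]_T|=\Cat(W)$, which equals the number of maximal cones in $\C(W,c)$, would close the argument that the union is all of $\reals^n$. For the third fact, I would invoke \cite[Theorem~9.1]{camb_fan}, which says that for bipartite $c$ the $c$-Cambrian fan is linearly isomorphic to the $c$-cluster fan. The cone $F_c(w)$ is specified by a choice, for each facet reflection, of a sign that corresponds to assigning either a positive root (from the factor $w$) or a negative simple root (from $cw^{-1}$); this produces an almost positive root, and the collection of $n$ such roots should form a $c$-cluster whose nonnegative linear span is exactly $F_c(w)$. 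Matching this identification ray-by-ray against the cluster fan completes the argument.

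The main obstacle will be the second fact. The definition of $F_c(w)$ imposes inequalities from every reflection below $w$ or below $cw^{-1}$, not merely the $n$ facet ones, so one must verify that the remaining inequalities are automatically satisfied on $F_c(w)$ and that distinct $F_c(w)$ and $F_c(w')$ meet only along common faces. This requires a nontrivial analysis of how the root subsystems generated by the factorizations $w\cdot(cw^{-1})=c$ interact with the full reflection arrangement, and is essentially the content of \cite[Theorem~5.5]{BWassoc}.
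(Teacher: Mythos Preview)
The paper does not prove this theorem: it is stated as a quotation, combining \cite[Theorem~1.1]{BWassoc} and \cite[Theorem~5.5]{BWassoc}, with only a remark afterward that the extra inequalities in the paper's definition of $F_c(w)$ are redundant compared to Brady--Watt's original definition. So there is nothing to compare your proposal against except the cited source.

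Your outline is a plausible reconstruction of what such a proof must contain, and you correctly identify that the real work---showing the cones fit together into a complete fan with the right facet adjacencies---is essentially \cite[Theorem~5.5]{BWassoc}. Two comments on the sketch itself. First, for fact~1 you need not only that the roots for reflections below $w$ and below $cw^{-1}$ together span $\reals^n$, but that the two subspaces are complementary (so the cone is genuinely simplicial with $n$ facets); this is Carter's lemma on orthogonal decompositions of moved spaces, used throughout \cite{BWassoc}. Second, your route to fact~3 via \cite[Theorem~9.1]{camb_fan} and an identification of $F_c(w)$ with a cluster cone is somewhat circular: the ray-by-ray matching you describe between factorizations $w\cdot(cw^{-1})=c$ and $c$-clusters is itself a nontrivial bijection of Coxeter--Catalan objects, and establishing it geometrically at the level of cones is essentially the content of the Brady--Watt result you are trying to prove. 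Their argument goes the other way, building the fan directly from $[1,c]_T$ and then identifying it with the associahedron/Cambrian fan, rather than importing the cluster fan first.
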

The astute reader will notice a difference between our definition of $F_c$ and the definition appearing in \cite[Section~1]{BWassoc}.
The set of reflections $t$ such that $t\le_Tw$ is the intersection of $T$ with some (not necessarily standard) parabolic subgroup of $W$.
The definition in \cite{BWassoc} imposes inequalities $\x\cdot\beta_t\le0$ only for those $\beta_t$ that are simple roots for that parabolic subgroup.
Our definition imposes additional inequalities, all of which are implied by the inequalities for the simple roots.
We similarly add additional redundant inequalities of the form $x\cdot\beta_t\geq0$.

Theorem~\ref{BWthm} suggests a definition of twin noncrossing partitions.
In fact, given Proposition~\ref{biCamb alt def}, two natural definitions suggest themselves.
Given $u,v\in[1,c]_T$, we call $(u,v)$ a pair of \newword{twin $c$-noncrossing partitions} if $F_c(u)\cap(-F_c(v))$ is full-dimensional.
Similarly, given $u\in[1,c]_T$ and $v\in[1,c^{-1}]_T$, we call $(u,v)$ a pair of \newword{twin $(c,c^{-1})$-noncrossing partitions} if $F_c(u)\cap F_{c^{-1}}(v)$ is full-dimensional.
Theorem~\ref{BWthm} now immediately implies the following theorem.
\begin{theorem}\label{bi nc}
For all $W$ and bipartite $c$, the $c$-bisortable elements are in bijection with pairs of twin $c$-noncrossing partitions and with pairs of twin $(c,c^{-1})$-noncrossing partitions.
\end{theorem}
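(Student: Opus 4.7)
The plan is to assemble Theorem~\ref{BWthm} with the two characterizations of $\bC(W,c)$ (its original definition and Proposition~\ref{biCamb alt def}) and with the bijection between $c$-bisortable elements and maximal cones of the biCambrian fan that was established in Section~\ref{twin sec}.

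First I would note that when $c$ is bipartite (corresponding to a bipartition $S=S_+\cup S_-$), the inverse $c^{-1}$ is also bipartite (it corresponds to the same bipartition with the roles of $S_+$ and $S_-$ swapped). Hence Theorem~\ref{BWthm} applies equally to give a bijection $F_{c^{-1}}\colon [1,c^{-1}]_T\to$ maximal cones of $\C(W,c^{-1})$. Recall from Section~\ref{twin sec} that the $c$-biCambrian congruence is the meet $\Theta_c\wedge\Theta_{c^{-1}}$, and its classes (equivalently, the $c$-bisortable elements, which are the bottom elements of these classes) are in bijection with the maximal cones of $\bC(W,c)$. Since $\bC(W,c)$ is by definition the coarsest common refinement of $\C(W,c)$ and $\C(W,c^{-1})$, its maximal cones are precisely the full-dimensional intersections $K_1\cap K_2$ for $K_1$ a maximal cone of $\C(W,c)$ and $K_2$ a maximal cone of $\C(W,c^{-1})$.

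Applying $F_c$ and $F_{c^{-1}}$ to translate this correspondence, the maximal cones of $\bC(W,c)$ are in bijection with pairs $(u,v)\in[1,c]_T\times[1,c^{-1}]_T$ such that $F_c(u)\cap F_{c^{-1}}(v)$ is full-dimensional. By definition, these are exactly the pairs of twin $(c,c^{-1})$-noncrossing partitions. Composing with the bijection between maximal cones of $\bC(W,c)$ and $c$-bisortable elements yields the second assertion.

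For the first assertion, I would invoke Proposition~\ref{biCamb alt def}, which identifies $\bC(W,c)$ with the coarsest common refinement of $\C(W,c)$ and $-\C(W,c)$. Thus the maximal cones of $\bC(W,c)$ are also the full-dimensional intersections $K_1\cap(-K_2)$ with $K_1,K_2$ maximal cones of $\C(W,c)$. Translating via $F_c$ on each factor, these are in bijection with pairs $(u,v)\in[1,c]_T\times[1,c]_T$ such that $F_c(u)\cap(-F_c(v))$ is full-dimensional, which is exactly the definition of twin $c$-noncrossing partitions. Again composing with the bijection to $c$-bisortable elements completes the proof.

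There is no real obstacle: the argument is a direct assembly of already-established bijections, and the only mild point to verify is that bipartiteness is preserved under inversion so that Theorem~\ref{BWthm} is available for both $c$ and $c^{-1}$. Everything else is bookkeeping about when coarsest common refinements produce full-dimensional intersections of maximal cones.
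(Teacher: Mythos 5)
Your proposal is correct and follows the same route as the paper, which simply observes that Theorem~\ref{BWthm}, combined with the definition of $\bC(W,c)$ (and Proposition~\ref{biCamb alt def}) and the bijection between $c$-bisortable elements and maximal cones of $\bC(W,c)$, immediately yields the result. You have merely spelled out the bookkeeping (including the observation that $c^{-1}$ is also bipartite) that the paper leaves implicit.
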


\section{Bipartite $c$-bisortable elements and alternating arc diagrams}\label{type A sec}
In this section, we show how bipartite $c$-bisortable elements of type A are in bijection with certain objects called alternating arc diagrams.
We then prove the type-A enumeration of bipartite $c$-bisortable elements in Theorem~\ref{main thm} by counting alternating arc diagrams and prove the type-B enumeration by counting centrally symmetric alternating arc diagrams.

\subsection{Pattern avoidance}\label{pat av sec} 
The Coxeter group of type $A_n$ is the symmetric group $S_{n+1}$. 
We will write permutations $x$ in $S_{n+1}$ in their one-line notations $x_1\cdots x_{n+1}$.
In the weak order on permutations in $S_{n+1}$, there is a cover $x_1\cdots x_{n+1}\covered y_1\cdots y_{n+1}$ if and only if there exists $i$ such that $y_i=x_{i+1}>x_i=y_{i+1}$ and $y_j=x_j$ for $j\not\in\set{i,i+1}$.
We say that $x$ is covered by $y$ via a swap in positions $i$ and $i+1$.

The Cambrian congruences on $S_{n+1}$ are described in detail in \cite{cambrian}.
We quote part of the description here.
The simple generator $s_i$ for $A_n$ is the transposition $(i\,\,\,i\!+\!1)$, for $i = 1, 2, \ldots n$.
Each Coxeter element $c$ can be encoded by a coloring of the elements $2,\ldots,n$ that we call a \newword{barring}.  
Each element $i$ is either \newword{overbarred} and marked $\overline{i}$ if $s_i$ occurs before $s_{i-1}$ in every reduced word for $c$, or \newword{underbarred} and marked~$\underline{i}$ if $s_i$ occurs after $s_{i-1}$ in every reduced word for $c$.
Passing from $c$ to $c^{-1}$ means swapping overbarring with underbarring.  

We say $x$ is obtained from $y$ by a \newword{$\overline231\to\overline213$ move} if $x$ is covered by $y$ via a swap in positions $i$ and $i+1$, for some $i$, and if there exists an \emph{overbarred} element $x_j$ with $j<i$ and $x_i<x_j<x_{i+1}$.  
Similarly, $x$ is obtained from $y$ by a \newword{$31\underline2\to13\underline2$ move} if $x$ is covered by $y$ via a swap in positions $i$ and $i+1$, for some $i$, and if there exists an \emph{underbarred} element $x_j$ with $i+1<j$ and $x_i<x_j<x_{i+1}$.
Combining \cite[Proposition~5.3]{cambrian} and \cite[Theorem~6.2]{cambrian}, we obtain the following proposition: 
\begin{prop}\label{same class}
Suppose $x$ and $y$ are permutations in $S_{n+1}$ with $x\covered y$ in the weak order, and assume that the numbers $2,\ldots,n$ have been barred according to~$c$.  
Then $x$ and $y$ are in the same $c$-Cambrian congruence class if and only if $x$ is obtained from $y$ by a $\overline231\to\overline213$ move or a $31\underline2\to13\underline2$ move.
\end{prop}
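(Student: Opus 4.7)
The plan is to deduce the equivalence by matching the definitions of the two moves against the characterization of $c$-Cambrian congruence classes given in \cite{cambrian}. The key general fact is that a lattice congruence on a finite lattice is determined by which cover relations it collapses: for $x \covered y$, we have $x \equiv y$ modulo $\Theta_c$ if and only if this particular cover is collapsed by $\Theta_c$. Hence the question reduces to deciding, for a cover $x \covered y$ realized by swapping adjacent positions $i$ and $i+1$, exactly when the $c$-Cambrian congruence collapses that cover.

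The first ingredient is \cite[Proposition~5.3]{cambrian}, which, for an arbitrary finite Coxeter group $W$, characterizes the cover relations collapsed by $\Theta_c$ in terms of the reflection realizing the cover together with the descents of $y$ and the orientation data recorded by $c$. The second ingredient is \cite[Theorem~6.2]{cambrian}, which specializes this abstract criterion to $W = S_{n+1}$ using the barring encoding of $c$ recalled in the paragraph preceding the statement. The content of that theorem is precisely that the cover $x \covered y$ via a swap in positions $i,i+1$ is collapsed if and only if there exists an index $j \notin \{i,i+1\}$ with $x_i < x_j < x_{i+1}$, and either $j < i$ with $x_j$ overbarred, or $j > i+1$ with $x_j$ underbarred. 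Reading this condition side-by-side with the definitions of the $\overline231 \to \overline213$ and $31\underline2 \to 13\underline2$ moves, the two are seen to be verbatim the same.

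The only real obstacle is conventional bookkeeping: one must verify that the right/left weak order conventions of \cite{cambrian} agree with those used here, that the identification of the simple generator $s_i$ with the transposition $(i\,\,i{+}1)$ is consistent, and that the correspondence ``$s_i$ occurs before $s_{i-1}$ in every reduced word for $c$'' $\leftrightarrow$ overbarring is the same convention used to state Theorem~6.2 of \cite{cambrian}. These are routine but genuine checks; once they are carried out, Proposition~\ref{same class} is immediate by combining the two cited results.
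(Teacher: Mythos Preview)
Your proposal is correct and follows exactly the same approach as the paper: the paper does not give a separate proof of this proposition but simply states that it is obtained by combining \cite[Proposition~5.3]{cambrian} and \cite[Theorem~6.2]{cambrian}, which are precisely the two ingredients you invoke. Your write-up merely unpacks in more detail how those two cited results fit together.
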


As an immediate corollary, we see that a permutation $y$ is the bottom element of its $c$-Cambrian congruence class (i.e.\ is $c$-sortable) if and only if none of the permutations covered by $y$ are obtained from $y$ by a $\overline231\to\overline213$ move or a $31\underline2\to13\underline2$ move.
In other words, there is no subsequence $\overline bca$ of $y$ with $a<b<c$, with $c$ immediately preceding $a$, and with $b$ overbarred and no subsequence $ca\underline b$ of $y$ with $a<b<c$, with $c$ immediately preceding $a$, and with $b$ underbarred.
In this case, we say that $y$ \newword{avoids} $\overline231$ and $31\underline2$.

We can similarly describe bottom elements of $c$-biCambrian congruence classes (the $c$-bisortable elements), keeping in mind that passing from $c$ to $c^{-1}$ means swapping overbarring with underbarring:
An element $y$ is the bottom element of its $c$-biCambrian congruence class if and only if none of the permutations covered by $y$ are obtained from $y$ by a $\overline231\to\overline213$ or $31\underline2\to13\underline2$ move that is \emph{also} a $\underline231\to\underline213$ or $31\overline2\to13\overline2$ move.
(Compare \cite[Remark~34]{ChaPil}.)  
For $c$ linear, the $c$-bisortable permutations are the twisted Baxter permutations of \cite[Section~4.2]{rectangle}.
In general, $c$-bisortable permutations may be described by a complicated pattern-avoidance condition that we will only describe, in Propositions~\ref{avoidance} and~\ref{avoidance bivinc}, for the case of bipartite $c$, where it becomes much simpler.

\subsection{Noncrossing arc diagrams}\label{arc sec}  
We now review the notion of \newword{noncrossing arc diagrams} from~\cite{arcs}.
Beginning with $n+1$ distinct points on a vertical line, numbered $1,\ldots,n+1$ from bottom to top, we draw some (or no) curves called \newword{arcs} connecting the points.
Each arc moves monotone upwards from one of the points to another, passing either to the left or to the right of each point in between.
Furthermore no two arcs may intersect in their interiors, no two arcs share the same upper endpoint, and no two arcs may share the same lower endpoint.
We consider arc diagrams only up to their combinatorics, i.e.\ which pairs of points are joined by an arc and which points are left and right of each arc.

Given a permutation $x_1\cdots x_{n+1}$ in $S_{n+1}$, we define a noncrossing arc diagram $\delta(x_1\cdots x_{n+1})$.
Each descent $x_i>x_{i+1}$ becomes an arc $\alpha$ in $\delta(x_1\cdots x_{n+1})$ with lower endpoint $x_{i+1}$ and upper endpoint $x_i$.
For each integer $j$ with $x_{i+1}<j<x_i$ that occurs to the \emph{left} of $x_i$ in $x_1\cdots x_{n+1}$, the point $j$ is \emph{left} of the arc $\alpha$.
For each integer $j$ with $x_{i+1}<j<x_i$ that occurs to the \emph{right} of $x_{i+1}$ in $x_1\cdots x_{n+1}$, the point $j$ is \emph{right} of the arc $\alpha$.
It was shown in \cite[Theorem~3.1]{arcs} that $\delta$ is a bijection from permutations to noncrossing arc diagrams.  
More specifically, for each $k$, the map $\delta$ restricts to a bijection from permutations with $k$ descents to noncrossing arc diagrams with $k$ arcs.

A \newword{$c$-sortable arc} is an arc that belongs to $\delta(v)$ for some $c$-sortable permutation~$v$.
The following characterization of $c$-sortable arcs in terms of the barring associated to $c$ is immediate from the pattern-avoidance description above.  
(Compare \cite[Example~4.9]{arcs}.)
\begin{prop}\label{csort arc}
For $W=A_n$ and any $c$, the $c$-sortable arcs are the arcs that do not pass to the left of any underbarred element of $\set{2,\ldots,n}$ and do not pass to the right of any overbarred element of $\set{2,\ldots,n}$.
\end{prop}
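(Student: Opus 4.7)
The plan is to apply the pattern-avoidance description of $c$-sortable permutations established immediately before the proposition: $v$ is $c$-sortable if and only if, with the elements $2,\ldots,n$ barred according to $c$, the one-line notation of $v$ contains no subsequence $\overline{b}ca$ with $c$ immediately preceding $a$ and no subsequence $ca\underline{b}$ with $c$ immediately preceding $a$ (always requiring $a<b<c$). Fix an arc $\alpha$ with lower endpoint $a$ and upper endpoint $c$, and let $L$ and $R$ be the sets of integers strictly between $a$ and $c$ that lie to the left and right of $\alpha$ respectively.

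For the direction that $c$-sortable arcs must satisfy the stated conditions, suppose $\alpha\in\delta(v)$ for some $c$-sortable $v$. By the definition of $\delta$ recalled at the start of Section~\ref{arc sec}, the arc $\alpha$ records a descent $v_i=c>a=v_{i+1}$ of $v$, with $L$ occupying positions less than $i$ and $R$ occupying positions greater than $i+1$. An overbarred $b\in L$ would supply a forbidden $\overline{b}ca$ subsequence, and an underbarred $b\in R$ would supply a forbidden $ca\underline{b}$ subsequence. So $\alpha$ passes to the right of no overbarred element and to the left of no underbarred element.

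For the converse, I plan to exhibit a single $c$-sortable permutation $v_\alpha$ with $\alpha\in\delta(v_\alpha)$. Define $v_\alpha$ to be the permutation whose one-line notation lists, in increasing order, the integers below $a$, then the elements of $L$, then $c$, then $a$, then the elements of $R$, then the integers above $c$. Direct inspection shows that $v_\alpha$ has the single descent $c>a$, and the recipe defining $\delta$ then forces $\delta(v_\alpha)=\{\alpha\}$. Because $c>a$ is the only descent, any $\overline{b}ca$ subsequence of $v_\alpha$ must take $(c,a)$ in the last two positions and some overbarred element of $L$ as its $\overline{b}$, and symmetrically for $ca\underline{b}$ with $R$ in place of $L$. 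Under the hypothesis on $\alpha$, neither occurs, so $v_\alpha$ is $c$-sortable and $\alpha$ is a $c$-sortable arc.

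The only step requiring genuine care is the construction of $v_\alpha$ and the verification that $\delta(v_\alpha)=\{\alpha\}$, but both are mechanical consequences of the definition of $\delta$; I do not anticipate this being a substantive obstacle.
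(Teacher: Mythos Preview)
Your proof is correct and follows exactly the approach the paper intends: the paper states this proposition as ``immediate from the pattern-avoidance description above'' and gives no further argument, so you have simply written out the verification that the paper omits. Your explicit construction of $v_\alpha$ (the join-irreducible permutation whose unique arc is $\alpha$) is the natural way to handle the converse direction; one minor wording issue is that ``$(c,a)$ in the last two positions'' should read ``$(c,a)$ at the unique descent of $v_\alpha$,'' but the intended meaning is clear and the argument is sound.
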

In particular, since $c$ and $c^{-1}$ correspond to opposite barrings, the only arcs that are both $c$ and $c^{-1}$-sortable are the arcs that connect adjacent endpoints $i$ and $i+1$.
(This is a restatement of the type-A case of Remark~\ref{common walls} in terms of noncrossing arc diagrams.)

Combining the above descriptions of $c$-sortable and $c$-bisortable elements in terms of overbarred and underbarred elements, we obtain the following proposition.

\begin{proposition}\label{c-bisort arcs} 
For $W=A_n$ and any $c$, the map $\delta$ restricts to a bijection from $c$-bisortable permutations with $k$ descents to noncrossing arc diagrams on $n+1$ vertices with $k$ arcs, each of which is either $c$ or $c^{-1}$-sortable.
\end{proposition}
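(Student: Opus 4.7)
The plan is to reduce this proposition to a direct translation between the pattern-avoidance description of $c$-bisortable permutations recalled in Section~\ref{pat av sec} and the arc-sortability criterion of Proposition~\ref{csort arc}. By \cite[Theorem~3.1]{arcs}, the map $\delta$ is already a bijection from $S_{n+1}$ to all noncrossing arc diagrams on $n+1$ vertices, and it sends $k$-descent permutations to $k$-arc diagrams, so it suffices to check that $y$ is $c$-bisortable if and only if each arc of $\delta(y)$ is $c$-sortable or $c^{-1}$-sortable.

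The first step is to build a dictionary between forbidden-pattern witnesses at a descent $y_i > y_{i+1}$ of $y$ and points lying on one side of the corresponding arc $\alpha$ of $\delta(y)$. Directly from the definition of $\delta$, an element $b$ with $y_{i+1} < b < y_i$ lies to the left of $\alpha$ precisely when $b$ appears to the left of $y_i$ in the one-line notation of $y$, and lies to the right of $\alpha$ precisely when $b$ appears to the right of $y_{i+1}$. Consequently, a $\overline{2}31 \to \overline{2}13$ witness for the cover produced by this descent is exactly an overbarred element to the left of $\alpha$, and a $31\underline{2} \to 13\underline{2}$ witness is exactly an underbarred element to the right of $\alpha$. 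Since passing from $c$ to $c^{-1}$ swaps overbarring and underbarring, the analogous statement for $\underline{2}31 \to \underline{2}13$ and $31\overline{2} \to 13\overline{2}$ witnesses then follows automatically.

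Combining this dictionary with Proposition~\ref{csort arc} shows that the descent admits a $c$-Cambrian move if and only if $\alpha$ fails to be $c$-sortable, and likewise for $c^{-1}$. The $c$-bisortability condition, recalled in Section~\ref{pat av sec} as the requirement that no cover below $y$ be simultaneously a $c$-Cambrian move and a $c^{-1}$-Cambrian move, therefore translates directly into the condition that every arc of $\delta(y)$ is $c$-sortable or $c^{-1}$-sortable, completing the argument. The main obstacle is little more than bookkeeping: one must be careful about the direction of the equivalence between ``the arc passes to the left of a point'' and ``the point lies to the right of the arc,'' and about matching the side on which each kind of barred witness must occur, but there is no substantial mathematical difficulty beyond what is already packaged in Proposition~\ref{csort arc} and the bijectivity of $\delta$.
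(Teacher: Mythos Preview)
Your proof is correct and follows essentially the same approach as the paper's: both arguments translate, via Proposition~\ref{csort arc} and the definition of $\delta$, between ``the descent at position $i$ is a $c$-Cambrian (resp.\ $c^{-1}$-Cambrian) move'' and ``the corresponding arc fails to be $c$-sortable (resp.\ $c^{-1}$-sortable),'' then take the logical conjunction/negation. The paper argues one contrapositive direction explicitly and says the converse is easily reversed, while you set up the biconditional dictionary first and apply it symmetrically, but the content is the same.
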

\begin{proof}
Suppose $x=x_1\cdots x_n$ is a permutation such that $\delta(x)$ has an arc that is neither $c$-sortable nor $c^{-1}$-sortable.
This arc has upper endpoint $x_i$ and lower endpoint $x_{i+1}$ for some $i$ and it fails the conclusion of Proposition~\ref{csort arc} for $c$ and for $c^{-1}$.
That is, it either passes left of an underbarred element or right of an overbarred element \emph{and} it either passes left of an overbarred element or right of an underbarred element.
Thus, switching $x_i$ with $x_{i+1}$ is both a $\overline231\to\overline213$ or $31\underline2\to13\underline2$ move \emph{and} a $\underline231\to\underline213$ or $31\overline2\to13\overline2$ move.
Therefore, $x$ is not $c$-bisortable.
The argument is easily reversed to prove the converse.
\end{proof}
Alternately, Proposition~\ref{c-bisort arcs} follows from the description of the $c$-biCambrian congruence as the meet of the $c$-Cambrian and $c^{-1}$-Cambrian congruences.

\subsection{Alternating arc diagrams}\label{alt sec}
We now consider the case where $c$ is bipartite.
Let $c_{+}$ be the product of the simple generators $s_i$ where $i$ is even, and $c_{-}$ be the product of the simple generators $s_i$ where $i$ is odd.
The bipartite Coxeter elements in $A_n$ are $c_+c_-$ and its inverse $c_-c_+$.
The barring of the numbers $2,\ldots, n$ associated to $c_+c_-$ has all even numbers overbarred and all odd numbers underbarred.
A \newword{right-even alternating arc} is an arc that passes to the right of even vertices and to the left of odd vertices.
A \newword{left-even alternating arc} is an arc that passes to the left of even vertices and to the right of odd vertices.
A \newword{right-even alternating arc diagram} is a noncrossing arc diagram all of whose arcs are right-even alternating, and \newword{left-even alternating arc diagrams} are defined analogously.
The following proposition is an immediate consequence of Proposition~\ref{csort arc}.

\begin{proposition}\label{c-arcs}  
Suppose $W=A_n$ and $c$ is the bipartite Coxeter element $c_+c_-$.
\begin{enumerate}
\item The map $\delta$ restricts to a bijection from $c$-sortable permutations to right-even alternating arc diagrams.
\item The map $\delta$ restricts to a bijection from $c^{-1}$-sortable permutations to left-even alternating arc diagrams.
\end{enumerate}
In each case, $\delta$ restricts further to send permutations with $k$ descents bijectively to arc diagrams with $k$ arcs.
\end{proposition}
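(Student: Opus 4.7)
The proof plan is a direct translation of Proposition~\ref{csort arc} to the bipartite setting. The preliminary fact I will invoke is that a permutation $v$ is $c$-sortable if and only if every arc of $\delta(v)$ is itself a $c$-sortable arc. In the pattern-avoidance language at the end of Section~\ref{pat av sec}, each forbidden pattern $\overline{2}31$ or $31\underline{2}$ in $v$ is encoded, under $\delta$, as an arc of $\delta(v)$ passing on the forbidden side of an overbarred or underbarred vertex, and the converse is equally direct; I used exactly this localization implicitly in the proof of Proposition~\ref{c-bisort arcs}.

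For part (1), I would specialize to $c = c_+c_-$. The paper records that the associated barring marks every even integer in $\{2,\ldots,n\}$ as overbarred and every odd integer as underbarred. Substituting this into Proposition~\ref{csort arc}, the $c$-sortable arcs are exactly the arcs that do not pass to the left of any odd vertex and do not pass to the right of any even vertex, which by definition is the class of right-even alternating arcs. Combined with the localization above, $\delta$ therefore restricts to a bijection from $c$-sortable permutations onto noncrossing arc diagrams all of whose arcs are right-even alternating, i.e., onto right-even alternating arc diagrams.

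Part (2) follows from the same argument applied to $c^{-1} = c_-c_+$: passing from $c$ to $c^{-1}$ swaps overbarring and underbarring, so that odd integers become overbarred and even integers become underbarred, and Proposition~\ref{csort arc} now carves out precisely the left-even alternating arcs. The refinement by descent number in both parts comes for free from the general property of $\delta$ recalled in Section~\ref{arc sec}: a permutation with $k$ descents maps to an arc diagram with exactly $k$ arcs, and this property is preserved under restriction. The only step with any genuine content is the localization of $c$-sortability to individual arcs, and that content was already packaged into the pattern-avoidance discussion preceding Proposition~\ref{c-bisort arcs}; the remainder is bookkeeping about which parity is overbarred for which bipartite Coxeter element.
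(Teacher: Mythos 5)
Your proposal is correct and is exactly the argument the paper intends: the paper gives no separate proof, declaring the proposition an immediate consequence of Proposition~\ref{csort arc}, and your two ingredients---localizing $c$-sortability to the individual arcs of $\delta(v)$ via the vincular patterns $\overline231$ and $31\underline2$, and substituting the bipartite barring (evens overbarred, odds underbarred) into Proposition~\ref{csort arc}---are precisely that reduction. One caution on the final identification: the arc class your substitution produces (arcs passing to the \emph{left} of even vertices and to the \emph{right} of odd vertices, since a sortable arc may not pass to the right of an overbarred vertex) is, under the paper's stated definitions, the class of \emph{left}-even alternating arcs rather than right-even; this left/right mismatch already exists between the paper's definitions and the labels in parts (1) and (2) of the proposition (and is harmless downstream, where only the union of the two classes matters), so it reflects a convention slip in the source rather than a gap in your reasoning, but the step should not be waved through ``by definition'' without checking which convention is in force.
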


An \newword{alternating arc} is an arc that is either right-even alternating or left-even alternating or both.
We call a noncrossing arc diagram consisting of alternating arcs an \newword{alternating arc diagram}. 
Figure~\ref{some} shows several alternating noncrossing arc diagrams.
From left to right, they are $\delta(5371624)$, $\delta(4631275)$, and $\delta(4275136)$.  
\begin{figure}
\includegraphics{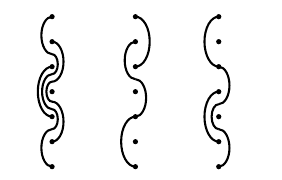}
\caption{Some alternating noncrossing arc diagrams}
\label{some}
\end{figure}
The following proposition is the bipartite case of Proposition~\ref{c-bisort arcs}.

\begin{prop}\label{avoid alt}
For $W=A_n$ and $c$ bipartite, the map $\delta$ restricts to a bijection from $c$-bisortable permutations with $k$ descents to alternating arc diagrams on $n+1$ points with $k$ arcs.
\end{prop}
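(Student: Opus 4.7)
The plan is to deduce this proposition as a direct corollary of Proposition \ref{c-bisort arcs}, specialized to bipartite $c$. Proposition \ref{c-bisort arcs} already establishes, for any Coxeter element $c$, that $\delta$ is a bijection from $c$-bisortable permutations with $k$ descents to noncrossing arc diagrams with $k$ arcs in which every arc is either $c$-sortable or $c^{-1}$-sortable. It therefore suffices to check that, when $c$ is bipartite, the set of arcs that are $c$-sortable or $c^{-1}$-sortable coincides with the set of alternating arcs defined in Section \ref{alt sec}.

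For this, I would first record, as in the text preceding Proposition \ref{c-arcs}, that the barring associated to $c = c_+c_-$ overbars every even element of $\{2,\ldots,n\}$ and underbars every odd element, while the barring associated to $c^{-1} = c_-c_+$ reverses these roles. Applying Proposition \ref{csort arc} separately to $c$ and to $c^{-1}$---in effect reading Proposition \ref{c-arcs} at the level of individual arcs rather than entire diagrams---identifies the $c$-sortable arcs with one class of alternating arcs (right-even) and the $c^{-1}$-sortable arcs with the other (left-even).

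Since, by definition, an arc is alternating if and only if it is right-even alternating or left-even alternating (arcs between adjacent endpoints satisfying both conditions vacuously), these two classes together cover exactly the alternating arcs. Substituting this identification into Proposition \ref{c-bisort arcs} yields the asserted descent-preserving bijection. I anticipate no real obstacle here: the entire argument is a bookkeeping match between the alternating conditions of Section \ref{alt sec} and the sortability conditions of Proposition \ref{csort arc}, both of which have already been fully developed in the paper.
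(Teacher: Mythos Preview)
Your proposal is correct and matches the paper's approach exactly: the paper itself states that Proposition~\ref{avoid alt} ``is the bipartite case of Proposition~\ref{c-bisort arcs},'' with no further proof given. Your write-up simply spells out the identification (via Proposition~\ref{csort arc} and the bipartite barring) that makes this specialization immediate.
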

Observe that an arc fails to be alternating if and only if it passes on the same side of two consecutive numbers.
Thus, we obtain the following simpler description of the pattern avoidance condition defining bipartite $c$-bisortable elements.
(Compare \cite[Remark~34]{ChaPil}.)  

\begin{prop}\label{avoidance}
If $c$ is the bipartite Coxeter element $c_+c_-$ of $A_n$, a permutation $x=x_1\cdots x_{n+1}$ is $c$-bisortable if and only if, for every descent $x_i>x_{i+1}$, there exists no $k$ with $x_{i+1}<k<k+1<x_i$ such that $k$ and $k+1$ are on the same side of the descent (i.e.\ $k$ and $k+1$ both left of $x_i$ or both right of $x_{i+1}$).
\end{prop}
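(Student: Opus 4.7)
The plan is to derive Proposition~\ref{avoidance} as an almost immediate consequence of Proposition~\ref{avoid alt} combined with the definition of the bijection $\delta$. Since Proposition~\ref{avoid alt} already identifies the bipartite $c$-bisortable permutations as those $x$ for which every arc of $\delta(x)$ is alternating, what remains is simply to translate the condition ``every arc is alternating'' into an assertion about the one-line notation of $x$.

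First, I would invoke the characterization of alternating arcs recorded just before the statement of Proposition~\ref{avoidance}: an arc fails to be alternating if and only if it passes on the same side of some two consecutive integers lying strictly between its endpoints. By Proposition~\ref{avoid alt}, then, $x$ is bipartite $c$-bisortable iff no arc of $\delta(x)$ exhibits such a forbidden pair.

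Next, I would use the definition of $\delta$ recalled in Section~\ref{arc sec} to transport this condition to $x$ itself. Each descent $x_i > x_{i+1}$ produces a unique arc $\alpha$ in $\delta(x)$ with lower endpoint $x_{i+1}$ and upper endpoint $x_i$. Any value $k$ with $x_{i+1} < k < x_i$ occupies a position of $x$ distinct from $i$ and $i+1$ (since those positions are already taken by $x_i$ and $x_{i+1}$), so $k$ either lies strictly left of $x_i$ or strictly right of $x_{i+1}$ in the one-line notation, and these two positional cases are mutually exclusive and exhaustive. By definition of $\delta$, the first case places $k$ on the left of $\alpha$ and the second places $k$ on the right. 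Hence the statement ``$k$ and $k+1$ lie on the same side of $\alpha$'' translates exactly into ``$k$ and $k+1$ are on the same side of the descent'' in the sense of the proposition. Conjoining over all descents gives the claimed equivalence.

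I expect no serious obstacle: this is a bookkeeping argument reconciling the geometric notion of side-of-an-arc with the positional notion of side-of-a-descent. The only point worth stating carefully is the mutual exclusivity above, which is immediate from the fact that positions $i$ and $i+1$ are fully occupied by the descent values, leaving no room for an intermediate $k$ to be ambiguously placed.
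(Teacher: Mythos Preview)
Your proposal is correct and follows exactly the paper's approach: the paper derives Proposition~\ref{avoidance} from Proposition~\ref{avoid alt} via the single sentence ``Observe that an arc fails to be alternating if and only if it passes on the same side of two consecutive numbers,'' and you have simply spelled out that translation from arcs to one-line notation in detail.
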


The condition in Proposition~\ref{avoidance} is that $x$ avoids subsequences $dabc$, $dacb$, $bcda$, and $cbda$ with $a<b<c<d$, with $d$ and $a$ adjacent in \emph{position}, and with $b$ and $c$ being adjacent in \emph{value}.  
This is an instance of \newword{bivincular} pattern avoidance in the sense of \cite[Section~2]{BCDK}.
We will not review the notation for bivincular patterns from \cite{BCDK}, but we restate Proposition~\ref{avoidance} in that notation as follows:

\begin{prop}\label{avoidance bivinc}
For $c$ bipartite, a permutation is $c$-bisortable if and only if it avoids the bivincular patterns $(2341,\set{3},\set{2})$, $(3241,\set{3},\set{2})$, $(4123,\set{1},\set{2})$, and $(4132,\set{1},\set{2})$.
\end{prop}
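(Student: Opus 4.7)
The plan is to verify that the condition of Proposition~\ref{avoidance} is a direct rewriting of the avoidance of the four bivincular patterns listed in Proposition~\ref{avoidance bivinc}, once the notation of \cite[Section~2]{BCDK} is unpacked. Recall that for a bivincular pattern $(\pi, X, Y)$, an occurrence of $\pi$ in a permutation is an order-isomorphic subsequence with the added requirements that, for each $i \in X$, the entries corresponding to positions $i$ and $i+1$ of $\pi$ sit in adjacent positions of the containing permutation, and for each $j \in Y$, the entries corresponding to values $j$ and $j+1$ of $\pi$ are consecutive integers.

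First I would unpack the condition of Proposition~\ref{avoidance}: a forbidden configuration consists of a descent $x_i > x_{i+1}$ together with an integer $k$ satisfying $x_{i+1} < k < k+1 < x_i$ such that $k$ and $k+1$ are either both strictly left of position $i$ or both strictly right of position $i+1$. Writing $a = x_{i+1}$, $d = x_i$, $b = k$, and $c = k+1$, every such configuration involves four entries $a < b < c < d$ with $c = b+1$ and with $d$ in position immediately preceding $a$. Splitting on which side of the descent the pair $b, c$ lies on, and on their relative positional order, produces exactly four patterns read in positional order: $bcda$, $cbda$, $dabc$, and $dacb$, which up to the standard relabelling are the classical patterns $2341$, $3241$, $4123$, and $4132$.

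Next I would identify the vincular decorations forced by the extra adjacency conditions. The requirement that $d$ and $a$ occupy adjacent positions of the containing permutation becomes the position-adjacency constraint: in $2341$ and $3241$ the letters $d$ and $a$ occupy positions $3$ and $4$ of the pattern, so $X = \{3\}$; in $4123$ and $4132$ they occupy positions $1$ and $2$, so $X = \{1\}$. The requirement that $b$ and $c = b+1$ are consecutive integers becomes the value-adjacency constraint: in each of the four patterns, the letters $b$ and $c$ are precisely the entries playing the roles of pattern values $2$ and $3$, so $Y = \{2\}$ uniformly.

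Combining the two directions, a permutation violates the condition of Proposition~\ref{avoidance} if and only if it contains one of the four bivincular patterns of Proposition~\ref{avoidance bivinc}, which establishes the equivalence. There is no real obstacle here beyond keeping the position-adjacency and value-adjacency conventions of \cite{BCDK} straight; once this bookkeeping is carried out the correspondence between forbidden configurations and decorated patterns is literal, and no further argument is required.
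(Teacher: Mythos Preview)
Your proposal is correct and is exactly the approach taken in the paper: the paragraph immediately preceding Proposition~\ref{avoidance bivinc} simply observes that the condition of Proposition~\ref{avoidance} describes avoidance of the subsequences $dabc$, $dacb$, $bcda$, $cbda$ with $d,a$ adjacent in position and $b,c$ adjacent in value, and then restates this in the bivincular notation of \cite{BCDK}. Your write-up carries out precisely this translation, with a bit more care in matching the $X$- and $Y$-sets to each of the four patterns.
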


\subsection{Counting alternating arc diagrams}\label{alt_arc_diagrams}
Let $[n]$ denote the set $\{1,2,\ldots , n\}$.
To prove the type-A enumeration of bipartite $c$-bisortable elements in Theorem~\ref{main thm}, we give a bijection $\pi$ from noncrossing alternating arc diagrams on $n+1$ vertices with $k$ arcs to pairs $(S,T)$ of subsets of $[n]$ with $|S|=|T|=k$.

Suppose that $\Sigma$ is an alternating arc diagram.
Whenever we encounter a right-even alternating arc in $\Sigma$ with endpoints $i<j$, we put $i$ into $S$ and $j-1$ into $T$; whenever we encounter a left-even alternating arc with endpoints $i<j$ we put $j-1$ into $S$ and $i$ into $T$.  
More precisely, suppose that $\Sigma$ is an alternating arc diagram with $k$ arcs.
Let $S'$ denote the set of numbers $i$ such that $i$ is bottom endpoint of a right-even alternating arc in $\Sigma$ and let $S''$ denote the set of numbers $j-1$ such that $j$ is the top endpoint of a left-even alternating arc in $\Sigma$.
Let $T'$ denote the set of numbers $j'-1$ such that $j'$ is the top endpoint of a right-even alternating arc in $\Sigma$ and let $T''$ denote the set of numbers $i'$ such that $i'$ is the bottom endpoint of a left-even alternating arc.
The map $\pi$ sends $\Sigma$ to the pair $(S'\cup S'', T'\cup T'')$.  

\begin{theorem}\label{alt arc bij}
The map $\pi$ is a bijection from the set of alternating arc diagrams on $n+1$ points to the set of pairs of subsets of $[n]$ of the same size.
For each $k$, the bijection restricts to a bijection from alternating arc diagrams with $k$ arcs to pairs of subsets of size $k$.
\end{theorem}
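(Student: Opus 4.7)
My plan is to prove Theorem~\ref{alt arc bij} by first verifying that $\pi$ is well-defined and then constructing a two-sided inverse $\rho$ by induction on $k$.

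For well-definedness, I need to show that $\pi(\Sigma) = (S,T)$ has $|S| = |T| = k$ for any alternating arc diagram $\Sigma$ with $k$ arcs. Since each arc contributes exactly one element to each of $S$ and $T$, only distinctness of the contributions across arcs needs to be verified. Two right-even arcs with the same $S$-contribution would share a lower endpoint, forbidden in a noncrossing diagram; two left-even arcs would share an upper endpoint. The remaining possibility is a right-even arc with lower endpoint $i$ together with a left-even arc with upper endpoint $i+1$, both contributing $i$ to $S$. A direct geometric check using the monotone-upward constraint and the parity-driven side choices shows that such a pair of arcs must cross somewhere between heights $i$ and $i+1$, regardless of their other endpoints: the right-even arc exits $(0,i)$ and moves to whichever side of the vertical line the parity of $i+1$ demands, while the left-even arc approaches $(0,i+1)$ from the opposite side, so that the two arcs switch relative horizontal order between heights $i$ and $i+1$. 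The analogous argument handles $T$.

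For the inverse, I would induct on $k$ by peeling off the topmost arc of the diagram. Given $(S,T)$ with $|S|=|T|=k\ge 1$, the upper endpoint of the highest arc of any diagram mapping to $(S,T)$ must be $m+1$ where $m = \max(S \cup T)$, and the type of this arc (adjacent, right-even, or left-even) is forced by whether $m$ lies in $S \cap T$, $T\setminus S$, or $S\setminus S$, respectively. The lower endpoint $b$ of the topmost arc is then selected by a canonical rule so that the recursion on the residual pair $(S\setminus\{b\}, T\setminus\{m\})$ (with appropriately adjusted size $k-1$) produces a diagram whose arcs are all noncrossing with respect to the topmost arc. Once $\rho$ is defined in this way, $\rho\circ\pi = \mathrm{id}$ follows from the observation that $\pi$ applied to the topmost arc recovers $(b,m)$, and $\pi\circ\rho = \mathrm{id}$ follows by induction. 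The refinement to $k$-arc diagrams is automatic, since each inductive step removes exactly one arc and one element from each of $S$ and $T$.

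The main obstacle is identifying the correct rule for the lower endpoint $b$ of the topmost arc and proving its correctness. Small examples show that $b$ is not simply $\min S$ or $\max S$: e.g.\ $(S,T) = (\{1,2\},\{3,4\})$ forces $b = 1$ while $(S,T) = (\{1,2\},\{2,3\})$ forces $b = 2$, and $(\{1,2,4\},\{3,5,6\})$ forces $b = 1$ even though $\min T \le \max S$. The delicate point is that two interleaved right-even arcs may or may not cross depending on whether they share intermediate vertices and how their parity-driven side choices interact at those vertices; the alternating constraint is essential to making the correct choice of $b$ unique. A potentially cleaner alternative is to translate the claim via Proposition~\ref{avoid alt} into a statement about bipartite $c$-bisortable permutations, then use the pattern-avoidance description of Proposition~\ref{avoidance} to establish a direct bijection with pairs of subsets via the descent data of the permutation; the recursion on $(S,T)$ by maximum element corresponds to removing the largest value from the permutation's one-line notation.
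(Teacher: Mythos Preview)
Your well-definedness argument is essentially the paper's, phrased geometrically rather than via the structural Proposition~\ref{overlap comp}; the parity check you sketch is exactly why a strictly right-even arc and a strictly left-even arc cannot overlap.

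The gap is in the inverse.  You correctly observe that the highest arc must have upper endpoint $m+1$ for $m=\max(S\cup T)$ and that its type is forced by membership of $m$ in $S\cap T$, $T\setminus S$, or $S\setminus T$.  But you then postpone the choice of the lower endpoint $b$ to an unspecified ``canonical rule,'' and your own examples show this rule is not local to $m$: it depends on global parity information in $(S,T)$.  Without that rule (and the verification that the recursively produced arcs are all compatible with the peeled arc), the argument is only a program, not a proof.  The alternative route through Proposition~\ref{avoid alt} and pattern avoidance would face the same difficulty in another guise.

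The paper supplies exactly the structural ingredient you are missing.  Rather than peel arcs one at a time globally, it first decomposes both sides into blocks: the set $P(\Sigma)$ of points no arc passes on either side of, and the matching set $Q(S,T)$ of indices not ``straddled'' by any pair $(s_j,t_j)$.  Lemma~\ref{P Q lemma} shows $P(\Sigma)=Q(\pi(\Sigma))$, so the block decomposition can be read off from $(S,T)$ alone.  Within a block one shows $1\in S\triangle T$ and this forces \emph{all} arcs in the block to be right-even (if $1\in S$) or left-even (if $1\in T$).  Once all arcs have the same type, the pairing problem becomes tractable: the paper pairs the largest bottom endpoint $s_k$ with $\min\{t\in T:t\ge s_k,\ t-s_k\text{ odd}\}$ (or with $t_k$ if that set is empty), appealing to Lemmas~\ref{arc compat 1} and~\ref{arc compat 2} for compatibility, and recurses.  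This parity rule is precisely the missing ``canonical rule'' for $b$, but it only becomes transparent after the reduction to a single-type block; trying to state it without that reduction is what made your examples look erratic.
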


In preparation for the proof of Theorem~\ref{alt arc bij}, we will break each alternating diagram into smaller pieces.
Two alternating arcs with endpoints $i<j$ and $i'<j'$ \newword{overlap} if the intersection of the sets $\{i, \ldots, j-1\}$ and $\{i',\ldots, j'-1\}$ is nonempty.  
Informally, the arcs overlap if some part of one arc passes alongside of the other arc.
(If they only touch at their endpoints but don't pass alongside one another, then they do \emph{not} overlap). 
Given a collection $\E$ of arcs, we can define an ``overlap graph'' with vertices $\E$ and edges given by overlapping pairs in $\E$. 
We say that the collection $\E$ is \newword{overlapping} if this overlap graph is connected.
Each noncrossing arc diagram can be broken into overlapping components, maximal overlapping collections of arcs.
The definition of alternating arc diagrams and the definition of right-even and left-even alternating  arcs let us immediately conclude that two distinct arcs appearing in the same alternating arc diagram, one right-even alternating and one left-even alternating, cannot overlap.
We have proved the following fact.

\begin{prop}\label{overlap comp}
Each overlapping component of an alternating arc diagram fits exactly one of the following descriptions: (1) It consists of right-even alternating arcs that are not left-even alternating; (2) It consists of left-even alternating arcs that are not right-even alternating; or (3) it consists of a single arc that is right-even and left-even alternating (and thus connects two adjacent points).
\end{prop}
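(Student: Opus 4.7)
My plan is to prove the trichotomy by showing that any overlap between two arcs of incompatible types forces a crossing, contradicting the noncrossing hypothesis. The starting observation is a parity check: for any intermediate vertex $k$ of an arc, the right-even alternating condition prescribes passing on the side of $k$ opposite to what the left-even condition prescribes. Hence an arc with at least one intermediate vertex is exactly one of right-even or left-even alternating (but not both), while an arc connecting two adjacent vertices has no intermediate vertex and is vacuously both.

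The main step is a crossing lemma proved via the intermediate value theorem on horizontal positions. I realize each arc as a monotone upward curve $y \mapsto (x(y), y)$ that touches the line $\{x = 0\}$ exactly at its endpoints and lies strictly on the prescribed side of each intermediate vertex. Suppose arcs $A$ (right-even, endpoints $a_1 < a_2$) and $B$ (left-even, endpoints $b_1 < b_2$) overlap; by the symmetry between left-even and right-even I may assume $a_1 < b_1$, and the overlap condition then forces $b_1 < a_2$, so $b_1$ is intermediate for $A$. I consider $f(y) = x_A(y) - x_B(y)$ at $y = b_1$ and $y = b_1 + 1$: at $y = b_1$, $B$ is on the line and $A$ is strictly on its right-even side, so $f(b_1)$ has a definite sign determined by the parity of $b_1$; at $y = b_1 + 1$ the parities have flipped, and a short case-check (including the edge case $b_1 + 1 = a_2$, in which only $B$ is off the line) shows that $A$ and $B$ have swapped relative sides, so $f(b_1+1)$ has the opposite sign. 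The intermediate value theorem then yields a common point on $A$ and $B$, i.e., a crossing. The same argument, applied at heights $i$ and $i+1$ with $A$ a short arc from $i$ to $i+1$ (so $x_A \equiv 0$ at those heights) and $B$ any longer arc whose range contains both, shows that a short arc cannot overlap any longer arc; distinct short arcs cannot overlap at all, since that would require sharing a singleton range and hence an endpoint.

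With the crossing lemma in hand, the trichotomy is immediate: each overlapping component consists of either only right-even-not-left-even arcs (case 1), only left-even-not-right-even arcs (case 2), or a single short arc (case 3). The hardest part will be the uniform handling of the parity and endpoint subcases in the intermediate value theorem argument, but each subcase reduces to a direct sign computation.
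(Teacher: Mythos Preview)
Your argument is correct. The core idea---that a right-even alternating arc and a left-even alternating arc which overlap are forced to cross---is exactly what the paper relies on, but the paper treats this as immediate from the definitions and states it in a single sentence without proof. You have supplied a rigorous justification via the intermediate value theorem applied to the difference of horizontal positions at two consecutive integer heights, carefully handling the endpoint and short-arc edge cases. One minor point worth making explicit: your IVT argument shows a crossing for your chosen realizations, but since the sign of $x_A(y)-x_B(y)$ at each relevant integer height is determined by the combinatorial left/right data (or by the endpoint condition $x=0$), the sign change and hence the crossing occur in \emph{every} realization, which is what incompatibility requires. With that clarification, your proof is a correct and fully detailed version of what the paper leaves to the reader.
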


Proposition~\ref{overlap comp} implies that, on each overlapping component, the map $\pi$ collects all of the top endpoints of the arcs into one set, and all of the bottom endpoints into the other set.

Now we describe how to break an alternating diagram $\Sigma$ into its overlapping components.
Let $P(\Sigma)$ be the set of numbers $p\in[n+1]$ such that no arc in $\Sigma$ passes left or right of $p$.
(A point $p\in P(\Sigma)$ may still be an endpoint of one or two arcs.)
Write $P(\Sigma)=\set{p_0,\ldots,p_m}$ with $p_0<\cdots<p_m$.
In every case, $p_0=1$ and $p_m=n+1$.
For each $i$, we claim that an arc in $\Sigma$ has its lower endpoint in $\{p_{i-1}, p_{i-1}+1,\ldots, p_{i}-1\}$ if and only if it has its upper endpoint in $\{p_{i-1}+1, p_{i-1}+2,\ldots, p_i\}$.
Indeed, if an arc has a lower endpoint in $\{p_{i-1},p_{i-1}+1,\ldots, p_{i}-1\}$, then since it cannot pass on either side of $p_i$, it must end at a number in the set $\{p_{i-1}+1, p_{i-1}+2,\ldots, p_i\}$.
A similar argument proves the converse, so we have established the claim.
Let $\Sigma_i$ denote the set of arcs with lower endpoints in  $\{p_{i-1},p_{i-1}+1,\ldots, p_{i}-1\}$ (and thus with upper endpoints in  $\{p_{i-1}+1, p_{i-1}+2,\ldots, p_i\}$).  
By construction, $\Sigma_i$ is an overlapping component, and all overlapping components are $\Sigma_i$ for some $i$.
Let $(S_i,T_i)$ be the image of $\Sigma_i$ under $\pi$, so that $\pi(\Sigma) = (\bigcup_{i=1}^mS_i\,,\,\bigcup_{i=1}^mT_i)$.

We say that two arcs are \newword{compatible} if there is a noncrossing arc diagram containing both arcs.
Our next task is to understand for which pairs $(s,t)$ and $(s',t')$ there exists an overlapping pair of compatible \emph{alternating arcs}, one with endpoints $s$ and $t+1$ and one with endpoints $s'$ and $t'+1$.
Since the arcs must overlap but may not share the same bottom endpoint and may not share the same top endpoint, and taking without loss of generality $s<s'$, there are only two cases.
These cases are covered by the following two lemmas, which are easily verified.

\begin{lemma}\label{arc compat 1}
Suppose $s<s'\le t<t'$.
Then there exist two compatible alternating arcs, one with endpoints $s$ and $t+1$ and one with endpoints $s'$ and $t'+1$ if and only if $s'$ and $t$ have the same parity.
The pair of arcs can be chosen in exactly two ways, either both as right-even alternating arcs or both as left-even alternating arcs.
\end{lemma}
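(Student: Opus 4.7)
The plan is to analyze directly when two alternating arcs with the specified endpoints can coexist without crossing. First, note that the hypothesis $s<s'\le t<t'$ forces the four endpoints to be strictly interleaved as $s<s'<t+1<t'+1$, so the arcs $A$ (from $s$ to $t+1$) and $A'$ (from $s'$ to $t'+1$) have overlapping but non-nested spans. In the common interval $[s',t+1]$, the vertex $s'$ is an endpoint of $A'$ and an interior vertex of $A$, the vertex $t+1$ is an endpoint of $A$ and an interior vertex of $A'$, and the set $\{s'+1,\ldots,t\}$ is the shared interior of the two arcs. Compatibility of $A$ and $A'$ is equivalent to the existence of continuous realizations $x_A(h),x_{A'}(h)$, agreeing with the combinatorial side at each interior vertex, such that $f(h):=x_A(h)-x_{A'}(h)$ has no zero on $[s',t+1]$.

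The signs of $f$ at the boundary heights are forced by the combinatorics: $\mathrm{sgn}(f(s'))=\mathrm{sgn}(x_A(s'))$ and $\mathrm{sgn}(f(t+1))=-\mathrm{sgn}(x_{A'}(t+1))$, each determined by the type of the corresponding arc (right-even vs.\ left-even) together with the parity of the vertex. At each shared interior vertex $v\in\{s'+1,\ldots,t\}$, the sign of $f(v)$ is \emph{freely choosable} when $A$ and $A'$ have the same type, since then $x_A(v)$ and $x_{A'}(v)$ share a sign and their magnitudes can be adjusted independently at distinct heights. When $A$ and $A'$ have opposite types, $x_A(v)$ and $x_{A'}(v)$ have opposite signs, forcing $\mathrm{sgn}(f(v))=\mathrm{sgn}(x_A(v))$, which alternates with the parity of $v$.

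The lemma then reduces to three short case checks. When $A$ and $A'$ share a type, compatibility amounts to $\mathrm{sgn}(f(s'))=\mathrm{sgn}(f(t+1))$, and substituting the explicit boundary signs computed above shows this equality holds precisely when $s'$ and $t$ have the same parity; this yields the two compatible pairs named in the lemma. When $A$ and $A'$ have opposite types and the shared interior has at least two vertices, the alternating forced signs of $f$ at consecutive $v$ create a sign change and hence a zero of $f$. The two degenerate sub-cases, $s'=t$ (empty shared interior) and $s'=t-1$ (single shared interior vertex), are closed out by a direct sign comparison at the boundary heights, which in each case forces a zero of $f$. The main point that needs explicit attention is the ``freely choosable'' claim, which rests on the independence of magnitudes at distinct heights; this is routine but is the hinge that converts the necessary sign condition on $f$ into a sufficient condition for a compatible realization.
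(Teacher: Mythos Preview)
The paper does not give a proof of this lemma; it simply says the statement is ``easily verified.'' Your argument supplies the verification and is correct in substance. The key computation---that for two arcs of the \emph{same} alternating type the only forced constraints on the sign of $f$ occur at the boundary heights $s'$ and $t+1$, and that these agree precisely when $s'$ and $t$ share a parity---is right, as is the observation that for \emph{opposite} types the sign of $f$ is forced at every integer height in the overlap and alternates there.

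There is one small slip. In the opposite-type case with $s'=t-1$ (a single shared interior vertex), you write that comparing signs at the boundary heights forces a zero of $f$. In fact, for $A$ right-even and $A'$ left-even, the forced sign of $f$ at an integer height $h$ is always the one dictated by the parity of $h$; since the two boundary heights $s'$ and $t+1=s'+2$ have the same parity, their forced signs \emph{agree}, and the boundary alone forces nothing. It is the forced sign at the single interior vertex $s'+1$ (opposite parity) that creates the sign change. The quickest uniform fix is to observe that for opposite types the heights $s'$ and $s'+1$ always carry opposite forced signs---whether $s'+1$ is an interior vertex or the boundary height $t+1$---so a sign change is guaranteed and no separate sub-cases are needed.
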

\begin{lemma}\label{arc compat 2}
Suppose $s<s'< t'<t$.
Then there exist two compatible alternating arcs, one with endpoints $s$ and $t+1$ and one with endpoints $s'$ and $t'+1$ if and only if $s'$ and $t'$ have opposite parity.
The pair of arcs can be chosen in exactly two ways, either both as right-even alternating arcs or both as left-even alternating arcs.
\end{lemma}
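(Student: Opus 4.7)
The plan is to reduce the statement to a nested noncrossing analysis combined with the parity rules governing right-even and left-even alternating arcs.

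First I would record the geometric picture. Since $s<s'<t'<t$, the arc with endpoints $s'$ and $t'+1$ is strictly nested inside the arc with endpoints $s$ and $t+1$: both $s'$ and $t'+1$ lie in the open interval $(s,t+1)$. Moreover $s'<t'$ forces $s'$ and $t'+1$ to be non-adjacent, and $s<s'<t'<t$ forces $s$ and $t+1$ to differ by at least $3$, so neither arc can be simultaneously right-even and left-even alternating. Consequently each arc is alternating of exactly one type. Since the spans satisfy $\{s',\ldots,t'\}\subseteq\{s,\ldots,t\}$, the two arcs overlap in the sense of Section~\ref{alt_arc_diagrams}, so by Proposition~\ref{overlap comp} any compatible pair must share its orientation: both right-even alternating or both left-even alternating.

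Next I would invoke the noncrossing condition for nested arcs. Because arcs are monotone upward curves whose interiors must be disjoint, and because the inner endpoints $s'$ and $t'+1$ both lie strictly between the outer endpoints, the outer arc must pass on the same side of $s'$ as of $t'+1$; otherwise, within the vertical range of the inner arc the two curves would be forced to cross. Translating this into parity, if both arcs are right-even alternating then the outer arc is to the right of $s'$ exactly when $s'$ is even and to the right of $t'+1$ exactly when $t'+1$ is even. Matching sides therefore forces $s'$ and $t'+1$ to have the same parity, equivalently $s'$ and $t'$ have opposite parity. The same analysis applies to the left-even alternating case.

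Finally, for the converse and the count, I would check that whenever $s'$ and $t'$ have opposite parity, each of the two common orientations (both right-even alternating, or both left-even alternating) yields a bona fide compatible pair, since the arcs are uniquely determined by their endpoints and orientation, and the parity condition guarantees that the outer arc consistently skirts the inner one on a single side. The main obstacle is not really an obstacle but rather a careful bookkeeping step: one must articulate the nested noncrossing condition precisely, and then verify that the parity arithmetic lines up identically for both the right-even and left-even cases so that no extra compatible pair is overlooked or overcounted.
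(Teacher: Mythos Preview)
Your proposal is correct and is exactly the kind of argument the paper has in mind; the paper itself omits the proof, declaring the lemma ``easily verified.'' Your reduction via Proposition~\ref{overlap comp} to same-orientation pairs, followed by the separation argument (the outer arc must lie on the same side of both inner endpoints $s'$ and $t'+1$, which for a right-even or left-even alternating arc translates to $s'\equiv t'+1\pmod 2$), is the natural verification. The one place worth a sentence more is the sufficiency direction: once both arcs are, say, right-even alternating and the parity condition holds, they share identical left/right data at every integer in $[s',t'+1]$, so the inner arc can be drawn and the outer arc realized as a small horizontal translate of it on that interval (extended freely outside), giving a noncrossing drawing; this is straightforward but is the only step you left at the level of ``I would check.''
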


Given a pair $(S,T)$ of $k$-subsets of $[n]$, we will always write $S=\set{s_1,\ldots,s_k}$ with $s_1<\cdots<s_k$ and $T=\set{t_1,\ldots,t_k}$ with $t_1<\cdots<t_k$.
Define $Q(S,T)$ to be the set of numbers $q\in[n+1]$ such that, for all $j$ from $1$ to $k$, neither $s_j<q\le t_j$, nor $t_j<q\le s_j$.

\begin{lemma}\label{P Q lemma}
Let $\Sigma$ be an alternating arc diagram.
Then $Q(\pi(\Sigma))=P(\Sigma)$.
\end{lemma}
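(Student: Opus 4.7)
The plan is to give a common combinatorial description of the complements of $P(\Sigma)$ and $Q(\pi(\Sigma))$. Unpacking the definitions, $q\notin P(\Sigma)$ if and only if some arc of $\Sigma$ with endpoints $i<j$ satisfies $i<q\le j-1$, whereas $q\notin Q(\pi(\Sigma))$ if and only if some index $a$ satisfies $\min(s_a,t_a)<q\le\max(s_a,t_a)$. Under $\pi$, each arc with endpoints $i<j$ contributes a pair $(s,t)$ with $\{s,t\}=\{i,j-1\}$, and hence contributes the same half-open interval $(i,j-1]$ to the ``arc-wise'' union regardless of its alternating type. So both complements are unions of the same collection of half-open intervals; they differ only in how the elements of $S$ and $T$ are matched into pairs.

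Next I would reduce to a single overlapping component. Using the decomposition $\Sigma=\Sigma_1\sqcup\cdots\sqcup\Sigma_m$ given just before the lemma, the contributions $(S_i,T_i):=\pi(\Sigma_i)$ lie inside $\{p_{i-1},\ldots,p_i-1\}$, and these blocks are pairwise disjoint and linearly ordered along the number line. Sorting $S=\bigsqcup_i S_i$ and $T=\bigsqcup_i T_i$ therefore concatenates the sorts of the individual blocks, so the global index-wise pairing restricts to the index-wise pairing within each $\Sigma_i$. It thus suffices to prove the equality of arc-wise and index-wise unions inside a single overlapping component.

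Within such a component, Proposition~\ref{overlap comp} forces the arcs to be all right-even alternating, all left-even alternating, or a single arc between adjacent points (in which case both unions are empty and the component contributes nothing). In the right-even case, every arc-wise pair $(s,t)$ satisfies $s<t$, and a short pigeonhole argument shows that the index-wise pairs within the component also satisfy $s_j<t_j$: otherwise some $s_j\ge t_j$ would force at least $j$ elements of $T$ to lie at or below $s_j$ and to be paired with elements of $S$ strictly below $s_j$, contradicting that only $j-1$ elements of $S$ lie strictly below $s_j$. The left-even case is symmetric, with all pairs satisfying $s>t$.

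The main step is then the general observation that, for any list of pairs $(a_1,b_1),\ldots,(a_k,b_k)$ with $a_i<b_i$ for every $i$, the union $\bigcup_i(a_i,b_i]$ depends only on the underlying multisets $\{a_i\}$ and $\{b_i\}$, not on the pairing. This is immediate from the identity
\[\#\{i : a_i<q\le b_i\}\;=\;|\{i:a_i<q\}|-|\{i:b_i<q\}|,\]
whose right side depends only on the multisets and whose validity uses $a_i<b_i$ (so that $b_i<q$ forces $a_i<q$). Applying this identity (and its mirror image for the left-even case) inside each component shows that the arc-wise and index-wise unions of intervals coincide, and therefore $Q(\pi(\Sigma))=P(\Sigma)$.
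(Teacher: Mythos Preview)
Your proof is correct and takes a genuinely different route from the paper's. The paper treats the two inclusions separately: for $P(\Sigma)\subseteq Q(\pi(\Sigma))$ it uses a short counting observation (a point $p\in P(\Sigma)$ forces equally many elements of $S$ and of $T$ to lie below $p$, so the sorted pairs cannot straddle $p$), while for $Q(\pi(\Sigma))\subseteq P(\Sigma)$ it argues by contradiction inside one overlapping component, carefully tracking which bottom endpoints must connect to which top endpoints and reaching a pigeonhole contradiction on shared top endpoints. Your argument instead reformulates both complements as unions of half-open intervals and shows directly that such a union depends only on the multisets $\{a_i\}$ and $\{b_i\}$, not on the matching, via the identity $\#\{i:a_i<q\le b_i\}=|\{i:a_i<q\}|-|\{i:b_i<q\}|$ (valid whenever $a_i\le b_i$ for all $i$). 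The reduction to a single overlapping component and the appeal to Proposition~\ref{overlap comp} to guarantee that all pairs in a component are oriented the same way are the same in spirit as the paper's reduction, and your pigeonhole step showing the index-wise pairs inherit $s_j<t_j$ is exactly the implicit claim the paper makes when it writes ``Thus, $s_i\le t_i$'' without further comment. What your approach buys is symmetry: you prove both inclusions at once with a single clean identity, rather than two separate arguments of differing complexity. What the paper's approach buys is that its harder direction gives slightly more structural information about how arcs within a component must connect, though this information is not used elsewhere.
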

\begin{proof}
Write $(S,T)$ for $\pi(\Sigma)$.
If $p\in P(\Sigma)$, then no arc passes left or right of $p$. 
Thus there exists $k$ such that $s_j$ and $t_j$ are less than $p$ for all $j\le k$ and $s_j$ and $t_j$ are greater than or equal to $p$ for all $j>k$.
We see that $p\in Q(S,T)$.

Suppose that $q\in Q(S,T)$, and there exists some arc $\alpha$ that passes to the left or right of $q$.
The arc $\alpha$ belongs to some overlapping component of $\Sigma$, and each pair $s_i, t_i$ in the image of a different component satisfies $s_i, t_i < q$ or $s_i, t_i> q$.
Thus, we may as well assume that $\Sigma$ consists of a single overlapping component.  
Write $\pi(\Sigma) = (\set{s_1,\ldots,s_k},\set{t_1,\ldots,t_k})$ with $s_1<\cdots < s_k$ and $t_1<\cdots<t_k$.
Lemma~\ref{overlap comp} says that $\Sigma$ consists of either right-even overlapping arcs or left-even overlapping arcs.
Without loss of generality, we assume that $\Sigma$ consists of only right-even overlapping arcs, so that $\{s_1,\ldots, s_k\}$ is the set of bottom endpoints of those arcs.
Thus, $s_i \le t_i$ for each $i=1,2,\ldots, k$.
Let $s_i$ be the bottom endpoint of $\alpha$, and let $l$ be the largest number such that $s_l <q$.
We make two observations.
First, $\alpha$ must connect $s_i$ with $t_j+1$, where $j$ is strictly greater than $i$ (otherwise $s_i< q\le t_j\le t_i$), and $j$ is strictly greater than $l$ (otherwise $s_j < q \le t_j$).
Second, $t_{l+1} \ge q> t_{l}$, because $t_{l+1} \ge s_{l+1} \ge q >  t_l\ge s_l$.
We conclude that each number in the set of bottom endpoints $\{s_{l+1}, s_{l+2}, \ldots, s_k\}$ must connect with a number in the set $\{t_{l+1}+1,\ldots, t_k+1\}$.
Since $t_j +1$ is already connected to $s_i$, there is some number in the set $\{t_{l+1}+1, \ldots, t_k+1\}$ that is the top endpoint of two arcs, and that is a contradiction.
\end{proof}

We are now prepared to prove the main theorem of this section.

\begin{proof}[Proof of Theorem~\ref{alt arc bij}]
We first show that $\pi$ is well-defined.
Since each arc in $\Sigma$ contributes exactly one of its endpoints to $S'\cup S''$ and the other to $T'\cup T''$, both $S'\cup S''$ and $T'\cup T''$ have size $k$ as long as each contribution to $S'\cup S''$ is distinct and each contribution to $T'\cup T''$ is distinct.
Each contribution to $S'$ is distinct because no two arcs share the same lower endpoint, and each contribution to $S''$ is distinct because no two arcs share the same upper endpoint.
Proposition~\ref{overlap comp} implies that a right-even alternating arc with bottom endpoint $i$ and a \emph{distinct} left-even alternating arc with top endpoint $i+1$ are not compatible.
Thus the only elements of $S'\cap S''$ come from arcs that are both right-even alternating and left-even alternating, and we see that each contribution to $S'\cup S''$ is distinct.
The symmetric argument shows that each contribution to $T'\cup T''$ is distinct.
We have shown that $\pi$ is a well-defined map from alternating arc diagrams with $k$ arcs to pairs of $k$-element subsets of $[n]$.

We complete the proof by exhibiting an inverse $\eta$ to $\pi$.
Let $(S,T)$ be a pair of $k$-element subsets of $[n]$.
Write $Q(S,T)=\set{q_0,\ldots,q_m}$ with $q_0<\cdots<q_m$.
For each $i$ from $1$ to $m$, define $S_i=S\cap\set{q_{i-1},q_{i-1}+1,\ldots,q_i-1}$ and $T_i=T\cap\set{q_{i-1},q_{i-1}+1,\ldots,q_i-1}$.
We claim that $|S_i|=|T_i|$, and more specifically, that $s_j\in S_i$ if and only if $t_j\in T_i$.
Indeed, suppose $s_j\in S_i$, so that $q_{i-1}\le s_j<q_i$.
If $t_j<q_{i-1}$, then $t_j<q_{i-1}\le s_j$, contradicting the fact that $q_{i-1}\in Q(S,T)$.
If $t_j\ge q_i$, then $s_j<q_i\le t_j$, contradicting the fact that $q_i\in Q(S,T)$.
We conclude that $t_j\in T_i$.
The symmetric argument completes the proof of the claim.

Now, in light of Lemma~\ref{P Q lemma} and the definition of $\pi$, by subtracting $q_{i-1}-1$ from each element of $S_i$ and $T_i$, we reduce to the case where $m=1$ and thus $Q=\set{1,n+1}$ and $(S_1,T_1)=(S,T)$.
In particular, all of the arcs in the diagram $\eta(S,T)$ are right-even alternating, or all of the arcs are left-even alternating.
If $n=1$, then either $(S,T)=(\emptyset,\emptyset)$, in which case $\eta(S,T)$ has no arc, or $(S,T)=(\set{1},\set{1})$, in which case $\eta(S,T)$ has an arc connecting $1$ and $2$.

If $n>1$, then we observe that the element $1$ must be in $S$ or in $T$ but must not be in both.
Indeed, if $1$ is in neither set or in both, we see that $2\in Q(S,T)$, and this is a contradiction.
In particular, we will need to construct an arc whose lower endpoint is $1$ and whose upper endpoint is above $2$.
This arc will pass by $2$, and so it is either right-even alternating or left-even alternating (but not both).
If $1\in S$, then the corresponding arc is right-even alternating, and if $1\in T$ this arc is left-even alternating.
Without loss of generality, we assume $1\in S$, so that each $i$ in $S$ is a bottom endpoint and for each $j$ in $T$, $j+1$ is a top endpoint of a right-even alternating arc in $\eta(S,T)$.
To complete the proof, we show  that there is a unique way to pair off each bottom endpoint in $S$ with a top endpoint in $T$ so that the union of the resulting arcs is a noncrossing arc diagram.
Since the arcs in the diagram are all right-even alternating, we must pair each element of $S$ with a larger element of $T$.

We first decide which element of $T$ we should pair with $s_k$.
Because $s_k$ is the maximum element of $S$, Lemma~\ref{arc compat 1} implies that we must pair $s_k$ with some $t'$ such that $\set{t\in T:s_k<t<t',\, t-s_k\text{ odd}}$ is empty.
Similarly, Lemma~\ref{arc compat 2} implies that we must either pair $s_k$ with $t_k$ or pair $s_k$ with some $t'$ such that $t'-s_k$ is odd.
Furthermore, if we choose $t'$ according to those two rules, no matter how we pair the remaining elements of $S$ and $T$, the arcs produced will be compatible with the arc whose bottom endpoint is $s_k$.
We are forced to pair $s_k$ with $\min\set{t\in T:t\geq s_k,\, t-s_k\text{ odd}}$, or with $t_k$ if $\set{t\in T:t\geq s_k,\, t-s_k\text{ odd}}=\emptyset$.  
By induction on $k$, there is a unique way to pair the elements of $S\setminus\set{s_k}$ with the elements of $T\setminus\set{t'}$ to make a noncrossing alternating diagram.
Putting in the pair $(s_k,t')$ we obtain the unique pairing of elements of $S$ with elements of $T$ to make a noncrossing alternating diagram.
The base of the induction is where $k=1$.  
Here existence of a pairing is trivial and uniqueness comes from the requirement that the arc whose bottom endpoint is $1$ must be right-even alternating.
\end{proof}

The proof of Theorem~\ref{alt arc bij} completes the proof of Theorem~\ref{hard part finer} and Theorem~\ref{biNar thm} for type A.

\begin{remark}\label{type A insight}
The proof of Theorem~\ref{alt arc bij} provides a key insight that leads to our proof of Theorems~\ref{hard part} and~\ref{hard part finer}:
An alternating arc diagram decomposes into disjoint pieces such that each piece is either right-even alternating or left-even alternating.
We now describe a way of counting alternating arc diagrams by decomposing into left-even and right-even pieces (more coarsely than the decomposition into overlapping collections of arcs used in the proof of Theorem~\ref{alt arc bij}).

Recall that an arc is both right-even alternating and left-even alternating if and only if it connects consecutive points.
We call such an arc a \textit{simple arc} (and every other arc is called \textit{non-simple}).  
Given an alternating arc diagram $\Sigma$, let $R$ be the set of  points $p$ such that there exists a non-simple right-even alternating arc $\alpha$ in $\Sigma$ such that $\alpha$ passes alongside $p$, or $\alpha$ has $p$ as an endpoint.
Similarly, let $L$ be the set of $p'$ satisfying the above but with $\alpha$ left-even alternating.

We will refer to a set of integers of the form $\set{a,a+1,\ldots,b-1,b}$ as an \newword{interval}.
Let $R_1,\ldots, R_k$ be the maximal intervals contained in $R$, so that in particular $R$ is a disjoint union of the $R_i$, and any two of the $R_i$ have at least one point between them that is not in $R$.
Thus on each~$R_i$, we have an ``indecomposable piece'' of the diagram for $\Sigma$.
See Figure~\ref{fig: right-even double positive arc diagrams} for the right-even indecomposable pieces on $3$, $4$, or $5$ points.
\begin{figure}
\raisebox{12pt}{\includegraphics{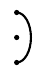}}\quad\quad
\raisebox{6pt}{\includegraphics{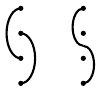}}\qquad
\includegraphics{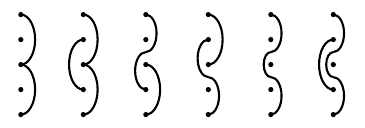}
\caption{
The indecomposable right-even alternating arc diagrams on $3$, $4$, or $5$ points
}
\label{fig: right-even double positive arc diagrams}
\end{figure}
(The restriction of $\Sigma$ to each $R_i$ is a union of overlapping collections of arcs, in the sense of Proposition~\ref{overlap comp}.
Each overlapping collection of arcs lives on an interval contained in $R_i$, and these intervals are pairwise disjoint except for intersecting at their endpoints.)
Symmetrically, $L$ breaks into an analogous collection of indecomposable pieces consisting of left-even alternating arcs.
Since the arcs of $\Sigma$ don't cross, each $R_i$ is disjoint from each $L_j$, except possibly at their endpoints.
The enumeration of alternating arc diagrams can be decomposed as a sum over all choices of $R$ and $L$ and their decompositions into pieces $R_1,\ldots, R_k$ and $L_i,\ldots,L_m$.
Each term in the sum is a product of: a power of $2$; a factor for each $R_i$ equal to the number of indecomposable pieces that can be constructed that interval; and an analogous factor for each $L_j$. 
The power of $2$ arises because there are ``gaps'' between intervals of $R$ or $L$ where we can fill in simple arcs or not.
Our proof of Theorems~\ref{hard part} and~\ref{hard part finer} generalizes this method, which can be carried out uniformly for all finite Coxeter groups.

Each indecomposable diagram with right-even alternating arcs corresponds (via~$\delta$) to a $c$-sortable permutation whose set of cover reflections has no simple reflections and also has full support.
(Cover reflections and supports will be defined in Section~\ref{can sec} for general Coxeter groups.
The cover reflections of a permutation are the transpositions $(i\,\,j)$ such that $i$ immediately precedes $j$ and $i>j$.
The support of an element is the set of simple reflections appearing in a reduced word for the element.
The requirement on supports here is that the union of the supports of the cover reflections is full.)
Similarly, each indecomposable diagram with left-even alternating arcs corresponds to a $c^{-1}$-sortable permutation whose set of cover reflections has no simple reflections and also has full support.
Thus the proof breaks bipartite biCambrian objects (the alternating arc diagrams) into pieces that belong to ordinary Catalan combinatorics.
More specifically, once we fix the type of arc (right-even or left-even alternating), the number of indecomposable diagrams on $m+1$ points is the number that in Section~\ref{double pos sec} will be called the double-positive Catalan number $\Cat\pp(A_m)$.

In the general setting, the role of the arcs in an alternating arc diagram is played by the \emph{canonical joinands} of a bipartite $c$-bisortable element. 
(The latter are join-irreducible $c$- or $c^{-1}$-sortable elements, and are defined in Section~\ref{can sec}.
For the connection between arcs and canonical join representations, see \cite[Section~3]{arcs}.)  
The fact that distinct right-even alternating and left-even alternating arcs may not overlap is a special case of the following fact, which we will prove uniformly in Section~\ref{sortable formula sec}:
Suppose $c$ is bipartite and $w$ is a $c$-bisortable element with a $c$-sortable canonical joinand $u$ and a $c^{-1}$-sortable canonical joinand $v$.
If $u=v$, then they equal a simple reflection, and if $u\neq v$, then the supports of $u$ and $v$ are disjoint.
Therefore, we can partition the set of canonical joinands of a $c$-bisortable element into a set of simple reflections, a set of non-simple $c$-sortable join-irreducible elements, and a set of non-simple $c^{-1}$-sortable join-irreducible elements.  
The set of non-simple $c$-sortable canonical joinands is a collection of ``indecomposable pieces'' that belong to ordinary Catalan combinatorics.
Just as the set $R$ broke into a disjoint union of the intervals $R_1, \ldots, R_k$, we break up this collection on $c$-sortable canonical joinands as follows:
An ``indecomposable piece'' corresponds to a subset of these canonical joinands that has full support on some irreducible parabolic subgroup of $W$.
The number of possible pieces for each standard parabolic subgroup is a double-positive Catalan number, so we obtain a formula counting $c$-bisortable elements in terms of  double-positive Catalan numbers.
We complete the proof by showing that the same formula also counts antichains in the doubled root poset.

\end{remark}

\begin{remark} 
Looking ahead to Section~\ref{double pos sec}, the previous remark implies an interpretation of the type-A double-positive Narayana number which---after some combinatorial manipulations that amount to changing from a bipartite Coxeter element to a linear Coxeter element---coincides with the interpretation given in \mbox{\cite[Theorem~1.1]{Ath-Sav}}.
\end{remark}

\subsection{Enumerating bipartite $c$-bisortable elements in type B}\label{type B sec} 
In this section, we use certain alternating arc diagrams to prove the enumeration of bipartite $c$-bisortable elements of type B given in Theorem~\ref{main thm}. 
In order to reuse much of our work from Section~\ref{alt_arc_diagrams}, we realize the weak order on $B_n$ as a sublattice of the weak order on $A_{2n-1}$, through the usual signed permutation model.

Let $x=x_{-n}\ldots x_{-1} x_1 \ldots x_n$ be a permutation of $\{\pm 1,\pm2, \ldots, \pm n\}$.  
Recall that $x$ is a \newword{signed permutation} if $x_i = -x_{-i}$ for each $i\in [n]$.
A signed permutation is completely determined by its abbreviated notation $x_1x_2\cdots x_n$.
We refer to the longer sequence $x_{-n}\ldots x_{-1} x_1 \ldots x_n$ as the \newword{full one-line notation} for $x$.
The weak order on $B_n$ is isomorphic to the set of the signed permutations, ordered so that $y_1\ldots y_n\covers x_1\ldots x_n$ if and only if one of the two following conditions is satisfied:
Either $y_i = x_{i+1} > x_i = y_{i+1}$ for $i,i+1\in [n]$ and $y_j = x_j$ for each $j\not\in \{i,i+1\}$, or $0<x_1=-y_1$ and $x_j = y_j$ for all $j \in \{2, 3,\ldots, n\}$.  
In the former case, the symmetry $y_i=-y_{-i}$ implies that $y_{-i-1}=x_{-i}> x_{-i-1} = y_{-i}$.  
In particular, in its full one-line notation, the signed permutation $y$ has two descents: $y_i>y_{i+1}$ and $y_{-i-1}>y_{-i}$. 
We say that such a pair of descents, or a single symmetric descent in positions $-1$ and $1$, is a \newword{type-B descent}.
(For more information on this realization of the  weak order on the type-$B$ Coxeter group see \cite[Section~8.1]{CombCoxeter}).

To motivate the definition of noncrossing arc diagrams of type B, we consider the action $y\mapsto w_0 y w_0$ on $A_{2n-1}$ where $w_0$ is the longest element.
(We describe $w_0\in A_{2n-1}$ below.  For the general definition of length, see Section~\ref{can sec}.)
We write each element $y$ in $A_{2n-1}$ as a permutation of $\{\pm 1,\ldots, \pm n\}$.
In the noncrossing arc diagram $\delta(y)$, we label the points $-n,-n+1,\ldots,-1,1,\ldots,n-1,n$ from bottom to top.
We place these points so that a half-turn rotation through the center of the diagram maps each point $i$ to the point~$-i$.
We call this rotation the \newword{central symmetry}.
The longest element $w_0$ in this copy of $A_{2n-1}$ is the permutation  $(-n)\cdots(-1)1\cdots n$.
Conjugation by $w_0$ acts by negating all of the entries of the full one-line notation of $y$ and reversing its order.
Thus, $y$ is fixed by the action of $w_0$ if and only if $y$ is a signed permutation.
On the level of noncrossing arc diagrams, the action of $w_0$ coincides with the central symmetry.

A \newword{centrally symmetric noncrossing arc diagram} is a  noncrossing arc diagram on the points $-n,\ldots,-1,1,\ldots, n$ that is fixed by the central symmetry.
The map $\delta$ restricts to a bijection from signed permutations to centrally symmetric noncrossing arc diagrams.
We use the term \newword{centrally symmetric arc} to describe either an arc that is fixed by the central symmetry or a pair of arcs that form an orbit under the symmetry.
For each $k$, the map $\delta$ restricts further to a bijection between signed permutations with $k$ type-B descents and centrally symmetric noncrossing arc diagrams with $k$ centrally symmetric arcs.
Since each signed permutation has at most one symmetric descent in the positions $-1$ and $1$, it follows that each centrally symmetric noncrossing arc diagram has at most one arc that is fixed by the central symmetry.


Now we describe the Cambrian and biCambrian congruences in type B.
The simple generators of $B_n$, are $s_0=(-1\,\,\,1)$ and $s_i=(-i\!-\!1\,\,\,-\!i)(i\,\,\,i\!+\!1)$ for $i=1,\ldots,n-1$, written in cycle notation as permutations of $\set{\pm 1, \ldots, \pm n}$.
A \newword{symmetric Coxeter element} of $A_{2n-1}$ is a Coxeter element that is fixed by the automorphism $y\mapsto w_0yw_0$.
Equivalently, the Coxeter element can be written as a product of some permutation of the elements $s_0,\ldots,s_{n-1}$ defined above.
This product in $A_{2n-1}$ can be interpreted as a Coxeter element of $B_n$, which we denote by $\tilde c$. 
A Coxeter element is symmetric if and only if it corresponds to a barring of $\set{\pm1,\ldots,\pm(n-1)}$ with the property that $i$ is overbarred if and only if $-i$ is underbarred.
Thus, a signed permutation avoids the pattern $\overline231$ if and only if it also avoids the pattern $31\underline2$ (in its full one-line notation).
The signed permutations avoiding $\overline231$ (and equivalently $31\underline2$) in their full notation are exactly the $\tilde c$-sortable elements by \cite[Theorem~7.5]{cambrian}.
Comparing with the description of $c$-sortable permutations following Proposition~\ref{same class}, we obtain the following proposition.

\begin{proposition}\label{b_sort}
Suppose $c$ is a symmetric Coxeter element of $A_{2n-1}$ and suppose $\tilde c$ is the corresponding Coxeter element of $B_n$.
A signed permutation is $\tilde c$-sortable in $B_n$ if and only if it is $c$-sortable as an element of $A_{2n-1}$.
\end{proposition}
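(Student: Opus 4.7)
The plan is to read off the result by comparing the two pattern-avoidance characterizations of sortable elements already established in the text. Following Proposition~\ref{same class}, an element of $A_{2n-1}$ is $c$-sortable if and only if its full one-line notation avoids both $\overline{2}31$ and $31\underline{2}$. By \cite[Theorem~7.5]{cambrian}, as quoted in the paragraph preceding the proposition, a signed permutation is $\tilde c$-sortable in $B_n$ if and only if its full one-line notation avoids $\overline{2}31$. So the proposition reduces to the assertion that, for signed permutations under the symmetric barring coming from $c$, avoidance of $\overline{2}31$ is equivalent to avoidance of $31\underline{2}$.

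First, if a signed permutation $x$ is $c$-sortable in $A_{2n-1}$, then in particular it avoids $\overline{2}31$, so it is $\tilde c$-sortable. Conversely, if $x$ is a $\tilde c$-sortable signed permutation, then $x$ avoids $\overline{2}31$; using the equivalence noted above it also avoids $31\underline{2}$, and hence is $c$-sortable in $A_{2n-1}$. So the proof is really just the equivalence of the two avoidance conditions for symmetric barrings of signed permutations.

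That equivalence is an immediate bookkeeping consequence of the central symmetry $x_i = -x_{-i}$ together with the defining property of a symmetric Coxeter element that $i$ is overbarred if and only if $-i$ is underbarred. Explicitly, suppose $x$ contains $\overline{2}31$ at positions $j < i < i{+}1$, with $x_j = b$ overbarred, $x_i = c$, $x_{i+1} = a$, and $a<b<c$. Applying the symmetry, the entries $-a$, $-c$, $-b$ sit at positions $-i{-}1$, $-i$, $-j$ respectively; the adjacent pair $-a, -c$ at positions $-i{-}1, -i$ is a descent; the value $-b$ lies to the right; and the inequalities $-c<-b<-a$ hold. Since $b$ is overbarred, $-b$ is underbarred, so we have precisely an occurrence of $31\underline{2}$. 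The reverse direction is symmetric, proving the claimed equivalence and the proposition.

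There is no substantive obstacle here: everything beyond quoting \cite[Theorem~7.5]{cambrian} and Proposition~\ref{same class} is the central-symmetry matching of pattern occurrences, which is routine. The only care needed is in tracking that adjacency of positions is preserved by the symmetry (positions $i, i{+}1$ map to positions $-i{-}1, -i$) so that the ``$31$'' in one pattern matches the ``$31$'' of the other after negation.
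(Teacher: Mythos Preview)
Your proposal is correct and follows essentially the same approach as the paper: the paragraph immediately preceding the proposition already records that, for a symmetric barring, a signed permutation avoids $\overline{2}31$ if and only if it avoids $31\underline{2}$, and then simply compares the $\tilde c$-sortable characterization from \cite[Theorem~7.5]{cambrian} with the $c$-sortable characterization following Proposition~\ref{same class}. You have merely spelled out the central-symmetry matching of pattern occurrences that the paper asserts in one sentence; the only cosmetic issue is reusing the letter $c$ for a permutation value when it already denotes the Coxeter element.
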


The analogous result holds for $\tilde c$-bisortable elements.

\begin{proposition}\label{typeB_bisort}
Suppose $c$ is a symmetric Coxeter element of $A_{2n-1}$ and suppose $\tilde c$ is the corresponding Coxeter element of $B_n$.
A signed permutation is $\tilde c$-bisortable in $B_n$ if and only if it is $c$-bisortable as an element of $A_{2n-1}$.
\end{proposition}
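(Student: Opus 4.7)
The plan is to reduce Proposition~\ref{typeB_bisort} to Proposition~\ref{b_sort} using the characterization of bisortable elements in Proposition~\ref{c join cinv}, together with the fact that $B_n$ sits inside $A_{2n-1}$ as the fixed-point sublattice of the weak-order automorphism $\sigma\colon x\mapsto w_0 x w_0$. The key structural input I would verify first is that $\sigma$ preserves the $c$-Cambrian congruence. Since $\sigma$ is induced by conjugation by $w_0$, it permutes the simple reflections and sends reduced words to reduced words, so in general $\sigma(\Theta_c)=\Theta_{\sigma(c)}$. Because $c$ is symmetric, $\sigma(c)=c$ and hence $\Theta_c$ is $\sigma$-stable. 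The same reasoning handles $\Theta_{c^{-1}}$, after observing that the barring condition defining symmetry is preserved under swapping overbars and underbars, so $c^{-1}$ is symmetric too.

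For the forward implication, I would start with a $\tilde c$-bisortable signed permutation $w$. Proposition~\ref{c join cinv} applied inside $B_n$ produces a $\tilde c$-sortable $u$ and a $\tilde c^{-1}$-sortable $v$ with $w=u\join v$ in $B_n$. Proposition~\ref{b_sort} then identifies $u$ as a $c$-sortable element of $A_{2n-1}$ and $v$ as a $c^{-1}$-sortable element of $A_{2n-1}$. Since the weak order on $B_n$ is a sublattice of the weak order on $A_{2n-1}$, the join of $u$ and $v$ computed in $B_n$ coincides with the one computed in $A_{2n-1}$, and a second application of Proposition~\ref{c join cinv} (inside $A_{2n-1}$) shows that $w$ is $c$-bisortable.

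For the reverse implication, I would start with a signed permutation $w$ that is $c$-bisortable in $A_{2n-1}$. Proposition~\ref{c join cinv} gives $w=\pidown^c(w)\join\pidown^{c^{-1}}(w)$ inside $A_{2n-1}$. Since $w$ is $\sigma$-fixed and $\Theta_c$ is $\sigma$-stable, the $\Theta_c$-class containing $w$ is $\sigma$-stable; because $\sigma$ is a weak-order automorphism, the unique minimum $\pidown^c(w)$ of that class is $\sigma$-fixed, hence a signed permutation, and the same argument applies to $\pidown^{c^{-1}}(w)$. Proposition~\ref{b_sort} then promotes these to a $\tilde c$-sortable element and a $\tilde c^{-1}$-sortable element of $B_n$, whose join in $B_n$ agrees with their join in $A_{2n-1}$, namely $w$. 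A final application of Proposition~\ref{c join cinv}, this time inside $B_n$, certifies that $w$ is $\tilde c$-bisortable.

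The only non-routine step is verifying that $\sigma$ stabilizes both Cambrian congruences and, as a consequence, fixes the $c$- and $c^{-1}$-projections of any signed permutation; once that is in hand, the rest of the argument is bookkeeping that combines Propositions~\ref{c join cinv} and~\ref{b_sort} with the sublattice inclusion $B_n\hookrightarrow A_{2n-1}$.
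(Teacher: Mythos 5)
Your proposal is correct and follows essentially the same route as the paper: both directions reduce to Proposition~\ref{c join cinv} plus Proposition~\ref{b_sort} and the sublattice inclusion, with the reverse direction hinging on the fact that conjugation by $w_0$ fixes $\pidown^c(w)$ and $\pidown^{c^{-1}}(w)$ for a $\sigma$-fixed $w$. The paper derives that last fact from the maximality of $\pidown^c(w)$ among $c$-sortable elements below $w$ rather than from $\sigma$-stability of the congruence class, but this is only a cosmetic difference.
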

\begin{proof} 
Suppose $w$ is a signed permutation.
If $w$ is $\tilde c$-bisortable, then Proposition~\ref{c join cinv} says that $w = u \join v$ for some $\tilde c$-sortable signed permutation $u$ and some \mbox{$\tilde c^{-1}$-sortable} signed permutation $v$.
Proposition~\ref{b_sort} says that, as elements of $A_{2n-1}$, $u$ is a $c$-sortable permutation and $v$ is a $c^{-1}$-sortable permutation.
It is well-known that the weak order on $B_n$ is a sublattice of the weak order on $A_{2n-1}$.
Indeed, in any finite Coxeter group, the map $y\mapsto w_0 y w_0$ is a rank-preserving \textit{lattice} automorphism.
For any lattice automorphism, the set of fixed points of the automorphism is a sublattice.
Thus, the join $u \join v$ is the same in $A_{2n-1}$ as in $B_n$, and Proposition~\ref{c join cinv} implies that $w$ is $c$-bisortable. 

On the other hand, if $w$ is $c$-bisortable as an element of $A_{2n-1}$, then as in Proposition~\ref{c join cinv}, we can write $w$ as $u \join v$, where $u$ is the $c$-sortable permutation $\pidown^c(w)$ and $v$ is the $c^{-1}$-sortable permutation $\pidown^{c^{-1}}(w)$.
Since conjugation by $w_0$ is a lattice automorphism fixing $w$, we obtain $w = (w_0 u w_0) \join (w_0 v w_0)$.
But $w_0 u w_0$ is $c$-sortable and below $w$, so $w_0 u w_0\le u$.
Since conjugation by $w_0$ is order preserving, we conclude that $w_0 u w_0 = u$.
Similarly $w_0 v w_0= v$.
Thus, by Proposition~\ref{b_sort}, $u$ is $\tilde c$-sortable and $v$ is $\tilde c^{-1}$-sortable in $B_n$. 
Since the weak order on $B_n$ is a sublattice of the weak order on $A_{2n-1}$, Proposition~\ref{c join cinv} says that $w$ is $\tilde c$-bisortable.
\end{proof}

A bipartite Coxeter element $\tilde c$ of $B_n$ is a symmetric bipartite Coxeter element of $A_{2n-1}$, so combining Propositions~\ref{avoid alt} and~\ref{typeB_bisort}, we immediately obtain the following proposition.

\begin{proposition}\label{avoid alt B}  
For $W=B_n$ and $\tilde c$ a bipartite Coxeter element, the map $\delta$ restricts to a bijection from $\tilde c$-bisortable signed permutations with $k$ descents to centrally symmetric alternating arc diagrams on $2n$ points with $k$ centrally symmetric alternating arcs.
\end{proposition}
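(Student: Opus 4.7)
The plan is to combine Proposition~\ref{avoid alt} with Proposition~\ref{typeB_bisort} and the equivariance of $\delta$ under the automorphism $y\mapsto w_0yw_0$ of $A_{2n-1}$. Since a bipartite Coxeter element $\tilde c$ of $B_n$ arises from a symmetric bipartite Coxeter element $c$ of $A_{2n-1}$, Proposition~\ref{typeB_bisort} identifies the $\tilde c$-bisortable signed permutations with exactly those $c$-bisortable elements of $A_{2n-1}$ that happen to be signed permutations, i.e., fixed points of conjugation by $w_0$. Proposition~\ref{avoid alt} then sends all $c$-bisortable elements of $A_{2n-1}$ bijectively to alternating arc diagrams on $2n$ points, with elements whose full one-line notation has $j$ descents corresponding to diagrams with $j$ arcs.

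Next, I would verify that $\delta$ intertwines the automorphism $y\mapsto w_0yw_0$ with the central symmetry on arc diagrams. This is essentially the content of the claim already recorded in the paper that $\delta$ restricts to a bijection from signed permutations to centrally symmetric noncrossing diagrams: negating and reversing the one-line notation of $y$ translates directly, through the construction of $\delta$ from descents, into vertically reflecting the diagram and relabeling via $i\mapsto -i$, which is precisely the central symmetry on the $2n$ labeled points. Consequently, a $c$-bisortable element of $A_{2n-1}$ is a signed permutation if and only if its arc diagram is centrally symmetric, and moreover it is automatically an alternating diagram when it comes from a $c$-bisortable element. The restriction of $\delta$ in Proposition~\ref{avoid alt} therefore yields the desired bijection between $\tilde c$-bisortable signed permutations and centrally symmetric alternating arc diagrams on $2n$ points.

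Finally, I would match the descent counts between the two sides. Every $B_n$-cover of $w$ is of one of the two forms identified earlier in the paper. A cover via a swap $y_i>y_{i+1}$ with $i,i+1\in[n]$ forces the twin descent $y_{-i-1}>y_{-i}$ in the full one-line notation, so the two associated arcs form a single orbit under the central symmetry, i.e., one centrally symmetric arc. A cover of the form $0<x_1=-y_1$ produces the single descent $y_{-1}>y_1$ (fixed by the symmetry), giving one arc fixed by the central symmetry, hence again one centrally symmetric arc. In either case, one $B_n$-descent of $w$ contributes exactly one centrally symmetric arc to $\delta(w)$, matching the statement.

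The main (very modest) obstacle is the descent bookkeeping in the last step: one must handle the fixed-arc case carefully so that it is counted as a single centrally symmetric arc rather than an orbit of two. Everything else is immediate from the already cited propositions and from the observation that a bipartite Coxeter element of $B_n$ lifts to a symmetric bipartite Coxeter element of $A_{2n-1}$.
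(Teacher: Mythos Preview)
Your proposal is correct and takes essentially the same approach as the paper: the paper derives the proposition ``immediately'' by combining Propositions~\ref{avoid alt} and~\ref{typeB_bisort} together with the already-stated fact that $\delta$ restricts to a bijection from signed permutations with $k$ descents to centrally symmetric noncrossing diagrams with $k$ centrally symmetric arcs. Your write-up simply unpacks the descent bookkeeping that the paper handles by pointing back to that earlier observation.
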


Thus, to count the bipartite $c$-bisortable elements in $B_n$, it remains only to count centrally symmetric alternating arc diagrams.
The points in the noncrossing arc diagram for a permutation in $S_{2n}$ are labeled $1,\ldots,2n$ from bottom to top.
If we instead label the points $-n,\ldots,-1,1,\ldots,n$ from bottom to top, we can interpret the map $\pi$ as returning an ordered pair of subsets of $\set{-n,\ldots,-1,1,\ldots n-1}$.
Define $\pi_B$ to be the map on centrally symmetric alternating arc diagrams with $2n$ vertices that first does the map $\pi$ to obtain $(S,T)$ and then ignores $T$ and outputs only $S$.
The following theorem shows that the number of centrally symmetric alternating arc diagrams with $k$ centrally symmetric arcs is $\binom{2n-1}{2k}+\binom{2n-1}{2k-1}=\binom{2n}{2k}$, proving Theorem~\ref{biNar thm} for type $B_n$.

\begin{theorem}\label{type b-alt arc bij}
For each $k$, the map $\pi_B$ restricts to a bijection from centrally symmetric alternating arc diagrams with $k$ centrally symmetric arcs to subsets of $\set{-n,\ldots,-1,1,\ldots n-1}$ of size $2k$ or $2k-1$.
\end{theorem}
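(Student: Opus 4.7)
The plan is to leverage the bijection $\pi$ of Theorem~\ref{alt arc bij} by characterizing which pairs $(S,T)$ correspond to centrally symmetric diagrams, then matching sizes.  After the relabeling $1,\ldots,2n\leftrightarrow-n,\ldots,-1,1,\ldots,n$ of the $2n$ vertices, the output subsets $S$ of $\pi_B$ lie in $\{-n,\ldots,-1,1,\ldots,n-1\}$, which corresponds to $\{1,\ldots,2n-1\}$ in the original labels; I will work throughout in the original labels.

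The first step is to compute the action of the central symmetry $\sigma\colon p\mapsto 2n+1-p$ on pairs $(S,T)=\pi(\Sigma)$.  Since $\sigma$ reverses left/right and simultaneously swaps the parity of every position, it sends right-even alternating arcs to right-even alternating arcs (and likewise for left-even).  Examining one arc at a time: a right-even arc from $i$ to $j$ contributes $(s,t)=(i,j-1)$ to $(S,T)$, while its $\sigma$-image is the right-even arc from $2n+1-j$ to $2n+1-i$, contributing $(2n+1-j,\,2n-i)=(2n-t,\,2n-s)$.  The same transformation $(s,t)\mapsto(2n-t,\,2n-s)$ also holds for left-even arcs, so $\sigma$ induces the involution $(S,T)\mapsto(\{2n-t:t\in T\},\,\{2n-s:s\in S\})$ on pairs, and a diagram is centrally symmetric if and only if $T=\{2n-s:s\in S\}$.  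Because $T$ is then recoverable from $S$, the restriction of $\pi_B$ to centrally symmetric diagrams is injective; for surjectivity, given any subset $S$ of $\{1,\ldots,2n-1\}$, the diagram $\eta(S,\{2n-s:s\in S\})$ provided by Theorem~\ref{alt arc bij} is centrally symmetric by construction.

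The main task is to match $|S|$ with the number $k$ of centrally symmetric arcs.  Writing $f$ for the number of $\sigma$-fixed arcs and $p$ for the number of $\sigma$-orbit pairs, the total arc count is $|S|=f+2p$ while $k=f+p$, so $|S|=2k-f$.  The key claim, and the main obstacle, is that $f\le 1$.  Suppose $\Sigma$ contained two fixed arcs $\alpha_1=(i_1,2n+1-i_1)$ and $\alpha_2=(i_2,2n+1-i_2)$ with $i_1<i_2\le n$.  Their endpoint intervals overlap (both straddle the symmetry axis), so by Proposition~\ref{overlap comp} they lie in a common overlapping component and share an alternating type; assuming both right-even (the other case is symmetric), the positions $i_2$ and $2n+1-i_2$ have opposite parities, forcing $\alpha_1$ onto opposite sides of the vertical line at those two heights.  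Since both heights are endpoints of $\alpha_2$, $\alpha_2$ is on the line at each, so the relative horizontal order of $\alpha_1$ and $\alpha_2$ swaps between heights $i_2$ and $2n+1-i_2$; continuity of the two arcs on $[i_2,2n+1-i_2]$ then forces an intersection, contradicting the noncrossing property.

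With $f\in\{0,1\}$ established, the parity of $|S|=f+2p$ equals that of $f$, so $|S|=2k$ corresponds to $f=0$ and $|S|=2k-1$ to $f=1$.  This completes the bijection, for each $k$, between centrally symmetric alternating arc diagrams with $k$ centrally symmetric arcs and subsets of $\{-n,\ldots,-1,1,\ldots,n-1\}$ of size $2k$ or $2k-1$.
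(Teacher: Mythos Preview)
Your proof is correct and follows essentially the same approach as the paper: both compute the action of the central symmetry on pairs $(S,T)$, characterize centrally symmetric diagrams as exactly those with $T$ determined by $S$ via this symmetry (your $T=\{2n-s:s\in S\}$ is the paper's $T=-S-1$ in different coordinates), and deduce the bijection and size count.  The only noteworthy difference is that the paper treats the bound $f\le 1$ on fixed arcs as immediate---it is implicit in the earlier correspondence between centrally symmetric diagrams and signed permutations, where at most one descent can occur at the central position---whereas you supply a direct crossing argument; your argument is sound (and could be shortened by invoking Lemma~\ref{arc compat 2}, since two nested fixed arcs would have $s'=i_2$ and $t'=2n-i_2$ of the \emph{same} parity).
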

\begin{proof}
We first show that $\pi_B$ is a bijection from centrally symmetric alternating arc diagrams to subsets of $\set{-n,\ldots,-1,1,\ldots n-1}$.
Given $S\subseteq\set{\pm1,\ldots,\pm n}$, we write $-S-1$ for the set $\set{-i-1:i\in S}$, where we interpret $1-1$ to mean $-1$ in order to make $-S-1$ a subset of $\set{\pm1,\ldots,\pm n}$.
We show that $\pi_B$ is a bijection by showing that an alternating diagram $\Sigma$ is centrally symmetric if and only if $\pi(\Sigma)=(S,-S-1)$ for some $S$.

The terms ``right-even alternating'' and ``left-even alternating'' should be understood in terms of the labeling of points as $1,\ldots,2n$.
These terms become problematic when we label points as $-n,\ldots,-1,1,\ldots,n$.
(For example, whether a right-even alternating arc passes left or right of the point labeled $i$ depends on the sign of $i$, the parity of $i$, and the parity of $n$.)
Without worrying about these details, we make two easy observations:
 First, an alternating arc is right-even alternating if and only if its image under the central symmetry is right-even alternating.
Second, the central symmetry swaps top with bottom endpoints and  positive with negative endpoints.
These observations immediately imply that $\pi$ maps centrally symmetric alternating arc diagrams to pairs of the form $(S,-S-1)$.

These observations also immediately imply that if $\pi$ maps an alternating arc diagram $\Sigma$ to $(S,T)$ and $\Sigma'$ is the image of $\Sigma$ under the central symmetry, then $\pi$ maps $\Sigma'$ to $(-T+1,-S-1)$, where $-T+1$ is the set $\set{-i+1:i\in T}$, where we interpret $-1+1$ to mean $1$.
In particular, if $\pi$ maps $\Sigma$ to $(S,-S-1)$, then $\pi$ also maps $\Sigma'$ to $(S,-S-1)$.
Since we already know that $\pi$ is a bijection, we conclude that in this case $\Sigma$ must be centrally symmetric.
We have shown that $\Sigma$ is centrally symmetric if and only if $\pi(\Sigma)$ is of the form $(S,-S-1)$.
Therefore $\pi_B$ is a bijection.

It is now immediate that $\pi_B$ maps a centrally symmetric alternating arc diagram with $k$ centrally symmetric arcs to a $(2k-1)$-element set if the diagram has an arc that is fixed by the central symmetry or to a $2k$-element set if all of the arcs in the diagram come in symmetric pairs.
(Recall that the diagram has at most one arc fixed by the central symmetry.)
\end{proof}

\subsection{Simpliciality of the bipartite biCambrian fan in types A and B}\label{A B simp sec}
We now prove Theorem~\ref{simple A B}, which states that the bipartite biCambrian fan is simplicial in types A and B.
The proof of the type-A case of Theorem~\ref{simple A B} proceeds by combining results of~\cite{DIRRT} and~\cite{arcs}.

Some collections of noncrossing arc diagrams (including, we will see, the alternating arc diagrams), correspond to lattice quotients of the weak order.
More specifically, a collection of noncrossing arc diagrams may be the image, under  $\delta$, of the bottom elements of congruence classes of some congruence.
To describe when and how such a situation arises, we need the notion of a subarc.
For $i<j$ and $i'<j'$, an arc $\alpha$ connecting $i$ to $j$ is a \newword{subarc} of an arc $\alpha'$ connecting $i'$ to $j'$ if $i'\le i$ and $j'\ge j$ and if $\alpha$ and $\alpha'$ pass to the same side of every point between $i$ and~$j$.
It follows from \cite[Theorem~4.1]{arcs} and \cite[Theorem~4.4]{arcs} that a subset $D$ of the noncrossing arc diagrams on $n+1$ points is the image, under $\delta$, of the set of bottom elements for some congruence $\Theta$ if and only if all of the following conditions hold.
\begin{enumerate}[\rm(i)]
\item There exists a set $U$ of arcs such that a noncrossing arc diagram $\Sigma$ is in $D$ if and only if all arcs in $\Sigma$ are in $U$.
\item \label{subarc req} Any subarc of an arc in $U$ is itself also in $U$.\end{enumerate}
We will call $U$ the set of \newword{unremoved arcs} of the congruence $\Theta$.
If $C$ is any set of arcs and $U$ is the maximal set such that $U\cap C=\emptyset$ and condition~\eqref{subarc req} above holds, then we say that the congruence $\Theta$ is \newword{generated by removing the arcs $C$}.

An element $j$ of a finite lattice $L$ is join-irreducible if it covers exactly one element $j_*$.
A lattice congruence on $L$ \newword{contracts} a join-irreducible element $j$ if the congruence has $j\equiv j_*$.
A congruence is uniquely determined by the set of join-irreducible elements it contracts.
The join-irreducible elements of the weak order on $A_n$ are the permutations in $S_{n+1}$ with exactly one descent.
In particular, the map $\delta$ restricts to a bijection between join-irreducible elements in $S_{n+1}$ and noncrossing arc diagrams with exactly one arc.
(We will think of this restriction as mapping join-irreducible elements to arcs, rather than to singletons of arcs.)
Under this bijection, the join-irreducible elements \emph{not} contracted by a congruence $\Theta$ correspond to the arcs in $U$, where $U$ is the set of unremoved arcs of $\Theta$.
The congruence is \newword{generated by contracting} a set $J$ of join-irreducible elements if and only if it is generated by removing the arcs $\delta(J)$.

We call $j$ a \newword{double join-irreducible} element if it is join-irreducible and if the unique element $j_*$ covered by $j$ is either the bottom element of the lattice or is itself join-irreducible.
The following is a result of~\cite{DIRRT}. 
\begin{theorem}\label{one} Suppose $\Theta$ is a lattice congruence on the weak order on $A_n$.
Then the following three conditions are equivalent.
\begin{enumerate}[\rm(i)] 
\item The undirected Hasse diagram of the quotient lattice $A_n/\Theta$ is a regular graph.
\item $\F_\Theta(A_n)$ is a simplicial fan.
\item $\Theta$ is generated by contracting a set of double join-irreducible elements.  
\end{enumerate}
\end{theorem}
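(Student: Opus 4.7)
The plan is to establish (ii) $\Leftrightarrow$ (i) and (ii) $\Leftrightarrow$ (iii) separately. The implication (ii) $\Rightarrow$ (i) is essentially by definition: the dual graph of a complete simplicial fan in $\reals^n$ is $n$-regular, and this dual graph coincides with the undirected Hasse diagram of $A_n/\Theta$, since adjacent maximal cones correspond to covers or cocovers in the quotient. For the converse (i) $\Rightarrow$ (ii), I would first dispose of the trivial case (a one-element quotient) and then observe that in all other cases one can force $[e]_\Theta=\{e\}$, whereupon the outdegree of the identity class in the quotient Hasse diagram equals the number of simple reflections not contracted by $\Theta$. Regularity then pins this common degree $d$ down to $n$ (otherwise some $s_i$ would be contracted, contradicting $[e]_\Theta=\{e\}$), so every maximal cone of $\F_\Theta(A_n)$ has exactly $n$ walls and is simplicial.

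The substantive equivalence is (ii) $\Leftrightarrow$ (iii), which I would attack using the arc diagram framework from \cite{arcs}. Each congruence $\Theta$ corresponds to a set $U$ of unremoved arcs; the bottom elements of $\Theta$-classes are in bijection with noncrossing $U$-arc diagrams; and the number of descents of such a bottom element $v$ equals the number of arcs in $\delta(v)$. Simpliciality of $\F_\Theta(A_n)$ is then equivalent to the statement that, for each $U$-arc diagram $\Sigma$, the arcs of $\Sigma$ together with the addable $U$-arcs (those whose addition to $\Sigma$ yields another noncrossing $U$-diagram) total exactly $n$. The first step is to translate the double join-irreducible condition into an arc-theoretic form: an arc $\alpha=\delta(j)$ corresponds to a double $j$ precisely when undoing the descent encoded by $\alpha$ introduces at most one new descent, a purely local condition on $\alpha$'s endpoints and the two points flanking the descent.

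For (iii) $\Rightarrow$ (ii), assuming $\Theta$ is generated by contracting double join-irreducibles, I would show that in any $U$-arc diagram the local moves total exactly $n$. Doubleness ensures that removing any arc from such a diagram yields a diagram still in the bottom-element set, and the possible arc-additions partition the descent-candidate positions $1,\ldots,n$ cleanly. For the contrapositive of (ii) $\Rightarrow$ (iii): if $\Theta$ contracts a non-double join-irreducible $j$, then $j_*$ has at least two descents, and I would construct a specific $U$-arc diagram whose maximal cone has strictly fewer than $n$ walls by exploiting the two descents of $j_*$ to identify a position where two candidate arc-additions are simultaneously unavailable because of the subarc-closure condition (removing $\alpha$ forces removal of every arc having $\alpha$ as a subarc).

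The main obstacle will be this contrapositive direction. One must produce an explicit noncrossing $U$-arc diagram where an expected local move disappears due to subarc closure, and this requires a careful case analysis: the two extra descents of $j_*$ must be leveraged to exhibit a wall deficiency in $\F_\Theta(A_n)$. The local-to-global passage---from the failure of doubleness at a single join-irreducible to a genuine non-simpliciality of the whole fan---is where the real work lies, and is presumably what makes this a nontrivial result of \cite{IRRT}.
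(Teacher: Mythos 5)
The first thing to say is that the paper does not prove Theorem~\ref{one}: it is quoted as ``part of the main result of~\cite{IRRT},'' a reference listed as in preparation, and the surrounding text indicates that the proof there goes through the representation theory of preprojective algebras rather than through a direct combinatorial argument. So there is no in-paper proof to compare against. Judged on its own terms, your proposal is a plan rather than a proof, and it has two genuine gaps.

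The gap in (i)~$\Rightarrow$~(ii) is the claim that outside the trivial case ``one can force $[e]_\Theta=\{e\}$,'' from which you deduce that the common degree is $n$. Congruences on the weak order can contract atoms---indeed every simple reflection is a double join-irreducible element, so condition (iii) explicitly permits this---and regularity of the quotient Hasse diagram does not rule it out. Concretely, in $A_2$ the congruence generated by contracting the atom $s_1$ (equivalently, removing the arc joining points $1$ and $2$, which forces removal of both long arcs) has exactly two classes, $\{e,s_1,s_1s_2\}$ and $\{s_2,s_2s_1,w_0\}$: the quotient Hasse diagram is a single edge, hence $1$-regular, yet $[e]_\Theta\neq\{e\}$ and the two maximal cones of $\F_\Theta(A_2)$ are half-planes. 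Your argument gives no way to handle or exclude such degenerate quotients, and it also silently assumes that every maximal cone of $\F_\Theta(A_n)$ is pointed (so that ``exactly $n$ walls'' implies ``simplicial''), which is precisely what fails here. Any correct proof must confront this case and the convention under which the statement is to be read.

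The larger gap is that (ii)~$\Leftrightarrow$~(iii), which is the substantive content of the theorem, is never established. Translating into the arc-diagram language of \cite{arcs} is a reasonable strategy (and consistent with how the present paper applies the theorem in Section~\ref{A B simp sec}), but both key steps are only announced: the arc-theoretic characterization of double join-irreducible elements is asserted without proof, and the crucial direction---that contracting a non-double join-irreducible element produces a maximal cone with a wall deficiency---is explicitly deferred with the remark that it ``is presumably what makes this a nontrivial result of \cite{IRRT}.'' That deferred step is the theorem; without it, and without repairing the degree argument above, the proposal does not constitute a proof.
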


We now apply these considerations to alternating arc diagrams.
First, it is apparent that the set of alternating arc diagrams is the image of $\delta$ restricted to the set of bottom elements of a congruence.
(Indeed, this is the bipartite $c$-biCambrian congruence.)
It is also apparent that the congruence is generated by removing the arcs that connect $i$ to $i+3$ and that \emph{do not} alternate.
(That is they pass to the same side of $i+1$ and $i+2$.)
Applying the inverse of $\delta$, we see that the congruence is generated by contracting the join-irreducible elements 
\[1\cdots (i-1)(i+1)(i+2)(i+3)i(i+4)\cdots (n+1)\]
and
\[1\cdots (i-1)(i+3)i(i+1)(i+2)(i+4)\cdots (n+1)\]
for $i=1,\ldots,n-2$.
These are both double join-irrreducible elements, and thus Theorem~\ref{one} implies the type-A case of Theorem~\ref{simple A B}.

We now move to the type-B case of Theorem~\ref{simple A B}.
Just as in type-A, there is a correspondence between congruences on the weak order and certain sets of (centrally symmetric) noncrossing arc diagrams.
However, there is currently no analogue to Theorem~\ref{one} in type B\@.
Therefore, instead of arguing the type-B case as we argued the type-A case, we will use a folding argument to show that the type-A case implies the type-B case.

Say a lattice congruence of the weak order on $A_{2n-1}$ is \newword{symmetric under conjugation by $w_0$} if for all $x,y\in A_{2n-1}$ we have $x\equiv y$ modulo $\Theta$ if and only if $w_0xw_0\equiv w_0yw_0$ modulo $\Theta$.
\begin{prop}\label{sym bottoms}
If $\Theta$ is a lattice congruence of the weak order on $A_{2n-1}$ that is symmetric under conjugation by $w_0$, then its restriction to the sublattice $B_n$ is a congruence $\Theta'$.
An element of $B_n$ is the bottom element of its $\Theta'$-class if and only if it is the bottom element of its $\Theta$-class.
\end{prop}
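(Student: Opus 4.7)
The plan has two parts: first verify that $\Theta'$ is a well-defined congruence on $B_n$, and then prove the equivalence of bottom elements. The first part is almost formal. Since $B_n$ is a sublattice of $A_{2n-1}$, for any $x,x'\in B_n$ the meet $x\wedge x'$ and join $x\join x'$ computed inside $B_n$ coincide with those computed inside $A_{2n-1}$. Thus if $x,y,x',y'\in B_n$ with $x\equiv y$ and $x'\equiv y'$ modulo $\Theta$, then $x\join x'\equiv y\join y'$ and $x\wedge x'\equiv y\wedge y'$ modulo $\Theta$, and the results lie in $B_n$. So $\Theta'$ respects meets and joins. That it is an equivalence relation is immediate, so $\Theta'$ is a lattice congruence on $B_n$.

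For the second claim, one direction is immediate: if $x\in B_n$ is the bottom element of its $\Theta$-class in $A_{2n-1}$, then every element of its $\Theta'$-class (a subset of its $\Theta$-class) is at least $x$, so $x$ is the bottom of its $\Theta'$-class. The nontrivial direction is the converse, and this is where symmetry under conjugation by $w_0$ is essential.

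So suppose $x\in B_n$ is the bottom of its $\Theta'$-class, and let $y\in A_{2n-1}$ be the bottom of the $\Theta$-class of $x$ in the full weak order. Then $y\leq x$ and $y\equiv x$ modulo $\Theta$. Applying the automorphism $z\mapsto w_0zw_0$ (which preserves the weak order and preserves $\Theta$ by hypothesis), and using $w_0xw_0=x$, we obtain $w_0yw_0\leq x$ with $w_0yw_0\equiv x$ modulo $\Theta$. Since $y$ is the \emph{unique} minimum of its $\Theta$-class, $y\leq w_0yw_0$; applying the same automorphism again gives $w_0yw_0\leq y$, hence $w_0yw_0=y$. Thus $y\in B_n$, so $y$ lies in the $\Theta'$-class of $x$ with $y\leq x$. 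Since $x$ is the bottom of that $\Theta'$-class, $y=x$, and hence $x$ is the bottom of its $\Theta$-class in $A_{2n-1}$.

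The only potential pitfall is the very last step, where one must use that $y$ is the \emph{unique} bottom of its $\Theta$-class, a property valid because $\Theta$-classes in a finite lattice are intervals. Everything else is a routine exploitation of the fact that $B_n$ sits inside $A_{2n-1}$ as the fixed sublattice of an involutive lattice automorphism that is compatible with~$\Theta$.
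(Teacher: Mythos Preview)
Your proof is correct and follows essentially the same argument as the paper's: both let $y$ be the bottom of the $\Theta$-class of $x$, use the $w_0$-symmetry of $\Theta$ together with minimality of $y$ to force $w_0yw_0=y$, and then conclude $y=x$ since $y\in B_n$. Your treatment of the first assertion is slightly more explicit than the paper's (which simply cites the general fact that congruences restrict to sublattices), but the substance is the same.
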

\begin{proof}
It is also a well-known and easy fact that the restriction of a lattice congruence to any sublattice is a congruence on the sublattice, and the first assertion of the proposition follows.
One implication in the second assertion is immediate.
For the other implication, suppose $x\in B_n$ is the bottom element of its $\Theta'$-class and let $y=\pidown^\Theta(x)$, so that in particular $x\equiv y$ modulo $\Theta$.
Then because $\Theta$ is symmetric under conjugation by $w_0$, also $x=w_0xw_0\equiv w_0yw_0$ modulo $\Theta$.
Since $y$ is the bottom element of its $\Theta$-class, $y\le w_0 y w_0$.
Since conjugation by $w_0$ is order preserving, also $w_0yw_0\le y$, so $y=w_0 y w_0$.
Thus $y$ is in the $\Theta'$-class of $x$, and we conclude that $y=x$, so that $x$ is also the bottom element of its $\Theta$-class.
\end{proof}

\begin{proposition}\label{typeB simplicial} 
Suppose that $\Theta$ is a lattice congruence of the weak order on $A_{2n-1}$ and let $\Theta'$ denote its restriction to the weak order on $B_n$.
If $\F_{\Theta}(A_{2n-1})$ is simplicial and $\Theta$ is symmetric under conjugation by $w_0$, then $\F_{\Theta'}(B_n)$ is simplicial.
\end{proposition}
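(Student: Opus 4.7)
The plan is a folding argument based on the linear involution $\sigma$ on the ambient Cartan space $V$ of $A_{2n-1}$ induced by conjugation by $w_0$; its fixed subspace $V^\sigma$ is naturally the $n$-dimensional Cartan space of the subgroup $B_n$, and the Coxeter fan $\F(B_n)$ consists of the intersections with $V^\sigma$ of the $\sigma$-invariant maximal cones of $\F(A_{2n-1})$, together with their faces. The goal is to show that the analogous relationship
\[
\F_{\Theta'}(B_n)=\{\,C\cap V^\sigma:C\text{ a $\sigma$-invariant maximal cone of }\F_\Theta(A_{2n-1})\,\}
\]
holds, and then to check that a $\sigma$-invariant simplicial cone of $\F_\Theta(A_{2n-1})$ necessarily slices to a simplicial cone of the correct dimension.

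First, because $\Theta$ is symmetric under conjugation by $w_0$, the fan $\F_\Theta(A_{2n-1})$ is $\sigma$-invariant, and a $\Theta$-class is $\sigma$-invariant exactly when its bottom element lies in $B_n$. Proposition~\ref{sym bottoms} then identifies such $\sigma$-invariant $\Theta$-classes with the $\Theta'$-classes of $B_n$. The corresponding $\Theta$-cone is a union of chambers of $\F(A_{2n-1})$, and its intersection with $V^\sigma$ retains exactly the $\sigma$-invariant chambers (the $B_n$-chambers), which together form the $\Theta'$-cone. Hence every maximal cone of $\F_{\Theta'}(B_n)$ is $C\cap V^\sigma$ for a $\sigma$-invariant maximal cone $C$ of $\F_\Theta(A_{2n-1})$, and by hypothesis every such $C$ is simplicial.

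It remains to show that if $C$ is a $\sigma$-invariant simplicial cone of dimension $2n-1$ in $V$, then $C\cap V^\sigma$ is a simplicial cone of dimension $n$ in $V^\sigma$. Let $v_1,\ldots,v_{2n-1}$ be the extreme rays of $C$; these are linearly independent by simpliciality, and $\sigma$ permutes them, fixing some number $a$ and acting with $b$ two-orbits on the rest, so that $a+2b=2n-1$. The $\sigma$-fixed elements of $C$ are the nonnegative combinations $\sum c_iv_i$ with $c_i=c_{\sigma(i)}$, so $C\cap V^\sigma$ is the nonnegative span of the $a$ fixed rays together with the $b$ symmetric sums $v_i+v_{\sigma(i)}$. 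These $a+b$ vectors are linearly independent and span the $\sigma$-fixed part of $\mathrm{span}(v_1,\ldots,v_{2n-1})=V$, which is $V^\sigma$; comparing dimensions forces $a+b=n$, so also $a=1$, and $C\cap V^\sigma$ is a simplicial cone with exactly $n$ extreme rays.

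The main subtlety is really the first step—establishing that the $\Theta'$-cones are precisely the $\sigma$-invariant $\Theta$-cones sliced by $V^\sigma$—for which Proposition~\ref{sym bottoms} is essential. Once that bijection of maximal cones is in place, the dimension count of the second step automatically forces $\sigma$ to fix exactly one extreme ray of $C$, which is the only nontrivial numerical input needed, so no type-B analogue of Theorem~\ref{one} is required.
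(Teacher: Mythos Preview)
Your argument is correct and follows the same folding framework as the paper: identify $\F_{\Theta'}(B_n)$ with the restriction to the fixed subspace $V^\sigma$ of the $\sigma$-invariant part of $\F_\Theta(A_{2n-1})$, using Proposition~\ref{sym bottoms} to match up the maximal cones.

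Where you diverge is in the final step showing that $C\cap V^\sigma$ is simplicial of the right dimension. The paper works on the facet side: it uses the explicit description of walls of $C$ in terms of descents of the bottom permutation of the $\Theta$-class, observes that the conjugation action pairs these walls up into $2$-orbits with at most one fixed wall among the lower walls and at most one among the upper walls, and then uses the odd count $2n-1$ to force exactly one fixed wall overall, yielding $n$ walls for $C\cap V^\sigma$. You work on the dual (ray) side: $\sigma$ permutes the $2n-1$ extreme rays of the simplicial cone $C$, and the orbit sums give $a+b$ independent generators of $C\cap V^\sigma$ spanning $V^\sigma$, whence $a+b=n$ and $a=1$. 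Your version is cleaner and type-independent---it uses no combinatorics of signed permutations, only that $\sigma$ is an involution on a $(2n-1)$-dimensional space with $n$-dimensional fixed subspace---whereas the paper's wall count is more hands-on but makes the connection to the lattice-theoretic cover relations explicit.
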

Before we proceed with the proof of Proposition~\ref{typeB simplicial} we define some useful terminology.
Recall that there is a linear functional $\lambda$ that orients the adjacency graph on maximal cones in $\F(W)$ to yield a partial order isomorphic to the weak order on $W$.
A facet  of a maximal cone is a \newword{lower wall} (with respect to $\lambda$) if passing through it to an adjacent maximal cone is the same as moving down by a cover in the weak order.
\newword{Upper walls} are defined dually.
The maximal cones of $\F_{\Theta}(W)$ similarly have lower and upper walls with respect to $\lambda$; passing from one cone to an adjacent cone through a lower wall corresponds to moving down by a cover in the lattice quotient induced by $\Theta$.
The lower walls of a maximal cone  in $\F_{\Theta}(W)$ are the lower walls of the smallest element in the corresponding $\Theta$-congruence class.
(Recall that each maximal cone in $\F_{\Theta}(W)$ is the union of the set of maximal cones in $\F(W)$ in the same $\Theta$-congruence class.)
Dually, the upper walls of a maximal cone in $\F_{\Theta}(W)$ are the upper walls of the cone corresponding to the largest element in the $\Theta$-congruence class.

\begin{proof}[Proof of Proposition~\ref{typeB simplicial}]
We begin by considering type $A_{2n-1}$ in the usual geometric representation in $\R^{2n}$.
However, to prepare for the type-B construction, we index the standard unit basis vectors of $\R^{2n}$ as $-n,\ldots,-1,1,\ldots,n$.
In this representation, there is a reflecting hyperplane $H_{ji}$, with normal vector $e_j-e_i$, for each $i<j$ with $i,j\in\set{\pm1,\ldots\pm n}$.
The maximal cone corresponding to the permutation $x_{-n}\cdots x_{-1}x_1\cdots x_n$ has a lower (respectively upper) wall contained in $H_{ji}$ if and only if there exists $r\in\set{-n,\ldots,-1,1,\ldots n-1}$ such that $x_r=j$ and $x_{r+1}=i$ (respectively, $x_{r+1}=j$ and $x_{r}=i$).
As the price for our choice of indices, when $r=-1$, we must interpret $r+1$ here to mean $1$.

Recall that the signed permutations of $B_n$ are exactly the permutations in $A_{2n-1}$ that are fixed under conjugation by $w_0$ and that the restriction of weak order to these $w_0$-fixed permutations is weak order on $B_n$. 
As an abuse of terminology, the linear map on $\R^{2n}$ that sends each vector $(v_{-n},\ldots,v_{-1},v_1,\ldots,v_n)$ to $-(v_n,\ldots,v_1,v_{-1},\ldots,v_{-n})$ will be called the conjugation action of $w_0$ on $\R^{2n}$.
Let $L$ be the linear subspace of $\R^{2n}$ consisting of vectors fixed by this action.
These are the vectors with $v_i=-v_{-i}$ for all $i$.
A permutation in $A_{2n-1}$ is fixed under conjugation by $w_0$ if and only if its corresponding cone in $\F(A_{2n-1})$ intersects $L$ in its relative interior, in which case the cone is also fixed under conjugation by $w_0$.
Thus, we obtain $\F(B_n)$ as the fan induced on $L$ by $\F(A_{2n-1})$, and the weak order on $B_n$ arises from that induced fan, ordered by the same linear functional $\lambda$ as $\F(A_{2n-1})$.
Moreover, $\F_{\Theta'}(B_n)$ is the fan induced on $L$ by $\F_{\Theta}(A_{2n-1})$.

Almost all of the lower walls of a $w_0$-fixed maximal cone $C$ in $\F_{\Theta}(A_{2n-1})$ intersect $L$ in pairs.
Specifically, Proposition~\ref{sym bottoms} implies that any such cone is associated to a signed permutation $x=x_{-n}\cdots x_{-1}x_1\cdots x_n$ that is the bottom element of its $\Theta$-class.
A descent $x_{-1}x_1$ of $x$ contributes a single lower wall to $C$, and thus a single lower wall to $C\cap L$.
We will say that such a lower wall is centrally symmetric. 
All other descents of $x$ come in symmetric pairs $x_{-i-1}x_{-i}$ and $x_ix_{i+1}$, contributing two lower walls to $C$.
However, these two walls have the same intersection with $L$ and thus contribute only one lower wall to $C\cap L$.
Similar dual statements hold for the upper walls.
Most importantly, among all of the walls of $C\cap L$, there are at most two that are centrally symmetric: at most one among the set of lower walls, and at most one among set of upper walls.

Since $\F_{\Theta}(A_{2n-1})$ is simplicial, $C$ has an odd number of walls.
In particular, this implies that among all of the walls for $C$, there is exactly one that is centrally symmetric wall.
Suppose that this wall is a lower wall. 
Then, $C$ has an odd number of lower walls, say $2k-1$, and their intersection with $L$ yields $k$ lower walls for the corresponding cone $C\cap L$ in $\F_{\Theta'}(B_n)$.
Since $\F_{\Theta}(A_{2n-1})$ is simplicial, there are $2n-2k$ upper walls, which intersect $L$  in pairs, to form $n-k$ upper walls in $\F_{\Theta'}(B_n)$.
Thus the cone associated to $C$ in $\F_{\Theta'}(B_n)$ has a total of $n$ walls.
The same argument (switching lower walls with upper walls) shows that if the centrally symmetric wall is an upper wall, the cone associated to $C$ in $\F_{\Theta'}(B_n)$ has $n$ walls.
We conclude that $\F_{\Theta'}(B_n)$ is simplicial.
\end{proof}

\begin{proof}[Proof of the type-B case of Theorem~\ref{simple A B}]
Let $c$ be a bipartite Coxeter element in $A_{2n-1}$ and let $\tilde c$ be the same element, thought of as a Coxeter element of $B_n$.
Recall that $\tilde c$ is also bipartite.

Using the bipartite case of Proposition~\ref{same class} (with $n$ replaced by $2n-1$), it is easily checked that $x\equiv y $ modulo $\Theta_c$ if and only if $w_0xw_0\equiv w_0yw_0$ modulo $\Theta_{c}$.
It follows that the $c$-biCambrian congruence is symmetric under conjugation by~$w_0$.
Since a congruence is uniquely determined by the set of bottom elements of its classes, Proposition~\ref{typeB_bisort} implies that the restriction of the $c$-biCambrian congruence to $B_n$ is the $\tilde c$-biCambrian congruence.
Thus the type-B case of the theorem follows from Proposition~\ref{typeB simplicial} and the type-A case of the theorem.
\end{proof}

\section{Double-positive Catalan numbers and biCatalan numbers}\label{double pos sec}  
For each finite Coxeter group $W$, the positive $W$-Catalan number $\Cat^+(W)$ is defined from the $W$-Catalan number $\Cat(W)$ by inclusion-exclusion.  
In this section, we review the definition of the positive $W$-Catalan number and define the double-positive $W$-Catalan number $\Cat\pp(W)$ from the positive $W$-Catalan number by inclusion-exclusion.
We then prove Theorems~\ref{hard part} and \ref{hard part finer} by showing how to count both antichains in the doubled root poset and bipartite $c$-bisortable elements by the same formula involving double-positive Catalan numbers.
Recall that these two theorems in particular establish that the terms ``biCatalan number'' and ``biNarayana number'' make sense.
As we prove these theorems, we obtain as a by-product a formula for the $W$-biCatalan numbers in terms of the double-positive Catalan numbers of parabolic subgroups of $W$.
This formula leads to a recursion for the $W$-biCatalan numbers.
Using a similar recursion for the $W$-Catalan numbers and a few other enumerative facts, we solve that recursion for $\biCat(D_n)$ to complete the proof of Theorem~\ref{enum thm}.
The recursions discussed here all have Narayana $q$-analogues, but we are not at this time able to solve the recursion to find a formula for $\biCat(D_n;q)$.
See Section~\ref{type D biNar sec} for a brief discussion of the type-D biNarayana numbers.

The $W$-Catalan number has a uniform formula $\Cat(W)=\prod_{i=1}^n\frac{h+e_i+1}{e_i+1}$, but this formula will not play a large role in this paper.
Similarly, the positive $W$-Catalan number is $\Cat^+(W)=\prod_{i=1}^n\frac{h+e_i-1}{e_i+1}$, but we will give, and use, the more simple-minded inclusion-exclusion definition of $\Cat^+(W)$.
The positive $W$-Catalan and positive $W$-Narayana numbers have interpretations in each setting of Coxeter-Catalan combinatorics.
(See for example \cite{Ath,ABMW,Ath-Tzan,ga,Haiman,Panyushev,sortable,Sommers}.)
In this paper, we give the usual interpretations in the settings of nonnesting partitions and $c$-sortable elements, specifically in Sections~\ref{antichain formula sec} and~\ref{sortable formula sec}.
We are not aware of any uniform formulas for the double-positive $W$-Catalan or $W$-Narayana numbers; we define these numbers by inclusion-exclusion.
Case-by-case formulas for the double-positive $W$-Catalan numbers are found in Theorem~\ref{Cat++ thm}.


The double-positive $W$-Narayana numbers appeared in \cite{Ath-Sav} as the local $h$-vector of the positive part of the cluster complex.
(See Remark~\ref{Ath connection}.)
As far as we know, \cite{Ath-Sav} was the first appearance of the double-positive $W$-Catalan/Narayana numbers and the only appearance before the current paper.

\subsection{Double-positivity}\label{double pos subsec}
We write $S$ for the set of simple reflections generating~$W$.
Given $J\subseteq S$, the notation $W_J$ stands for the subgroup of $W$ generated by $J$.
The subgroup $W_J$ is called a \newword{standard parabolic subgroup} of $W$ and is a Coxeter group in its own right with simple reflections $J$.
In particular, each $W_J$ has a Catalan number.
As usual, we define the \newword{positive $W$-Catalan number} to be
\begin{equation}\label{Cat+ def}
\Cat^+(W)=\sum_{J\subseteq S}(-1)^{|S|-|J|}\Cat(W_J).
\end{equation} 
As is not usual, we define the \newword{double-positive $W$-Catalan number} to be  
\begin{equation}\label{Cat++ def}
\Cat\pp(W)=\sum_{J\subseteq S}(-1)^{|S|-|J|}\Cat^+(W_J).
\end{equation}


We will prove the following formula for the biCatalan numbers.

\begin{theorem}\label{bicat d-p}
For any finite Coxeter group $W$ with simple generators $S$, 
\begin{equation}\label{bicat d-p formula}
\biCat(W)=\sum2^{|S|-|I|-|J|}\Cat\pp(W_{I})\Cat\pp(W_{J}),
\end{equation}
where the sum is over all ordered pairs $(I,J)$ of disjoint subsets of $S$.
\end{theorem}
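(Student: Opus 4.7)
The plan is to enumerate two of the families from Theorem~\ref{main thm}—antichains in the doubled root poset (crystallographic case) and bipartite $c$-bisortable elements (all types)—and show directly that each is counted by the right-hand side of~(\ref{bicat d-p formula}). The uniform bisortable enumeration establishes Theorem~\ref{bicat d-p} for every finite Coxeter group, while the antichain enumeration additionally proves Theorem~\ref{hard part} in the crystallographic case. Both arguments decompose their objects along parallel lines, indexed by an ordered triple $(K,I,J)$ of pairwise disjoint subsets of $S$: here $K$ records a ``diagonal'' piece and $I$, $J$ record the supports of two positive pieces, each counted by a double-positive Catalan number of a parabolic subgroup.

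The key preliminary is an interpretation of $\Cat\pp$. M\"obius inversion of~(\ref{Cat+ def}) and~(\ref{Cat++ def}) yields $\Cat(W)=\sum_{J\subseteq S}\Cat^+(W_J)$ and $\Cat^+(W)=\sum_{L\subseteq S}\Cat\pp(W_L)$. Splitting a nonnesting antichain first by its support, and then within each full-support antichain by the set of simple roots it contains—with antichain-ness forcing the non-simple part to live in the complementary parabolic—identifies $\Cat\pp(W)$ with the number of nonnesting antichains of \emph{non-simple} positive roots in $W$ whose support is all of $S$.

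For the antichain side in the crystallographic case, I would apply Proposition~\ref{anti twin nn} to pass to twin nonnesting partitions $(A_1,A_2)$ and classify them by the triple $K=A_1\cap\Delta=A_2\cap\Delta$, $I=\supp(A_1^\circ)$, $J=\supp(A_2^\circ)$. The twin condition forces $I\cap J=\emptyset$, and antichain-ness of each $A_i$ forces $K$ disjoint from $I\cup J$. Conversely, for any disjoint $(I,J)$ and any $K\subseteq S\setminus(I\cup J)$, the non-simple parts $A_1^\circ$ and $A_2^\circ$ may be chosen independently among antichains of non-simple positive roots with full support in $W_I$ and $W_J$, contributing $\Cat\pp(W_I)\,\Cat\pp(W_J)$ by the previous paragraph. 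Summing over the $2^{|S|-|I|-|J|}$ choices of $K$ and over ordered disjoint pairs $(I,J)$ yields the right-hand side of~(\ref{bicat d-p formula}).

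The uniform bisortable argument runs in parallel but demands more sortable-theoretic input. Proposition~\ref{c join cinv} writes each bipartite $c$-bisortable $w$ as $\pidown^c(w)\vee\pidown^{c^{-1}}(w)$, and I would decompose $w$ by classifying the canonical joinands of $u=\pidown^c(w)$ and $v=\pidown^{c^{-1}}(w)$ according to their $c$- and $c^{-1}$-sortability. Under the Coxeter-Catalan bijection between $c$-sortable join-irreducibles and positive roots, the join-irreducibles that are simultaneously $c$- and $c^{-1}$-sortable correspond to simple roots: extract $K\subseteq S$ from such ``doubly-sortable'' joinands, and extract $I$ and $J$ from the supports of the remaining $c$-sortable and $c^{-1}$-sortable joinands. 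The key technical input is the disjoint-support property for $c$- and $c^{-1}$-sortable join-irreducibles under bipartite $c$, promised uniformly in Section~\ref{sortable formula sec}; it forces $I\cap J=\emptyset$ and, together with antichain-ness of canonical joinands, $K\cap(I\cup J)=\emptyset$. Applying the sortable/nonnesting correspondence within each of $W_I$ and $W_J$ then matches the count of admissible joinand collections with $\Cat\pp(W_I)\,\Cat\pp(W_J)$, and the same sum appears. The main obstacle is precisely this sortable-side analysis—verifying the disjoint-support property uniformly and cleanly reducing the restricted joinand counts to the double-positive Catalan numbers of parabolic subgroups. Once in place, Theorem~\ref{bicat d-p} follows, and in the crystallographic case Theorem~\ref{hard part} follows simultaneously.
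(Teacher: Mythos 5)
Your proposal follows essentially the same route as the paper: the paper proves Theorem~\ref{bicat d-p} by establishing (in Propositions~\ref{q antichain} and~\ref{q bisortable formula}) that the sum $\sum q^{|M|}\Cat\pp(W_I;q)\Cat\pp(W_J;q)$ over pairwise disjoint triples $(I,J,M)$ counts both antichains in the doubled root poset and bipartite $c$-bisortable elements, using exactly your decomposition into a set of simple roots/reflections plus two non-simple pieces with disjoint supports (your $K$ is the paper's $M$), with Proposition~\ref{i-e nn 3} and the ``positive, clever'' sortable analysis supplying the $\Cat\pp$ interpretations. Setting $q=1$ and summing over the $2^{|S|-|I|-|J|}$ choices of the diagonal set gives \eqref{bicat d-p formula}, just as you describe.
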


We can prove a refinement of Theorem~\ref{bicat d-p} using the usual notion of positive Narayana numbers and a notion of double-positive Narayana numbers.
The \newword{positive $W$-Narayana numbers} are
\begin{equation}\label{Nar+ def}
\Nar^+_k(W)=\sum_{J\subseteq S}(-1)^{|S|-|J|}\Nar_k(W_J).
\end{equation}
We define the \newword{double-positive $W$-Narayana number} to be
\begin{equation}\label{Nar++ def}
\Nar\pp_k(W)=\sum_{J\subseteq S}(-1)^{|S|-|J|}\Nar^+_{k-|S|+|J|}(W_J).
\end{equation}
In all of the settings where the Narayana numbers appear, it is apparent that $\Nar_k(W)=0$ whenever $k<0$ or $k$ is greater than the rank of $W$.
These definitions establish that $\Nar^+_k(W)=\Nar\pp_k(W)=0$ as well for those values of $k$.

Defining $\Cat^+(W;q)=\sum_k\Nar^+_k(W)q^k$ and $\Cat\pp(W;q)=\sum_k\Nar\pp_k(W)q^k$, equations \eqref{Nar+ def} and \eqref{Nar++ def} correspond to 
\begin{equation}\label{q Cat+ def}
\Cat^+(W;q)=\sum_{J\subseteq S}(-1)^{|S|-|J|}\Cat(W_J;q).
\end{equation}
and
\begin{equation}\label{q Cat++ def} 
\Cat\pp(W;q)=\sum_{J\subseteq S}(-q)^{|S|-|J|}\Cat^+(W_J;q).
\end{equation}
Taking $\biCat(W;q)=\sum_k\biNar_k(W)q^k$, we will prove the following $q$-analog of Theorem~\ref{bicat d-p}.

\begin{theorem}\label{biCat GF d-p}
For any finite Coxeter group $W$ with simple generators $S$,
\begin{equation}\label{GF d-p formula}
\biCat(W;q)=\sum\, q^{|M|}\Cat\pp(W_{I};q)\Cat\pp(W_{J};q),
\end{equation}
where the sum is over all ordered triples $(I,J,M)$ of pairwise disjoint subsets of~$S$.
\end{theorem}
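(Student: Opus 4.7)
The plan is to derive Theorem~\ref{biCat GF d-p} as the $q$-refinement of Theorem~\ref{bicat d-p}, with $q$ tracking antichain size (equivalently, by Theorem~\ref{hard part finer}, the descent statistic on bipartite $c$-bisortable elements). I will sketch the argument in the crystallographic setting, where Theorem~\ref{hard part finer} and Proposition~\ref{anti twin nn} identify $\biNar_k(W)$ with the number of pairs $(A_1,A_2)$ of twin nonnesting partitions satisfying $|A_1|+|A_2|-|A_1\cap A_2|=k$.

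To each such pair I attach a triple of pairwise disjoint subsets of $S$: $M=A_1\cap A_2\cap\Delta$, $I=\supp(A_1^\circ)$, and $J=\supp(A_2^\circ)$. Here $I\cap J=\emptyset$ is part of the twin condition, while $M\cap I=\emptyset$ holds because a simple root $\alpha\in A_1\cap M$ whose associated simple reflection lay in $I$ would sit strictly below some $\beta\in A_1^\circ$ with $\alpha$ in its support, contradicting the antichain property of $A_1$; symmetrically $M\cap J=\emptyset$. Conversely, given any pairwise disjoint triple $(M,I,J)$, the sets $A_1^\circ$ and $A_2^\circ$ range independently over non-simple antichains of full support in the root posets of $W_I$ and $W_J$.

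The next step is to identify $\Cat\pp(W_L;q)$ as the size-generating polynomial for non-simple antichains of full support in the root poset of $W_L$. M\"obius-inverting equation~\eqref{q Cat++ def} gives
\[
\Cat^+(W_L;q)=\sum_{K\subseteq L}q^{|L|-|K|}\Cat\pp(W_K;q),
\]
which matches the evident bijective decomposition of a full-support antichain $A$ in $W_L$ into its simple part $K=A\cap\Delta$ (contributing a $q^{|K|}$) and its non-simple part $A^\circ$ (which necessarily has full support on $L\setminus K$). Since $|A|=|M|+|A_1^\circ|+|A_2^\circ|$ under the triple decomposition above, summing over $(M,I,J)$ then produces the claimed identity.

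For the non-crystallographic types, where the antichain interpretation is unavailable, I would run the parallel argument on the bipartite $c$-bisortable side: decompose each bisortable element by the supports of its canonical joinands into a subset $M\subseteq S$ of ``both-sortable'' joinands (forced to be simple), a collection of $c$-sortable-only joinands of combined support $I$, and a collection of $c^{-1}$-sortable-only joinands of combined support $J$, with $q$ tracking the total number of canonical joinands (hence descents). The main obstacle is the disjoint-support input that makes this decomposition clean: namely, that $c$-sortable and $c^{-1}$-sortable join-irreducibles have disjoint supports when $c$ is bipartite. This is the uniform analogue of the non-overlap of right-even and left-even alternating arcs (Proposition~\ref{overlap comp}) flagged in Remark~\ref{type A insight}, and establishing it uniformly (rather than by type-by-type analysis) is what makes the sortable-side proof nontrivial.
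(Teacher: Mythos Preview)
Your proposal is correct and follows essentially the same route as the paper. The crystallographic half is exactly Proposition~\ref{q antichain} (phrased via twin nonnesting pairs rather than antichains in the doubled root poset, which are equivalent by Proposition~\ref{anti twin nn}), and your identification of $\Cat\pp(W_L;q)$ as the size-generating function for full-support, simple-free antichains is Proposition~\ref{i-e nn 3}. For the non-crystallographic types you correctly point to the bisortable-side argument and correctly isolate the key technical input: the paper proves this disjoint-support property uniformly as Proposition~\ref{disjoint_simple_support} (via Lemmas~\ref{only ji}, \ref{simple_covers}, and~\ref{simpler_disjoint_support}), and then packages the resulting decomposition as Proposition~\ref{q bisortable formula}, exactly as you outline.
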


The following theorem is equivalent to Theorem~\ref{biCat GF d-p}.
\begin{theorem}\label{binar d-p}
For any finite Coxeter group $W$ with simple generators $S$ and any~$k$,
\begin{equation}\label{binar d-p formula}
\biNar_k(W)=\sum\,\sum_{i=0}^{k-|M|}\Nar\pp_i(W_{I})\Nar\pp_{k-|M|-i}(W_{J}),
\end{equation}
where the outer sum is over all ordered triples $(I,J,M)$ of pairwise disjoint subsets of~$S$.
(If $|M|>k$, then the inner sum is interpreted to be zero.)
\end{theorem}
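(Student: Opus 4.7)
The plan is to observe that Theorem~\ref{binar d-p} is precisely the statement obtained by extracting the coefficient of $q^k$ from both sides of equation~\eqref{GF d-p formula} in Theorem~\ref{biCat GF d-p}, so the proof reduces to a direct coefficient comparison. No additional combinatorial or lattice-theoretic input is needed beyond the generating function identity that (by the setup of the section) will have already been proved.

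More concretely, I would first unpack the left-hand side: by the definition $\biCat(W;q)=\sum_k\biNar_k(W)q^k$, the coefficient of $q^k$ on the left of~\eqref{GF d-p formula} is exactly $\biNar_k(W)$. For the right-hand side, fix an ordered triple $(I,J,M)$ of pairwise disjoint subsets of $S$ and expand
\[
q^{|M|}\Cat\pp(W_{I};q)\Cat\pp(W_{J};q)
=\sum_{i,j\geq 0}\Nar\pp_i(W_I)\,\Nar\pp_j(W_J)\,q^{|M|+i+j}.
\]
Collecting the coefficient of $q^k$ gives $\sum_{i+j=k-|M|}\Nar\pp_i(W_I)\,\Nar\pp_j(W_J)$, and summing over $(I,J,M)$ yields precisely the right-hand side of~\eqref{binar d-p formula}.

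The only minor point to address is the boundary convention when $|M|>k$. In that case $k-|M|<0$, so for every $i\geq 0$ we have $k-|M|-i<0$, and by the convention established immediately after~\eqref{Nar++ def} (namely $\Nar\pp_j(W)=0$ for $j<0$), every term in the inner sum vanishes. This matches the parenthetical instruction in the statement of Theorem~\ref{binar d-p} that the inner sum be interpreted as zero, so the two theorems agree term-by-term. There is no genuine obstacle here; the only thing to be careful about is the bookkeeping of indices and the sign/vanishing conventions for $\Nar\pp_j$ at negative arguments, all of which are already in place.
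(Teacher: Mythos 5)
Your proposal is correct and matches the paper's treatment: the paper states that Theorem~\ref{binar d-p} is equivalent to Theorem~\ref{biCat GF d-p} and derives it immediately from that generating-function identity, which is exactly the coefficient-of-$q^k$ extraction you carry out, including the vanishing convention $\Nar\pp_j=0$ for $j<0$ established after~\eqref{Nar++ def}.
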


To prove these theorems, as well as Theorems~\ref{hard part} and~\ref{hard part finer}, we establish (in Propositions~\ref{q antichain} and~\ref{q bisortable formula}) that the right side of \eqref{GF d-p formula} counts antichains $A$ in the doubled root poset with weight $q^{|A|}$ and also counts bipartite $c$-bisortable elements $v$ with weight $q^{\des(v)}$.
Once these counts are established, Theorems~\ref{hard part} and~\ref{hard part finer} follow, and in particular the definitions of the biCatalan and biNarayana numbers are validated.
Also, Theorem~\ref{biCat GF d-p} holds, leading immediately to Theorems~\ref{bicat d-p} and~\ref{binar d-p}.

\subsection{Counting twin nonnesting partitions}\label{antichain formula sec}
We now recall the interpretations of the positive Catalan and Narayana numbers and give the interpretations of double-positive Catalan and Narayana numbers in the nonnesting setting. 
(Results in \cite{Ath-Tzan,Sommers} give the same interpretations, but accomplish much more, by establishing bijections and counting formulas.
By contrast, here we are only making simple assertions about inclusion-exclusion.)
After giving these interpretations, we prove that the formula in Theorem~\ref{binar d-p} counts $k$-element antichains in the doubled root poset.

Since it is customary to talk about the ``$W$-Catalan number'' rather than the ``$\Phi$-Catalan number,'' we will make statements about ``the root poset of $W$,'' when $W$ is a crystallographic Coxeter group.
This is harmless because, although the map from crystallographic root systems to Coxeter groups is not one-to-one, for each crystallographic Coxeter group, all corresponding crystallographic root systems have isomorphic root posets.
Correspondingly, when $W_J$ is a standard parabolic subgroup of $W$, we will say that a root or set of roots is ``contained in $W_J$'' if it is contained in the subset of $\Phi$ forming a root system for $W_J$.
An antichain that is not contained in any proper parabolic $W_J$ has full support, in the sense of Section~\ref{nn sec}.

For any $J\subseteq S$, the number of antichains in the root poset for $W$ that are contained in $W_J$ is $\Cat(W_J)$.  
By inclusion-exclusion, we conclude that: 
\begin{prop}\label{i-e nn}
The number of antichains in the root poset for~$W$ with full support is $\Cat^+(W)$.
The number of $k$-element antichains in the root poset for~$W$ with full support is $\Nar^+_k(W)$.
\end{prop}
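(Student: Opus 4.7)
The plan is a routine Möbius inversion on the Boolean lattice of subsets of $S$, using the fact just quoted before the proposition: for any $J\subseteq S$, the number of antichains in the root poset of $W$ contained in $W_J$ is $\Cat(W_J)$, and refining by cardinality gives $\Nar_k(W_J)$ for the $k$-element antichains. The only conceptual step is to identify ``contained in $W_J$'' with ``support contained in $J$.'' Indeed, a positive root $\beta$ lies in the root subsystem of $W_J$ exactly when $\supp(\beta)\subseteq J$, so an antichain $A$ is contained in $W_J$ if and only if $\supp(A)\subseteq J$. In particular, the antichains with full support are exactly those contained in no proper standard parabolic.

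Next I would set $f_k(J)$ equal to the number of $k$-element antichains in the root poset of $W$ with $\supp(A)=J$ and $g_k(J)=\Nar_k(W_J)$, so that by partitioning antichains according to their (unique) support we get
\[
g_k(J)=\sum_{I\subseteq J}f_k(I).
\]
Möbius inversion on the Boolean lattice yields
\[
f_k(S)=\sum_{J\subseteq S}(-1)^{|S|-|J|}\,g_k(J)=\sum_{J\subseteq S}(-1)^{|S|-|J|}\,\Nar_k(W_J),
\]
which is precisely $\Nar^+_k(W)$ by definition \eqref{Nar+ def}. This proves the second assertion. Summing over $k$ (or running the identical argument unrefined by size, using $\Cat(W_J)$ in place of $\Nar_k(W_J)$) gives the first assertion via \eqref{Cat+ def}.

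There is no real obstacle here; the content of the proposition is essentially that definitions \eqref{Cat+ def} and \eqref{Nar+ def} are exactly the inclusion-exclusion formulas that count the full-support objects, so this is a one-paragraph verification rather than a theorem whose proof requires new ideas. In the paper I would present it as a brief remark immediately after stating the preceding fact about $\Cat(W_J)$ counting antichains contained in $W_J$.
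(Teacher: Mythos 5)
Your proof is correct and is exactly the argument the paper intends: the paper states the preceding fact that antichains contained in $W_J$ are counted by $\Cat(W_J)$ and then simply says ``by inclusion-exclusion,'' which is precisely the M\"obius-inversion-on-the-Boolean-lattice computation you spell out. No difference in approach; your write-up just makes the one-line justification explicit.
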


For $J\subseteq S$, the map $A\mapsto A\setminus\set{\alpha_i:i\in J}$ is a bijection from the set of antichains containing the simple roots $\set{\alpha_i:i\in J}$ to the set of antichains in the root poset for $W_{S\setminus J}$.

Using this bijection, we prove the following proposition.
\begin{prop}\label{i-e nn 2}
The number of antichains in the root poset for~$W$ containing no simple roots is $\Cat^+(W)$.
The number of $k$-element antichains in the root poset for~$W$ containing no simple roots is $\Nar^+_{n-k}(W)$.
\end{prop}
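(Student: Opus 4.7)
The plan is to apply the bijection stated immediately before the proposition, combined with inclusion--exclusion on which simple roots an antichain contains. For each $J\subseteq S$, that bijection identifies the antichains of the root poset of $W$ that contain $\{\alpha_i:i\in J\}$ with all antichains of $W_{S\setminus J}$, and restricts to a bijection between the $k$-element members of the first family and the $(k-|J|)$-element members of the second. So, writing $N_J$ for the number of antichains of $W$ containing these specified simple roots and $N_J^{(k)}$ for the $k$-element version, we obtain $N_J=\Cat(W_{S\setminus J})$ and $N_J^{(k)}=\Nar_{k-|J|}(W_{S\setminus J})$.

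For the unrefined statement, standard inclusion--exclusion yields the number of antichains containing no simple roots as
\[
\sum_{J\subseteq S}(-1)^{|J|}\Cat(W_{S\setminus J})
\;=\;\sum_{K\subseteq S}(-1)^{|S|-|K|}\Cat(W_K)
\;=\;\Cat^+(W),
\]
by reindexing $K=S\setminus J$ and applying \eqref{Cat+ def}. For the refinement, the same inclusion--exclusion gives the number of $k$-element antichains containing no simple roots as
\[
\sum_{K\subseteq S}(-1)^{|S|-|K|}\Nar_{k-n+|K|}(W_K),
\]
whereas the target quantity, by \eqref{Nar+ def}, is
\[
\Nar^+_{n-k}(W)\;=\;\sum_{K\subseteq S}(-1)^{|S|-|K|}\Nar_{n-k}(W_K).
\]
To reconcile these I would invoke the palindromicity of the Narayana polynomial, $\Nar_i(W_K)=\Nar_{|K|-i}(W_K)$, applied with $i=k-n+|K|$ so that $|K|-i=n-k$; this matches the two sums term by term.

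The main obstacle is this palindromicity for each crystallographic parabolic $W_K$, which I would cite as a standard fact: the $W_K$-Narayana polynomial is the $h$-polynomial of the simplicial generalized associahedron (equivalently, of the cluster complex) of $W_K$, and palindromicity follows from the Dehn--Sommerville relations on the underlying simplicial sphere. Everything else is routine bookkeeping with the bijection and the definitions of $\Cat^+$ and $\Nar^+$.
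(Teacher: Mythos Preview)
Your proof is correct and follows essentially the same approach as the paper: both use the stated bijection together with inclusion--exclusion, and both rely on the palindromicity of the Narayana polynomial of each parabolic $W_K$ as the key ingredient. The only difference is cosmetic---the paper packages the argument via generating functions (using the symmetry $q^{|J|}\Cat(W_J;q^{-1})=\Cat(W_J;q)$, which is exactly your palindromicity statement), while you work directly with the coefficients.
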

\begin{proof}
The bijection mentioned above implies that the generating function for antichains containing the simple roots $\set{\alpha_i:i\in J}$ (and possibly additional simple roots) is $q^{|J|}\Cat(W_{S\setminus J};q)$.
By inclusion-exclusion, the generating function for $k$-element antichains containing no simple roots is $\sum_{J\subseteq S}(-q)^{|S|-|J|}\Cat(W_J;q)$.
On the other hand, starting with \eqref{q Cat+ def}, replacing $q$ by $q^{-1}$, multiplying through by $q^{|S|}$ (i.e.\ $q^n$), and using the known symmetry $q^{|J|}\Cat(W_J;q^{-1})=\Cat(W_J;q)$ of the coefficients of $\Cat(W_J;q)$, we obtain 
\[\sum_{k=0}^n\Nar^+_{n-k}(W)q^k=\sum_{J\subseteq S}(-q)^{|S|-|J|}\Cat(W_J;q).\qedhere\]
\end{proof}

The bijection described above restricts to a bijection from the set of antichains \emph{with full support} containing the simple roots $\set{\alpha_i:i\in J}$ to the set of antichains \emph{with full support} in the root poset for $W_{S\setminus J}$.
Thus, a similar inclusion-exclusion argument yields the following proposition.

\begin{prop}\label{i-e nn 3}
The number of antichains in the root poset for~$W$ with full support containing no simple roots is $\Cat\pp(W)$.
The number of $k$-element antichains in the root poset for~$W$ with full support containing no simple roots is $\Nar\pp_k(W)$.
\end{prop}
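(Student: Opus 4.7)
The plan is to adapt the inclusion-exclusion argument of Proposition~\ref{i-e nn 2} by inserting a full-support constraint throughout. Write $g_W(q) = \sum_A q^{|A|}$ for the generating polynomial, where $A$ ranges over antichains in the root poset of $W$ with full support and containing no simple roots. The goal is to show $g_W(q) = \Cat\pp(W;q)$; then setting $q = 1$ proves the Catalan statement, and extracting the coefficient of $q^k$ proves the Narayana statement.

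First I would verify the restricted bijection asserted just before the proposition: the map $A \mapsto A \setminus \{\alpha_i : i \in J\}$ sends antichains of $W$ with full support and containing $\{\alpha_i : i \in J\}$ bijectively to antichains of $W_{S \setminus J}$ with full support. The key observation is that any non-simple $\beta \in A$ must satisfy $\supp(\beta) \cap J = \emptyset$; otherwise $\beta \geq \alpha_i$ in the root poset for some $i \in J$, contradicting the antichain condition. Hence the images lie in $W_{S \setminus J}$, and the full support of $A$ in $W$ forces the images to have full support in $W_{S \setminus J}$ (the $\alpha_i$ with $i \in J$ are no longer needed, and the other simple roots remain covered by the non-simple part of $A$). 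The inverse map adjoins $\{\alpha_i : i \in J\}$, which remains an antichain because the new simple roots are incomparable to every root with support in $S \setminus J$.

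Applying Proposition~\ref{i-e nn} to the right-hand side of this bijection shows that the generating function for antichains of $W$ with full support that contain all of $\{\alpha_i : i \in J\}$ is $q^{|J|}\,\Cat^+(W_{S \setminus J}; q)$. Standard inclusion-exclusion over $\Delta$ then yields
\[g_W(q) = \sum_{J \subseteq S}(-1)^{|J|}\, q^{|J|}\, \Cat^+(W_{S \setminus J}; q) = \sum_{K \subseteq S}(-q)^{|S|-|K|}\, \Cat^+(W_K; q),\]
after the substitution $K = S \setminus J$. The right-hand side is precisely the definition~\eqref{q Cat++ def} of $\Cat\pp(W;q)$, completing the identification.

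The main obstacle is the careful verification of the restricted bijection, especially that full support in $W$ descends to full support in $W_{S \setminus J}$ and that no unwanted simple roots reappear. The supporting fact---that a positive root with $\alpha_i$ in its support dominates $\alpha_i$ in the root poset---is immediate from the definition, but it must be invoked consistently at each step. Once the bijection is secured, the rest is a formal Möbius computation on the Boolean lattice of subsets of $S$.
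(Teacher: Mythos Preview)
Your proposal is correct and follows exactly the approach the paper sketches: the paper simply asserts that the bijection $A\mapsto A\setminus\{\alpha_i:i\in J\}$ restricts to full-support antichains and that ``a similar inclusion-exclusion argument'' to that of Proposition~\ref{i-e nn 2} yields the result, without writing out the details. You have filled in precisely those details, and your verification of the restricted bijection and the resulting identity $g_W(q)=\sum_{K\subseteq S}(-q)^{|S|-|K|}\Cat^+(W_K;q)=\Cat\pp(W;q)$ is correct.
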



\begin{remark}\label{Ath connection}
The polynomials $\Cat\pp(W;q)$ appeared in \cite{Ath-Sav}, where Athanasiadis and Savvidou showed that $\Cat\pp(W;q)$ is the local $h$-vector of the positive part of the cluster complex, as we now explain.
We refer to \cite{Ath-Sav} for the relevant definitions, which we will not need here.
In light of \cite[Theorem~1.5]{Ath-Tzan} and Proposition~\ref{i-e nn 2}, the polynomial $h(\Delta_+(\Phi),x)$ appearing in \cite{Ath-Sav} is $x^{|S|}\Cat^+(W;x^{-1})$, where $(W,S)$ is the Coxeter system associated to $\Phi$.
Thus the assertion of \cite[Proposition~2.5]{Ath-Sav} is that the local $h$-vector of the positive part of the cluster complex is $\sum_{J\subseteq S}(-1)^{|S|-|J|}x^{|J|}\Cat^+(W;x^{-1})$.
But since the local $h$-vector is symmetric by \cite[Theorem~3.3]{Stanley}, we can replace $x$ by $x^{-1}$ and multiply by $x^{|S|}$ to show that the local $h$-vector is $\sum_{J\subseteq S}(-x)^{|S|-|J|}\Cat^+(W;x)=\Cat\pp(W;x)$.
\end{remark}

We now prove the key result on antichains in the doubled root poset.
\begin{prop}\label{q antichain}
For any finite Coxeter group $W$ with simple generators $S$, the generating function $\sum_Aq^{|A|}$ for antichains $A$ in the doubled root poset is 
\begin{equation}\label{q antichain formula}
\sum\, q^{|M|}\Cat\pp(W_{I};q)\Cat\pp(W_{J};q),
\end{equation}
where the sum is over all ordered triples $(I,J,M)$ of \emph{pairwise disjoint} subsets of~$S$.
\end{prop}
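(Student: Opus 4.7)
The plan is to apply Proposition~\ref{anti twin nn} to translate antichains $A$ in the doubled root poset into pairs $(A_1,A_2)$ of twin nonnesting partitions, and then organize this count by the three subsets of simple roots that it naturally produces. Given such an antichain $A$, set $A_1=\top(A)$, $A_2=\bottom(A)$, and let $M=A_1\cap\Delta=A_2\cap\Delta$, $I=\supp(A_1^\circ)$, $J=\supp(A_2^\circ)$. I would first check that the three sets $I$, $J$, $M$ are pairwise disjoint subsets of $S$: disjointness $I\cap J=\emptyset$ is the support condition in the definition of twin nonnesting partitions, while $M\cap I=\emptyset$ (and symmetrically $M\cap J=\emptyset$) follows because any simple root $\alpha\in M$ lying in $\supp(A_1^\circ)$ would satisfy $\alpha\le\beta$ in the root poset for some non-simple $\beta\in A_1^\circ$, contradicting the antichain property of $A_1$.

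Next, I would note the weight identity $|A|=|M|+|A_1^\circ|+|A_2^\circ|$, which holds because $M$ is the overlap where top and bottom copies are identified while the non-simple parts $A_1^\circ$ and $A_2^\circ$ contribute separately. Then I would argue that, conversely, given any pairwise disjoint triple $(I,J,M)$ in $S$, the antichains $A$ with $\supp(\top(A)^\circ)=I$, $\supp(\bottom(A)^\circ)=J$, and $\top(A)\cap\Delta=\bottom(A)\cap\Delta=M$ are in bijection with pairs $(A_1^\circ,A_2^\circ)$ where $A_1^\circ$ is an antichain in the root poset of $W_I$ having full support and containing no simple roots, and $A_2^\circ$ is the analogous object for $W_J$. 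The twin conditions are then automatic: the intersections with $\Delta$ agree (both equal $M$) and the supports of $A_1^\circ$ and $A_2^\circ$ are disjoint.

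Finally, I would invoke Proposition~\ref{i-e nn 3} to identify the generating function for antichains of $W_I$ with full support containing no simple roots as $\Cat\pp(W_I;q)$, and similarly $\Cat\pp(W_J;q)$ for $W_J$. Combining these two independent choices with the factor $q^{|M|}$ from the simple roots in $M$ and summing over all pairwise disjoint ordered triples $(I,J,M)$ yields exactly \eqref{q antichain formula}.

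The main conceptual step is the disjointness observation that $M$ cannot meet $I$ or $J$; everything else is bookkeeping combined with the inclusion-exclusion content of Proposition~\ref{i-e nn 3}. There is no substantive combinatorial obstacle, since the bijection of Proposition~\ref{anti twin nn} already does the heavy lifting of separating an antichain in the doubled root poset into independent data on the top and bottom sides glued along the simple roots.
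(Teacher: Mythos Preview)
Your proposal is correct and follows essentially the same approach as the paper's proof: decompose an antichain $A$ in the doubled root poset into its non-simple top part, its non-simple bottom part, and its simple part $M$, observe that the supports $I$, $J$, and $M$ are pairwise disjoint because $A$ is an antichain, and then apply Proposition~\ref{i-e nn 3}. The only cosmetic difference is that you route the decomposition through Proposition~\ref{anti twin nn} and the language of twin nonnesting partitions, whereas the paper decomposes $A$ directly into $(B,C,M)$; your $(A_1^\circ,A_2^\circ,M)$ is exactly the paper's $(B,C,M)$.
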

\begin{proof} 
In light of Proposition~\ref{i-e nn 3}, the proposition amounts to the following assertions:
First, there is a bijection from antichains $A$ in the doubled root poset to triples $(B,C,M)$ such that $B$ and $C$ are antichains in the root poset for $W$, each containing no simple roots, and the sets $I=\supp(B)$, $J=\supp(C)$ and $M$ are pairwise disjoint.
Second, under this bijection, $|B|+|C|+|M|=|A|$.
Every antichain $A$ in the doubled root poset consists of some set $B$ of positive non-simple roots in the top root poset, some set $C$ of positive non-simple roots in the bottom root poset, and some set $M$ of simple roots.  
The sets $I$, $J$, and $M$ are pairwise disjoint because $A$ is an antichain. 
The map $A\mapsto(B,C,M)$ is the desired bijection.
\end{proof}

It will be useful to have a similar formula for antichains in the (not doubled) root poset, which are known to be counted by $\Cat(W)$.

\begin{theorem}\label{Cat GF d-p}
For any finite Coxeter group $W$ with simple generators $S$.
\begin{equation}\label{Cat GF d-p formula}
\Cat(W;q)=\sum\, q^{|J|}\Cat\pp(W_{I};q),
\end{equation}
where the sum is over all ordered pairs $(I,J)$ of \emph{disjoint} subsets of~$S$.
\end{theorem}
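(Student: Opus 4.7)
The plan is to mirror the bijective decomposition used in Proposition~\ref{q antichain}, but applied to a single root poset rather than the doubled one. Given an antichain $A$ in the root poset for $W$, I would partition it as $A = J \sqcup A^\circ$, where $J = A \cap \Delta$ is the set of simple roots in $A$ and $A^\circ$ is the set of non-simple roots. Let $I = \supp(A^\circ) \subseteq S$.

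The first key observation is that $I$ and $J$ are disjoint. Indeed, if a simple root $\alpha_i$ lies in $J$ and also appears in the support of some non-simple root $\beta \in A^\circ$, then $\alpha_i < \beta$ in the root poset, contradicting the antichain property of $A$. The second observation is that $A^\circ$, viewed inside the root system for $W_I$, is an antichain of full support containing no simple roots: full support by the definition of $I$, and no simple roots because $A^\circ$ contains only non-simple roots of $W$ (so in particular no simple roots of $W_I$, since the simple roots of $W_I$ form a subset of $\Delta$). Conversely, given any disjoint pair $(I,J)$ of subsets of $S$ and any antichain $B$ of full support in $W_I$ containing no simple roots of $W_I$, the union $B \cup \{\alpha_j : j \in J\}$ is an antichain in the root poset for $W$, because no element of $B$ has any $\alpha_j$ (for $j \in J$) in its support. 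Thus $A \mapsto (I, J, A^\circ)$ is a bijection.

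Since $|A| = |A^\circ| + |J|$, grouping antichains by $(I,J)$ yields
\begin{equation*}
\Cat(W;q) \;=\; \sum_A q^{|A|} \;=\; \sum_{(I,J)} q^{|J|} \sum_{A^\circ} q^{|A^\circ|},
\end{equation*}
where the outer sum ranges over disjoint pairs $(I,J)$ and the inner sum is over antichains of full support in the root poset for $W_I$ containing no simple roots. By Proposition~\ref{i-e nn 3}, the inner sum is precisely $\Cat\pp(W_I;q)$, giving the desired formula.

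I expect no serious obstacle here: the argument is a direct one-copy version of the proof of Proposition~\ref{q antichain}. The only point worth being careful about is the support convention, namely that ``full support'' for an antichain in $W_I$ refers to the simple roots of $W_I$, and that the non-appearance of simple roots of $W$ in $A^\circ$ automatically implies the non-appearance of simple roots of $W_I$. Once these bookkeeping points are checked, the bijection and generating-function identity follow immediately.
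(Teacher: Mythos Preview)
Your proof is correct and follows essentially the same approach as the paper: decompose each antichain in the root poset into its simple and non-simple parts, let $I$ be the support of the non-simple part and $J$ the set of simple roots, observe disjointness, and invoke Proposition~\ref{i-e nn 3}. Your version is in fact more careful than the paper's terse two-sentence proof, explicitly verifying both directions of the bijection and the bookkeeping about full support and absence of simple roots in $W_I$.
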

\begin{proof}  
Every antichain $A$ in the root poset consists of some set $B$ of positive non-simple roots and some set $C$ of simple roots.
Writing $I$ and $J$ for the supports of $B$ and $C$, again $I$ and $J$ are disjoint.
By Proposition~\ref{i-e nn 3}, each pair $(I,J)$ of disjoint subsets of $S$ contributes $q^{|J|}\Cat\pp(W_{I};q)$ to the count.
\end{proof}

The following is an immediate consequence of Proposition~\ref{i-e nn 3} and will also be useful.

\begin{prop}\label{Cat++ reducible}
If $W$ is reducible as $W_1\times W_2$, then 
\begin{equation}\label{Cat++ reducible formula}
\Cat\pp(W;q)=\Cat\pp(W_1;q)\Cat\pp(W_2;q).
\end{equation}
\end{prop}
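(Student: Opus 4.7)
The plan is to give a short direct proof by leveraging the combinatorial interpretation furnished by Proposition~\ref{i-e nn 3}, falling back to the inclusion-exclusion definition if we want a proof that works uniformly for all finite Coxeter groups (not only crystallographic ones).

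First I would handle the crystallographic case, which is the cleanest. If $W = W_1 \times W_2$ with simple generators $S = S_1 \sqcup S_2$, the associated root system decomposes as a disjoint union $\Phi = \Phi_1 \sqcup \Phi_2$, and the root poset of $W$ is the disjoint union (with no comparisons between the components) of the root posets of $W_1$ and $W_2$. In particular, an antichain $A$ in the root poset of $W$ is exactly a pair $(A_1, A_2)$ where $A_i$ is an antichain in the root poset of $W_i$; moreover $A$ has full support in $W$ iff each $A_i$ has full support in $W_i$, and $A$ contains no simple root of $W$ iff each $A_i$ contains no simple root of $W_i$. Since $|A| = |A_1| + |A_2|$, Proposition~\ref{i-e nn 3} immediately gives
\[
\Cat\pp(W;q) = \sum_A q^{|A|} = \left(\sum_{A_1} q^{|A_1|}\right)\left(\sum_{A_2} q^{|A_2|}\right) = \Cat\pp(W_1;q)\Cat\pp(W_2;q),
\]
where the sums range over antichains of the stated kind.

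To cover noncrystallographic $W$ as well, I would instead argue purely from the inclusion-exclusion definitions \eqref{q Cat+ def} and \eqref{q Cat++ def}. The starting point is the standard fact that $\Cat(W;q) = \Cat(W_1;q)\Cat(W_2;q)$ when $W = W_1 \times W_2$. Any subset $J \subseteq S$ factors uniquely as $J_1 \sqcup J_2$ with $J_i \subseteq S_i$, and $W_J = W_{J_1} \times W_{J_2}$, while $(-1)^{|S|-|J|} = (-1)^{|S_1|-|J_1|}(-1)^{|S_2|-|J_2|}$. Plugging into \eqref{q Cat+ def} and distributing the sum yields $\Cat^+(W;q) = \Cat^+(W_1;q)\Cat^+(W_2;q)$. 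Repeating the same factorization argument with \eqref{q Cat++ def}, using $(-q)^{|S|-|J|} = (-q)^{|S_1|-|J_1|}(-q)^{|S_2|-|J_2|}$, gives \eqref{Cat++ reducible formula}.

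There is no real obstacle: the result is a formal consequence of the fact that every piece of data in the definition of $\Cat\pp(W;q)$ (the set $S$, the subsets $J$, the Catalan polynomial $\Cat(W_J;q)$, and the signs/weights attached to $|S|-|J|$) decomposes multiplicatively under $W = W_1 \times W_2$. The only mild point to note is that the antichain proof requires crystallographicity in order to invoke the root poset, whereas the inclusion-exclusion proof is uniform; since both routes are equally short, I would present the inclusion-exclusion version as the main proof and remark that in the crystallographic case it can also be read off directly from the antichain interpretation.
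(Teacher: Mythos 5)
Your proposal is correct, and your first route is exactly the paper's: the paper states Proposition~\ref{Cat++ reducible} as ``an immediate consequence of Proposition~\ref{i-e nn 3},'' i.e.\ precisely the observation that a full-support, simple-root-free antichain in the root poset of $W_1\times W_2$ is the same thing as a pair of such antichains in $W_1$ and $W_2$, with cardinalities adding. Your second route---factoring the inclusion-exclusion sums \eqref{q Cat+ def} and \eqref{q Cat++ def} over $J=J_1\sqcup J_2$ and using $(-q)^{|S|-|J|}=(-q)^{|S_1|-|J_1|}(-q)^{|S_2|-|J_2|}$---is genuinely different and is a clean formal argument; its only input is the multiplicativity $\Cat(W_1\times W_2;q)=\Cat(W_1;q)\Cat(W_2;q)$, which holds uniformly (e.g.\ via the descent generating function of $c$-sortable elements, or via the $f$-polynomial multiplicativity the paper quotes as Proposition~\ref{ga prop}). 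What this buys you is exactly what you point out: the antichain proof needs a root poset and hence crystallographicity (or the ad hoc posets of Remark~\ref{H I remark}), whereas the statement is used in the paper for arbitrary finite Coxeter groups (Theorem~\ref{biCat GF d-p} and Proposition~\ref{q biCat recursion} are stated in that generality), so your inclusion-exclusion version is arguably the more robust justification.
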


\subsection{Canonical join representations and lattice congruences}\label{can sec}  
To count bipartite $c$-bisortable elements, we will use a canonical factorization in the weak order called the canonical join representation.
In this section, we focus exclusively on the lattice-theoretic tools that we will use in the following sections to complete the proof of Theorem~\ref{binar d-p}. 
Additional background specific to lattice congruences can be found in Section~\ref{twin sec}.

The canonical join representation is a ``minimal'' expression for an element as a join of join-irreducible elements.
The construction is somewhat analogous to prime factorizations of integers.
Indeed, in the divisibility poset for positive integers, where $p\le q$ if and only if $p|q$, the canonical join representation coincides with prime factorization.
For our purposes, the canonical join representation is useful because of how it interacts with lattice congruences.
Recall that a lattice congruence $\Theta$ contracts a join-irreducible element $j$ if $j$ is equivalent modulo $\Theta$ to the unique element that it covers.
Each congruence $\Theta$ of a finite lattice is determined by the set of join-irreducible elements that it contracts.
In particular, we can see which elements of $W$ are $c$-sortable or $c$-bisortable by looking at their canonical join representations (much as we looked at the arcs in their arc diagrams in types A and B). 

The canonical join representation of an element $a$ is an expression $a=\Join A$ such that $A$ is minimal in two senses, among sets joining to $a$.
First, the join $\Join A$ is \newword{irredundant}, meaning that there is no proper subset $A'\subset A$ with $\Join A'=\Join A$.
Second, $A$ has the smallest possible elements (in terms of the partial order on $L$).
Specifically, a subset $A$ of $L$ \newword{join-refines} a subset $B$ of $L$ if for each $a\in A$ there is an element $b\in B$ such that $a\le b$.
Join-refinement is a preorder on the subsets of $L$ that restricts to a partial order on the set of antichains.
The \newword{canonical join representation} of $a$, if it exists, is the unique minimal antichain $A$, in the sense of join-refinement, that joins irredundantly to $a$.
We sometimes write $\Can(a)$ for $A$. 
The elements of $A$ are called the \newword{canonical joinands} of $a$.
It follows immediately that each canonical joinand is join-irreducible.

Not every finite lattice admits a canonical join representation for each of its elements.
For example, in the diamond lattice $M_3$, which has five elements, three of which are atoms, the largest element does not have a canonical join representation.
Many interesting lattices do admit canonical join representations, including all finite distributive lattices and, as we will see, the weak order on finite Coxeter groups.
The next proposition establishes the promised connection between canonical join representations and lattice congruences.
(The last assertion in the proposition also follows from \cite[Proposition~6.3]{shardint}.)

\begin{proposition}\label{can cong}
Suppose $L$ is a finite lattice such that each element in $L$ has a canonical join representation, and suppose that $\Theta$ is a lattice congruence on $L$.
If $j$ is a canonical joinand of $a\in L$ and $j$ is not contracted by $\Theta$, then $j$ is a canonical joinand of $\pidown^\Theta(a)$ in $L$.
Moreover, if $\pidown^{\Theta}(a)= a$ then none of the canonical joinands of $a$ are contracted by $\Theta$.
\end{proposition}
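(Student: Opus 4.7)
The plan is to establish the second (``moreover'') assertion first, then derive the first assertion from it via the standard bijection between canonical joinands and lower covers in a finite join-semidistributive lattice.

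For the second assertion, suppose $\pidown^\Theta(a)=a$ with $\Can(a)=A$. Applying the lattice homomorphism $\pidown^\Theta$ to $a=\Join A$ yields $a=\Join_{j\in A}\pidown^\Theta(j)=\Join_{j\in A}\Join\Can(\pidown^\Theta(j))$. Setting $D=\bigcup_{j\in A}\Can(\pidown^\Theta(j))$, the set $D$ consists of join-irreducibles with $\Join D=a$; extract a minimal subset $E\subseteq D$ with $\Join E=a$, which is automatically an irredundant antichain. Canonicity of $A$ means $A$ join-refines $E$, so each $j_0\in A$ has $j_0\leq e$ for some $e\in E$; since $e\in\Can(\pidown^\Theta(j))$ for some $j\in A$, one has $j_0\leq e\leq\pidown^\Theta(j)\leq j$. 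The antichain property of $A$ forces $j_0=j$, hence $e=j_0$ lies in $\Can(\pidown^\Theta(j_0))$, which gives $j_0\leq\pidown^\Theta(j_0)$ and therefore $\pidown^\Theta(j_0)=j_0$.

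For the first assertion, let $j\in\Can(a)$ be uncontracted and set $a_j=\Join(A\setminus\{j\})$, so $a=j\vee a_j$ with $j\not\leq a_j$ by irredundancy. I would invoke the standard fact that in a finite lattice in which every element has a canonical join representation, the canonical joinands of $a$ are in bijection with the lower covers of $a$: to $j\in\Can(a)$ is associated a unique lower cover $a_*\covered a$ with $j\vee a_*=a$, where $j$ is the minimum join-irreducible satisfying this. First, $a_*\not\equiv a$ modulo $\Theta$: otherwise $j=\pidown^\Theta(j)\leq\pidown^\Theta(a)=\pidown^\Theta(a_*)\leq a_*$ contradicts $j\not\leq a_*$. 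Hence $\pidown^\Theta(a_*)\covered\pidown^\Theta(a)$ is a lower cover in $\pidown^\Theta(L)$; let $j^\dagger$ denote its associated canonical joinand. Since $j$ is join-irreducible in $\pidown^\Theta(L)$ (being an uncontracted join-irreducible of $L$) and $j\vee\pidown^\Theta(a_*)=\pidown^\Theta(j\vee a_*)=\pidown^\Theta(a)$, minimality gives $j^\dagger\leq j$. If $j^\dagger<j$, then by $a_*\covered a$ we have $j^\dagger\vee a_*\in\{a_*,a\}$: the option $j^\dagger\vee a_*=a$ contradicts the minimality of $j$ in $L$, while $j^\dagger\leq a_*$ yields $j^\dagger\leq\pidown^\Theta(a_*)$ and hence $\pidown^\Theta(a)=j^\dagger\vee\pidown^\Theta(a_*)=\pidown^\Theta(a_*)$, contradicting $a_*\not\equiv a$. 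Thus $j^\dagger=j\in\Can(\pidown^\Theta(a))$.

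The main obstacle is the use of the bijection between canonical joinands and lower covers, which is a standard but nontrivial structural result about finite join-semidistributive lattices. A reference-free alternative would construct an explicit irredundant antichain of join-irreducibles joining to $\pidown^\Theta(a)$ containing $j$---for example $\{j\}\cup C'$, where $C'$ consists of those elements of $\Can(\pidown^\Theta(a_j))$ not below $j$---and verify irredundancy via join-semidistributivity, which is delicate but tractable.
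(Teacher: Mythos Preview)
Your proof of the ``moreover'' assertion is essentially the paper's argument with an extra detour: the paper simply observes that $\Can(a)$ join-refines $\{\pidown^\Theta(j_1),\ldots,\pidown^\Theta(j_k)\}$ directly, without first expanding each $\pidown^\Theta(j_i)$ into its own canonical joinands and extracting a minimal subset.

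For the first assertion your route is genuinely different. The paper argues entirely from the definition of canonical join representation: writing $B=\Can(\pidown^\Theta(a))$, it shows (i) some $j'\in B$ has $j'\le j$ (else $j\le\Join_{i\ge 2}\pidown^\Theta(j_i)\le\Join_{i\ge 2}j_i$, violating irredundancy), and (ii) some $j''\in B$ has $j\le j''$ (since $(\Join B)\vee\Join_{i\ge 2}j_i=a$ and $\Can(a)$ must join-refine $B\cup\{j_2,\ldots,j_k\}$); then $j'=j''=j$. This is self-contained and never leaves $L$. Your approach instead imports the canonical-joinand/lower-cover bijection and works in the quotient $\pidown^\Theta(L)$, which is cleaner conceptually but requires more infrastructure. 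Two points need tightening. First, your step ``$j^\dagger\vee a_*=a$ contradicts the minimality of $j$ in $L$'' needs $j^\dagger$ to be join-irreducible in $L$; the cleanest fix is to state the bijection in its stronger form (in a join-semidistributive lattice $j$ is the minimum \emph{element}, not merely the minimum join-irreducible, with $j\vee a_*=a$). Second, you obtain $j\in\Can_{\pidown^\Theta(L)}(\pidown^\Theta(a))$, whereas the statement asks for $\Can_L$; the equality of these two follows from your already-proved second assertion (which forces $\Can_L(\pidown^\Theta(a))\subseteq\pidown^\Theta(L)$, whence it is the canonical join representation in the join-sublattice $\pidown^\Theta(L)$ as well), but you should make that bridge explicit.
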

The assertion that $j$ is a canonical joinand of $\pidown^\Theta(a)$ in $L$ implies also that $j$ is a canonical joinand of $\pidown^\Theta(a)$ in $\pidown^\Theta (L)$.
(Since $\pidown^\Theta(L)$ is a join-sublattice of $L$, every join-representation of $\pidown^\Theta(a)$ in $\pidown^\Theta(L)$ is also a join-representation of $\pidown^\Theta(a)$ in $L$.)

\begin{proof}
Throughout the proof, we write $\set{j_1,\ldots j_k}$ for $\Can(a)$ with $j=j_1$.
Recall that the lattice quotient $L/\Theta$ is isomorphic to the subposet of $L$ induced by the set $\pidown^{\Theta}(L)$.
Suppose $j$ is not contracted by $\Theta$, so that $\pidown^{\Theta}(j)= j$.
Recall that $\pidown^{\Theta}$ is a lattice homomorphism, so $\pidown^\Theta(a)=\Join_{i=1}^k\pidown^\Theta(j_1) = j\join\left(\Join_{i=2}^k\pidown^\Theta(j_i)\right)$, (where the joins are all taken in the lattice quotient $L/\Theta$).
Since $L/\Theta$ is also a join-sublattice of $L$, the join in $L/\Theta$ coincides with the join in $L$.
Thus $\pidown^\Theta(a)$ is equal to $j\join\left(\Join_{i=2}^k\pidown^\Theta(j_i)\right)$ in $L$.
Write $B$ for the set $\Can(\pidown^\Theta(a))$.
Thus $B$ join-refines $\set{j}\cup\set{\pidown^\Theta(j_2),\ldots,\pidown^\Theta(j_k)}$.
If no element of $B$ is less or equal to~$j$, then this join-refinement implies that each element of $B$ is below some element of $\set{\pidown^\Theta(j_2),\ldots,\pidown^\Theta(j_k)}$, so that $\pidown^\Theta(a)\le\Join_{i=2}^k\pidown^\Theta(j_i)$.
Since also $\pidown^\Theta(a)$ is equal to $j\join\left(\Join_{i=2}^k\pidown^\Theta(j_i)\right)$, we see that $j \le \Join_{i=2}^k\pidown^\Theta(j_i)$.
Recall that $\pidown^\Theta (j_i) \le j_i$ for each $i$, so we have $j\le \Join_{i=2}^k j_i$.
This contradicts the fact that $\Join_{i=1}^k j_i$ is irredundant.
We conclude that there is some $j'\in B$ with $j'\le j$.
Observe that $\left(\Join B\right)\join \left(\Join_{i=2}^k j_i\right) = a$ because ${j_1=j\le\pidown^\Theta(a)\le a}$.
Thus, $\{j_1,\ldots j_k\}$ join-refines $B\cup \{j_2,\ldots j_k\}$.
Since $j$ is incomparable to each $j_i$, there is some $j''\in B$ such that $j\le j''$.  
But $B$ is an antichain, so $j'=j''=j$, and thus $j\in B$ as desired.

Now suppose that $\pidown^{\Theta}(a)= a$.
Then $a=\Join_{i=1}^n \pidown^{\Theta}(j_i)$, so $\set{j_1,\ldots j_k}$ join-refines $\set{\pidown^\Theta(j_1),\ldots,\pidown^\Theta(j_k)}$.
Thus, for each $j_i$, there is some $j_m$ with $j_i\le \pidown^{\Theta}(j_m)$.
But $\pidown^{\Theta}(j_m) \le j_m$, and since $\set{j_1,\ldots j_k}$ is an antichain, we have $j_i=j_m$, and thus also $j_i=\pidown^{\Theta}(j_i)$.
\end{proof}

We will use the following easy proposition, which appears as \cite[Proposition~2.2]{arcs}.
\begin{prop}\label{can cplx}
Suppose $L$ is a finite lattice and $J\subset L$.
If $\Join J$ is the canonical join representation of some element of $L$ and if $J'\subseteq J$, then $\Join J'$ is the canonical join representation of some element of $L$.  
\end{prop}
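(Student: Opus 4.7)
The plan is to verify directly that $J'$ is the canonical join representation of $a':=\Join J'$, by checking the three defining properties in turn: antichain, irredundancy, and minimality with respect to join-refinement.

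The first two properties are quick. Since $J=\Can(\Join J)$ is an antichain, so is its subset $J'$. For irredundancy, suppose some $j\in J'$ satisfied $\Join(J'\setminus\{j\})=\Join J'$. Then $j\le\Join(J'\setminus\{j\})\le\Join(J\setminus\{j\})$, so $\Join(J\setminus\{j\})=\Join J$, contradicting the irredundancy of the canonical representation of $\Join J$.

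The substantive step is minimality: given any antichain $B\subseteq L$ with $\Join B=a'$, I must exhibit, for each $j\in J'$, a $b\in B$ with $j\le b$. The trick I would use is to ``complete'' $B$ to an antichain joining to the full element $a:=\Join J$. Specifically, let $B'$ be the set of maximal elements of $B\cup(J\setminus J')$; this is an antichain with $\Join B'=a'\vee\Join(J\setminus J')=a$. The canonicity of $J$ then join-refines $J$ through $B'$: for each $j\in J'$ there is some $b'\in B'$ with $j\le b'$. The key observation is that $b'$ must lie in $B$ rather than in $J\setminus J'$, because otherwise $j$ and $b'$ would be distinct (since $j\in J'$, $b'\notin J'$) comparable elements of the antichain $J$. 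Thus $b'\in B$, and $J'$ join-refines $B$, as desired.

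The only subtlety to watch is the construction of the antichain $B'$ from $B\cup(J\setminus J')$ and the verification that the witness $b'$ cannot accidentally land in $J\setminus J'$; both are controlled by the antichain hypothesis on $J$. Beyond this point, the argument is a routine unwinding of the definitions, so I do not anticipate any serious obstacle.
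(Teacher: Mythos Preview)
Your proof is correct. The paper does not actually prove this proposition; it merely quotes it as \cite[Proposition~2.2]{arcs} and labels it ``easy.'' Your argument---reducing irredundancy of $J'$ to irredundancy of $J$, and obtaining minimality by completing an arbitrary antichain $B$ joining to $a'$ with $J\setminus J'$ and invoking the canonicity of $J$ for $a$---is exactly the standard one, and the subtlety you flag (that the witness $b'$ cannot land in $J\setminus J'$ because $J$ is an antichain) is handled correctly. One small point worth making explicit: you use that $J$ join-refines \emph{every} antichain $B'$ with $\Join B'=a$, not just the irredundant ones; this follows since any such $B'$ contains an irredundant subantichain $B'_0$ with the same join, $J$ refines $B'_0$ by canonicity, and hence $J$ refines $B'$.
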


Next we consider canonical join representations in the weak order.
Before we begin, we briefly review some relevant terminology.
For each $w\in W$, the \newword{length} of $w$, denoted $l(w)$, is the number of letters in a reduced (that is, a shortest possible) word for $w$ in the alphabet~$S$.
The covers in the (right) weak order on $W$ are $w\covers ws$ whenever $w\in W$ and $s\in S$ have $l(ws) < l(w)$.
In this case, the simple generator $s$ is a \newword{descent} of~$w$.
Let $T$ denote the set of reflections in $W$.
An \newword{inversion} of $w$ is a reflection $t$ such that $l(tw)<l(w)$. 
We denote the set of inversions of $w$ by $\inv(w)$. 
A \newword{cover reflection} of $w$ is an inversion $t$ of $w$ such that $tw=ws$ for some $s\in S$. 
Thus, the cover reflections of $w$ are in bijection with the descents of $w$.
We write $\cov(w)$ for the set of cover reflections of $w$.
The following proposition is quoted from \cite[Theorem~8.1]{typefree}.
\begin{proposition}\label{coxcjr}
Fix a finite Coxeter group $W$, and an element $w\in W$.
The canonical join representation of $w$ exists and is equal to $\Join j_t$ where $t$ ranges over the set of cover reflections of $w$, and $j_t$ is the unique smallest element below $w$ that has $t$ as an inversion.
In particular, $w$ has $\des(w)$ many canonical joinands.
\end{proposition}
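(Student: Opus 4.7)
The plan is to verify the formula $\Can(w) = \set{j_t : t \in \cov(w)}$ directly, after first producing the candidate joinands using general properties of the weak order on $W$. I would start by recalling that the weak order on finite $W$ is a finite lattice, and that an element $j \in W$ is join-irreducible if and only if it has exactly one descent, equivalently exactly one cover reflection.

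First I would establish, for each $t \in \inv(w)$, the existence of a unique smallest element $j_t$ in $\set{u \le w : t \in \inv(u)}$. The central claim is that this set is closed under the meet operation in the weak order, and hence (being nonempty and finite) has a unique minimum. Geometrically, writing $t = s_\alpha$, the inversions of $u$ are encoded by the set of positive roots whose hyperplanes separate the fundamental chamber from $uD$, and the meet in weak order is realized by an intersection of half-space data; in the shard language alluded to in Remark~\ref{common walls}, the element $j_t$ is the smallest $u \le w$ whose region lies on the upper side of the shard carrying $t$ that is crossed on passing from $e$ to $w$. Minimality of $j_t$ then forces it to cover exactly one element (any other element it covered would still contain $t$ as an inversion), so $j_t$ is join-irreducible with unique cover reflection $t$.

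Next I would verify $\Join_{t \in \cov(w)} j_t = w$. Setting $w' := \Join_{t \in \cov(w)} j_t$, we have $w' \le w$ and $\cov(w) \subseteq \inv(w')$; a descent-count comparison, together with the fact that an element of weak order is determined by its inversion set, forces $w' = w$. Irredundancy of $\set{j_t : t \in \cov(w)}$ is then immediate, because removing any single $j_t$ erases $t$ from the cover reflections of the resulting join, so this join drops strictly below $w$. For join-refinement minimality, given any antichain $A$ with $\Join A = w$ irredundantly, each $t \in \cov(w)$ must lie in $\inv(a)$ for some $a \in A$ (else $t$ would be missing from $\inv(\Join A)$), and the minimality of $j_t$ then gives $j_t \le a$; thus $\set{j_t : t \in \cov(w)}$ join-refines $A$, and we conclude that it is the canonical join representation of $w$, with exactly $\des(w)$ elements.

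The main obstacle is the closure of $\set{u \le w : t \in \inv(u)}$ under meets in the weak order. This is what yields the existence and join-irreducibility of $j_t$, and it is not entirely formal: it reflects the semidistributivity of weak order. The cleanest route is via shards, identifying $j_t$ as the minimum weak-order element on the appropriate side of the shard of the hyperplane fixed by $t$; alternatively, one can argue directly from the exchange condition by induction on length, though this is more technical.
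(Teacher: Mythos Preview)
The paper does not prove this proposition; it is quoted as \cite[Theorem~8.1]{typefree}. So there is no argument in the paper to compare yours against, and you are in effect supplying a proof from scratch. Your outline is the right one and matches the standard approach, but two points need correction.

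First, you write ``for each $t\in\inv(w)$'' when producing the minimum $j_t$, but the meet-closure claim is only valid for $t\in\cov(w)$: for a non-cover inversion it already fails in $S_3$ with $w=w_0$ and $t=s_1s_2s_1$, since both $s_1s_2$ and $s_2s_1$ lie below $w_0$ and have $t$ as an inversion while their meet is the identity. What makes cover reflections special is the identity $\set{u\le w:t\notin\inv(u)}=[e,tw]$, which holds precisely when $tw\covered w$. With that identity in hand, your meet-closure claim does follow from the join-semidistributivity you invoke: if $u,v\le w$ both have $t$ as an inversion then $u\vee tw=v\vee tw=w$, so join-semidistributivity gives $(u\wedge v)\vee tw=w$, hence $u\wedge v\not\le tw$.

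Second, your justification for join-refinement minimality --- ``else $t$ would be missing from $\inv(\Join A)$'' --- is not a valid argument, because inversion sets are not additive under joins: $\inv(\Join A)$ can strictly contain $\bigcup_{a\in A}\inv(a)$. The conclusion is still correct, but the reasoning should again pass through the interval $[e,tw]$: if no $a\in A$ has $t\in\inv(a)$ then every $a\le tw$, so $\Join A\le tw<w$, a contradiction. The same observation --- that $j_{t'}\le tw$ whenever $t'\in\cov(w)\setminus\set{t}$, since $t'\in\inv(tw)$ --- also cleans up your irredundancy step and shows directly that the $j_t$ form an antichain.
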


Recall that the \newword{support} of $w$, written $\supp(w)$, is the set of simple reflections appearing in a reduced word for $w$, and is independent of the choice of reduced word for $w$.
The following lemma is an immediate consequence of the fact that every standard parabolic subgroup $W_J$ is a lower interval in the weak order on $W$.

\begin{lemma}\label{supp}
For each $w\in W$, the support of $w$ equals $\bigcup_{j\in\Can(w)}\supp(j)$.
\end{lemma}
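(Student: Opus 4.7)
The plan is to verify the two inclusions separately, exploiting the lower-interval property in each direction.

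For the inclusion $\bigcup_{j \in \Can(w)} \supp(j) \subseteq \supp(w)$, I would simply use that every canonical joinand satisfies $j \le w$ in the weak order. Since $j \le w$ means that some reduced word for $j$ appears as a prefix of some reduced word for $w$, the simple reflections appearing in a reduced expression for $j$ form a subset of those appearing in a reduced expression for $w$, so $\supp(j) \subseteq \supp(w)$. Taking the union over all $j \in \Can(w)$ gives the containment.

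For the reverse inclusion $\supp(w) \subseteq \bigcup_{j \in \Can(w)} \supp(j)$, set $J = \bigcup_{j \in \Can(w)} \supp(j)$. By definition each canonical joinand $j$ has $\supp(j) \subseteq J$, so $j \in W_J$. Now I invoke the stated fact that $W_J$ is a lower interval in the weak order on $W$, i.e.\ $W_J = [e,w_0(J)]$ where $w_0(J)$ is the longest element of $W_J$. A lower interval in any lattice is automatically closed under joins: if $u,v \le w_0(J)$ then $u \vee v \le w_0(J)$ as well. Consequently $w = \bigvee_{j \in \Can(w)} j$ lies in $W_J$, and therefore $\supp(w) \subseteq J$, as required.

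Since both steps are essentially one-line consequences of definitions plus the quoted lower-interval property, there is no real obstacle here; the only point that warrants care is making explicit that ``lower interval'' upgrades to ``closed under joins,'' which is what converts the pointwise support containments for the canonical joinands into a support containment for~$w$ itself.
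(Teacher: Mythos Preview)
Your proof is correct and follows exactly the approach the paper indicates: the paper does not spell out a proof but simply states that the lemma is an immediate consequence of the fact that each standard parabolic subgroup $W_J$ is a lower interval in the weak order, and your two inclusions make that deduction explicit. Both directions are handled correctly; in particular, your observation that a lower interval is closed under joins is precisely the content needed for the nontrivial inclusion.
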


For each element $w$ and standard parabolic subgroup $W_J$, there is a unique largest element below $w$ that belongs to $W_J$.
We write $w_J$ for this element and $\pidown^J$ for the map that sends $w$ to $w_J$.
In \cite[Corollary~{6.10}]{congruence}, it was shown that the fibers of $\pidown^J$ constitute a lattice congruence of the weak order. 
We write $\Theta_J$ for this congruence.
Since $\pidown^J$ sends each element to the bottom if its fiber, it is a lattice homomorphism from $W$ to $\pidown^J(W)$, which equals $W_J$.

\begin{lemma}\label{disjoint joinands}  
Suppose that $A_1$ and $A_2$ are antichains with disjoint support such that $\Join A_1$ and $\Join A_2$ are both canonical join representations in the weak order on~$W$.
Then $\Join(A_1\cup A_2)$ is a canonical join representation.
\end{lemma}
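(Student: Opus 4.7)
The plan is to exploit the lattice homomorphism $\pidown^J$ to separate the contributions of $A_1$ and $A_2$.  Write $w_i=\Join A_i$ and $J_i=\supp(A_i)$; by hypothesis $J_1\cap J_2=\emptyset$, and Lemma~\ref{supp} gives $\supp(w_i)=J_i$, so $w_i\in W_{J_i}$.  Let $w=w_1\vee w_2$.

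The crucial preliminary step will be to show that $\pidown^{J_1}(w_2)=e$ and $\pidown^{J_2}(w_1)=e$.  By definition, $\pidown^{J_1}(w_2)$ is the largest element $u\in W_{J_1}$ with $\inv(u)\subseteq\inv(w_2)$.  But $\inv(u)$ consists of reflections of $W_{J_1}$ while $\inv(w_2)$ consists of reflections of $W_{J_2}$, and the standard identity $W_{J_1}\cap W_{J_2}=W_{J_1\cap J_2}=\set{e}$ forces $u=e$.  Since $\pidown^{J_i}$ is a lattice homomorphism, these two identities yield $\pidown^{J_i}(w)=w_i$.

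Next I will check that $A:=A_1\cup A_2$ is an antichain that joins irredundantly to $w$.  Any $a_1\in A_1$ and $a_2\in A_2$ are join-irreducible (hence have nonempty support) with disjoint supports, so they are incomparable in the weak order; each $A_i$ is itself an antichain.  Clearly $\Join A=w$.  For irredundance, a proper subset $A'\subsetneq A$ with $\Join A'=w$ would, after applying $\pidown^{J_1}$ (which fixes each element of $A_1$ pointwise and sends each element of $A_2$ to $e$), give $w_1=\Join(A'\cap A_1)$, forcing $A'\cap A_1=A_1$ because $\Join A_1$ is irredundant; the symmetric argument gives $A'\cap A_2=A_2$, a contradiction.

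To conclude that $A=\Can(w)$, it remains to show that $A$ join-refines every antichain $B$ joining to $w$.  Given $j\in A_1$, applying $\pidown^{J_1}$ to $\Join B=w$ yields $\Join\pidown^{J_1}(B)=w_1$; since $A_1=\Can(w_1)$ join-refines any subset joining to $w_1$, there exists $b\in B$ with $j\le\pidown^{J_1}(b)\le b$, and the symmetric argument handles $j\in A_2$.  The only substantive step in the whole plan is the computation $\pidown^{J_1}(w_2)=e$, which rests on the standard parabolic intersection formula; everything else is routine bookkeeping using that $\pidown^{J_i}$ is a lattice homomorphism together with the characterization of the canonical join representation.
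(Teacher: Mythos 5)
Your proof is correct and takes essentially the same route as the paper's: both hinge on the parabolic projections $\pidown^{J_i}$ being lattice homomorphisms that fix $A_i$ pointwise and annihilate the other antichain, and the irredundance argument is identical. The only (cosmetic) difference is in the second half, where you verify join-refinement minimality directly, while the paper instead invokes Proposition~\ref{can cong} for the congruence $\Theta_J$ to show that every canonical joinand of $\Join(A_1\cup A_2)$ already lies in $A_1\cup A_2$.
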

\begin{proof}
We write $A$ for $A_1\cup A_2$.
First we show that $\Join A$ is irredundant.
By way of contradiction, assume that there is some $j\in A$ such that $\Join A = \Join (A\setminus \{j\})$.
We may as well take $j\in A_1$.
We write $J$ for the support of $A_1$.
Since the support of each join-irreducible element $j'$ in $A_2$ is disjoint from $J$, and since support decreases weakly in the weak order, we conclude that $\pidown^{J}(j')$ is the identity element.
Since $\pidown^J$ is a lattice homomorphism, $\pidown^{J}(\Join A) = \Join A_1$ and $\pidown^{J}(\Join (A\setminus \{j\}))= \Join(A_1\setminus \{j\})$.
We conclude that ${\Join A_1 = \Join(A_1\setminus \{j\})}$, contradicting the fact that $\Join A_1$ is a canonical join representation.

Next we show that $\Can(\Join A)$ is contained in $A$.
Assume that $j''$ is a canonical joinand of $\Join A$.
There is some $j\in A$ such that $j''\le j$.
Assume that $j\in A_1$, so that $\supp(j'')\subset J$.
Thus, $\pidown^{J}(j'') = j''$. 
Proposition~\ref{can cong} says $j''$ is a canonical joinand of $\pidown^{J}(\Join A) = \Join A_1$.
Because $A$ is an antichain, $j''= j$.
Since $\Join A$ is irredundant, and $A$ contains $\Can(\Join A)$, we conclude that $A$ is equal to $\Can(\Join A)$.
\end{proof}

Observe that if $s\in S$ is a cover reflection of $w$ then Proposition~\ref{coxcjr} implies that $s$ is also a canonical joinand of $w$ because simple reflections are atoms in the weak order.
We immediately obtain the following useful fact.
\begin{lemma}\label{simple_cjr}
Each $w\in W$ has $\Can(w)\cap S=\cov(w)\cap S$.
\end{lemma}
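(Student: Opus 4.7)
The plan is to apply Proposition~\ref{coxcjr} in both directions, using the fact that each simple reflection is an atom of the weak order.

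First I would handle the inclusion $\cov(w)\cap S\subseteq\Can(w)\cap S$. Fix $s\in\cov(w)\cap S$. By Proposition~\ref{coxcjr}, the canonical joinand of $w$ corresponding to $s$ is $j_s$, the unique smallest element below $w$ having $s$ as an inversion. Because $s\in S$ is itself an atom of the weak order, $s$ lies below every element that has $s$ as an inversion (indeed, $s\le u$ in the weak order if and only if $s\in\inv(u)$ for $s\in S$). Since $s\in\inv(w)$ and $s$ itself is an element below $w$ with $s$ as an inversion, minimality forces $j_s=s$, so $s\in\Can(w)$.

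Conversely, suppose $s\in\Can(w)\cap S$. Again by Proposition~\ref{coxcjr}, $s=j_t$ for some cover reflection $t\in\cov(w)$, and $j_t$ has $t$ as an inversion. But the only inversion of the simple reflection $s$ is $s$ itself, forcing $t=s$. Hence $s\in\cov(w)$, completing the argument.

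I do not anticipate any real obstacle: the statement is essentially a direct reading of Proposition~\ref{coxcjr} once one notes that the map $t\mapsto j_t$ restricted to the simple cover reflections is the identity, because simple reflections are atoms and each has a unique inversion. The only subtle point worth stating explicitly is the equivalence ``$s\le u$ in the weak order iff $s\in\inv(u)$'' for $s\in S$, which is standard from the characterization of weak order by inversion sets.
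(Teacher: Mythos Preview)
Your proof is correct and follows essentially the same approach as the paper. The paper's argument is the single sentence preceding the lemma---``if $s\in S$ is a cover reflection of $w$ then Proposition~\ref{coxcjr} implies that $s$ is also a canonical joinand of $w$ because simple reflections are atoms in the weak order''---which handles the inclusion $\cov(w)\cap S\subseteq\Can(w)\cap S$ and leaves the converse implicit. Your write-up makes the converse explicit (using that $\inv(s)=\{s\}$ forces $t=s$), which is a welcome bit of added care but not a different idea.
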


In much of what follows, for $s\in S$, we will use the abbreviation $\br{s}$ to mean $S\setminus\set{s}$.
It is known (see for example \cite[Lemma~2.8]{sort_camb}) that if $w\in W_\br{s}$, then ${\cov(w\join s)} = \cov(w)\cup \{s\}$.
We close this section with a lemma extends this statement to canonical join representations.
\begin{lemma}\label{cover_ref}
If $w\in W_\br{s}$, then $\Can(w\join s) =\Can(w)\cup \{s\}$. 
\end{lemma}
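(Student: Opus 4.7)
The plan is to obtain this as an essentially immediate consequence of Lemma~\ref{disjoint joinands}. Set $A_1 = \Can(w)$ and $A_2 = \{s\}$. Then $\Join A_1 = w$ is a canonical join representation by definition, and $\Join A_2 = s$ is trivially a canonical join representation since $s$, being a simple reflection, is an atom of the weak order and hence join-irreducible (with itself as its only canonical joinand).

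Next, I would check the disjoint-support hypothesis. The hypothesis $w \in W_\br{s}$ means $\supp(w) \subseteq \br{s}$. By Lemma~\ref{supp}, $\bigcup_{j \in \Can(w)} \supp(j) = \supp(w)$, so the support of $A_1$ is contained in $\br{s}$, while the support of $A_2$ is $\{s\}$. These are disjoint. One small point to verify is that $A_1 \cup A_2$ is actually an antichain (as required implicitly by the statement that it is a canonical join representation): since $s \notin \supp(j)$ for any $j \in \Can(w)$, no element of $\Can(w)$ lies weakly above $s$, and conversely $s$ cannot lie weakly above any $j \in \Can(w)$ because $s$ is an atom while each $j$ is a non-identity join-irreducible with support inside $\br{s}$.

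Applying Lemma~\ref{disjoint joinands} with these choices, $\Join(A_1 \cup A_2) = w \join s$ is a canonical join representation, and therefore $\Can(w \join s) = \Can(w) \cup \{s\}$, as desired.

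There is no serious obstacle here; the lemma is a direct corollary of the disjoint-supports lemma, with the only mild subtlety being the verification that $\Can(w) \cup \{s\}$ forms an antichain, which follows from the fact that $s$ is an atom not appearing in the support of any element of $\Can(w)$.
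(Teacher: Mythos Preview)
Your proposal is correct and follows essentially the same approach as the paper's proof: both verify that $\Can(w)$ has support disjoint from $\{s\}$ (the paper simply notes that support is weakly decreasing in the weak order, while you invoke Lemma~\ref{supp}) and then apply Lemma~\ref{disjoint joinands}. Your extra verification that $\Can(w)\cup\{s\}$ is an antichain is not strictly needed, since disjoint support already forces incomparability between elements of $\Can(w)$ and $s$.
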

\begin{proof}
Since support is weakly decreasing in the weak order, each $j\in \Can(w)$ has support contained in $\br{s}$.
Lemma~\ref{disjoint joinands} says that $\Join \left(\Can(w)\cup \{s\}\right)$ is a canonical join representation.
\end{proof}

\subsection{Canonical join representations of $c$-bisortable elements}\label{canonical bisort sec}
In this section we focus on canonical join representations of $c$-sortable elements and $c$-bisortable elements.
Our goal is to prove the following result:
\begin{proposition}\label{disjoint_simple_support} 
Fix a bipartite $c$-bisortable element $w$ and the corresponding twin $(c,c^{-1})$-sortable elements $(u,v)=(\pidown^c(w),\pidown^{c^{-1}}(w))$.
Then 
\begin{enumerate}[\qquad\rm(1)]
\item $\Can(w)\cap S=\Can(u)\cap \Can(v)$   
\item $\Can(w)$ is the disjoint union  $(\Can(u)\setminus S)\uplus(\Can(v)\setminus S)\uplus(\Can(w)\cap S)$
\item \label{pw dis} The sets $\supp(\Can(u)\setminus S)$,\, $\supp(\Can(v)\setminus S)$ and $\Can(w)\cap S$ are pairwise disjoint.  
\end{enumerate}
\end{proposition}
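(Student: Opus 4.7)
The plan is to piece together $\Can(w)$ from $\Can(u)\cup\Can(v)$, using Proposition~\ref{can cong} to route each canonical joinand of $w$ into $\Can(u)$ or $\Can(v)$, and Lemma~\ref{disjoint joinands} to reassemble the pieces into the canonical join representation of $w$. Applying Proposition~\ref{can cong} first to the $c$-biCambrian congruence $\Theta=\Theta_c\meet\Theta_{c^{-1}}$ shows that each $j\in\Can(w)$ is not contracted by both $\Theta_c$ and $\Theta_{c^{-1}}$, and so is $c$-sortable or $c^{-1}$-sortable; applying it again to the individual Cambrian congruences places the $c$-sortable (respectively, $c^{-1}$-sortable) elements of $\Can(w)$ into $\Can(u)$ (respectively, $\Can(v)$). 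Since simple reflections are never contracted by a Cambrian congruence, each $s\in\Can(w)\cap S$ lies in both $\Can(u)$ and $\Can(v)$. The reverse inclusion in (1) expresses the bipartite-specific fact that no non-simple join-irreducible is simultaneously $c$- and $c^{-1}$-sortable: in types A and B this is immediate from Proposition~\ref{overlap comp}, and for arbitrary $W$ it is part of the uniform analysis in Section~\ref{sortable formula sec}. This establishes (1).

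Writing $B_u=\Can(u)\setminus S$, $B_v=\Can(v)\setminus S$, and $T=\Can(u)\cap\Can(v)$, so that (1) reads $T=\Can(w)\cap S$, the two side disjointnesses $\supp(B_u)\cap T=\supp(B_v)\cap T=\emptyset$ in (3) follow from a general feature of canonical join representations: a simple canonical joinand of an element cannot lie in the support of a distinct non-simple canonical joinand of the same element (in type A, this is the arc-compatibility statement that a simple arc and a non-simple arc whose support strictly contains it cannot coexist in a noncrossing diagram). The central and harder disjointness $\supp(B_u)\cap\supp(B_v)=\emptyset$ is the main technical obstacle of the proposition; it encodes the non-overlap of distinct right-even and left-even alternating canonical joinands from Remark~\ref{type A insight} and requires the bipartite hypothesis in an essential way. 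I plan to defer its uniform treatment to Section~\ref{sortable formula sec}.

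Granting (3), two iterations of Lemma~\ref{disjoint joinands} show that $B_u\cup B_v\cup T$ is an antichain with pairwise disjoint supports whose join $w^{*}$ is a canonical join representation; each summand lies below $u$ or $v$, so $w^{*}\leq u\join v=w$. For the reverse inequality, any simple $s\in\Can(u)\cap S$ not lying in $T$ fails to be a cover reflection of $w$, so by Lemma~\ref{supp} it lies in the support of some non-simple canonical joinand $j\in\Can(w)$; the simple-versus-nonsimple property applied to $\Can(u)$ forces $j\in B_v$, and hence $s\leq\Join B_v\leq w^{*}$. Thus $u\leq w^{*}$, and symmetrically $v\leq w^{*}$, giving $w\leq w^{*}$ and hence equality. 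Uniqueness of canonical join representations then yields $\Can(w)=B_u\cup B_v\cup T$, which is (2). The main obstacle throughout is the bipartite-specific disjointness $\supp(B_u)\cap\supp(B_v)=\emptyset$ in (3), which is where the structure of a bipartite Coxeter element enters essentially.
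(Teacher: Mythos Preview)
Your outline has the right shape---route canonical joinands of $w$ into $\Can(u)$ or $\Can(v)$ via Proposition~\ref{can cong}, then reassemble---but there are two genuine gaps that make the argument, as written, incomplete.

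\textbf{Circularity between (2) and (3).} You propose to deduce (2) from (3), where $B_u=\Can(u)\setminus S$ and $B_v=\Can(v)\setminus S$. But the only tool available for the central disjointness $\supp(B_u)\cap\supp(B_v)=\emptyset$ is Lemma~\ref{simpler_disjoint_support}, and that lemma requires the pair $j\in B_u$, $j'\in B_v$ to appear together in \emph{some} canonical join representation. A priori you do not know that $B_u\cup B_v\subseteq\Can(w)$; establishing that containment is essentially (2). So (3) cannot be proved independently of (2) by the route you suggest, and deferring it ``to Section~\ref{sortable formula sec}'' does not help, since this proposition is the goal of that development.

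\textbf{The step $s\le\Join B_v$.} In your reverse-inequality argument you take $s\in(\Can(u)\cap S)\setminus T$, find a non-simple $j\in\Can(w)$ with $s\in\supp(j)$, argue $j\in B_v$, and conclude $s\le\Join B_v$. But $s\in\supp(j)$ does not imply $s\le j$ in weak order; that implication holds only when $s$ is initial in the Coxeter element for which $j$ is sortable (this is exactly the fact invoked inside the proof of Lemma~\ref{simpler_disjoint_support}), and you have not established which side of the bipartition $s$ lies on. Relatedly, your $\supseteq$ direction in (1) only shows $\Can(u)\cap\Can(v)\subseteq S$ via Lemma~\ref{only ji}; it does not show $\Can(u)\cap\Can(v)\subseteq\Can(w)$.

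The paper avoids both problems by running the argument in the opposite direction: it first decomposes $\Can(w)=(\Can(w)\cap S)\uplus J_+\uplus J_-$ using Lemma~\ref{only ji}, gets pairwise disjoint supports from Lemma~\ref{simpler_disjoint_support} (which now applies because the pairs live inside $\Can(w)$), and then \emph{computes} $\Can(u)$ explicitly by applying $\pidown^c$ and using Lemma~\ref{pidown} to evaluate $\pidown^c$ on each $j\in J_-$. This produces $\Can(u)=(\Can(w)\cap S)\uplus J_+\uplus S'$ where $S'$ is the set of simples below elements of $J_-$; from this description (1)--(3) can be read off directly. The key point you are missing is Lemma~\ref{pidown}: it tells you precisely what the ``extra'' simple joinands of $u$ are, and in particular that each such $s$ satisfies $s\le j$ for some $j\in J_-$, which is the fact your argument needs but cannot reach.
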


We begin with an easy application of Proposition~\ref{can cong} (the first item below can also be found as \cite[Proposition~8.2]{typefree}).
\begin{prop}\label{c or cinv}
For any Coxeter element $c$ and $w\in W$:
\begin{enumerate}
\item $w$ is $c$-sortable if and only if each of its canonical joinands is $c$-sortable.
\item $w$ is $c$-bisortable if and only if each of its canonical joinands is either $c$- or $c^{-1}$-sortable. 
\end{enumerate}
\end{prop}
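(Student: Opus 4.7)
The plan is to prove part (2) only, since part (1) is quoted from \cite[Proposition~8.2]{typefree}. My two main tools will be Proposition~\ref{can cong} (relating canonical joinands to contracted join-irreducibles) and Proposition~\ref{c join cinv} (the characterization of $c$-bisortable elements as joins $u\vee v$ with $u$ being $c$-sortable and $v$ being $c^{-1}$-sortable). The bridge between them is the straightforward observation that, since the meet of congruences is the coarsest common refinement as a set partition, a join-irreducible $j$ is contracted by the $c$-biCambrian congruence $\Theta=\Theta_c\wedge\Theta_{c^{-1}}$ if and only if $j\equiv j_*$ modulo both $\Theta_c$ and $\Theta_{c^{-1}}$, equivalently if and only if $j$ is neither $c$-sortable nor $c^{-1}$-sortable (using that a join-irreducible is $c$-sortable precisely when $\Theta_c$ does not contract it).

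For the forward direction, I would assume $w$ is $c$-bisortable, so that $\pidown^\Theta(w)=w$. Proposition~\ref{can cong} then forces every $j\in\Can(w)$ to be non-contracted by $\Theta$, and the observation above immediately yields that every such $j$ is $c$-sortable or $c^{-1}$-sortable, as required.

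For the reverse direction, I would partition $\Can(w)=A\sqcup B$ by sending each canonical joinand to $A$ if it is $c$-sortable and otherwise to $B$ (so $B$ consists only of $c^{-1}$-sortable joinands). Setting $u=\Join A$ and $v=\Join B$, Proposition~\ref{can cplx} guarantees that $A=\Can(u)$ and $B=\Can(v)$, so by part (1) applied to $u$ (and its mirror image applied to $v$) the element $u$ is $c$-sortable and $v$ is $c^{-1}$-sortable. Since $w=u\vee v$, Proposition~\ref{c join cinv} concludes that $w$ is $c$-bisortable.

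The only point that might appear delicate is the meet-of-congruences observation used in the forward direction, but this is immediate from the description of $\Con(W)$ as a sublattice of the lattice of set partitions of $W$, recalled at the beginning of Section~\ref{twin sec}: two elements are equivalent modulo $\Theta_c\wedge\Theta_{c^{-1}}$ exactly when they are equivalent modulo both factors. Thus no step in the argument requires a new technical ingredient beyond what is already developed in Sections~\ref{twin sec} and~\ref{can sec}.
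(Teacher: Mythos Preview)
Your proof is correct. The forward direction of part~(2) and the observation about join-irreducibles contracted by $\Theta_c\wedge\Theta_{c^{-1}}$ are exactly what the paper does. For the reverse direction, however, the paper takes a shorter and more uniform route: having established that a join-irreducible is uncontracted by $\Theta=\Theta_c\wedge\Theta_{c^{-1}}$ if and only if it is $c$- or $c^{-1}$-sortable, the paper simply applies Proposition~\ref{can cong} directly to $\Theta$ (just as for part~(1)). If every $j\in\Can(w)$ is uncontracted by $\Theta$, then by the first assertion of Proposition~\ref{can cong} each such $j$ is a canonical joinand of $\pidown^\Theta(w)$, whence $w=\Join\Can(w)\le\pidown^\Theta(w)\le w$ and $w$ is $c$-bisortable. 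Your route---splitting $\Can(w)$ into $c$-sortable and $c^{-1}$-sortable pieces, invoking Proposition~\ref{can cplx} and part~(1), and finishing with Proposition~\ref{c join cinv}---works perfectly well and has the virtue of making the connection to the join characterization of bisortability explicit, but it is a detour compared to the paper's argument, which treats~(2) as a verbatim rerun of~(1) with $\Theta_c$ replaced by $\Theta$.
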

\begin{proof}  
The first assertion follows immediately from Proposition~\ref{can cong}.
Recall the notation $\Theta_c$ for the $c$-Cambrian congruence and write $\Theta$ for the $c$-biCambrian congruence.
Since $\Theta = \Theta_c\meet \Theta_{c^{-1}}$, a join-irreducible element in $W$ is contracted by $\Theta$ if and only if it is contracted by $\Theta_c$ \textit{and} by $\Theta_{c^{-1}}$.
The second assertion follows.
\end{proof}

Recall from Section~\ref{clus sec} that a simple reflection $s$ is initial in a Coxeter element~$c$ if there is a reduced word $a_1\ldots a_n$ for $c$ with $a_1=s$.
Similarly $s$ is \newword{final} in $c$ if there is a reduced word $a_1\ldots a_n$ for $c$ with $a_n=s$.
In much of what follows, the key property of a bipartite Coxeter element is that every $s\in S$ is either initial or final in $c$.

The following lemma is the combination of \cite[Propositions~3.13, 5.3, and~5.4]{typefree}.
Recall that $v_\br{s}$ is the largest element in $W$ below $v$ that belongs to $W_\br{s}$.
\begin{lemma}\label{s_initial_or_final} 
Fix a $c$-sortable element $v$ in $W$ and a simple reflection $s\in S$.\begin{enumerate}
\item If $s$ is final in $c$ and $v\ge s$, then $v_\br{s}$ is $cs$-sortable and $v=s\join v_\br{s}$.
\item  If $s$ be initial in $c$ and $s\in\cov(v)$, then $v_\br{s}$ is $sc$-sortable and $v=s\join v_\br{s}$.
\end{enumerate}
\end{lemma}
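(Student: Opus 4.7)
The plan is to prove both parts by induction on $\ell(v)$, using the standard recursive characterization of $c$-sortability with respect to an initial letter of $c$: if $r$ is initial in $c$, then $w$ is $c$-sortable iff either $w\not\ge r$ (so $w\in W_\br{r}$) and $w$ is $rc$-sortable in $W_\br{r}$, or $w\ge r$ and $rw$ is $rcr$-sortable. This lets each of the two parts reduce a sortability claim in $W$ to a sortability claim of shorter length, and ultimately to a claim inside $W_\br{s}$.

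For Part 2, the hypothesis $s\in\cov(v)\subseteq\inv(v)$ gives $v\ge s$, so the recursion yields that $sv$ is $scs$-sortable. The extra force of $s\in\cov(v)$ beyond $v\ge s$ is that $sv=vs'$ for some simple right descent $s'$, equivalently $sv\le v$ in right weak order. I will first prove the identity $v_\br{s}=(sv)_\br{s}$. The inequality $(sv)_\br{s}\le v_\br{s}$ is immediate from $sv\le v$, and it is precisely here that the cover hypothesis enters (without it, $sv\not\le v$ and $(sv)_\br{s}$ need not even lie below $v$). For the reverse inequality, any reflection $t_\beta\in\inv(v_\br{s})$ has $\beta$ in the root subsystem of $W_\br{s}$, so $(\beta,\alpha_s^\vee)\le 0$ and $s(\beta)$ is a positive root; the expansion $v^{-1}s(\beta)=v^{-1}(\beta)-(\beta,\alpha_s^\vee)v^{-1}(\alpha_s)$ exhibits it as a sum of two non-positive contributions (using $s\in\inv(v)$ to get $v^{-1}(\alpha_s)<0$), forcing $t_\beta\in\inv(sv)$ and hence $v_\br{s}\le sv$. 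With $v_\br{s}=(sv)_\br{s}$ in hand, the $sc$-sortability of $v_\br{s}$ in $W_\br{s}$ follows by iterating the recursion applied to the shorter element $sv$ and descending from $scs$-sortability in $W$ to $sc$-sortability in $W_\br{s}$. Finally, $v=s\join v_\br{s}$ follows from an inversion-set biclosure computation: $\inv(v)$ coincides with the biclosure of $\{s\}\cup\inv(v_\br{s})$, using the sorting-word structure of $v$ together with the cover hypothesis to control the additional inversions.

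For Part 1, I argue via the $c$-sorting word of $v$. Fix a reduced word $c=a_1\cdots a_{n-1}s$; the sorting word is a reduced subword of $c^\infty$ whose partition into copies of $c$ has weakly decreasing supports $T_1\supseteq T_2\supseteq\cdots$. Since $v\ge s$, at least one copy contributes its terminal $s$, and by decreasing supports the $s$-letters appear at the ends of an initial segment of copies $\{1,\ldots,m\}$. Deleting them produces a reduced subword of $(a_1\cdots a_{n-1})^\infty=(cs)^\infty$ with supports $T_i\setminus\{s\}$, still weakly decreasing; this is the $cs$-sorting word of an element $u\in W_\br{s}$. I then show $u=v_\br{s}$ by a subword-in-weak-order argument: any $w\in W_\br{s}$ with $w\le v$ has a reduced word appearing as an $s$-avoiding subword of $v$'s sorting word, hence as a subword of $u$'s sorting word. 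The identity $v=s\join v_\br{s}$ again follows from the biclosure computation on inversion sets. The main obstacle will be the identification $v_\br{s}=(sv)_\br{s}$ in Part 2, which isolates the precise role of the cover-reflection hypothesis versus the weaker $v\ge s$; in Part 1 the corresponding sorting-word identification is more transparent but still requires care to justify that the $s$-deleted subword represents the join-largest element of $W_\br{s}$ below $v$.
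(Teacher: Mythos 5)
The paper itself does not prove this lemma: it imports it verbatim as ``the combination of Propositions~3.13, 5.3, and~5.4'' of Reading--Speyer's \emph{Sortable elements in infinite Coxeter groups}, so you are attempting a from-scratch proof of a genuinely nontrivial external result. Unfortunately your Part~1 contains a concretely false step. Take $W=A_2$, $c=s_1s_2$, $s=s_2$ (final in $c$), and $v=w_0$, whose $c$-sorting word is $s_1s_2|s_1$ with supports $T_1=\{s_1,s_2\}\supseteq T_2=\{s_1\}$ and which satisfies $v\ge s$. Deleting the terminal $s_2$ of the first copy leaves the word $s_1s_1$: this is \emph{not} reduced, is not the $cs$-sorting word of anything, and evaluates to the identity rather than to $v_{\br{s}}=s_1$, whose actual $cs$-sorting word has support sequence $(\{s_1\})$ rather than $(T_1\setminus\{s\},T_2\setminus\{s\})$. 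So the claim that deleting the $s$'s yields a reduced subword of $(cs)^\infty$ representing $v_{\br{s}}$ fails already in rank $2$, and the rest of Part~1 collapses with it.

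Part~2 has an unjustified leap at exactly the hard point. The inversion-set computation giving $v_{\br{s}}=(sv)_{\br{s}}$ is fine, but ``the $sc$-sortability of $v_{\br{s}}$ follows by iterating the recursion applied to $sv$'' hides everything: what you actually need is that $sv$ is $scs$-sortable with $s$ \emph{final} in $scs$, hence $(sv)_{\br{s}}$ is $(scs)s=sc$-sortable --- i.e., precisely the sortability claim of Part~1 for the Coxeter element $scs$. Your Part~1 is both broken and inapplicable here, since $\ell(s\cdot sv)=\ell(v)>\ell(sv)$ gives $sv\not\ge s$; and for a \emph{final} letter the condition $w\not\ge s$ does not localize $w$ into $W_{\br{s}}$ (e.g.\ $s_2s_1$ is $s_2s_1$-sortable, $s_1$ is final, $s_2s_1\not\ge s_1$, yet $s_2s_1\notin W_{\br{s_1}}$), so there is no clean parabolic case to recurse into. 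Finally, in both parts the identity $v=s\join v_{\br{s}}$ is asserted as ``a biclosure computation'' with no argument; since it fails for non-sortable elements with $v\ge s$ (e.g.\ $v=s_1s_2$, $s=s_1$ gives $v_{\br{s}}=e$ and $s_1\join e\ne s_1s_2$), sortability must enter essentially and you never show how. The honest options are to cite Reading--Speyer as the paper does, or to reproduce their induction on length and rank, which proves the final-letter sortability statement without the hypothesis $v\ge s$ and only afterwards derives the cover-reflection and join statements.
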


Observe that if $v$ satisfies the conditions of either item in  Lemma~\ref{s_initial_or_final}, then by Lemma~\ref{cover_ref}, $\Can(v) = \{s\}\cup\Can(v_\br{s})$.
The following two lemmas are a straightforward application of Lemma~\ref{s_initial_or_final}.
Lemma~\ref{only ji} is a restatement of Remark~\ref{common walls}, for the special case where $c$ is bipartite, and, for this special case, we give a simpler proof.

\begin{lemma}\label{j eq s lemma}
If $j$ is a $c$-sortable join-irreducible element and $s$ is final in $c$ with $j\ge s$, then $j=s$.
\end{lemma}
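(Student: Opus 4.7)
The plan is to apply Lemma~\ref{s_initial_or_final}(1) directly to $v = j$. Since $s$ is final in $c$ and $j \ge s$, that lemma yields $j = s \join j_\br{s}$, where $j_\br{s} \in W_\br{s}$ is the largest element of $W_\br{s}$ below $j$.

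Next, I would apply Lemma~\ref{cover_ref} to $j_\br{s}\in W_\br{s}$, which gives
\[\Can(j) \;=\; \Can(j_\br{s}\join s) \;=\; \Can(j_\br{s})\cup\{s\}.\]
Since $j$ is join-irreducible, its canonical join representation is the singleton $\{j\}$. Combining these, $\{j\} = \Can(j_\br{s})\cup\{s\}$, and since no element of $\Can(j_\br{s})$ can equal $s$ (elements of $\Can(j_\br{s})$ have support in $\br{s}$), the set $\Can(j_\br{s})$ must be empty, forcing $j_\br{s}$ to be the identity and hence $j = s$.

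There is essentially no obstacle: the substantive work has been done in Lemmas~\ref{cover_ref} and~\ref{s_initial_or_final}. The only point that requires a moment's thought is recognizing that join-irreducibility of $j$ pins down $\Can(j)=\{j\}$, after which the identity $\Can(j)=\Can(j_\br{s})\cup\{s\}$ collapses to $j=s$ in a single step.
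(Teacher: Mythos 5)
Your proof is correct and follows essentially the same route as the paper: both start by applying Lemma~\ref{s_initial_or_final}(1) to get $j = s \join j_\br{s}$ and then exploit join-irreducibility of $j$. The paper finishes in one line (a join-irreducible element cannot be a join of two strictly smaller elements, and $j \neq j_\br{s}$ since $j \ge s$), whereas you route the same fact through Lemma~\ref{cover_ref} and $\Can(j)=\{j\}$ --- slightly more machinery, but the identity $\{j\}=\Can(j_\br{s})\cup\{s\}$ already gives $j=s$ directly, so the argument is sound.
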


\begin{proof}
The first assertion of Lemma~\ref{s_initial_or_final} says that $j= s\join j_\br{s}$.
Since $j$ is join-irreducible and not equal to $j_\br{s}$, we conclude that $j=s$.
\end{proof}

\begin{lemma}\label{only ji}
If $c$ is a bipartite Coxeter element and $j$ is a join-irreducible element that is both $c$-sortable and $c^{-1}$-sortable, then $j$ is a simple reflection. 
\end{lemma}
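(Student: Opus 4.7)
The plan is to reduce to Lemma~\ref{j eq s lemma} by using the bipartite structure of $c$ to ensure that every simple reflection is final in at least one of $c$ or $c^{-1}$.

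First I would observe that since $j$ is join-irreducible, $j\neq e$, so any reduced expression for $j$ begins with some simple reflection $s\in S$; equivalently, $s\le j$ in the right weak order. Next, because $c=c_+c_-$ is bipartite and each of $c_+$ and $c_-$ is a product of pairwise commuting simple generators, every $s\in S_+$ is initial in $c$ and every $s\in S_-$ is final in $c$. In particular, every $s\in S$ is either final in $c$ or initial in $c$, and in the latter case $s$ is final in $c^{-1}=c_-c_+$.

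If $s$ is final in $c$, I would apply Lemma~\ref{j eq s lemma} to the $c$-sortable join-irreducible $j$ with $j\ge s$ to conclude $j=s\in S$. If instead $s$ is final in $c^{-1}$, the same lemma applied to the $c^{-1}$-sortable join-irreducible $j$ again gives $j=s\in S$. Either way, $j$ is simple, as desired.

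There is no real obstacle here: the work is done by Lemma~\ref{j eq s lemma}, and the bipartite hypothesis enters only to guarantee that the leading simple reflection $s$ of some reduced expression for $j$ is final in at least one of $c$ or $c^{-1}$. The only thing to double-check is that this $s$ really satisfies $s\le j$ in the right weak order, which is immediate from the definition of the right weak order via prefixes of reduced expressions.
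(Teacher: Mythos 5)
Your proposal is correct and is essentially the paper's own argument: pick a simple reflection $s\le j$, use bipartiteness to assume (after possibly swapping $c$ and $c^{-1}$) that $s$ is final, and invoke Lemma~\ref{j eq s lemma}. The explicit two-case split you write out is exactly the paper's ``without loss of generality'' step.
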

\begin{proof}
Because $j$ is join-irreducible, it is not the identity, so there is some $s\in S$ such that $j\ge s$. 
Since $c$ is bipartite, we can assume without loss of generality that $s$ is final in~$c$. 
(If not, then replace $c$ with $c^{-1}$.)
Thus $j=s$ by Lemma~\ref{j eq s lemma}.
\end{proof}

Putting together Lemma~\ref{j eq s lemma} and Lemma~\ref{only ji}, we obtain an explicit description of $\pidown^{c^{-1}}(j)$, for bipartite $c$-sortable join-irreducible elements.
\begin{lemma}\label{pidown}  
Suppose that $c$ is a bipartite Coxeter element and $j$ is a $c$-sortable join-irreducible element.
Let $S'$ denote the set of simple reflections $s$ such that $j\ge s$.  
Then $\pidown^{c^{-1}}(j) = \Join S'$, which equals $\prod S'$, the product in $W$.
Moreover, this join is a canonical join representation.
\end{lemma}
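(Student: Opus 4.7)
The plan is to dispose of the case $j \in S$ separately (it is trivial: $S' = \{j\}$ and $j$ itself is $c^{-1}$-sortable, so $\pidown^{c^{-1}}(j) = j = \prod S'$), and then focus on the case $j \notin S$.

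Write the bipartite Coxeter element as $c = c_+c_-$ along the bipartition $S = S_+ \sqcup S_-$, with $S_+$ the set of initial letters of $c$ and $S_-$ the set of final letters; since $c^{-1} = c_-c_+$, the set $S_+$ is exactly the set of final letters of $c^{-1}$. Because $j$ is a $c$-sortable join-irreducible not in $S$, Lemma~\ref{j eq s lemma} forces $S' \subseteq S_+$. The elements of $S_+$ pairwise commute, so $\Join S' = \prod S'$. To see that this product is a canonical join representation, list $S' = \{s_1,\ldots,s_k\}$ in any order and induct: the partial product $\prod_{i \le m}s_i$ lies in $W_{\br{s_{m+1}}}$ by commutativity, so Lemma~\ref{cover_ref} yields $\Can(\prod S') = S'$.

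For the equality $\pidown^{c^{-1}}(j) = \Join S'$, I prove the two inequalities separately. Each $s \in S'$ is $\le j$ and is $c^{-1}$-sortable, so by Proposition~\ref{c or cinv} applied to the canonical join representation just established, $\Join S'$ is itself a $c^{-1}$-sortable element below $j$; since $\pidown^{c^{-1}}(j)$ is the largest $c^{-1}$-sortable element below $j$, we obtain $\Join S' \le \pidown^{c^{-1}}(j)$. For the reverse, set $v = \pidown^{c^{-1}}(j)$ and take any $j' \in \Can(v)$. Proposition~\ref{c or cinv} says $j'$ is $c^{-1}$-sortable, and $j' \le v \le j$. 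If $j' \notin S$, then any simple reflection $s$ with $s \le j'$ also satisfies $s \le j$, hence $s \in S' \subseteq S_+$, which is the set of final letters of $c^{-1}$; Lemma~\ref{j eq s lemma} applied with $c$ replaced by $c^{-1}$ then forces $j' = s \in S$, a contradiction. Therefore $\Can(v) \subseteq S'$, and hence $v \le \Join S'$.

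The only subtle point is the reverse inequality, where the bipartite hypothesis is used crucially: it guarantees that $S_+$ consists of pairwise commuting generators and simultaneously plays the role of ``initial letters of $c$'' and ``final letters of $c^{-1}$,'' so the ``peeling'' statement of Lemma~\ref{j eq s lemma} applies symmetrically to both $c$ and $c^{-1}$, and the chain $j' \le v \le j$ transports the constraint on simple reflections below $j$ into the hypothesis of Lemma~\ref{j eq s lemma} for $c^{-1}$, forcing each canonical joinand of $\pidown^{c^{-1}}(j)$ to be simple.
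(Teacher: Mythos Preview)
Your proof is correct and follows essentially the same approach as the paper: establish $S'\subseteq S_+$ via Lemma~\ref{j eq s lemma}, use commutativity to get $\Join S'=\prod S'$, show $\Join S'$ is $c^{-1}$-sortable to get the lower bound on $\pidown^{c^{-1}}(j)$, and then force every canonical joinand of $\pidown^{c^{-1}}(j)$ into $S'$ by applying Lemma~\ref{j eq s lemma} with $c^{-1}$ in place of $c$. The only cosmetic differences are that the paper observes directly that an irredundant join of atoms is canonical (rather than inducting via Lemma~\ref{cover_ref}), and that the paper phrases the final step uniformly rather than by contradiction on $j'\notin S$; in your version you should make explicit the trivial step that once $j'\in S$ and $j'\le j$, the definition of $S'$ gives $j'\in S'$.
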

\begin{proof}

The statement of the lemma is obvious if $j$ is a simple reflection, so we assume that $j$ is not simple.
Thus, Lemma~\ref{only ji} implies that $j$ is not $c^{-1}$-sortable, so $\pidown^{c^{-1}}(j)$ is strictly less than $j$. 

If any $s\in S'$ is final in $c$, then Lemma~\ref{j eq s lemma} says that $j=s$, contradicting our assumption.
Thus, since $c$ is bipartite, each $s\in S'$ is initial.
In particular, the elements of $S'$ pairwise commute, so that the notation $\prod S'$ makes sense and equals $\Join S'$.
Moreover, since $\Join S'$ is an irredundant join of atoms, it is a canonical join representation.
Since each simple reflection is both $c$- and $c^{-1}$-sortable, Proposition~\ref{c or cinv} says that this element is $c^{-1}$-sortable.
We conclude that $\pidown^{c^{-1}}(j)\ge \Join S'$. 

Suppose that $j'$ is a canonical joinand of $\pidown^{c^{-1}}(j)$.
There is some simple reflection $s$ such that $j'\ge s$.
Since also $j'\le \pidown^{c^{-1}}(j)\le j$, we conclude that $s\in S'$.
Every element of $S'$ is initial in $c$ and thus final in $c^{-1}$, so again by Lemma~\ref{j eq s lemma}, $j'=s$.
We conclude that $\Can(\pidown^{c^{-1}}(j))\subseteq S'$.
Thus $\pidown^{c^{-1}}(j) = \Join S'$. 
\end{proof}

Recall that Lemma~\ref{disjoint joinands} says that if $j$ and $j'$ are join-irreducible elements with disjoint support, then $j\join j'$ is canonical.
In Lemma~\ref{simpler_disjoint_support} below, we prove that when $j$ is bipartite $c$-sortable and $j'$ is bipartite $c^{-1}$-sortable, the converse is also true.
We begin with the case when $j'$ is a simple reflection.
\begin{lemma}\label{simple_covers}  
Given a bipartite Coxeter element $c$, a $c$-sortable join-irreducible element $j$ and a simple reflection $s\in\supp(j)$, there exists no element $w\in W$ with both $s$ and $j$ in $\Can(w)$.
\end{lemma}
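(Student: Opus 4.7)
The plan is to argue by contradiction. Suppose some $w \in W$ has both $s$ and $j$ as canonical joinands. By Proposition~\ref{can cplx}, $\{s, j\}$ is itself a canonical join representation (necessarily of $s \join j$), so after replacing $w$ by $s \join j$ I may assume $\Can(w) = \{s, j\}$; since antichains have distinct elements, $s \neq j$. The element $s$ is $c$-sortable because it is simple, and $j$ is $c$-sortable by hypothesis, so the fact (\cite[Theorem~1.2]{sort_camb}, quoted just before Proposition~\ref{c join cinv}) that $c$-sortable elements form a sublattice of the weak order makes $w = s \join j$ itself $c$-sortable.

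The next step is to peel $s$ off of $w$, using bipartiteness. Lemma~\ref{simple_cjr} gives $s \in \cov(w)$, and in particular $s \leq w$. Because $c$ is bipartite, $s$ is either initial or final in $c$. In the initial case, clause~(2) of Lemma~\ref{s_initial_or_final} applies via $s \in \cov(w)$; in the final case, clause~(1) applies via $w \geq s$. Either way I obtain $w = s \join w_\br{s}$ with $w_\br{s} \in W_\br{s}$.

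Finally, applying Lemma~\ref{cover_ref} to $w_\br{s}$ and $s$ yields $\Can(w) = \Can(w_\br{s}) \cup \{s\}$. Because $w_\br{s} \in W_\br{s}$ has support avoiding $s$, Lemma~\ref{supp} forces $s \notin \Can(w_\br{s})$, so $\Can(w_\br{s}) \subseteq \{j\}$. The alternative $\Can(w_\br{s}) = \emptyset$ would make $w_\br{s}$ the identity and $w = s$, contradicting $j \in \Can(w)$, so $\Can(w_\br{s}) = \{j\}$ and hence $w_\br{s} = j$. But this means $j \in W_\br{s}$, i.e., $s \notin \supp(j)$, contradicting the hypothesis.

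The real content is packaged in Lemma~\ref{s_initial_or_final}: its two clauses together cover all cases thanks to bipartiteness of $c$, and this is precisely where $c$-sortability of $j$ (inherited by $w$) is used. Once $w$ is peeled into $s \join w_\br{s}$, the rest is formal manipulation of canonical joinands via Lemma~\ref{cover_ref} and Lemma~\ref{supp}, so I anticipate no serious technical obstacle beyond ensuring the $c$-sortability of $w$ so that the peeling lemma applies.
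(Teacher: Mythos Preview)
Your proof is correct and follows essentially the same route as the paper's: reduce via Proposition~\ref{can cplx} to the case $\Can(w)=\{s,j\}$, establish that $w=s\join j$ is $c$-sortable, invoke Lemma~\ref{s_initial_or_final} (using bipartiteness to cover both the initial and final cases) to get $w=s\join w_\br{s}$, and then derive the contradiction $j\le w_\br{s}\in W_\br{s}$. The only cosmetic differences are that the paper obtains $c$-sortability of $w$ from Proposition~\ref{c or cinv}(1) rather than the sublattice property, and concludes $j\le v_\br{s}$ directly from join-refinement rather than by computing $\Can(w_\br{s})$ via Lemma~\ref{cover_ref}.
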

\begin{proof}  
In light of Proposition~\ref{can cplx}, to prove this proposition, it is enough to show that no element can have $s\join j$ as its canonical join representation.
Suppose to the contrary that there is an element $v$ with canonical join representation $s\join j$.
By Proposition~\ref{c or cinv}, $v$ is $c$-sortable.  
Also $s\join j$ is irredundant, so $j$ and $s$ are incomparable.  
Since $c$ is bipartite, $s$ is either initial or final in $c$, so Lemma~\ref{s_initial_or_final} says that $v = s\join v_\br{s}$.
Since $v=s\join j$ is a canonical join representation, we see that $j\le v_\br{s}$, contradicting the hypothesis that $s$ is in the support of $j$.
\end{proof}

\begin{lemma}\label{simpler_disjoint_support}
Fix a bipartite Coxeter element $c$ in $W$.
Suppose that $j$ is a $c$-sortable join-irreducible element and that $j'$ is a $c^{-1}$-sortable join-irreducible element.
Suppose that $j \join j'$ is a canonical join representation for some element of~$W$.
Then $j$ and $j'$ have disjoint support.
\end{lemma}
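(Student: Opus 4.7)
The plan is to argue by contradiction, first handling the cases where one of $j$ or $j'$ is simple, and then reducing the remaining case to Lemma~\ref{simple_covers} via the Cambrian congruence $\Theta_c$.

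If $j$ is a simple reflection $s$, then $\{s, j'\}$ is itself a canonical join representation. Applying Lemma~\ref{simple_covers} to $j'$ (as a $c^{-1}$-sortable join-irreducible under the bipartite Coxeter element $c^{-1}$) rules out $s \in \supp(j')$, so $\supp(j) = \{s\}$ is disjoint from $\supp(j')$. The case $j'$ simple is symmetric.

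Now assume both $j$ and $j'$ are non-simple, and suppose for contradiction that some $s$ lies in $\supp(j) \cap \supp(j')$. By bipartiteness, $s$ is either initial or final in $c$; swapping the roles of $c, c^{-1}$ and $j, j'$ if necessary, I take $s$ to be final in $c$ and hence initial in $c^{-1}$. Lemma~\ref{j eq s lemma} applied to the non-simple $c$-sortable join-irreducible $j$ forces $s \not\le j$. On the other hand, a standard property of sorting words (if $v$ is $c^{-1}$-sortable and $s$ is initial in $c^{-1}$, then $s \in \supp(v)$ implies $s \le v$, since $s$ appears exactly once in a reduced word for $c^{-1}$ and so must occur as the first letter of any $c^{-1}$-sorting word that uses it) gives $s \le j'$. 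Therefore $s \in S_{j'}$, and Lemma~\ref{pidown} yields $s \le \pidown^c(j') \le \pidown^c(w)$, where $w = j \join j'$.

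The crux is to exhibit $\{s, j\} \subseteq \Can(\pidown^c(w))$, for then Proposition~\ref{can cplx} promotes $\{s, j\}$ itself to a canonical join representation, contradicting Lemma~\ref{simple_covers} (which forbids this whenever $s \in \supp(j)$). Since $\pidown^c(w)$ is $c$-sortable, dominates $s$, and $s$ is final in $c$, Lemma~\ref{s_initial_or_final} writes $\pidown^c(w) = s \join (\pidown^c(w))_\br{s}$ with $(\pidown^c(w))_\br{s} \in W_\br{s}$, so Lemma~\ref{cover_ref} places $s$ in $\Can(\pidown^c(w))$. Meanwhile, $j$ is $c$-sortable and hence not contracted by $\Theta_c$, so Proposition~\ref{can cong} transports $j$ from $\Can(w)$ into $\Can(\pidown^c(w))$. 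The main obstacle lies in ensuring that $s$ really ascends to a canonical joinand of $\pidown^c(w)$: this is where the parity of $s$ with respect to $c$ matters essentially, and where the membership-implies-below fact for sortable elements is used to upgrade $s \in \supp(j')$ to $s \le \pidown^c(j')$.
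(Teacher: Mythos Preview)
Your proof is correct and follows essentially the same route as the paper's: choose a simple reflection $s$ in $\supp(j)\cap\supp(j')$, use its parity relative to $c$ to get $s$ below one of the two join-irreducibles, project via the appropriate Cambrian congruence, and obtain $s$ together with the other join-irreducible as canonical joinands, contradicting Lemma~\ref{simple_covers}. The paper takes $s$ initial in $c$ and projects via $\pidown^{c^{-1}}$ to place $s$ and $j'$ in $\Can(\pidown^{c^{-1}}(w))$; you take $s$ final in $c$ and project via $\pidown^c$ to place $s$ and $j$ in $\Can(\pidown^c(w))$---the same argument with $c\leftrightarrow c^{-1}$ and $j\leftrightarrow j'$ swapped. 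Your separate treatment of the simple cases and the detour through Lemma~\ref{pidown} (rather than the more direct observation that $s\le j'\le w$ and $s$ $c$-sortable forces $s\le\pidown^c(w)$) are harmless but unnecessary.
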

\begin{proof} 
Suppose that $s\in \supp(j)\cap\supp(j')$, and assume without loss of generality that $s$ is initial in $c$.  
It is immediate from the definition of $c$-sortable elements that $s\le j$.
(See for example \cite[Proposition~2.29]{typefree}.)
Since $s$ is a $c^{-1}$-sortable element, also $s\le\pidown^{c^{-1}}(j\join j')$.
By Lemma~\ref{s_initial_or_final}(1)  and Lemma~\ref{cover_ref}, $s$ is a canonical joinand of $\pidown^{c^{-1}}(j\join j')$.
But also Proposition~\ref{can cong} says that $j'$ is a canonical joinand of $\pidown^{c^{-1}}(j\join j')$.
We have reached a contradiction to Lemma~\ref{simple_covers}, and we conclude that $\supp(j)\cap\supp(j')=\emptyset$.
\end{proof}

Finally, we prove Proposition~\ref{disjoint_simple_support}.  
\begin{proof}[Proof of Proposition~\ref{disjoint_simple_support}]
Lemma~\ref{only ji} implies that $\Can(w)$ is the disjoint union $(\Can(w)\cap S)\uplus J_+\uplus J_-$ such that $J_+$ is the set of $c$-sortable join-irreducible elements in $\Can(w)\setminus S$ and $J_-$ is the set of $c^{-1}$-sortable join-irreducible elements in ${\Can(w)\setminus S}$.
Moreover, by Lemma~\ref{simpler_disjoint_support}, these sets have pairwise disjoint support.
For each $j\in J_-$, write $S'_j$ for the set of simple reflections $s$ such that $s\le j$, and $S'= \bigcup S'_j$, where the union ranges over all $j\in J_-$.
Lemma~\ref{pidown} says that ${\pidown^c(j) = \Join S'_j}$.
Since $\pidown^c$ is a join-homomorphism, $\pidown^c(\Join J_-) = \Join S'$.
Thus, applying the map $\pidown^{c}$ to the join $\Join[(\Can(w)\cap S)\uplus J_+\uplus J_-]$, we see that ${\Join[(\Can(w)\cap S)\uplus J_+ \uplus S']}$ is a join representation of $u$.
Since $S'$ is contained in the support of $J_-$, the sets $\Can(w)\cap S$, $J_+$, and $S'$ also have pairwise disjoint support.
Proposition~\ref{can cplx} says that both $\Join \Can(w)\cap S$ and $\Join J_+$ are canonical join representations.
Since $\Join S'$ is an irredundant join of atoms, it is also a canonical join representation.
Thus, by Lemma~\ref{disjoint joinands},  $\Join[(\Can(w)\cap S)\uplus J_+ \uplus S']$ is the canonical join representation of $u$.
The symmetric argument gives the canonical join representation of $v$.
We conclude that $\Can(w)\cap S = \Can(u)\cap \Can(v)$, ${J_+=\Can(u)\setminus S}$, and $J_-=\Can(v) \setminus S$.
The proposition follows.
\end{proof}

\subsection{Counting bipartite $c$-bisortable elements}\label{sortable formula sec} 
In this section, we prove that the formulas in Theorem~\ref{binar d-p} counts bipartite $c$-bisortable elements, thus completing the proofs of Theorems~\ref{hard part}, \ref{hard part finer}, \ref{bicat d-p}, \ref{biCat GF d-p} and~\ref{binar d-p}.
We begin by interpreting the double-positive Catalan and Narayana numbers in the $c$-sortable setting.
We define \newword{positive $c$-sortable elements} to be the set of $c$-sortable elements not contained in any standard parabolic subgroup of $W$.
Equivalently, these are the $c$-sortable elements whose support is not contained in any proper subset of $S$. 
As the name suggests, positive $c$-sortable elements are counted by the positive Catalan numbers.
The following analogue of Proposition~\ref{i-e nn} is the combination of \cite[Corollary~9.2]{sortable} and \cite[Corollary~9.3]{sortable}.
\begin{proposition}\label{positive_sortable} 
For any Coxeter element $c$ of $W$, the number of positive $c$-sortable elements in $W$ is $\Cat^+(W)$.
The number positive $c$-sortable elements with $k$ descents is $\Nar^+_k(W)$.
\end{proposition}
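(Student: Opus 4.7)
The plan is to prove Proposition~\ref{positive_sortable} directly by inclusion-exclusion over the support of a $c$-sortable element, using as input the classical Coxeter-Catalan fact that for any Coxeter element $c$ of any finite Coxeter group $W$, the $c$-sortable elements of $W$ are counted by $\Cat(W)$, refined by descent count to $\Nar_k(W)$.

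First I would observe that, because standard parabolic subgroups are lower intervals in the weak order, a $c$-sortable element $w$ lies in $W_J$ if and only if $\supp(w)\subseteq J$. So ``positive $c$-sortable'' means exactly ``$c$-sortable with $\supp(w)=S$.'' Next, for each $J\subseteq S$, I would invoke the standard fact from \cite{sortable} that the $c$-sortable elements of $W$ contained in $W_J$ are precisely the $c_J$-sortable elements of $W_J$, where $c_J$ is the Coxeter element of $W_J$ obtained by deleting the letters of $S\setminus J$ from any reduced word of $c$. Consequently the number of $c$-sortable elements with support contained in $J$ equals $\Cat(W_J)$, and the number of such elements with $k$ descents equals $\Nar_k(W_J)$.

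Now the inclusion-exclusion is straightforward. Let $f(J)$ denote the number of $c$-sortable elements with support exactly $J$ and $f_k(J)$ the number of these with $k$ descents. Summing over subsets gives $\sum_{J'\subseteq J}f(J')=\Cat(W_J)$ and $\sum_{J'\subseteq J}f_k(J')=\Nar_k(W_J)$. Möbius inversion on the Boolean lattice of subsets of $S$ then yields
\[
f(S)=\sum_{J\subseteq S}(-1)^{|S|-|J|}\Cat(W_J)=\Cat^+(W),
\]
and analogously $f_k(S)=\Nar^+_k(W)$, which is exactly the desired statement.

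The main obstacle is the compatibility step: one must know that the restriction of $c$-sortability to $W_J$ coincides with $c_J$-sortability for a well-defined Coxeter element $c_J$ of $W_J$. This is a nontrivial input, but it is already established in the $c$-sortable literature (in \cite{sortable}), so I would simply quote it rather than re-derive it. Everything else is formal: summing the known Coxeter-Catalan/Narayana counts over parabolic subgroups and inverting over the subset lattice.
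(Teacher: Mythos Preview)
Your argument is correct. The paper does not actually prove this proposition; it simply quotes it as the combination of \cite[Corollary~9.2]{sortable} and \cite[Corollary~9.3]{sortable}. Your inclusion-exclusion over the support is exactly the natural proof, parallel to the paper's own argument for the nonnesting analogue (Proposition~\ref{i-e nn}). The two nontrivial inputs you isolate---that the $c$-sortable elements lying in $W_J$ are precisely the $c_J$-sortable elements of $W_J$ for the restricted Coxeter element $c_J$, and that $c$-sortable elements are counted by $\Cat(W)$ (refined by descents to $\Nar_k(W)$)---are indeed the content of \cite{sortable}, so quoting them is appropriate. One small point worth making explicit: an element $w\in W_J$ has the same descent set whether computed in $W$ or in $W_J$, since any $s\in S\setminus J$ satisfies $\ell(ws)>\ell(w)$; this is needed for the Narayana refinement to go through cleanly.
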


We define \newword{clever $c$-sortable elements} to be $c$-sortable elements which have no simple canonical joinands.
We continue to let $\br{s}$ stand for $S\setminus \{s\}$.
To count clever $c$-sortable elements we will use Lemma~\ref{s_initial_or_final} to define a map from $c$-sortable elements $v$ with simple cover reflection $s$ to $c'$-sortable elements in the standard parabolic subgroup $W_\br{s}$, where $c'$ is the restriction of $c$ to $W_\br{s}$.
Our next task is to show that, for bipartite $c$, clever $c$-sortable elements are analogous, enumeratively, to antichains in the root poset having no simple roots:

\begin{proposition}\label{i-e sortable}  
Fix a bipartite Coxeter element $c$ of $W$.
\begin{enumerate}
\item The number of clever $c$-sortable elements is $\Cat^+(W)$. 
\item The number of positive, clever $c$-sortable elements is $\Cat\pp(W)$.
\item The number of positive, clever $c$-sortable elements with exactly $k$ descents is $\Nar\pp_k(W)$.
\end{enumerate}
\end{proposition}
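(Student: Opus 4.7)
The approach will parallel the inclusion-exclusion proofs of Propositions~\ref{i-e nn 2} and~\ref{i-e nn 3} on the nonnesting side, replacing the role played there by ``simple roots in the antichain'' with ``simple canonical joinands'' and using the same notion of support on both sides.

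The engine is a bijection for stripping off a simple canonical joinand. Fix $s\in S$. Because $c$ is bipartite, $s$ is initial or final in $c$, so in either case Lemma~\ref{s_initial_or_final} together with Lemma~\ref{cover_ref} gives that $v\mapsto v_\br{s}$ is a bijection from $c$-sortable elements $v$ with $s\in\Can(v)$ to $c'$-sortable elements of $W_\br{s}$, where $c'$ is the restriction of $c$; its inverse is $v'\mapsto v'\join s$. Under this bijection, $\des(v)=\des(v')+1$ (by Lemma~\ref{cover_ref} together with Proposition~\ref{coxcjr}) and $\supp(v)=\supp(v')\cup\{s\}$ (by Lemma~\ref{supp}). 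Since $c'$ remains bipartite inside $W_\br{s}$, this may be iterated: for each $K\subseteq S$, I obtain a bijection from $c$-sortable $v$ with $K\subseteq\Can(v)\cap S$ to $c_{S\setminus K}$-sortable elements of $W_{S\setminus K}$, decreasing descents by $|K|$ and support by $K$.

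With this bijection, I deduce (1) by inclusion-exclusion on the subset $\Can(v)\cap S$:
\[\#\{\text{clever }c\text{-sortable in }W\}=\sum_{K\subseteq S}(-1)^{|K|}\Cat(W_{S\setminus K})=\sum_{J\subseteq S}(-1)^{|S|-|J|}\Cat(W_J)=\Cat^+(W).\]
For (2), a clever $c$-sortable element of support $J$ coincides with a positive clever $c_J$-sortable element of $W_J$, since $W_J$ is a lower interval of $W$ so sortability and canonical join representations agree. Applying (1) in every parabolic gives $\Cat^+(W_I)=\sum_{J\subseteq I}a(W_J)$, where $a(W_J)$ counts positive clever $c_J$-sortable elements; Möbius inversion on the Boolean lattice then yields $a(W)=\sum_{J\subseteq S}(-1)^{|S|-|J|}\Cat^+(W_J)=\Cat\pp(W)$.

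Part (3) runs the same argument while weighting by $q^{\des(v)}$. Because the bijection shifts descents by one, the generating function for clever $c$-sortable elements in $W$ is $\sum_{J\subseteq S}(-q)^{|S|-|J|}\Cat(W_J;q)$; refining by support and Möbius inverting then produces
\[\sum_k a_k(W)\,q^k=\sum_{K\subseteq S}(-1)^{|S|-|K|}(1+q)^{|S|-|K|}\Cat(W_K;q),\]
after collapsing an inner sum $\sum_{M\subseteq S\setminus K}(-1)^{|S|-|K|-|M|}(-q)^{|M|}$ to $(-1)^{|S|-|K|}(1+q)^{|S|-|K|}$ via the binomial theorem. The main obstacle, and the only step that is not straightforward bookkeeping, is to identify this expression with $\Cat\pp(W;q)$. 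I intend to handle this by a parallel algebraic simplification: expanding $\Cat\pp(W;q)=\sum_J(-q)^{|S|-|J|}\Cat^+(W_J;q)$ through the definition of $\Cat^+(W_J;q)$, swapping the order of summation, and collapsing the inner sum $\sum_{M\subseteq S\setminus K}(-q)^{|S|-|K|-|M|}(-1)^{|M|}$ likewise to $(-1)^{|S|-|K|}(1+q)^{|S|-|K|}$. Matching the two formulas term by term over $K\subseteq S$ yields (3), and in particular validates the name ``biNarayana'' without requiring any direct combinatorial bijection between positive clever $c$-sortable elements and the nonnesting objects of Proposition~\ref{i-e nn 3}.
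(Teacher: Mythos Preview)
Your proof is correct and shares its core with the paper's: both set up the bijection $v\mapsto v_{S\setminus J}$ from $c$-sortable elements with $J\subseteq\Can(v)\cap S$ to $c'$-sortable elements of $W_{S\setminus J}$ (the paper packages this as Lemma~\ref{tech lemma}, you build it by iterating single-$s$ removals), and then use inclusion-exclusion on the set of simple canonical joinands to obtain~(1).

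Where you diverge is in deducing (2) and (3). The paper's Lemma~\ref{tech lemma} also records that the bijection \emph{preserves positivity} (part~(2) of the lemma), so (2) and (3) follow from a single inclusion-exclusion applied directly to \emph{positive} $c$-sortable elements, landing immediately on the definitions of $\Cat\pp$ and $\Nar\pp_k$. You instead avoid proving positivity preservation: you stratify clever sortable elements by support, Möbius invert, and then match the resulting double sum against $\Cat\pp(W;q)$ by collapsing an inner binomial sum to $(-1)^{|S|-|K|}(1+q)^{|S|-|K|}$. This is valid and the algebra is clean, but it is a longer road: two inclusion-exclusions and a formal identity in place of one. The payoff of the paper's route is that Lemma~\ref{tech lemma}(2) (positivity preservation) is itself a one-line consequence of Lemma~\ref{supp} and Lemma~\ref{simple_covers}, so the direct inclusion-exclusion costs almost nothing extra and avoids the algebraic matching step entirely.
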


We emphasize that while Proposition~\ref{positive_sortable} holds for arbitrary $c$, Proposition~\ref{i-e sortable} holds only for bipartite $c$.
The proof of Proposition~\ref{i-e sortable} will use inclusion-exclusion and the following technical lemma.
\begin{lemma}\label{tech lemma}
For bipartite $c$ and $J\subseteq S$, let $c'$ be the restriction of $c$ to $W_{S\setminus J}$.
\begin{enumerate}
\item The map $\pidown^{S\setminus J}:v\mapsto v_{S\setminus J}$ is a bijection from $c$-sortable elements of $W$ with ${J\subseteq\Can(v)}$ to $c'$-sortable elements of $W_{S\setminus J}$.   
Also, $\Can(v_{S\setminus J})=\Can(v)\setminus J$.
\item The map restricts to a bijection from positive $c$-sortable elements of $W$ with $J\subseteq\Can(v)$ to positive $c'$-sortable elements of $W_{S\setminus J}$.
\item The map restricts further to a bijection from positive $c$-sortable elements of $W$ with $J\subseteq\Can(v)$ and with exactly $k$ descents to positive $c'$-sortable elements of $W_{S\setminus J}$ with exactly $k-|J|$ descents.
\end{enumerate}
\end{lemma}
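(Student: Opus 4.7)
The plan is to prove all three parts by giving explicit formulas in terms of canonical join representations and then tracking supports and descent counts through the bijection.

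First I would analyze $\Can(v)$ for a $c$-sortable $v$ with $J\subseteq\Can(v)$. Each canonical joinand of $v$ is $c$-sortable by Proposition~\ref{c or cinv}. For any $j\in\Can(v)\setminus J$: either $j$ is simple, in which case $\supp(j)\subseteq S\setminus J$ is automatic; or $j$ is non-simple, and then for each $s\in J$, Lemma~\ref{simple_covers} (applied to $v$, $s$, and $j$, which both lie in $\Can(v)$) forces $s\notin\supp(j)$. Thus $\supp(\Can(v)\setminus J)\subseteq S\setminus J$, so $w:=\Join(\Can(v)\setminus J)$ lies in $W_{S\setminus J}$; by Proposition~\ref{can cplx}, $\Can_W(w)=\Can(v)\setminus J$, which coincides with the canonical join representation computed inside $W_{S\setminus J}$ since $W_{S\setminus J}$ is a lower interval in weak order. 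Applying the lattice homomorphism $\pidown^{S\setminus J}$ to $v=\Join\Can(v)$, and using that $\pidown^{S\setminus J}$ sends each $s\in J$ to the identity while fixing elements of $W_{S\setminus J}$, I conclude $v_{S\setminus J}=w$ and hence $\Can(v_{S\setminus J})=\Can(v)\setminus J$. Each canonical joinand of $v_{S\setminus J}$ is $c$-sortable and lies in $W_{S\setminus J}$, so by the standard restriction property of sortable elements to parabolic subgroups it is $c'$-sortable, and another application of Proposition~\ref{c or cinv} inside $W_{S\setminus J}$ shows that $v_{S\setminus J}$ itself is $c'$-sortable.

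For the inverse, given a $c'$-sortable $w\in W_{S\setminus J}$, I would define $v:=w\join\Join J$. The supports of $\Can_W(w)\subseteq S\setminus J$ and $J$ are disjoint, and $\Join J$ is trivially canonical as an irredundant join of atoms, so Lemma~\ref{disjoint joinands} gives $\Can(v)=\Can_W(w)\cup J\supseteq J$. Each canonical joinand of $v$ is either a simple reflection (trivially $c$-sortable) or a canonical joinand of the $c'$-sortable $w$ (hence $c$-sortable by the same restriction property), so $v$ is $c$-sortable by Proposition~\ref{c or cinv}. The same homomorphism computation yields $v_{S\setminus J}=w$, so the two constructions are mutually inverse, completing (1).

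For part (2), Lemma~\ref{supp} yields $\supp(v)=J\cup\supp(v_{S\setminus J})$, so $v$ is positive in $W$ if and only if $v_{S\setminus J}$ is positive in $W_{S\setminus J}$. For part (3), Proposition~\ref{coxcjr} gives $\des(v)=|\Can(v)|=|J|+|\Can(v_{S\setminus J})|=|J|+\des(v_{S\setminus J})$. The main obstacle is the opening support analysis: bipartiteness of $c$ enters essentially through Lemma~\ref{simple_covers}, which is precisely the tool that separates the simple canonical joinands indexed by $J$ from the rest of $\Can(v)$ by support. Once this disjointness of supports is secured, the remainder is routine bookkeeping with canonical join representations and parabolic quotient maps.
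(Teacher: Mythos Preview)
Your proposal is correct and follows essentially the same route as the paper's proof: both use Lemma~\ref{simple_covers} to establish that $\supp(\Can(v)\setminus J)\subseteq S\setminus J$, push the canonical join representation through the lattice homomorphism $\pidown^{S\setminus J}$, invoke Lemma~\ref{disjoint joinands} for the inverse, and finish parts (2) and (3) with Lemma~\ref{supp} and Proposition~\ref{coxcjr}. The only cosmetic difference is that you cite Proposition~\ref{can cplx} where the paper cites Proposition~\ref{can cong} to see that $\Join(\Can(v)\setminus J)$ is canonical; either works.
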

\begin{proof}
Suppose that $v$ is $c$-sortable, and $J\subseteq\Can(v)$.
Lemma~\ref{simple_covers} says that the support of each canonical joinand $j$ in $\Can(v)\setminus J$ is contained in $S\setminus J$.
(Lemma~\ref{simple_covers} applies to the non-simple elements of $\Can(v)$. Clearly, each simple reflection $s\in \Can(v)\setminus J$ is supported on $S\setminus J$.)
On the one hand, $\pidown^{S\setminus J}(j) = j$ for each $j\in \Can(v)\setminus J$.
On the other hand, $\pidown^{S\setminus J}(s)$ is the identity element for each $s$ in $J$.
Since $\pidown^{S\setminus J}$ is a lattice homomorphism, $\pidown^{S\setminus J}(\Join \Can(v)) = \Join [\Can(v) \setminus J]$.
Proposition~\ref{can cong} implies that $\Join [\Can(v) \setminus J]$ is the canonical join representation of $\pidown^{S\setminus J}(v) = v_{S\setminus J}$.
Lemma~\ref{c or cinv} says that $v_{S\setminus J}$ is $c'$-sortable.

To complete the proof of the first assertion, we construct an inverse map.
Suppose that $v'$ is a $c'$-sortable element in $W_{S\setminus J}$.
Lemma~\ref{supp} says that the support of each canonical joinand $j\in \Can(v')$ is contained in $S\setminus J$.
Lemma~\ref{disjoint joinands} says that the join $\Join [\Can(v')\cup J]$ is a canonical join representation for some element $v\in W$.
Lemma~\ref{c or cinv} says that $v$ is $c$-sortable.
We conclude that the map sending $v'$ to $\Join [\Can(v')\cup J]$ is a well-defined inverse.

Lemma~\ref{supp}, Lemma~\ref{simple_covers}, and the fact that $\Can(v_{S\setminus J})=\Can(v)\setminus J$ imply that $v$ is positive in $W$ if and only if $v_{S\setminus J}$ is positive in $W_{S\setminus J}$.
The second assertion follows.
The third assertion then follows from Proposition~\ref{coxcjr} and the fact that $\Can(v_{S\setminus J})=\Can(v)\setminus J$.
\end{proof}

Finally, we complete the proof of that bipartite $c$-bisortable elements are counted by the formula in Theorem~\ref{biCat GF d-p}.
\begin{prop}\label{q bisortable formula}
For any finite Coxeter group $W$ with simple generators $S$, the generating function $\sum_vq^{\des(v)}$ for bipartite $c$-bisortable elements is
\[\sum\, q^{|M|}\Cat\pp(W_{I};q)\Cat\pp(W_{J};q),\]
where the sum is over all ordered triples $(I,J,M)$ of \emph{pairwise disjoint} subsets of~$S$.
\end{prop}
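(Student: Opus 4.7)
The plan is to establish a bijection between bipartite $c$-bisortable elements $w$ and quintuples $(I, J, M, u'', v'')$, where $(I,J,M)$ is an ordered triple of pairwise disjoint subsets of $S$, $u''$ is a positive clever $c|_I$-sortable element of $W_I$, and $v''$ is a positive clever $(c|_J)^{-1}$-sortable element of $W_J$, such that $\des(w) = \des(u'') + \des(v'') + |M|$. Summing $q^{\des(w)}$ and invoking Proposition~\ref{i-e sortable}(3) then yields the formula, using also the easy fact that the restriction of a bipartite Coxeter element to a standard parabolic subgroup remains bipartite.

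Forward direction: given a bipartite $c$-bisortable $w$, use Lemma~\ref{only ji} to partition $\Can(w)$ as $M \uplus J_+ \uplus J_-$, where $M = \Can(w) \cap S$, $J_+$ consists of the non-simple $c$-sortable canonical joinands, and $J_-$ consists of the non-simple $c^{-1}$-sortable canonical joinands. Set $I = \supp(J_+)$ and $J = \supp(J_-)$, and define $u'' = \Join J_+$ and $v'' = \Join J_-$. By Proposition~\ref{disjoint_simple_support}, $(I, J, M)$ are pairwise disjoint. By Proposition~\ref{can cplx}, $J_+$ is the canonical join representation of $u''$, so $u'' \in W_I$, with $\Can(u'') = J_+$ having support equal to $I$ and containing no simple elements; thus $u''$ is positive and clever. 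Proposition~\ref{c or cinv}(1) together with the standard compatibility of $c$-sortability with parabolic restriction ensure $u''$ is $c|_I$-sortable in $W_I$. The construction of $v''$ is symmetric.

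Reverse direction: given a quintuple $(I,J,M,u'',v'')$, set $w = \Join[M \cup \Can(u'') \cup \Can(v'')]$. Iterating Lemma~\ref{cover_ref} shows $\Join M$ is a canonical join representation; since the three antichains $M$, $\Can(u'')$, $\Can(v'')$ have the pairwise disjoint supports $M, I, J$, iterated application of Lemma~\ref{disjoint joinands} gives that $\Can(w) = M \uplus \Can(u'') \uplus \Can(v'')$ is canonical. Every canonical joinand of $w$ is $c$-sortable or $c^{-1}$-sortable, so Proposition~\ref{c or cinv}(2) yields that $w$ is $c$-bisortable. The forward construction applied to this $w$ recovers the original quintuple: $\Can(w) \cap S = M$ because $u''$ and $v''$ are clever, while Lemma~\ref{only ji} forces the split of $\Can(w) \setminus S$ into $c$-sortable and $c^{-1}$-sortable parts to be exactly $\Can(u'') \uplus \Can(v'')$.

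The descent count follows from Proposition~\ref{coxcjr}: $\des(w) = |\Can(w)| = |M| + |\Can(u'')| + |\Can(v'')| = |M| + \des(u'') + \des(v'')$. Summing,
\[\sum_w q^{\des(w)} = \sum_{(I,J,M)} q^{|M|}\Cat\pp(W_I;q)\,\Cat\pp(W_J;q)\]
by Proposition~\ref{i-e sortable}(3), as desired. The main obstacle is the bookkeeping needed to pass cleanly between $c$-sortability in $W$ and $c|_K$-sortability in a standard parabolic subgroup $W_K$, and analogously for canonical join representations; this rests on the fact that $W_K$ is a lower interval of the weak order on $W$, so join-irreducibles, canonical join representations, and sortability all restrict as expected.
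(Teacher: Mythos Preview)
Your proposal is correct and follows essentially the same approach as the paper's proof: both construct a bijection between bipartite $c$-bisortable elements and data consisting of a positive clever $c$-sortable element, a positive clever $c^{-1}$-sortable element, and a set $M\subseteq S$, with the three supports pairwise disjoint, and both verify the descent count via Proposition~\ref{coxcjr} and conclude via Proposition~\ref{i-e sortable}(3). The only cosmetic differences are that the paper passes explicitly through $(u,v)=(\pidown^c(w),\pidown^{c^{-1}}(w))$ before invoking Proposition~\ref{disjoint_simple_support}, whereas you partition $\Can(w)$ directly using Proposition~\ref{c or cinv}(2) together with Lemma~\ref{only ji}; since Proposition~\ref{disjoint_simple_support}(2) identifies your $J_+,J_-$ with $\Can(u)\setminus S,\Can(v)\setminus S$, this is the same argument.
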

\begin{proof}
Similarly to the proof of Proposition~\ref{q antichain}, the proposition amounts to establishing a bijection from bipartite $c$-bisortable elements $w$ to triples $(u',v',M)$ such that $u'$ is a clever $c$-sortable element, $v'$ is a clever $c^{-1}$-sortable element, and the sets $I=\supp(u')$, $J=\supp(v')$, and $M$ are disjoint subsets of $S$, and then showing that $\des(w)=\des(u')+\des(v')+|M|$.

Given a bipartite $c$-bisortable element $w$, write $(u,v)$ for the corresponding pair $(\pidown^c(w),\pidown^{c^{-1}}(w))$ of twin $(c,c^{-1})$-sortable elements.
Proposition~\ref{disjoint_simple_support}(2) says that $\Can(w)$ is the disjoint union $\left(\Can(u)\setminus S\right)\uplus \left(\Can(v)\setminus S\right)\uplus\left( \Can(w)\cap S\right)$.
Proposition~\ref{disjoint_simple_support}(\ref{pw dis}) says that the sets $I=\supp(\Can(u)\setminus S)$, $J=\supp(\Can(v)\setminus S)$, and $M=\Can(w)\cap S$ are pairwise disjoint subsets of $S$.
By Proposition~\ref{can cplx}, $\Join \Can(u)\setminus S$ is the canonical join representation of a positive, clever $c$-sortable element $u'$ in~$W_I$.
Similarly, $\Join \Can(v)\setminus S$ is the canonical join representation of a positive, clever $c^{-1}$-sortable element $v'$ in $W_J$.  
Applying Proposition~\ref{coxcjr} several times, we see that $\des(w) = \des(u')+\des(v')+|M|$.

We will show that this map $w\mapsto(u',v',M)$ is a bijection by showing that the map $(u',v',M)\mapsto u'\join v'\join (\Join M)$ is the inverse.
On one hand, given $w$, construct $(u',v',M)$ as above.
Then $w$ equals $\Join\Can(w)$, which equals 
\[\left(\Join\Can(u)\setminus S\right)\join \left(\Join\Can(v)\setminus S\right)\join\left(\Join\Can(w)\cap S\right)=u'\join v'\join (\Join M).\]
On the other hand, given a triple $(u',v',M)$ satisfying the description above, set ${w=u'\join v'\join (\Join M)}$. 
Since $u'$, $v'$ and $M$ have pairwise disjoint support, we conclude that $\Can(u')$, $\Can(v')$, and $M$ also have pairwise disjoint support.
Lemma~\ref{disjoint joinands} says that $\Join \Can(u')\uplus \Can(v') \uplus M$ is the canonical join representation of $w$.
By Lemma~\ref{c or cinv}(1), each canonical joinand of $u'$ is $c$-sortable and each canonical joinand of $v'$ is $c^{-1}$-sortable.
Since each simple generator is both $c$- and $c^{-1}$-sortable, we conclude that each canonical joinand of $w$ either either $c$- or $c^{-1}$-sortable.
By Lemma~\ref{c or cinv}(2), $w$ is $c$-bisortable.
Thus, the map $(u',v', M) \mapsto u'\join v'\join (\Join M)$ is a well-defined.

Lemma~\ref{only ji} says that $\Can(u')\uplus M$ is equal to the set of $c$-sortable canonical joinands of $w$.
Since $u'$ is clever, $\Can(u')$ is equal to the set of $c$-sortable canonical joinands in $\Can(w)\setminus S$.
Similarly, $\Can(v')$ is the set of $c^{-1}$-sortable canonical joinands in $\Can(w)\setminus S$, and $\Can(w)\cap S = M$.
Define $u=\pidown^c(w)$ and $v=\pidown^{c^-1}(w)$.
Proposition~\ref{disjoint_simple_support}(2) says that $\Can(w)=(\Can(u)\setminus S)\uplus (\Can(v) \setminus S)\uplus (\Can(w)\cap S)$.
Comparing this to the expression $\Can(w)=\Can(u')\uplus\Can(v')\uplus M$, we see that $\Can(u)\setminus S=\Can(u')$, that $\Can(v)\setminus S=\Can(v')$, and that $\Can(w)\cap S=M$.
Thus the map described above takes $w$ back to $(u',v',M)$.
\end{proof}

\begin{remark}\label{uniform uniform}
The proof given here that twin nonnesting partitions are in bijection with bipartite $c$-bisortable elements would be uniform if there were a uniform proof connecting $c$-sortable elements and nonnesting partitions.
The opposite is true as well: 
Suppose one proved uniformly that a given map $\phi$ is a bijection from antichains in the doubled root poset to bipartite $c$-bisortable elements and also that $\phi$ preserves the triples $(I,J,M)$ appearing in Propositions~\ref{q antichain} and~\ref{q bisortable formula}.
Then the restriction of $\phi$ to antichains in the root poset (i.e.\ those with $J=\emptyset$) is a bijection from antichains in the root poset to $c$-sortable elements.
\end{remark}

\begin{remark}\label{other c}  
The methods of this section don't apply well to the case where $c$ is not bipartite, because the main structural results of the section, Propositions~\ref{disjoint_simple_support} and~\ref{i-e sortable}, can fail when $c$ is not bipartite.
We now describe how both propositions fail for linear $c$ in type $A_3$.
By analogy to Proposition~\ref{avoid alt}, the $c$-bisortable elements for linear $c$ are in bijection with noncrossing arc diagrams such that every arc either passes only left of points or passes only right of points.
(Each arc in the diagram corresponds to a canonical joinand.
See Remark~\ref{type A insight} or \cite[Example~4.10]{arcs}.)
Taking $w=3241$ and $u$ as in Proposition~\ref{disjoint_simple_support}, the noncrossing arc diagram $\delta(w)$ has a right arc connecting $1$ to $4$ and an arc (which is both a left arc and a right arc) connecting $2$ to~$3$.
Thus $\Can(w)\cap S=\set{s_2}$.
Also $u=w=3241$, so $\Can(u)\setminus S$ corresponds to the arc connecting $1$ to $4$, which has support $\set{s_1,s_2,s_3}$, contradicting Proposition~\ref{disjoint_simple_support}\eqref{pw dis}.
The $c$-sortable elements are in bijection with noncrossing arc diagrams such that every arc only passes right of points.
From there, we easily see that there is only $1$ positive, clever $c$-sortable element, contradicting Proposition~\ref{i-e sortable}(2).
\end{remark}

\subsection{BiCatalan and Catalan formulas}\label{recursions sec}
In this section and the next, we prepare to prove the formula for $\biCat(D_n)$ in Theorem~\ref{enum thm}, thus completing the proof of that theorem.
Specifically, the proof requires combining a very large number of identities relating $q$-analogs of biCatalan numbers, Catalan numbers, and double-positive Catalan numbers that we quote or prove here.
In this section, we give recursions for the $q$-analogs of $W$-biCatalan and $W$-Catalan numbers for irreducible finite Coxeter groups, in which $q$-analogs of double-positive Catalan numbers appear as coefficients.

\begin{prop}\label{q biCat recursion}  
For an irreducible finite Coxeter group $W$ and a simple generator $s\in S$, the $q$-analog of the $W$-biCatalan number satisfies
\begin{multline}\label{q biCat recursion formula}
\biCat(W;q)=(1+q)\biCat(W_{S\setminus\set{s}};q)\\
+2\sum_{S_0}\Cat\pp(W_{S_0};q)\prod_{i=1}^m\left[\frac12\biCat(W_{S_i};q)+\frac{1+q}2\biCat(W_{S_i\setminus\set{s_i}};q)\right],
\end{multline}
where the sum is over all connected subgraphs $S_0$ of the diagram for $W$ with $s\in S_0$, the connected components of the complement of $S_0$ in the diagram are $S_1,\ldots,S_m$, and each $s_i$ is the unique vertex in $S_i$ that is connected by an edge to a vertex in~$S_0$.
\end{prop}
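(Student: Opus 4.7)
The plan is to derive the recursion directly from Theorem~\ref{biCat GF d-p}, which writes $\biCat(W;q)$ as a sum of $q^{|M|}\Cat\pp(W_I;q)\Cat\pp(W_J;q)$ over ordered triples $(I,J,M)$ of pairwise disjoint subsets of $S$. I would partition the triples according to how they involve the distinguished generator $s$: either $s\notin I\cup J$, or $s\in I\cup J$ (disjointness of $I$ and $J$ rules out $s$ lying in both).

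For triples with $s\notin I\cup J$, the element $s$ lies either in $M$ (contributing a factor $q$) or outside $I\cup J\cup M$ (contributing $1$), while the rest of the triple is an arbitrary triple of pairwise disjoint subsets of $S\setminus\{s\}$. A second application of Theorem~\ref{biCat GF d-p} to $W_{S\setminus\{s\}}$ shows that these triples contribute precisely $(1+q)\biCat(W_{S\setminus\{s\}};q)$, the first term of the recursion.

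For triples with $s\in I\cup J$, the summand is symmetric in $I$ and $J$, so it suffices to treat the case $s\in I$ and multiply by $2$. Given such a triple, let $S_0$ be the connected component of the induced subdiagram on $I$ containing $s$. Since the Coxeter diagram of an irreducible finite Coxeter group is a tree, removing $S_0$ leaves connected components $S_1,\ldots,S_m$ of $S\setminus S_0$, each attached to $S_0$ by a unique edge with endpoint $s_i\in S_i$. Maximality of $S_0$ among connected subsets of $I$ containing $s$ is equivalent to $s_i\notin I$ for every $i$, and conversely any connected $S_0\ni s$ with $S_0\subseteq I$ satisfying this condition realizes $S_0$ as the desired component. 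Since $J$ is disjoint from $I$, and $M$ from $I\cup J$, both $J$ and $M$ lie in $\bigcup_i S_i$, as do the other components of $I$. Factoring $\Cat\pp(W_I;q)$ and $\Cat\pp(W_J;q)$ over connected components via Proposition~\ref{Cat++ reducible} rewrites this part of the sum as
\[
2\sum_{\substack{S_0\ni s\\S_0\text{ connected}}}\Cat\pp(W_{S_0};q)\prod_{i=1}^m A_i,\quad A_i=\sum_{\substack{(I',J',M')\text{ disj.\ in }S_i\\s_i\notin I'}} q^{|M'|}\Cat\pp(W_{I'};q)\Cat\pp(W_{J'};q).
\]

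It remains to evaluate $A_i$. Let $B_i$ be the analogous sum with the constraint $s_i\notin J'$ in place of $s_i\notin I'$; the involution $(I',J')\leftrightarrow(J',I')$ gives $A_i=B_i$. Since $I'\cap J'=\emptyset$ forbids $s_i$ from both, every triple satisfies at least one of the two constraints, and the doubly-counted triples are exactly those with $s_i\notin I'\cup J'$, which split according to whether $s_i\in M'$ (factor $q$) or $s_i$ lies in none of the three (factor $1$), with the rest an arbitrary triple on $S_i\setminus\{s_i\}$. Hence $2A_i=\biCat(W_{S_i};q)+(1+q)\biCat(W_{S_i\setminus\{s_i\}};q)$, using the multiplicativity $\biCat(W_1\times W_2;q)=\biCat(W_1;q)\biCat(W_2;q)$ (immediate from Theorem~\ref{biCat GF d-p} and Proposition~\ref{Cat++ reducible}) to handle the possible disconnectedness of $S_i\setminus\{s_i\}$. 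Dividing by $2$ gives the claimed formula for $A_i$ and completes the recursion. I expect the only nontrivial point to be the bookkeeping in the second case, namely recognizing that the constraint $s_i\notin I$ exactly captures maximality of $S_0$; there is no deeper structural obstacle.
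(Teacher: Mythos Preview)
Your proof is correct and follows essentially the same approach as the paper: partition the triples $(I,J,M)$ from Theorem~\ref{biCat GF d-p} according to the location of $s$, use the $I\leftrightarrow J$ symmetry to reduce to the case $s\in I$, take $S_0$ to be the connected component of $I$ containing $s$, and factor over the components $S_i$ using Proposition~\ref{Cat++ reducible}. The only cosmetic difference is in evaluating the factor $A_i$: the paper splits $A_i$ into the cases $s_i\in J_i$ and $s_i\notin J_i$ and recombines using symmetry, whereas you use the cleaner inclusion--exclusion $A_i+B_i=\biCat(W_{S_i};q)+(1+q)\biCat(W_{S_i\setminus\{s_i\}};q)$; both arguments exploit the same $I\leftrightarrow J$ symmetry and arrive at the same expression.
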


\begin{proof}
For fixed $s$, we break the formula in Theorem~\ref{biCat GF d-p} into four sums, according to whether $s$ is in $S\setminus(I\cup J\cup M)$, in $M$, in $I$, or in $J$.
The sum of terms with $s\in S\setminus(I\cup J\cup M)$ equals $\biCat(W_{S\setminus\set{s}};q)$.
The sum of terms with $s\in M$ equals $q\cdot\biCat(W_{S\setminus\set{s}};q)$.

Consider next the sum of terms with $s\in I$, and in each term let $S_0$ be the connected component of the diagram containing $s$.
Using \eqref{Cat++ reducible formula}, we can reorganize the sum according to $S_0$ to obtain
\[\sum_{S_0}\Cat\pp(W_{S_0};q)\sum q^{|M|}\Cat\pp(W_{I'};q)\Cat\pp(W_J;q),\]
where the $S_0$-sum is as described in the statement of the proposition and the inner sum is over all ordered triples $(I',J,M)$ of disjoint subsets of $S\setminus S_0$ such that no element of $I'$ is connected by an edge of the diagram to an element of $S_0$.
Again using \eqref{Cat++ reducible formula}, we factor the inner sum further to obtain
\[\sum_{S_0}\Cat\pp(W_{S_0};q)\prod_{i=1}^m\left[\sum q^{|M_i|}\Cat\pp(W_{I_i};q)\Cat\pp(W_{J_i};q)\right],\]
where the $S_i$ and $s_i$ are as in the statement of the proposition and the inner sum runs of over all ordered triples $(I_i,J_i,M_i)$ of pairwise disjoint subsets of $S_i$ with $s_i\not\in I_i$.
The sum for each $i$ can be broken up into a sum over terms with $s_i \in J_i$ and terms with $s_i \not\in J_i$.
Splitting the sum over terms with $s_i \not\in J_i$ in half, we obtain three sums:
\begin{multline*}
\sum_{s_i\in J_i} q^{|M_i|}\Cat\pp(W_{I_i};q)\Cat\pp(W_{J_i};q)\\
+\frac12\sum_{s_i\not\in J_i} q^{|M_i|}\Cat\pp(W_{I_i};q)\Cat\pp(W_{J_i};q)\\
+\frac12\sum_{s_i\not\in J_i} q^{|M_i|}\Cat\pp(W_{I_i};q)\Cat\pp(W_{J_i};q)
\end{multline*}
The symmetry between $I$ and $J$ on the right side of Theorem~\ref{biCat GF d-p} lets us recognize the sum of the first two terms as $\frac12\biCat(W_{S_i};q)$, recalling that $s\not\in I_i$ throughout.
The third term is $\frac{1+q}2\biCat(W_{S_i\setminus\set{s_i}};q)$.
We see that the sum of terms with $s\in I$ is the sum in the proposed formula, without the factor $2$ in front.
By symmetry, the sum of terms with $s\in J$ is the same sum, so we obtain the factor $2$ in the sum and we have established the desired formula.
\end{proof}

We obtain the following recursion for $\biCat(D_n;q)$ from Proposition~\ref{q biCat recursion}.
The notation $D_2$ means $A_1\times A_1$ and $D_3$ means $A_3$.


\begin{prop}\label{q biCat recursion D}  
For $n\ge 3$,   
\begin{multline}\label{q biCat recursion D formula}
\biCat(D_n;q)=(1+q)\biCat(D_{n-1};q)\\
+\sum_{i=1}^{n-3}\Cat\pp(A_i;q)\left(\biCat(D_{n-i};q)+(1+q)\biCat(D_{n-i-1};q)\right)\\
+2(1+q)^2\Cat\pp(A_{n-2};q)+4(1+q)\Cat\pp(A_{n-1};q)+2\Cat\pp(D_n;q)
\end{multline}
\end{prop}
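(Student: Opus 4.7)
The plan is to apply Proposition~\ref{q biCat recursion} to $W = D_n$ with a careful choice of simple generator $s$. For $n \geq 4$ I take $s$ to be the leaf at the far end of the long tail of the $D_n$ diagram (the vertex not adjacent to the two fork leaves), and for $n = 3$, where $D_3 = A_3$, I take $s$ to be the central vertex of the $A_3$ path. In either case $W_{S \setminus \{s\}} = D_{n-1}$, so the first term of Proposition~\ref{q biCat recursion} immediately contributes $(1+q)\biCat(D_{n-1};q)$ in agreement with~\eqref{q biCat recursion D formula}. It remains to analyze the sum by enumerating the connected subgraphs $S_0$ containing $s$ and matching each to a term of~\eqref{q biCat recursion D formula}.

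For $n \geq 4$ the connected subgraphs $S_0$ of the $D_n$ diagram containing $s = s_n$ fall into four families. (i)~The path extensions $S_0 = \{s_k, s_{k+1}, \ldots, s_n\}$ for $k = 4, \ldots, n$ along the long tail, of type $A_{n-k+1}$, whose complement is a connected $D_{k-1}$ subgraph adjacent to $S_0$ via $s_{k-1}$; setting $i = n-k+1$ and distributing the factor $2$ of Proposition~\ref{q biCat recursion} over the halves, these contribute the summand $\Cat\pp(A_i;q)\bigl[\biCat(D_{n-i};q) + (1+q)\biCat(D_{n-i-1};q)\bigr]$. (ii)~The subgraph $S_0 = \{s_3, \ldots, s_n\}$ of type $A_{n-2}$, whose complement splits into the two singleton components $\{s_1\}$ and $\{s_2\}$; each contributes the factor $\tfrac{1}{2}\biCat(A_1;q) + \tfrac{1+q}{2}\biCat(\{e\};q) = 1 + q$, using $\biCat(A_1;q) = 1 + q$, so the total contribution is $2(1+q)^2\Cat\pp(A_{n-2};q)$. (iii)~The two subgraphs that include exactly one fork leaf in addition to the long tail, each of type $A_{n-1}$ with a single $A_1$ complement; the same singleton factor $1+q$ combined with the two choices of fork leaf gives $4(1+q)\Cat\pp(A_{n-1};q)$. (iv)~The entire diagram of type $D_n$, with empty complement, contributing $2\Cat\pp(D_n;q)$.

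The only subtle point is that the sum in~\eqref{q biCat recursion D formula} is indexed starting at $i = 1$, but the $i = 1$ case (coming from $k = n$, the singleton $S_0 = \{s_n\}$) yields $\Cat\pp(A_1;q)\bigl[\biCat(D_{n-1};q) + (1+q)\biCat(D_{n-2};q)\bigr]$, which vanishes because $\Cat\pp(A_1;q) = 0$. This identity is immediate from the defining inclusion-exclusion, or equivalently from the observation that the unique positive root of $A_1$ is simple, so there are no full-support antichains without simple roots. The case $n = 3$ is handled analogously with $s$ the central vertex of $A_3$: families (i) and (ii) collapse appropriately, and the same vanishing accounts for the missing $i = 1$ summand. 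I expect no serious conceptual obstacle; the argument is essentially bookkeeping, with care required only in distributing the halved factors and in correctly identifying, for each $S_0$, which vertex of each complement component is adjacent to $S_0$.
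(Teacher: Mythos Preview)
Your proposal is correct and follows essentially the same approach as the paper's proof: apply Proposition~\ref{q biCat recursion} with $s$ chosen so that $W_{S\setminus\{s\}}=D_{n-1}$, and classify the connected subgraphs $S_0$ containing $s$ into the same four families. Your treatment is in fact slightly more careful than the paper's, which says ``take $s$ to be a leaf of the $D_n$ diagram whose removal leaves the diagram for $D_{n-1}$''---a description that is literally inapplicable when $n=3$, since no leaf of $A_3$ has complement $A_1\times A_1=D_2$; you correctly take $s$ to be the central vertex in that case. One small comment: your ``subtle point'' paragraph about the $i=1$ summand is harmless but not really needed---the $i=1$ term genuinely arises from the singleton $S_0=\{s_n\}$ and is legitimately part of the sum in~\eqref{q biCat recursion D formula}; the fact that $\Cat\pp(A_1;q)=0$ just means that term contributes nothing, not that anything is ``missing.''
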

\begin{proof}
In Proposition~\ref{q biCat recursion}, take $s$ to be a leaf of the $D_n$ diagram whose removal leaves the diagram for $D_{n-1}$.
The sum over $S_0$ splits into several pieces.
First, the $S_0$ for which the diagram on $S\setminus\set{S_0}$ is of type $D_k$ for $k\ge 3$ give rise to terms $\sum_{i=1}^{n-3}\Cat\pp(A_i;q)\left(\biCat(D_{n-i};q)+(1+q)\biCat(D_{n-i-1};q)\right)$.
Next, the term for which the diagram on $S\setminus\set{S_0}$ is of type $D_2$ is $2\Cat\pp(A_{n-2})(\frac12(1+q)+\frac{1+q}2\cdot1)^2$, which simplifies to $2(1+q)^2\Cat\pp(A_{n-2})$.
The two terms for which the diagram on $S\setminus\set{S_0}$ is of type $A_1$ \emph{each} contribute $2(1+q)\Cat\pp(A_{n-1})$.
Finally, the term with $S_0=S$ is $2\Cat\pp(D_n;q)$.
\end{proof}

We obtain the following recursion for $\biCat(B_n;q)$ from Proposition~\ref{q biCat recursion} similarly.
Here and throughout the paper, we interpret $B_0$ and $B_1$ to be synonyms for $A_0$ and $A_1$.
\begin{prop}\label{q biCat recursion B}  
For $n\ge1$, 
\begin{multline}\label{q biCat recursion B formula}
\biCat(B_n;q)=\\(1+q)\biCat(B_{n-1};q)+2\Cat\pp(B_n;q)+2(1+q)\Cat\pp(A_{n-1};q)\\
+\sum_{i=1}^{n-2}\Cat\pp(A_i;q)\left[\biCat(B_{n-i};q)+(1+q)\biCat(B_{n-i-1};q)\right].
\end{multline}
\end{prop}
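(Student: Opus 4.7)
The plan is to apply Proposition~\ref{q biCat recursion} to $W = B_n$ with a carefully chosen simple generator $s$, following the template of the proof of Proposition~\ref{q biCat recursion D}. Take $s$ to be the leaf of the $B_n$ diagram opposite the double edge, so that the complement diagram $B_n \setminus \{s\}$ is the diagram of $B_{n-1}$. The first line of \eqref{q biCat recursion formula} immediately yields $(1+q)\biCat(B_{n-1};q)$.

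Because the $B_n$ diagram is a path and $s$ is one of its two leaves, the connected subgraphs $S_0$ of the diagram that contain $s$ are precisely the initial segments beginning at $s$: one for each value of $k = |S_0|$ with $1 \le k \le n$. When $k < n$, the subgraph $S_0$ avoids the double edge, so $W_{S_0}$ is of type $A_k$; the complement $S \setminus S_0$ is a single connected subgraph (so $m = 1$) containing the double edge, and thus represents a standard parabolic of type $B_{n-k}$; and removing from $S_1$ the unique vertex adjacent to $S_0$ leaves a diagram of type $B_{n-k-1}$. When $k = n$ we have $S_0 = S$ and the product over $i$ is empty.

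Substituting these data into \eqref{q biCat recursion formula}, the $k = n$ term contributes $2\Cat\pp(B_n;q)$, while for each $k$ with $1 \le k \le n-1$ the corresponding term simplifies to
\[\Cat\pp(A_k;q)\bigl[\biCat(B_{n-k};q) + (1+q)\biCat(B_{n-k-1};q)\bigr].\]
The summands with $1 \le k \le n-2$ combine directly into the sum in \eqref{q biCat recursion B formula}. The boundary summand $k = n-1$ invokes $\biCat(B_1;q) = \biCat(A_1;q) = 1+q$ (read off from Theorem~\ref{biNar thm}) and the convention $\biCat(B_0;q) = 1$, and thereby simplifies to the free-standing term $2(1+q)\Cat\pp(A_{n-1};q)$. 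Assembling these contributions produces \eqref{q biCat recursion B formula}.

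The argument is essentially bookkeeping, with no serious obstacle beyond correctly identifying the connected subgraphs $S_0$ containing the chosen leaf and handling the boundary conventions for $B_0$ and $B_1$ in the $k = n-1$ summand.
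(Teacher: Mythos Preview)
Your proof is correct and follows essentially the same approach as the paper: apply Proposition~\ref{q biCat recursion} with $s$ the leaf of the $B_n$ diagram whose removal leaves $B_{n-1}$, identify the connected subgraphs $S_0$ containing $s$ as type-$A$ paths (or all of $S$), and separate out the boundary cases $|S_0|=n-1$ and $|S_0|=n$ using $\biCat(B_1;q)=1+q$ and $\biCat(B_0;q)=1$. Your exposition is somewhat more explicit than the paper's, but the argument is the same.
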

\begin{proof}
In Proposition~\ref{q biCat recursion}, take $s$ to be a leaf of the $B_n$ diagram whose removal leaves the diagram for $B_{n-1}$.
The terms with $|S_0|$ from $1$ to $n-2$ are in the summation in \eqref{q biCat recursion B formula}, but we separate out the terms with $|S_0|=n-1$ and $|S_0|=n$.
For the term with $|S_0|=n-1$, we use the facts that $\biCat(B_1;q)=(1+q)$ and that $\biCat(B_0;q)=1$.
\end{proof}

Similarly, we obtain the following recursion for $\biCat(A_n)$ by taking $s$ to be either leaf of the diagram.

\begin{prop}\label{q biCat recursion A}  
For $n\ge1$, 
\begin{multline}\label{q biCat recursion A formula}
\biCat(A_n;q)=(1+q)\biCat(A_{n-1};q)+2\Cat\pp(A_n;q)\\
+\sum_{i=1}^{n-1}\Cat\pp(A_i;q)\left[\biCat(A_{n-i};q)+(1+q)\biCat(A_{n-i-1};q)\right].
\end{multline}
\end{prop}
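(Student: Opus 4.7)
The plan is to specialize Proposition~\ref{q biCat recursion} to $W=A_n$ by choosing $s$ to be one of the two leaves of the $A_n$ Coxeter diagram. I will label the simple generators $s_1 - s_2 - \cdots - s_n$ along the path and take $s = s_1$. This is the direct analogue of the type-B proof (Proposition~\ref{q biCat recursion B}) and of the type-D proof (Proposition~\ref{q biCat recursion D}), so the argument will amount to bookkeeping of the terms in the general recursion.

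Because the $A_n$ diagram is a path and $s_1$ is a leaf, the connected subdiagrams $S_0$ of the $A_n$ diagram containing $s_1$ are precisely the initial segments $\{s_1,\ldots,s_k\}$ for $k=1,\ldots,n$, so $W_{S_0}$ is of type $A_k$. For $1\le k\le n-1$, the complement $S\setminus S_0$ has a single connected component $\{s_{k+1},\ldots,s_n\}$, which is of type $A_{n-k}$, and the unique vertex of this component adjacent to $S_0$ is the leaf $s_{k+1}$; removing $s_{k+1}$ leaves a diagram of type $A_{n-k-1}$. Hence the product in Proposition~\ref{q biCat recursion} has a single factor, namely $\tfrac{1}{2}\biCat(A_{n-k};q)+\tfrac{1+q}{2}\biCat(A_{n-k-1};q)$. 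For $k=n$, the complement $S\setminus S_0$ is empty, the product is the empty product $1$, and the contribution is $2\Cat\pp(A_n;q)$.

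Substituting these contributions into the formula of Proposition~\ref{q biCat recursion} and distributing the external factor of $2$ so that it cancels the $\tfrac{1}{2}$ inside each bracket yields the displayed identity (renaming the summation index from $k$ to $i$). The only minor convention to record is that $\biCat(A_0;q)=1$, which is needed to interpret the $i=n-1$ summand. There is no genuine obstacle: once Proposition~\ref{q biCat recursion} is in hand, the proof is a routine enumeration of initial segments of a path, just as in the type-B and type-D cases.
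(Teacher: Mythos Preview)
Your proposal is correct and is exactly the approach the paper takes: the paper's entire proof is the single sentence ``Similarly, we obtain the following recursion for $\biCat(A_n)$ by taking $s$ to be either leaf of the diagram,'' and your write-up simply makes that sentence explicit. The bookkeeping you give (initial segments $\{s_1,\ldots,s_k\}$, single complementary component $A_{n-k}$, empty product for $k=n$, and the convention $\biCat(A_0;q)=1$) is all accurate.
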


Next we gather some formulas involving the $q$-Catalan numbers.
We begin with the usual recursion for the type-A Catalan numbers, although this $q$-version may be less widely familiar.
It is easily obtained through the interpretation of $\Cat(A_n;q)$ as the descent generating function for $231$-avoiding permutations in $S_{n+1}$, by breaking up the count according to the first entry in the permutation.
We omit the details.
\begin{prop}\label{Cat A q}  
For $n\ge1$, 
\begin{equation}\label{Cat A q formula}
\Cat(A_n;q)=(1+q)\Cat(A_{n-1};q)+q\sum_{i=1}^{n-1}\Cat(A_{i-1};q)\Cat(A_{n-i-1};q).
\end{equation}
\end{prop}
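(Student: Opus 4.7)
The plan is to use the classical interpretation of $\Cat(A_n;q)$ as the descent generating function of $231$-avoiding permutations in $S_{n+1}$. This interpretation says $\Cat(A_n;q)=\sum_{\pi} q^{\des(\pi)}$, summed over $231$-avoiding permutations $\pi\in S_{n+1}$; equivalently, $\Nar_k(A_n)$ counts such permutations with exactly $k$ descents. Given this, I would decompose the sum according to the value of $\pi_1$.

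The key structural claim is: if $\pi=\pi_1\pi_2\cdots\pi_{n+1}$ is $231$-avoiding and $\pi_1=k$, then $\pi$ factors as $\pi = k \cdot \sigma \cdot \tau$, where $\sigma$ is a $231$-avoiding permutation of $\{1,\ldots,k-1\}$ occupying positions $2,\ldots,k$, and $\tau$ is a $231$-avoiding permutation of $\{k+1,\ldots,n+1\}$ occupying positions $k+1,\ldots,n+1$. Indeed, if some value $b>k$ occupied position $i>1$ before some value $a<k$ at a later position $j>i$, then $(\pi_1,\pi_i,\pi_j)=(k,b,a)$ with $a<k<b$ would be a $231$-pattern; the sub-permutations $\sigma,\tau$ are automatically $231$-avoiding as subwords of $\pi$. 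Conversely, any choice of $231$-avoiding $\sigma$ on $\{1,\ldots,k-1\}$ and $231$-avoiding $\tau$ on $\{k+1,\ldots,n+1\}$ yields a $231$-avoiding $\pi=k\cdot\sigma\cdot\tau$, so this gives a bijection.

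I would then track descents in each case. When $k=1$, the block $\sigma$ is empty and $\pi_1=1<\pi_2$ is an ascent (for $n\geq 1$), so $\des(\pi)=\des(\tau)$; the contribution is $\Cat(A_{n-1};q)$. When $k=n+1$, the block $\tau$ is empty and position $1$ is a descent ($\pi_1=n+1>\pi_2$), so $\des(\pi)=1+\des(\sigma)$; the contribution is $q\,\Cat(A_{n-1};q)$. When $1<k<n+1$, both blocks are nonempty: position $1$ contributes a descent, the boundary between $\sigma$ and $\tau$ is an ascent (the last entry of $\sigma$ is less than $k$ and the first entry of $\tau$ is greater than $k$), and the remaining descents of $\pi$ are exactly the internal descents of $\sigma$ and $\tau$; the contribution is $q\,\Cat(A_{k-2};q)\,\Cat(A_{n-k};q)$. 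Summing the three cases and reindexing the last by $i=k-1$ yields the asserted recursion.

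No step presents a serious obstacle: the $231$-avoiding structural split is classical, and the descent accounting is a direct verification. The only thing worth flagging is to remember that positions $k{-}k{+}1$ (at the $\sigma$--$\tau$ junction) and $1{-}2$ when $k=1$ are \emph{ascents}, not descents---a routine check but the place where the coefficient $(1+q)$ on the first term (rather than $2$ or $(1+q)^2$) gets pinned down.
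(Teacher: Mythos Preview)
Your proposal is correct and takes exactly the approach the paper indicates: the paper states that the recursion ``is easily obtained through the interpretation of $\Cat(A_n;q)$ as the descent generating function for $231$-avoiding permutations in $S_{n+1}$, by breaking up the count according to the first entry in the permutation,'' and omits the details. Your write-up supplies precisely those details, with the structural decomposition and descent bookkeeping carried out correctly.
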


Furthermore, using known formulas for the Narayana numbers, we obtain a recursion that relates the $q$-Catalan number in types A and D.
\begin{prop}\label{Cat D A q}
For $n\ge2$,
\begin{equation}\label{Cat D A formula}
\Cat(D_n;q) = \frac{n+1}2(1+q)\Cat(A_{n-1};q)-\Bigl(\frac{n-1}2+q+\frac{n-1}2q^2\Bigr)\Cat(A_{n-2}).
\end{equation}
\end{prop}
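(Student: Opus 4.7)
The plan is to prove the identity by extracting the coefficient of $q^k$ from both sides and reducing to a closed-form identity among Narayana numbers. Expanding the right-hand side, the coefficient of $q^k$ equals
\[\tfrac{n+1}{2}\bigl(\Nar_k(A_{n-1})+\Nar_{k-1}(A_{n-1})\bigr)-\tfrac{n-1}{2}\bigl(\Nar_k(A_{n-2})+\Nar_{k-2}(A_{n-2})\bigr)-\Nar_{k-1}(A_{n-2}),\]
whereas the coefficient of $q^k$ on the left is $\Nar_k(D_n)$. (I am reading the final $\Cat(A_{n-2})$ on the right as $\Cat(A_{n-2};q)$, since otherwise the equation does not balance as polynomials in~$q$.) The proposition thus amounts to a coefficient-by-coefficient identity for every $k$.

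To verify this identity, I would substitute the standard closed form $\Nar_k(A_{m-1})=\tfrac{1}{m}\binom{m}{k}\binom{m}{k-1}$ together with a known explicit formula for $\Nar_k(D_n)$ (e.g.\ one of the classical expressions in the literature as a combination of products of binomial coefficients). After clearing the common denominator $n(n-1)$ and repeatedly applying Pascal's rule and the absorption identity $k\binom{m}{k}=m\binom{m-1}{k-1}$, the proposition should collapse to a polynomial identity in $n$ and $k$ among products of binomial coefficients that can be checked by routine manipulation. The natural way to organize the computation is to first rewrite the type-A contributions in a common form (for instance, using $\binom{n}{k}+\binom{n}{k-1}=\binom{n+1}{k}$ on the $\Nar_k(A_{n-1})+\Nar_{k-1}(A_{n-1})$ piece) before comparing with the type-D expression.

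The main obstacle is simply computational: the identity is elementary but the bookkeeping is tedious, and the particular coefficients $\tfrac{n\pm1}{2}$ and $1$ must emerge naturally from the binomial algebra. As sanity checks to guard against sign or index errors, at $q=0$ both sides equal $1$, and at $q=1$ both sides reduce to $\binom{2n}{n}-\binom{2n-2}{n-1}=\tfrac{3n-2}{n}\binom{2n-2}{n-1}=\Cat(D_n)$, so the formula is at least dimensionally correct before any coefficient comparison begins.
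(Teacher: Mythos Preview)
Your proposal is correct and follows essentially the same approach as the paper: extract the coefficient of $q^k$ on both sides to reduce to the Narayana identity
\[\Nar_k(D_n) = \tfrac{n+1}{2}\bigl(\Nar_k(A_{n-1})+\Nar_{k-1}(A_{n-1})\bigr)-\tfrac{n-1}{2}\Nar_k(A_{n-2})-\Nar_{k-1}(A_{n-2})-\tfrac{n-1}{2}\Nar_{k-2}(A_{n-2}),\]
and then verify this using the known closed forms for the type-A and type-D Narayana numbers. The paper does exactly this, citing the explicit formulas in \cite[(9.1) and (9.3)]{gcccc} (with $m=1$) rather than carrying out the binomial algebra on the page; your reading of the final $\Cat(A_{n-2})$ as $\Cat(A_{n-2};q)$ is also correct.
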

\begin{proof}
Taking the coefficient of $q^k$ on both sides, we see that \eqref{Cat D A formula} is equivalent to 
\begin{multline}\label{Nar D A formula}
\Nar_k(D_n) = \frac{n+1}2\left(\Nar_k(A_{n-1})+\Nar_{k-1}(A_{n-1})\right)\\-\frac{n-1}2\Nar_k(A_{n-2})-\Nar_{k-1}(A_{n-2})-\frac{n-1}2\Nar_{k-2}(A_{n-2}).
\end{multline}
This can be verified using the known formulas for the type-A and type-D Narayana numbers.
(See for example in \cite[(9.1)]{gcccc} and \cite[(9.3)]{gcccc}, putting $m=1$ in both formulas).
\end{proof}

Next, we give a recursion for $\Cat(W;q)$ analogous to \eqref{q biCat recursion formula}.
The proof follows the outline of the proof of Proposition~\ref{q biCat recursion}, using Theorem~\ref{Cat GF d-p} instead of Theorem~\ref{biCat GF d-p}.
This proof is simpler than the proof of Proposition~\ref{q biCat recursion}, so we omit the details.

\begin{prop}\label{q Cat recursion}  
For an irreducible finite Coxeter group $W$ and a simple generator~$s$, the $q$-analog of the $W$-Catalan number satisfies
\begin{multline}\label{q Cat recursion formula}
\Cat(W;q)=(1+q)\Cat(W_{S\setminus\set{s}};q)\\
+\sum_{S_0}\Cat\pp(W_{S_0};q)\prod_{i=1}^m(1+q)\Cat(W_{S_i\setminus\set{s_i}};q),
\end{multline}
where the sum is over all connected subgraphs $S_0$ of the diagram for $W$ with $s\in S_0$, the connected components of the complement of $S_0$ in the diagram are $S_1,\ldots,S_m$, and each $s_i$ is the unique vertex in $S_i$ that is connected by an edge to a vertex in~$S_0$.
\end{prop}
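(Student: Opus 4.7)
The plan is to follow the template of the proof of Proposition~\ref{q biCat recursion}, but starting from Theorem~\ref{Cat GF d-p} instead of Theorem~\ref{biCat GF d-p}. First I would expand $\Cat(W;q)$ as $\sum q^{|J|}\Cat\pp(W_I;q)$ over ordered pairs $(I,J)$ of disjoint subsets of $S$, and split this sum into three pieces according to whether $s\in S\setminus(I\cup J)$, $s\in J$, or $s\in I$. The terms with $s\notin I\cup J$ are parametrized by ordered pairs of disjoint subsets of $S\setminus\{s\}$ and so contribute exactly $\Cat(W_{S\setminus\{s\}};q)$; the terms with $s\in J$ contribute $q\cdot\Cat(W_{S\setminus\{s\}};q)$ by stripping $s$ out of $J$. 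Together these give the term $(1+q)\Cat(W_{S\setminus\{s\}};q)$.

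For the remaining terms with $s\in I$, I would group them by $S_0$, the connected component of the diagram induced on $I$ that contains $s$. Because $S_0$ is a union of connected components of the diagram of $W_I$, the reducibility formula \eqref{Cat++ reducible formula} gives a factorization
\[\Cat\pp(W_I;q)=\Cat\pp(W_{S_0};q)\cdot\Cat\pp(W_{I\setminus S_0};q).\]
The condition that $S_0$ be an entire connected component of $I$ means precisely that $I\setminus S_0$ contains no vertex of $S$ adjacent to $S_0$; in the notation of the statement, for each $i$ we must have $s_i\notin I\cap S_i$, while no restriction is placed on $J\cap S_i$ beyond disjointness from $I\cap S_i$.

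With $S_0$ fixed, the sum thus factors across the connected components $S_1,\dots,S_m$ of $S\setminus S_0$, producing on each factor
\[\sum q^{|J_i|}\Cat\pp(W_{I_i};q),\]
where $(I_i,J_i)$ runs over ordered pairs of disjoint subsets of $S_i$ with $s_i\notin I_i$. Splitting this inner sum according to whether $s_i\in J_i$ or not, removing $s_i$ in the former case, in both subcases the remaining pair is a general ordered pair of disjoint subsets of $S_i\setminus\{s_i\}$, so by Theorem~\ref{Cat GF d-p} applied to $W_{S_i\setminus\{s_i\}}$ each factor becomes $(1+q)\Cat(W_{S_i\setminus\{s_i\}};q)$. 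Assembling the three cases yields \eqref{q Cat recursion formula}.

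The only mild subtlety, which I would want to verify carefully, is the bookkeeping that justifies declaring $S_0$ a connected subgraph of the diagram containing $s$ (rather than merely a subset): the key observation is that $S_0$, being a connected component of the parabolic subdiagram on $I$, is automatically connected in the diagram of $W$. Unlike the biCatalan recursion, no factor of $2$ arises here because the Catalan generating function is not symmetric in two slots $I$ and $J$, so no analogue of the $I/J$ doubling in the proof of Proposition~\ref{q biCat recursion} occurs. This is what makes the Catalan recursion structurally simpler than its biCatalan counterpart.
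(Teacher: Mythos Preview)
Your proposal is correct and follows precisely the approach the paper indicates: it says the proof ``follows the outline of the proof of Proposition~\ref{q biCat recursion}, using Theorem~\ref{Cat GF d-p} instead of Theorem~\ref{biCat GF d-p}'' and omits the details because it is simpler. Your write-up supplies exactly those omitted details, including the correct observation that no factor of $2$ appears because there is no $I\leftrightarrow J$ symmetry to exploit.
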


The following three propositions give the type-A, type-B, and type-D cases of \eqref{q Cat recursion formula}.

\begin{prop}\label{q Cat recursion A}
For $n\ge 0$,
\begin{multline}\label{q Cat recursion A formula}
\Cat(A_n;q)=\Cat\pp(A_n;q)+(1+q)\sum_{i=0}^{n-1}\Cat\pp(A_i;q)\Cat(A_{n-i-1};q).
\end{multline}
\end{prop}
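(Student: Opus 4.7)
The plan is to derive this identity as a direct specialization of Proposition~\ref{q Cat recursion}, taking $W=A_n$ and choosing $s$ to be a leaf of the type-A Coxeter diagram. I will label the simple generators of $A_n$ as $s_1,\ldots,s_n$ arranged along the path, with $s=s_1$. Removing $s_1$ leaves the diagram of $A_{n-1}$, so the initial summand $(1+q)\Cat(W_{S\setminus\{s\}};q)$ in \eqref{q Cat recursion formula} specializes to $(1+q)\Cat(A_{n-1};q)$.

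Next I would enumerate the connected subgraphs $S_0$ of the diagram containing $s_1$. Since the type-A diagram is a path and $s_1$ is an endpoint, such an $S_0$ must be an initial segment $\{s_1,\ldots,s_k\}$ with $1\le k\le n$, and so $S_0$ has type $A_k$. When $k<n$ the complement $\{s_{k+1},\ldots,s_n\}$ is a single connected component of type $A_{n-k}$, whose unique vertex adjacent to $S_0$ is $s_{k+1}$; removing $s_{k+1}$ leaves a component of type $A_{n-k-1}$. When $k=n$, the complement is empty and the product over complementary components is an empty product equal to $1$.

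Substituting into \eqref{q Cat recursion formula}, the $k=n$ summand contributes $\Cat\pp(A_n;q)$, while for $k=1,\ldots,n-1$ the summand contributes $(1+q)\Cat\pp(A_k;q)\Cat(A_{n-k-1};q)$. The leading $(1+q)\Cat(A_{n-1};q)$ can be absorbed into the summation as the $i=0$ term, using the fact that $\Cat\pp(A_0;q)=1$ (which follows from \eqref{q Cat++ def}, since the defining sum there reduces to the single term $J=\emptyset$, where $A_0$ is the trivial group). Reindexing with $i=k$ running from $0$ to $n-1$, the right side of \eqref{q Cat recursion formula} becomes exactly $\Cat\pp(A_n;q)+(1+q)\sum_{i=0}^{n-1}\Cat\pp(A_i;q)\Cat(A_{n-i-1};q)$, as claimed.

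Since Proposition~\ref{q Cat recursion} is already established, no new combinatorial input is needed: the entire proof is a bookkeeping exercise. The only mild obstacle is handling the extreme cases $k=n$ (empty complement) and $i=0$ (use of $\Cat\pp(A_0;q)=1$) cleanly, so that the single identity absorbs both the ``all of $S$'' summand and the initial leaf-removal term.
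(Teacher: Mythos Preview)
Your proposal is correct and follows essentially the same approach as the paper: specialize Proposition~\ref{q Cat recursion} to $W=A_n$ with $s$ a leaf, identify the connected subgraphs $S_0\ni s$ as initial segments of the path, and absorb the leading $(1+q)\Cat(A_{n-1};q)$ into the sum as the $i=0$ term via $\Cat\pp(A_0;q)=1$. The only minor omission is that Proposition~\ref{q Cat recursion} requires a simple generator $s$, so the case $n=0$ (where $A_0$ is trivial and has no such $s$) must be checked separately; the paper does this explicitly, observing that the formula then reads $\Cat(A_0;q)=\Cat\pp(A_0;q)$, i.e.\ $1=1$.
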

\begin{proof}
If $n=0$, then the formula is $\Cat(A_0;q)=\Cat\pp(A_0;q)$, which says ${1=1}$.
Otherwise, taking $s$ to be a leaf of the $A_n$ diagram in \eqref{q Cat recursion formula}, the sum over $S_0$ has terms $\sum_{i=1}^{n-1}\Cat\pp(A_i;q)(1+q)\Cat(A_{n-i-1};q)$ and $\Cat\pp(A_n;q)$.
Because $\Cat\pp(A_0;q)=1$, we can merge the first term into the sum.
\end{proof}

\begin{prop}\label{q Cat recursion B 2}
For $n\ge 0$,
\begin{multline}\label{q Cat recursion B 2 formula}  
\Cat(B_n;q)=\Cat\pp(B_n;q)+(1+q)\sum_{i=0}^{n-1}\Cat\pp(A_i;q)\Cat(B_{n-i-1};q) 
\end{multline}
\end{prop}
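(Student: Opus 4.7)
The plan is to apply Proposition~\ref{q Cat recursion} to $W = B_n$ with $s$ chosen to be the leaf of the $B_n$ diagram opposite the double bond, following the template of the proof of Proposition~\ref{q Cat recursion A}. The base case $n = 0$ is the tautology $\Cat(B_0;q) = \Cat\pp(B_0;q) = 1$, with an empty sum on the right of \eqref{q Cat recursion B 2 formula}.

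Label the simple reflections of $B_n$ as $s_1, \ldots, s_n$ along the diagram, with $s_1$ at the double-bond end and $s = s_n$. Every connected subgraph $S_0$ of the $B_n$ diagram containing $s_n$ is a ``tail'' $\{s_{n-i+1}, \ldots, s_n\}$ for some $i$ between $1$ and $n$. For $i < n$, such a tail avoids the double bond, hence is of type $A_i$; its complement $\{s_1, \ldots, s_{n-i}\}$ is connected of type $B_{n-i}$, with unique vertex $s_{n-i}$ adjacent to $S_0$, and deleting $s_{n-i}$ yields a subdiagram of type $B_{n-i-1}$ (using the conventions $B_0 = A_0$, $B_1 = A_1$). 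For $i = n$ we have $S_0 = S$ with $W_{S_0} = B_n$, and the product over the empty complement equals $1$.

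Substituting into \eqref{q Cat recursion formula} produces
\[
\Cat(B_n;q) = (1+q)\Cat(B_{n-1};q) + \Cat\pp(B_n;q) + (1+q)\sum_{i=1}^{n-1}\Cat\pp(A_i;q)\Cat(B_{n-i-1};q),
\]
and the identity $\Cat\pp(A_0;q) = 1$ lets the first term be absorbed into the sum as the $i = 0$ summand, yielding \eqref{q Cat recursion B 2 formula}. There is no genuine obstacle beyond correctly reading off the diagrammatic data: because $s_n$ is chosen on the side opposite the double bond, every proper connected subgraph containing $s_n$ lies wholly within the type-$A$ portion of the $B_n$ diagram, so each double-positive Catalan factor appearing in the sum is indeed of type $A$, exactly as the stated formula requires.
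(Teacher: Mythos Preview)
Your proof is correct and follows essentially the same approach as the paper: apply Proposition~\ref{q Cat recursion} with $s$ the leaf whose deletion leaves $B_{n-1}$, identify the connected subgraphs $S_0$ as type-$A$ tails (plus the full diagram giving the $\Cat\pp(B_n;q)$ term), and absorb the $(1+q)\Cat(B_{n-1};q)$ term into the sum as $i=0$. The only organizational difference is that the paper verifies $n=0$ and $n=1$ as separate base cases before invoking Proposition~\ref{q Cat recursion}, whereas you handle $n=1$ through the general argument; since $B_1=A_1$ is irreducible, your treatment is valid.
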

\begin{proof}
The formula holds for $n=0$ and $n=1$.
For $n>1$, take $s$ to be the leaf whose deletion leaves a diagram of type $B_{n-1}$ in \eqref{q Cat recursion formula}, and rearrange the formula as in the proof of Proposition~\ref{q Cat recursion A formula}.
\end{proof}

\begin{prop}\label{q Cat recursion D}  
For $n\ge3$, 
\begin{multline}\label{q Cat recursion D formula}
\Cat(D_n;q)=\\
(1+q)\Cat(A_{n-1};q)+(1+q)\Cat\pp(A_{n-1};q)+\Cat\pp(D_n;q)\\
+(1+q)^2\sum_{i=1}^{n-2}\Cat\pp(A_i;q)\Cat(A_{n-i-2};q)\\
+(1+q)\sum_{i=3}^{n-1}\Cat\pp(D_i;q)\Cat(A_{n-i-1};q).\\
\end{multline}
\end{prop}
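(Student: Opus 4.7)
The plan is to specialize Proposition~\ref{q Cat recursion} to $W=D_n$, choosing $s$ to be one of the two ``fork'' leaves of the $D_n$ diagram. Denote the other simple generators by $s',v_3,v_4,\ldots,v_n$, where $s'$ is the other fork leaf, $v_3$ is the central node adjacent to both $s$ and $s'$, and $v_3-v_4-\cdots-v_n$ is the long arm of the diagram. Then $W_{S\setminus\{s\}}$ is the path of length $n-1$, of type $A_{n-1}$, producing the leading term $(1+q)\Cat(A_{n-1};q)$ in \eqref{q Cat recursion formula}. All remaining work is to organize the sum over connected subdiagrams $S_0\subseteq S$ containing~$s$.

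I would partition these subdiagrams into two families. In Family~I, $S_0$ does not contain $s'$, so $S_0$ ranges over $\{s\},\{s,v_3\},\{s,v_3,v_4\},\ldots,\{s,v_3,\ldots,v_n\}$, giving $W_{S_0}$ of type $A_i$ for $i=1,2,\ldots,n-1$. In Family~II, $S_0$ contains both $s$ and $s'$, so $S_0=\{s,s',v_3,\ldots,v_k\}$ gives $W_{S_0}$ of type $D_k$ for $k=3,\ldots,n$ (with the convention $D_3=A_3$).

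For Family~I with $2\le i\le n-2$, the complement splits into two connected components $\{s'\}$ and $\{v_{i+2},\ldots,v_n\}$ whose vertices adjacent to $S_0$ are $s'$ and $v_{i+2}$ respectively, so the contribution is $(1+q)^2\Cat\pp(A_i;q)\Cat(A_{n-i-2};q)$. The extreme case $i=1$ has the single connected complement $\{s',v_3,\ldots,v_n\}$ of type $A_{n-1}$ with $s_1=v_3$; removing $v_3$ leaves the reducible subdiagram of type $A_1\times A_{n-3}$, and the product identity $\Cat(A_1\times A_{n-3};q)=(1+q)\Cat(A_{n-3};q)$ makes this case fit the same pattern $(1+q)^2\Cat\pp(A_1;q)\Cat(A_{n-3};q)$, allowing me to absorb it into the sum over $1\le i\le n-2$. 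The extreme case $i=n-1$ has complement $\{s'\}$ with $S_1\setminus\{s_1\}$ empty, contributing $(1+q)\Cat\pp(A_{n-1};q)$. For Family~II with $k<n$, the complement is the tail $\{v_{k+1},\ldots,v_n\}$ adjacent to $S_0$ at $v_{k+1}$, contributing $(1+q)\Cat\pp(D_k;q)\Cat(A_{n-k-1};q)$; the case $k=n$ has empty complement and contributes $\Cat\pp(D_n;q)$. Assembling the leading term, Family~I, and Family~II yields \eqref{q Cat recursion D formula}. The only mild bookkeeping obstacle is the $i=1$ case just described, where the diagrammatic complement is connected but becomes reducible after removing the vertex adjacent to $S_0$; this is resolved by the product formula $\Cat(W_1\times W_2;q)=\Cat(W_1;q)\Cat(W_2;q)$, which follows from the product decomposition of nonnesting partitions.
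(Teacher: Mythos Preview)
Your proof is correct and follows the same approach as the paper: specialize Proposition~\ref{q Cat recursion} with $s$ a fork leaf of the $D_n$ diagram and sort the connected subsets $S_0\ni s$ according to whether they contain the other fork leaf~$s'$. The only difference is your treatment of the edge case $i=1$ in Family~I: you verify directly that the contribution $(1+q)\Cat\pp(A_1;q)\,\Cat(A_1\times A_{n-3};q)=(1+q)^2\Cat\pp(A_1;q)\,\Cat(A_{n-3};q)$ matches the summand for $i=1$, whereas the paper simply observes that $\Cat\pp(A_1;q)=0$, so the term is harmless regardless of its form.
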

\begin{proof}
Start with Proposition~\ref{q Cat recursion}, taking $s$ to be a leaf of the $D_n$ diagram whose removal leaves the diagram for $A_{n-1}$.
For $S_0$ not containing the leaf symmetric to $s$, we get terms
$(1+q)^2\sum_{i=1}^{n-2}\Cat\pp(A_i;q)\Cat(A_{n-i-2};q)$ and $(1+q)\Cat\pp(A_{n-1};q)$.
(The $i=1$ term in the sum would be wrong, except that $\Cat\pp(A_1)=0$.)
For $S_0$ containing the leaf symmetric to $s$, we get $(1+q)\sum_{i=3}^{n-1}\Cat\pp(D_i;q)\Cat(A_{n-i-1};q)$ and $\Cat\pp(D_n;q)$.
\end{proof}

\subsection{The double-positive Catalan numbers}\label{Cat++ sec}
In this section, we consider the double-positive Catalan numbers for the classical reflection groups, and establish some identities for $\Cat\pp(A_n;q)$, $\Cat\pp(B_n;q)$, and $\Cat\pp(D_n;q)$ that will be useful for proving the type-D case of Theorem~\ref{enum thm}.
As an aside at the end of this section, we give formulas and computed values of the numbers $\Cat\pp(W)$ for all finite types.
(See Theorem~\ref{Cat++ thm}.)

\begin{remark}   
Athanasiadis and Savvidou, in \cite[Theorem~1.2]{Ath-Sav}, gave formulas for the polynomials $\Cat\pp(W;q)$ for each $W$ of finite type by explicitly determining coefficients $\xi_i$ such that $\Cat\pp(W;q)=\sum_{i=0}^{\lfloor n/2\rfloor}\xi_iq^i(1+q)^{n-2i}.$
Similar formulas for the relevant polynomials $\Cat(W;q)$ are known \cite[Propositions~11.14--11.15]{PRW}, so the identities we need can in principle be obtained by manipulating the formulas from \cite{Ath-Sav,PRW}.
Indeed, Proposition~\ref{Cat++ D q} is easily obtained in this way, but such proofs of Propositions~\ref{Cat++ A q} and~\ref{Cat++ Bn-1} appear to be more complicated.
\end{remark}

To motivate the propositions that follow, we list here some examples of the double-positive Catalan numbers for the classical reflection groups.   
{\small
\[\begin{array}{c||c|c|c|c|c|c|c||c|c|c|c|c||c|c|c|c}
\!W\!&\!A_0\!&\!A_1\!&\!A_2\!&\!A_3\!&\!A_4\!&\!A_5\!&\!A_6\!&\!B_2\!&\!B_3\!&\!B_4\!&\!B_5\!&\!B_6\!&\! D_4\!&\!D_5\!&\!D_6\!&\! D_7\\\hline
&&&&&&&&&&&&&&&&\\[-9pt]
\!\Cat\pp(W)\!&\!1\!&\!0\!&\!1\!&\!2\!&\!6\!&\!18\!&\!57\!&\!2\!&\!6\!&\!22\!&\!80\!&\!296\!&\!10\!&\!42\!&\!168\!&\!660
\end{array}\,\]
}


From inspection of these numbers, several interesting relationships appear.
First, the data suggests that $2\Cat\pp(A_n)+\Cat\pp(A_{n-1}) = \Cat(A_{n-1})$.  Below, we establish a $q$-analog of this identity.  

\begin{prop}\label{Cat++ A q}
For $n\ge1$, 
\begin{equation}\label{Cat++ A q formula}
(1+q)\Cat\pp(A_n;q)+q\Cat\pp(A_{n-1};q)=q\Cat(A_{n-1};q).
\end{equation}
\end{prop}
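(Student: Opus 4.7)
The plan is to use generating functions. Let
\[C(x)=\sum_{n\ge 0}\Cat(A_n;q)\,x^n,\qquad D(x)=\sum_{n\ge 0}\Cat\pp(A_n;q)\,x^n,\]
viewed as formal power series in $x$ with coefficients in $\mathbb{Z}[q]$. Since the desired identity is linear and the recursions in Propositions~\ref{Cat A q} and~\ref{q Cat recursion A} are bilinear in their summands, the most efficient route is to translate all three statements into functional equations for $C$ and $D$ and then do routine algebra.

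First, I would multiply the recursion of Proposition~\ref{q Cat recursion A} by $x^n$ and sum over $n\ge 0$; the convolution on the right side has the form $x\cdot DC$, so this yields
\[C=D+(1+q)\,x\,DC,\qquad\text{equivalently}\qquad D=\frac{C}{1+(1+q)xC}.\]
Next, I would multiply the recursion of Proposition~\ref{Cat A q} by $x^n$ and sum over $n\ge 1$; after the substitution $(i,n)\mapsto(i-1,n-i-1)$ the double sum becomes $x^2 C^2$, producing the quadratic functional equation
\[C=1+(1+q)xC+qx^2C^2.\]

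Finally, I would multiply the desired identity by $x^n$ and sum over $n\ge 1$, using $\Cat\pp(A_0;q)=1$ to handle the boundary; this rephrases the proposition as the single assertion
\[(1+q+qx)\,D=(1+q)+qx\,C.\]
Substituting $D=C/(1+(1+q)xC)$ from the first step and clearing the denominator reduces this to an identity in $C$ alone, which after expansion collapses to the quadratic functional equation for $C$ established in the second step. Hence the desired identity holds.

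There is no real obstacle here: the proof is purely formal manipulation of power series, and the only step requiring a bit of care is the re-indexing of the convolution in Proposition~\ref{Cat A q} to obtain the $qx^2 C^2$ term. If desired, one could instead argue by induction on $n$, using the two recursions to rewrite $(1+q)\Cat\pp(A_n;q)+q\Cat\pp(A_{n-1};q)-q\Cat(A_{n-1};q)$ and cancel telescopically, but the generating function approach makes the underlying algebra transparent.
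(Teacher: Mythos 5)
Your proof is correct, and the verification goes through: from Proposition~\ref{q Cat recursion A} one gets $C=D+(1+q)xDC$, from Proposition~\ref{Cat A q} one gets $C=1+(1+q)xC+qx^2C^2$, and the target identity is equivalent (using $\Cat\pp(A_0;q)=1$) to $(1+q+qx)D=(1+q)+qxC$; substituting $D=C/(1+(1+q)xC)$ and clearing the (invertible) denominator reduces this exactly to the quadratic functional equation. The inputs are identical to the paper's --- both arguments rest on Propositions~\ref{Cat A q} and~\ref{q Cat recursion A} and nothing else --- but the packaging is genuinely different: the paper argues by induction on $n$, using the inductive hypothesis to substitute $q(\Cat(A_{i-1};q)-\Cat\pp(A_{i-1};q))$ for $(1+q)\Cat\pp(A_i;q)$ inside the convolution sum and then recognizing the two resulting sums via the same two recursions, whereas you integrate the whole induction into a single algebraic identity among power series. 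Your version makes the bookkeeping transparent and avoids the reindexing gymnastics of the coefficient-level argument; the paper's version stays entirely within the polynomial ring $\mathbb{Z}[q]$ and matches the style of the surrounding recursive computations (e.g.\ the proof of Proposition~\ref{Cat++ D q}), where a closed functional equation is not available. Either is acceptable; just make sure to state explicitly that $1+(1+q)xC$ has constant term $1$ in $x$ and is therefore invertible as a formal power series, so that the division defining $D$ is legitimate.
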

\begin{proof}  
If $n=1$, then the identity is $(1+q)\cdot0+q\cdot1=q\cdot1$.
If $n>1$, then by induction, we can replace $(1+q)\Cat\pp(A_i;q)$ with $q(\Cat(A_{i-1};q)-\Cat\pp(A_{i-1};q))$ in the terms $i>1$ of \eqref{q Cat recursion A formula} and observe that $\Cat\pp(A_0;q)=1$ to obtain
\begin{multline*}
\Cat(A_n;q)=\Cat\pp(A_n;q)+(1+q)\Cat(A_{n-1};q)\\
+q\sum_{i=1}^{n-1}\Cat(A_{i-1};q)\Cat(A_{n-i-1};q)\\
-q\sum_{i=1}^{n-1}\Cat\pp(A_{i-1};q)\Cat(A_{n-i-1};q).
\end{multline*}
The first sum, by Proposition~\ref{Cat A q}, is $(\Cat(A_n;q)-(1+q)\Cat(A_{n-1};q))$.
The second sum can be reindexed to $q\sum_{i=0}^{n-2}\Cat\pp(A_{i};q)\Cat(A_{n-i-2};q)$, which, by Proposition~\ref{q Cat recursion A}, equals $\frac{q}{1+q}(\Cat(A_{n-1};q)-\Cat\pp(A_{n-1};q))$.
We obtain
\begin{multline*}
\Cat(A_n;q)=\Cat\pp(A_n;q)+(1+q)\Cat(A_{n-1};q)\\
+\Cat(A_n;q)-(1+q)\Cat(A_{n-1};q)\\
-\frac{q}{1+q}(\Cat(A_{n-1};q)-\Cat\pp(A_{n-1};q)),
\end{multline*}
which simplifies to the desired identity.
\end{proof}

The data also suggests that $\Cat\pp(D_n)=(n-2)\Cat(A_{n-2})$.
Indeed, the following is a $q$-analog.

\begin{prop}\label{Cat++ D q}
For $n\ge2$, 
\begin{equation}\label{Cat++ D q formula}
\Cat\pp(D_n;q) = (n-2)q\Cat(A_{n-2};q).
\end{equation}
\end{prop}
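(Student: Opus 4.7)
The plan is to prove the formula by induction on $n$. The base cases are immediate: for $n=2$, Proposition~\ref{Cat++ reducible} applied to $D_2=A_1\times A_1$ gives $\Cat\pp(D_2;q)=\Cat\pp(A_1;q)^2=0$, matching $0\cdot q\Cat(A_0;q)$; for $n=3$, since $D_3=A_3$, Proposition~\ref{Cat++ A q} (together with $\Cat\pp(A_2;q)=q$) gives $\Cat\pp(A_3;q)=q(1+q)=1\cdot q\cdot\Cat(A_1;q)$.

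For the induction step, I would solve Proposition~\ref{q Cat recursion D} directly for $\Cat\pp(D_n;q)$ to obtain
\[\Cat\pp(D_n;q) = \Cat(D_n;q) - (1+q)\Cat(A_{n-1};q) - (1+q)\Cat\pp(A_{n-1};q) - X_n - Y_n,\]
where $X_n=(1+q)^2\sum_{i=1}^{n-2}\Cat\pp(A_i;q)\Cat(A_{n-i-2};q)$ and $Y_n=(1+q)\sum_{i=3}^{n-1}\Cat\pp(D_i;q)\Cat(A_{n-i-1};q)$. Applying Proposition~\ref{q Cat recursion A} at $n-1$ and separating the $i=0$ term collapses $X_n$ to
\[X_n=(1+q)\Cat(A_{n-1};q)-(1+q)\Cat\pp(A_{n-1};q)-(1+q)^2\Cat(A_{n-2};q).\]
Substituting the inductive hypothesis $\Cat\pp(D_i;q)=(i-2)q\Cat(A_{i-2};q)$ into $Y_n$ and reindexing with $j=i-2$ yields $Y_n=(1+q)q\sum_{j=1}^{n-3}j\,\Cat(A_j;q)\Cat(A_{n-j-3};q)$.

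The key step is the observation that pairing terms $j\leftrightarrow(n-3)-j$ in this last sum (and noting that the $j=0$ term vanishes) gives $2Y_n=(n-3)(1+q)q\sum_{j=0}^{n-3}\Cat(A_j;q)\Cat(A_{n-j-3};q)$. Applying Proposition~\ref{q Cat recursion A} once more, this time to $\Cat(A_{n-1};q)$, identifies the remaining convolution as $\frac1q[\Cat(A_{n-1};q)-(1+q)\Cat(A_{n-2};q)]$, so
\[Y_n=\tfrac{n-3}{2}(1+q)[\Cat(A_{n-1};q)-(1+q)\Cat(A_{n-2};q)].\]

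Finally, substituting these expressions, together with Proposition~\ref{Cat D A q} for $\Cat(D_n;q)$, into the formula for $\Cat\pp(D_n;q)$ and collecting terms, the coefficient of $\Cat(A_{n-1};q)$ vanishes because $\frac{n+1}{2}-2-\frac{n-3}{2}=0$, and the coefficient of $\Cat(A_{n-2};q)$ simplifies, via the identity $(1+q)^2-1-q^2=2q$, to exactly $(n-2)q$, completing the induction. The main obstacle is nothing more than careful algebraic bookkeeping; the real structural insight is the symmetry $j\leftrightarrow(n-3)-j$, which converts the $j$-weighted convolution in $Y_n$ into a scalar multiple of a standard convolution recognized by the type-$A$ Catalan recursion.
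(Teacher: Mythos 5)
Your proof is correct and follows essentially the same route as the paper's: both start from Proposition~\ref{q Cat recursion D}, collapse the first sum via Proposition~\ref{q Cat recursion A}, substitute the inductive hypothesis into the second sum, exploit the symmetry $j\leftrightarrow(n-3)-j$ to reduce the weighted convolution to $\tfrac{n-3}{2}$ times a plain Catalan convolution, and finish with Proposition~\ref{Cat D A q}. One small correction: the identity $\sum_{j=0}^{n-3}\Cat(A_j;q)\Cat(A_{n-j-3};q)=\frac1q\bigl[\Cat(A_{n-1};q)-(1+q)\Cat(A_{n-2};q)\bigr]$ follows from Proposition~\ref{Cat A q} (the ordinary $q$-Catalan convolution recursion), not from Proposition~\ref{q Cat recursion A}, whose convolution involves $\Cat\pp$ rather than $\Cat$.
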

\begin{proof}
For $n=2$, the identity is $q+q^2=(3-2)q(1+q)$.
If $n\ge3$, then we start with \eqref{q Cat recursion D formula}.
The first summation in the formula can be rewritten, using \eqref{q Cat recursion A formula}, as 
\begin{equation}\label{firstsum}
(1+q)\bigl(\Cat(A_{n-1};q)-(1+q)\Cat(A_{n-2};q)-\Cat\pp(A_{n-1};q)\bigr).
\end{equation}
By induction, the second summation can be rewritten as 
\begin{equation}\label{secondsum}
(1+q)q\sum_{i=3}^{n-1}(i-2)\Cat\pp(A_{i-2};q)\Cat(A_{n-i-1};q).
\end{equation}
To further simplify \eqref{secondsum}, we use \eqref{Cat A q formula} to calculate
\begin{align*}
&(n-3)\bigl(\Cat(A_{n-1};q)-(1+q)\Cat(A_{n-2};q)\bigr)\\
&\qquad=(n-3)q\sum_{i=1}^{n-2}\Cat(A_{i-1};q)\Cat(A_{n-i-2};q)\\
&\qquad=q\sum_{i=1}^{n-2}\bigr((i-1)\Cat(A_{i-1};q)\Cat(A_{n-i-2};q)\\&\qquad\qquad\qquad\qquad\qquad\qquad\qquad\qquad+(n-i-2)\Cat(A_{i-1};q)\Cat(A_{n-i-2};q)\bigr)\\
&\qquad=q\sum_{i=1}^{n-2}(i-1)\Cat(A_{i-1};q)\Cat(A_{n-i-2};q)\\&\qquad\qquad\qquad\qquad\qquad\qquad\qquad+q\sum_{i=1}^{n-2}(n-i-2)\Cat(A_{i-1};q)\Cat(A_{n-i-2};q)
\end{align*}
Both sums can be reindexed to agree with \eqref{secondsum}, except for the initial factor $(1+q)$.
Thus \eqref{secondsum} equals $\frac{n-3}2(1+q)(\Cat(A_{n-1};q)-(1+q)\Cat(A_{n-2};q))$.
Finally, we use \eqref{Cat D A formula} to rewrite the $\Cat(D_n;q)$.
We obtain
\begin{multline}
\frac{n+1}2(1+q)\Cat(A_{n-1};q)-\Bigl(\frac{n-1}2+q+\frac{n-1}2q^2\Bigr)\Cat(A_{n-2})=\\
(1+q)\Cat(A_{n-1};q)+(1+q)\Cat\pp(A_{n-1};q)+\Cat\pp(D_n;q)\\
+(1+q)\bigl(\Cat(A_{n-1};q)-(1+q)\Cat(A_{n-2};q)-\Cat\pp(A_{n-1};q)\bigr)\\
+\frac{n-3}2(1+q)(\Cat(A_{n-1};q)-(1+q)\Cat(A_{n-2};q)).
\end{multline}
This can be rearranged to say $\Cat\pp(D_n;q) = (n-2)q\Cat(A_{n-2};q)$.
\end{proof}

In order to establish a needed identity for double-positive Catalan numbers of type B, we need a recursion for the $q$-Catalan number that comes from a completely different direction.
The $q$-Catalan numbers $\Cat(W;q)$ encode the $h$-vector of the generalized associahedron for $W$.
(See, for example, \cite[Section~5.2]{rsga}.)
For each Coxeter group $W$ of rank $n$ and each $i$ from $0$ to $n$, define $f_i$ to be the number of simplices in the simplicial generalized associahedron having exactly $i$ vertices (and thus dimension $i-1$).
Define a polynomial
\[f(W;x)=\sum_{k=0}^nf_k(W)x^k.\]
The following is \cite[Proposition~3.7]{ga}.
\begin{prop}\label{ga prop}
If $W$ is reducible as $W_1\times W_2$, then $f(W;x)=f(W_1;x)f(W_2;x)$.
If $W$ is irreducible with Coxeter number $h$, then 
\begin{equation}\label{ga prop formula}
\dod{f(W;x)}{x}=\frac{h+2}2\sum_{s\in S}f(W_{S\setminus\set{s}};x)
\end{equation}
\end{prop}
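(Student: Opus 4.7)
The plan is to handle the reducible and irreducible assertions separately. For the reducible case $W = W_1 \times W_2$, I would appeal to property~(i) of the $c$-compatibility relations from Section~\ref{clus sec}: a negative simple root from one factor is $c$-compatible with every almost positive root of the other factor, since the relevant coefficients vanish across factors. Property~(ii) then reduces compatibility between positive roots in different factors to the negative-simple case, so any two almost positive roots in different factors are $c$-compatible. Consequently each face of the $c$-cluster complex of $W$ decomposes uniquely as a disjoint union of a face from the $W_1$-cluster complex and a face from the $W_2$-cluster complex, yielding $f(W;x) = f(W_1;x)\,f(W_2;x)$.

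For the irreducible case, I would interpret both sides as counting face–vertex incidences in the cluster complex. The elementary identity
\begin{equation*}
\dod{f(W;x)}{x} \;=\; \sum_F |F|\,x^{|F|-1} \;=\; \sum_v f(\operatorname{lk}(v);x),
\end{equation*}
where $F$ ranges over simplices, $v$ over vertices (that is, almost positive roots), and $\operatorname{lk}(v)$ denotes the link of $v$ in the cluster complex, comes from double-counting pairs $(F,v)$ with $v \in F$. I would then identify the link of the negative simple root $-\alpha_i$ with the cluster complex of $W_{S\setminus\set{s_i}}$: property~(i) of $c$-compatibility says that $\beta \cm_c -\alpha_i$ if and only if $[\beta:\alpha_i]=0$, and a brief induction using property~(ii) shows that the restricted $c$-compatibility on these roots coincides with $c'$-compatibility in the parabolic root subsystem, for the appropriate induced Coxeter element $c'$. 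Hence $f(\operatorname{lk}(-\alpha_i);x) = f(W_{S\setminus\set{s_i}};x)$.

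It remains to establish
\begin{equation*}
\sum_v f(\operatorname{lk}(v);x) \;=\; \frac{h+2}{2}\sum_{s \in S} f(W_{S\setminus\set{s}};x).
\end{equation*}
For this I would use the dihedral symmetry of the cluster complex generated by the piecewise-linear involutions $\tau_+$ and $\tau_-$ on $\Phi_{\geq -1}$ studied in \cite{ga,MRZ}. These act as simplicial automorphisms of the cluster complex, so vertices in the same $\langle\tau_+,\tau_-\rangle$-orbit have links with identical $f$-polynomials, and every orbit contains at least one negative simple root. For irreducible $W$, the total number of almost positive roots is $n + nh/2 = n(h+2)/2$, and the orbit through each $-\alpha_i$ has size $(h+2)/2$, so the orbits are exactly the $n$ orbits through the negative simple roots. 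Summing $f(\operatorname{lk}(v);x)$ orbit-by-orbit produces a factor of $(h+2)/2$ in front of each $f(W_{S\setminus\set{s}};x)$, giving the claimed formula.

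The main obstacle is the orbit analysis in the last step: one must show that the $\langle\tau_+,\tau_-\rangle$-orbits on $\Phi_{\geq -1}$ all have the uniform size $(h+2)/2$ and each meets the set of negative simple roots in exactly one element. This is the nontrivial Coxeter-theoretic content, equivalent to the $(h+2)$-periodicity of the rotation map $\tau_- \tau_+$ on $\Phi_{\geq -1}$; one can either invoke the explicit orbit computations in \cite{ga,MRZ} for general $c$, or reduce to the bipartite case where the rotation acts with a single canonical period $h+2$ and the orbit structure is transparent.
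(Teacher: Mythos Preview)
The paper does not give a proof of this proposition at all; it simply quotes it as \cite[Proposition~3.7]{ga}.  Your outline is in fact essentially the argument Fomin and Zelevinsky use in \cite{ga}: identify links of negative simple roots with parabolic cluster complexes, and then transport these link computations to all vertices via the dihedral symmetry generated by $\tau_+,\tau_-$.

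There is, however, a genuine error in your orbit analysis.  You assert that each $\langle\tau_+,\tau_-\rangle$-orbit has size $(h+2)/2$ and meets the negative simple roots exactly once.  This is false whenever $w_0\neq -1$.  Already in type $A_2$ (where $h=3$) the five almost positive roots form a \emph{single} dihedral orbit containing both $-\alpha_1$ and $-\alpha_2$; there are certainly not two orbits of size~$5/2$.  The same failure occurs in $A_{2m}$, $D_{2m+1}$, $E_6$, and $I_2(2m+1)$.

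The correct statement, which is what \cite{ga} actually proves and what your double-count genuinely needs, is that for every dihedral orbit $\Omega$ one has
\[
|\Omega|\;=\;\frac{h+2}{2}\,\bigl|\Omega\cap\{-\alpha_1,\ldots,-\alpha_n\}\bigr|.
\]
Equivalently, $R^{(h+2)/2}$ (with $R=\tau_-\tau_+$) acts on $\Phi_{\ge -1}$ as the diagram involution induced by $-w_0$, so an orbit contains either one negative simple root and has size $(h+2)/2$, or two negative simple roots (exchanged by $-w_0$) and has size $h+2$.  With this in hand your computation goes through: summing link polynomials orbit by orbit and using that all vertices in an orbit have isomorphic links,
\[
\sum_{\beta\in\Phi_{\ge -1}} f(\operatorname{lk}(\beta);x)
=\sum_{\Omega}|\Omega|\,f(\operatorname{lk}(v_\Omega);x)
=\frac{h+2}{2}\sum_{\Omega}|\Omega\cap(-\Pi)|\,f(\operatorname{lk}(v_\Omega);x)
=\frac{h+2}{2}\sum_{s\in S}f(W_{S\setminus\{s\}};x).
\]
So the approach is right; only the orbit claim needs to be replaced by the sharper one above.
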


Since $f(W)$ encodes the $f$-vector of the generalized associahedron and $\Cat(W;q)$ encodes the $h$-vector, \eqref{ga prop formula} implies a formula for $\Cat(W;q)$. 
Since $f(W)$ is has coefficients reversed from the $f$-polynomial usually used to define $h$-vectors, the formula for $\Cat(W;q)$ is somewhat more complicated than \eqref{ga prop formula}.

\begin{prop}\label{q Cat FZ}
For an irreducible Coxeter group $W$ with rank $n\ge0$ and Coxeter number $h$, the $q$-analog of the Catalan number satisfies  
\begin{equation}\label{q Cat FZ formula}
n\Cat(W;q)+(1-q)\dod{}{q}\Cat(W;q)=\frac{h+2}2\sum_{s\in S}\Cat(W_{S\setminus\set{s}};q).
\end{equation}
\end{prop}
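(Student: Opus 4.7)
The plan is to derive \eqref{q Cat FZ formula} from Proposition~\ref{ga prop} by translating between the $f$-polynomial and the $h$-polynomial of the simplicial generalized associahedron. Since this polytope is an $(n-1)$-dimensional simplicial sphere whose $h$-vector coincides with the Narayana vector of $W$ (as in \cite[Section~5.2]{rsga}), and since $f(W;x)=\sum_k f_k(W)x^k$ has coefficients reversed from the standard $f$-polynomial, the $f$-to-$h$ conversion translates into the polynomial identity
$$f(W;x)=(1+x)^n\Cat\!\left(W;\frac{x}{1+x}\right),\qquad\text{equivalently}\qquad \Cat(W;q)=(1-q)^n f\!\left(W;\frac{q}{1-q}\right).$$
I would verify this identity once and for all by appealing to the standard Stanley formula $h(t)=(1-t)^d F(t/(1-t))$ for a $(d-1)$-dimensional simplicial sphere, with $d=n$; a quick sanity check in type $A_2$ (where $f(A_2;x)=1+5x+5x^2$ and $\Cat(A_2;q)=1+3q+q^2$) pins down conventions.

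Next I would substitute $q=x/(1+x)$, noting that $1+x=(1-q)^{-1}$ and $dq/dx=(1+x)^{-2}=(1-q)^2$, and rewrite the identity above as $f(W;x)=(1-q)^{-n}\Cat(W;q)$. Differentiating by $x$ via the chain rule and factoring out $(1-q)^{-n+1}$ gives
$$\dod{f(W;x)}{x}=(1-q)^{-n+1}\left[n\Cat(W;q)+(1-q)\dod{}{q}\Cat(W;q)\right].$$
Similarly, for each $s\in S$ the parabolic $W_{S\setminus\set{s}}$ has rank $n-1$, so
$$f(W_{S\setminus\set{s}};x)=(1+x)^{n-1}\Cat(W_{S\setminus\set{s}};q)=(1-q)^{-n+1}\Cat(W_{S\setminus\set{s}};q).$$

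Substituting these two expressions into \eqref{ga prop formula} and cancelling the common nonzero factor $(1-q)^{-n+1}$ yields precisely \eqref{q Cat FZ formula}. The only conceptual step is correctly lining up the paper's reversed $f$-vector convention with the standard $f$-to-$h$ polynomial conversion; after that, the derivation is a mechanical chain-rule computation, and I do not expect any substantive obstacle.
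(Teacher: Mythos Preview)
Your proof is correct and follows the same strategy as the paper's: derive the identity from Proposition~\ref{ga prop} by translating between the $f$- and $h$-polynomials via the chain rule. The only difference is the choice of substitution. The paper uses $q=x+1$ together with the coefficient-reversal relation $\Cat(W;x+1)=\rev(f(W;x))=x^nf(W;x^{-1})$, and then differentiates $x^{-n}\Cat(W;x+1)$ with respect to $x^{-1}$; you instead use the direct Stanley relation $f(W;x)=(1+x)^n\Cat(W;x/(1+x))$ with $q=x/(1+x)$ and differentiate in $x$. Your version sidesteps the reversal and the reciprocal variable, so the chain-rule bookkeeping is a bit shorter, but the two arguments are otherwise the same.
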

\begin{proof}

We begin with the right side of \eqref{q Cat FZ formula} and replace $q$ by $x+1$ throughout.
The result is $\frac{h+2}2\sum_{s\in S}\rev(f(W_{S\setminus\set{s}};x))$, where $\rev$ is the operator that reverses the coefficients of a polynomial.
In other symbols: $x^{n-1}\frac{h+2}2\sum_{s\in S}f(W_{S\setminus\set{s}};x^{-1})$
Using \eqref{ga prop formula}, the quantity becomes $x^{n-1}\dod{f(W;x^{-1})}{(x^{-1})}$.

Similarly, $\Cat(W;x+1)=\rev(f(W;x))=x^nf(W;x^{-1})$, so $f(W;x^{-1})=x^{-n}\Cat(W;x+1)$.
Thus the right side of \eqref{q Cat FZ formula} equals 
\begin{align*} 
&x^{n-1}\dod{}{(x^{-1})}\left[x^{-n}\Cat(W;x+1)\right]\\
&=x^{n-1}\dod{}{x}\left[x^{-n}\Cat(W;x+1)\right](-x^2)\\
&=-x^{n+1}\left[-nx^{-n-1}\Cat(W;x+1)+x^{-n}\dod{}{x}\Cat(W;x+1)\right]\\
&=n\Cat(W;x+1)-x\dod{}{x}\Cat(W;x+1)\\
&=n\Cat(W;x+1)-x\dod{}{(x+1)}\Cat(W;x+1)
\end{align*}
Replacing $x$ by $q-1$ throughout, we obtain the left side of \eqref{q Cat FZ formula}.
\end{proof}

The type-B version of \eqref{q Cat FZ formula} is the following recursion:
\begin{prop}\label{q Cat FZ B}
For $n\ge 0$, 
\begin{equation}\label{q Cat FZ B formula}
n\Cat(B_n;q)+(1-q)\od{}{q}\Cat(B_n;q)=(n+1)\sum_{i=1}^n\Cat(A_{i-1};q)\Cat(B_{n-i};q).
\end{equation}
\end{prop}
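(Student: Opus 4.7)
The plan is to derive \eqref{q Cat FZ B formula} directly from Proposition~\ref{q Cat FZ}, specialized to $W = B_n$. Since the Coxeter number of $B_n$ is $h = 2n$, the coefficient $(h+2)/2$ appearing on the right side of \eqref{q Cat FZ formula} becomes $n+1$, matching the prefactor in \eqref{q Cat FZ B formula}. What remains is to identify the sum $\sum_{s\in S}\Cat(W_{S\setminus\{s\}};q)$ with the convolution on the right side of \eqref{q Cat FZ B formula}.

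First I would fix a labeling $s_1,\ldots,s_n$ of the simple generators of $B_n$ so that the Coxeter diagram is the path having the order-$4$ edge between $s_1$ and $s_2$. Then for each $i \in \{1,\ldots,n\}$, the diagram on $S \setminus \{s_i\}$ decomposes as the disjoint union of the subpath on $\{s_1,\ldots,s_{i-1}\}$ (of type $B_{i-1}$, using the convention already adopted in the paper that $B_0$ and $B_1$ mean $A_0$ and $A_1$), together with the subpath on $\{s_{i+1},\ldots,s_n\}$ (of type $A_{n-i}$). At the two endpoints of the diagram, one of these subpaths is empty, corresponding to the trivial group whose $q$-Catalan polynomial is $1$. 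Multiplicativity of $\Cat(\,\cdot\,;q)$ under direct products then gives
\[\sum_{s\in S}\Cat(W_{S\setminus\{s\}};q)=\sum_{i=1}^{n}\Cat(B_{i-1};q)\Cat(A_{n-i};q),\]
and the substitution $j = n+1-i$ converts this into the desired sum $\sum_{j=1}^n \Cat(A_{j-1};q)\Cat(B_{n-j};q)$.

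There is no significant obstacle here: the argument is essentially the bookkeeping needed to specialize Proposition~\ref{q Cat FZ} to type $B$. The only care required is in the boundary cases where removing an endpoint of the diagram leaves a single connected component rather than two, and the convention $A_0 = B_0 = \{1\}$ handles these uniformly. The base case $n = 0$ reduces to the trivial identity $0 = 0$.
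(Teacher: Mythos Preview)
Your proposal is correct and is exactly the argument the paper intends: the paper introduces Proposition~\ref{q Cat FZ B} simply as ``the type-B version of \eqref{q Cat FZ formula},'' leaving the specialization implicit. Your write-up spells out the only nontrivial step, namely that deleting $s_i$ from the $B_n$ diagram yields $B_{i-1}\times A_{n-i}$ and hence the convolution on the right side via multiplicativity of $\Cat(\,\cdot\,;q)$.
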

The following formula is obtained using known formulas for Narayana numbers of types A and B.
\begin{prop}\label{mystery prop}
For $n\ge0$,
\begin{equation}\label{mystery formula}  
\sum_{i=1}^n\Cat(A_{i-1};q)\Cat(B_{n-i};q)=n\Cat(A_{n-1};q).
\end{equation}
\end{prop}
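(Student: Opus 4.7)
The plan is to prove \eqref{mystery formula} by a generating-function argument based on the standard Narayana formulas $\Nar_k(A_{n-1})=\frac{1}{n}\binom{n}{k}\binom{n}{k+1}$ and $\Nar_k(B_n)=\binom{n}{k}^2$. With $\phi(z):=(1+z)(1+qz)$, a direct expansion yields the diagonal-type representations
\[n\Cat(A_{n-1};q)=[z^{n-1}]\phi(z)^n\quad\text{and}\quad \Cat(B_n;q)=[z^n]\phi(z)^n,\]
where the first identity comes from $[z^{n-1}](1+z)^n(1+qz)^n=\sum_{j}\binom{n}{j+1}\binom{n}{j}q^j$, and the second from the analogous extraction at $z^n$.

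I would next pass to ordinary generating functions in a new variable $x$, treating $q$ as a parameter:
\[A(x):=\sum_{n\ge1}\Cat(A_{n-1};q)\,x^n,\qquad B(x):=\sum_{n\ge0}\Cat(B_n;q)\,x^n.\]
The left-hand side of \eqref{mystery formula} is $[x^n]\bigl(A(x)B(x)\bigr)$ and the right-hand side is $[x^n]\bigl(xA'(x)\bigr)$, where the prime denotes $d/dx$, so it suffices to prove the functional identity $A(x)B(x)=xA'(x)$. By Lagrange inversion applied to the two diagonal formulas above, $A(x)$ is the unique formal power series $z=z(x)$ satisfying $z(0)=0$ and $z=x\phi(z)$, and $B(x)=\bigl(1-x\phi'(z)\bigr)^{-1}$.

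The remainder is then a one-line computation. Implicitly differentiating $z=x\phi(z)$ gives $z'=\phi(z)+x\phi'(z)\,z'$, and hence
\[xA'(x)=xz'=\frac{x\phi(z)}{1-x\phi'(z)}=\frac{z}{1-x\phi'(z)}=A(x)\,B(x),\]
which is exactly the desired identity. I do not foresee any serious obstacle; the only real insight is to notice that both $q$-Narayana generating functions occur as diagonal coefficients of the same polynomial $\phi(z)^n$, after which Lagrange inversion reduces \eqref{mystery formula} to the defining relation for $z(x)$ and a single application of the chain rule.
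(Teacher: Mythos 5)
Your argument is correct, but it takes a genuinely different route from the paper. The paper first invokes its Proposition~\ref{q Cat FZ B} (itself derived from the Fomin--Zelevinsky $f$-vector recursion for the generalized associahedron via Proposition~\ref{q Cat FZ}) to convert the convolution on the left of \eqref{mystery formula} into the differential identity $n\Cat(B_n;q)+(1-q)\tfrac{d}{dq}\Cat(B_n;q)=n(n+1)\Cat(A_{n-1};q)$, and then verifies the resulting coefficient identity $(n-k)\Nar_k(B_n)+(k+1)\Nar_{k+1}(B_n)=n(n+1)\Nar_k(A_{n-1})$ directly from the explicit Narayana formulas. You instead bypass the associahedron machinery entirely: the observations $n\Cat(A_{n-1};q)=[z^{n-1}]\phi(z)^n$ and $\Cat(B_n;q)=[z^n]\phi(z)^n$ with $\phi(z)=(1+z)(1+qz)$ are correct consequences of the same Narayana formulas, the two forms of Lagrange inversion you cite do identify $A(x)$ as the solution of $z=x\phi(z)$ and $B(x)$ as $(1-x\phi'(z))^{-1}$, and the chain-rule computation $xA'=xz'=z/(1-x\phi'(z))=AB$ is exactly \eqref{mystery formula} upon extracting $[x^n]$ (the case $n=0$ being vacuous). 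What your approach buys is self-containment and reusable closed algebraic forms for the two $q$-Catalan generating functions; what the paper's approach buys is that it stays inside the web of recursions (Propositions~\ref{q Cat FZ}--\ref{q Cat FZ B}) it has already built for the type-D computation, at the cost of a separate binomial verification.
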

\begin{proof}
By \eqref{q Cat FZ B formula}, the assertion is equivalent to 
\begin{equation}\label{Cat B q ODE formula}
n\Cat(B_n;q)+(1-q)\od{}{q}\Cat(B_n;q)=n(n+1)\Cat(A_{n-1};q).
\end{equation}
Taking the coefficient of $q^k$ on both sides, we see that \eqref{Cat B q ODE formula} is equivalent to 
\begin{equation}\label{Nar B q ODE formula}
(n-k)\Nar_k(B_n)+(k+1)\Nar_{k+1}(B_n)=n(n+1)\Nar_k(A_{n-1}).
\end{equation}
This can be verified using the formulas for the type-A and type-B Narayana numbers, found for example in \cite[(9.1)]{gcccc} and \cite[(9.2)]{gcccc} (setting $m=1$ in both formulas from~\cite{gcccc}).
\end{proof}

Using \eqref{q biCat recursion A formula} and the observation that $\biCat(A_n;q)=\Cat(B_n;q)$, then applying \eqref{q Cat recursion B 2 formula} twice, (where, in the first instance $n$ is replaced by $n+1$ in \eqref{q Cat recursion B 2 formula}), we obtain the following formula.
\begin{prop}\label{gratuitous prop}
For $n\ge1$,
\begin{multline}\label{gratuitous formula}
\Cat(B_n;q)=(1+q)\Cat(B_{n-1};q)-(1+q)\Cat\pp(A_{n-1};q)\\+\Cat\pp(B_n;q)+(1+q)\Cat\pp(B_{n-1};q).
\end{multline}
\end{prop}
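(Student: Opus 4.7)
The plan is to combine the biCatalan recursion \eqref{q biCat recursion A formula} with the identity $\biCat(A_n;q)=\Cat(B_n;q)$---which follows immediately from Theorem~\ref{biNar thm}, since both polynomials equal $\sum_{k=0}^n\binom{n}{k}^2q^k$---and then to eliminate the resulting convolution sums using the double-positive Catalan recursion \eqref{q Cat recursion B 2 formula}, applied at two different indices.

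First I would substitute $\biCat(A_m;q)=\Cat(B_m;q)$ throughout \eqref{q biCat recursion A formula}, producing the identity
\[\Cat(B_n;q)=(1+q)\Cat(B_{n-1};q)+2\Cat\pp(A_n;q)+\Sigma_1+(1+q)\Sigma_2,\]
where $\Sigma_1=\sum_{i=1}^{n-1}\Cat\pp(A_i;q)\Cat(B_{n-i};q)$ and $\Sigma_2=\sum_{i=1}^{n-1}\Cat\pp(A_i;q)\Cat(B_{n-i-1};q)$. Multiplying through by $(1+q)$ sets things up so that both sums can be absorbed using \eqref{q Cat recursion B 2 formula}.

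For $\Sigma_2$, observe that $(1+q)\Sigma_2$ equals the right-hand side of \eqref{q Cat recursion B 2 formula} with its $i=0$ term $(1+q)\Cat(B_{n-1};q)$ removed, so $(1+q)\Sigma_2=\Cat(B_n;q)-\Cat\pp(B_n;q)-(1+q)\Cat(B_{n-1};q)$. For $\Sigma_1$, apply \eqref{q Cat recursion B 2 formula} with $n$ replaced by $n+1$ and strip off both the $i=0$ and $i=n$ terms; this yields $(1+q)\Sigma_1=\Cat(B_{n+1};q)-\Cat\pp(B_{n+1};q)-(1+q)\Cat(B_n;q)-(1+q)\Cat\pp(A_n;q)$. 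Substituting these expressions into the $(1+q)$-multiplied form of the earlier identity, the $(1+q)^2\Cat(B_{n-1};q)$ terms cancel, the $(1+q)\Cat(B_n;q)$ terms cancel, and $2(1+q)\Cat\pp(A_n;q)$ combines with $-(1+q)\Cat\pp(A_n;q)$ to leave a single $(1+q)\Cat\pp(A_n;q)$. After rearranging, one obtains
\[\Cat(B_{n+1};q)=(1+q)\Cat(B_n;q)-(1+q)\Cat\pp(A_n;q)+\Cat\pp(B_{n+1};q)+(1+q)\Cat\pp(B_n;q),\]
which is exactly \eqref{gratuitous formula} with $n$ shifted to $n+1$. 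This establishes the proposition for all $n\ge 2$; the base case $n=1$ is checked directly using $\Cat(B_1;q)=1+q$, $\Cat\pp(B_1;q)=0$, and $\Cat\pp(A_0;q)=\Cat\pp(B_0;q)=\Cat(B_0;q)=1$.

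I do not anticipate any conceptual obstacle, since the argument is purely algebraic. The main pitfall is bookkeeping---keeping careful track of the boundary terms of the convolution sums and of the various powers of $(1+q)$---so that the cancellations go through cleanly; a sign error or an off-by-one in an index would easily derail the simplification.
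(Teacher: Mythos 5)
Your proposal is correct and is essentially the paper's own proof: the paper likewise combines \eqref{q biCat recursion A formula} with the observation $\biCat(A_n;q)=\Cat(B_n;q)$ and then applies \eqref{q Cat recursion B 2 formula} twice, once with $n$ replaced by $n+1$. Your bookkeeping of the boundary terms and the index shift (proving the identity at $n+1$ and checking $n=1$ directly) all checks out.
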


Next, we obtain the following formula.

\begin{prop}\label{Cat B q combination}
For $n\ge2$, 
\begin{multline}\label{Cat B q combination formula}  
(1+q)\Cat(B_n;q)=\\(1+q+q^2)\Cat(B_{n-1};q)
+(n-1)q(1+q)\Cat(A_{n-2};q)\\
+q\Cat\pp(B_{n-1};q) 
+ (1+q)\Cat\pp(B_n;q).
\end{multline}
\end{prop}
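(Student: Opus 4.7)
The plan is to derive this identity by combining three already-established identities: Proposition~\ref{mystery prop}, Proposition~\ref{Cat++ A q}, and Proposition~\ref{q Cat recursion B 2}. The strategy is to rewrite the term $(n-1)q(1+q)\Cat(A_{n-2};q)$ on the right side as a linear combination of two convolutions of $\Cat\pp(A_i;q)$'s with $\Cat(B_j;q)$'s, and then collapse those convolutions using Proposition~\ref{q Cat recursion B 2}.

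First, I would apply Proposition~\ref{mystery prop} with $n$ replaced by $n-1$ to express $(n-1)\Cat(A_{n-2};q)$ as $\sum_{i=1}^{n-1}\Cat(A_{i-1};q)\Cat(B_{n-i-1};q)$, multiply through by $q(1+q)$, and expand each $q\Cat(A_{i-1};q)$ as $(1+q)\Cat\pp(A_i;q) + q\Cat\pp(A_{i-1};q)$ via Proposition~\ref{Cat++ A q} (applied with $n$ replaced by $i$). After distributing and reindexing the term coming from $\Cat\pp(A_{i-1};q)$ via $i\mapsto i+1$, this produces the expression
\[(1+q)^2\sum_{i=1}^{n-1}\Cat\pp(A_i;q)\Cat(B_{n-i-1};q) + q(1+q)\sum_{i=0}^{n-2}\Cat\pp(A_i;q)\Cat(B_{n-i-2};q).\]

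I would then recognize each inner sum as the convolution appearing in Proposition~\ref{q Cat recursion B 2}. The second sum collapses immediately to $\Cat(B_{n-1};q) - \Cat\pp(B_{n-1};q)$ via that proposition at $n-1$, while the first collapses, after splitting off the $i=0$ term and using $\Cat\pp(A_0;q)=1$, to $\Cat(B_n;q) - \Cat\pp(B_n;q) - (1+q)\Cat(B_{n-1};q)$. Substituting these back and using the coefficient identity $(1+q)^2 - q = 1+q+q^2$ to combine the $\Cat(B_{n-1};q)$ contributions rearranges directly to the claimed identity. There is no conceptual obstacle; the only thing to watch is the bookkeeping of the two reindexings and the two separate applications of Proposition~\ref{q Cat recursion B 2} at indices $n$ and $n-1$.
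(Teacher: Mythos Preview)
Your proof is correct and uses exactly the same three ingredients as the paper's proof: Proposition~\ref{q Cat recursion B 2} (applied at both $n$ and $n-1$), Proposition~\ref{Cat++ A q}, and Proposition~\ref{mystery prop}. The only difference is the direction of the argument: the paper starts from the expansion of $\Cat(B_n;q)$ given by Proposition~\ref{q Cat recursion B 2}, substitutes $(1+q)\Cat\pp(A_i;q)=q\bigl(\Cat(A_{i-1};q)-\Cat\pp(A_{i-1};q)\bigr)$, and then collapses the resulting sums via Propositions~\ref{mystery prop} and~\ref{q Cat recursion B 2}; you instead start from the term $(n-1)q(1+q)\Cat(A_{n-2};q)$, expand it via Proposition~\ref{mystery prop}, substitute $q\Cat(A_{i-1};q)=(1+q)\Cat\pp(A_i;q)+q\Cat\pp(A_{i-1};q)$, and collapse via Proposition~\ref{q Cat recursion B 2}. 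Your organization has the minor advantage of never introducing the fraction $\frac{q}{1+q}$ that the paper clears at the end.
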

\begin{proof}
Using \eqref{Cat++ A q formula} to replace each instance of $(1+q)\Cat\pp(A_i;q)$ in \eqref{q Cat recursion B 2 formula} 
with $q(\Cat(A_{i-1};q)-\Cat\pp(A_{i-1};q))$ for $i>0$ and splitting into two sums, we obtain:
\begin{multline*}
\Cat(B_n;q)=(1+q)\Cat(B_{n-1};q)+q\sum_{i=1}^{n-1}\Cat(A_{i-1};q)\Cat(B_{n-i-1};q)\\
-q\sum_{i=1}^{n-1}\Cat\pp(A_{i-1};q)\Cat(B_{n-i-1};q) + \Cat\pp(B_n;q)
\end{multline*}
We use \eqref{mystery formula} with $n$ replaced by $n-1$ to evaluate the first sum.
We reindex the second sum and evaluate it using \eqref{q Cat recursion B 2 formula} with $n$ replaced by $n-1$.
\begin{multline*}
\Cat(B_n;q)=(1+q)\Cat(B_{n-1};q)+q(n-1)\Cat(A_{n-2};q)\\
-\frac{q}{1+q}(\Cat(B_{n-1};q)-\Cat\pp(B_{n-1};q)) 
+ \Cat\pp(B_n;q).
\end{multline*}
We multiply through by $(1+q)$ and simplify to obtain \eqref{Cat B q combination formula}.
\end{proof}

Solving both \eqref{Cat B q combination formula} and \eqref{gratuitous formula} for $(1+q)\Cat\pp(B_n;q)$ and combining them, then solving for $(1+q+q^2)\Cat\pp(B_{n-1};q)$, we obtain the key result for $\Cat\pp(B_{n-1})$.
\begin{prop}\label{Cat++ Bn-1}
\begin{multline}\label{Cat++ Bn-1 formula}  
(1+q+q^2)\Cat\pp(B_{n-1};q)=-q\Cat(B_{n-1};q)\\+(n-1)q(1+q)\Cat(A_{n-2};q)
+(1+q)^2\Cat\pp(A_{n-1};q)
\end{multline}
\end{prop}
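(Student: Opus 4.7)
The plan is to follow exactly the recipe sketched in the sentence immediately preceding the statement: combine \eqref{Cat B q combination formula} and \eqref{gratuitous formula} by eliminating $\Cat\pp(B_n;q)$. First I would solve \eqref{Cat B q combination formula} directly for $(1+q)\Cat\pp(B_n;q)$, which gives
\[(1+q)\Cat\pp(B_n;q)=(1+q)\Cat(B_n;q)-(1+q+q^2)\Cat(B_{n-1};q)-(n-1)q(1+q)\Cat(A_{n-2};q)-q\Cat\pp(B_{n-1};q).\]
Next I would solve \eqref{gratuitous formula} for $\Cat\pp(B_n;q)$ and multiply through by $(1+q)$ to get a second expression for $(1+q)\Cat\pp(B_n;q)$, in which $\Cat(B_n;q)$ appears with the same coefficient $(1+q)$ as above.

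Then I would equate the two expressions for $(1+q)\Cat\pp(B_n;q)$. The $(1+q)\Cat(B_n;q)$ terms cancel, and I collect all $\Cat\pp(B_{n-1};q)$ terms on one side and everything else on the other. On the $\Cat\pp(B_{n-1};q)$ side the coefficient becomes $(1+q)^2-q$; on the $\Cat(B_{n-1};q)$ side the coefficient becomes $(1+q+q^2)-(1+q)^2$. The key algebraic identity $(1+q)^2-q=1+q+q^2$ (equivalently $(1+q)^2-(1+q+q^2)=q$) simultaneously produces the desired coefficient $1+q+q^2$ on the left of \eqref{Cat++ Bn-1 formula} and the coefficient $-q$ on $\Cat(B_{n-1};q)$ on the right. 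The remaining terms $(n-1)q(1+q)\Cat(A_{n-2};q)$ and $(1+q)^2\Cat\pp(A_{n-1};q)$ already appear in exactly the correct form.

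There is no serious obstacle: the proof is pure algebraic bookkeeping combining two previously established identities. The only thing to verify carefully is the coefficient arithmetic $(1+q)^2-q=1+q+q^2$ which is what makes the left-hand side collapse to the clean factor $1+q+q^2$.
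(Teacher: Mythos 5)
Your proposal is correct and is exactly the argument the paper intends: the paper's entire proof is the sentence preceding the proposition instructing one to solve \eqref{Cat B q combination formula} and \eqref{gratuitous formula} for $(1+q)\Cat\pp(B_n;q)$, equate, and solve for $(1+q+q^2)\Cat\pp(B_{n-1};q)$. Your coefficient bookkeeping, in particular $(1+q)^2-q=1+q+q^2$ and $(1+q)^2-(1+q+q^2)=q$, checks out.
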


We have established all the propositions we will need for the type-D case of Theorem~\ref{enum thm}.
We now provide formulas and values for the double-positive $W$-Catalan numbers, although they are not necessary for the Coxeter-biCatalan combinatorial results of this paper.

\begin{theorem}\label{Cat++ thm}
The double-positive $W$-Catalan numbers are
\begin{multline*}\begin{array}{c||c|c}
W		&A_n\quad (n\ge0)										&B_n\quad(n\ge1)						\\\hline
\Cat\pp(W)&\sum_{k=0}^n(-1)^k\frac{k+1}{n+1}\binom{2n-k}{n}	
&
\sum_{k=0}^n(-1)^k\binom{2n-k-1}{n-1}
		\\
\end{array}\\[10pt]
\begin{array}{|c|c|c|c|c|c|c|c}
D_n	\quad(n\ge2)			&E_6	&E_7	&E_8	&F_4	&H_3	&H_4	&I_2(m)\\\hline
\frac{n-2}n\binom{2n-2}{n-1}	&265	&1728	&13816	&49		&16		&208	&m-2
\end{array}
\end{multline*}
\end{theorem}
In particular, the numbers $\Cat\pp(A_n)$ are OEIS \cite{OEIS} sequence number A000957 (the Fine numbers).
The type-B and type-D double-positive Catalan numbers are sequences A014301 and A276666.

\begin{proof}
The proof for $\Cat\pp(A_n)$ can be found for example in \cite[Sections~3--4]{DS}, where both the $q=1$ specialization of \eqref{Cat++ A q formula} and the formula $\sum_{k=0}^n(-1)^k\frac{k+1}{n+1}\binom{2n-k}{n}$ are shown to describe the Fine numbers.

%
%
To obtain the formula for $\Cat\pp(B_n)$, we start from \eqref{Cat++ Bn-1 formula}, set $q=1$ and shift the index $n$ to obtain
\begin{equation}\label{shifted}
3\Cat\pp(B_n)=-\Cat(B_n)+2n\Cat(A_{n-1})+4\Cat\pp(A_n)
\end{equation}
We use the $q=1$ specialization of \eqref{Cat++ A q formula}
to rewrite $4\Cat\pp(A_n)$, as $2\Cat(A_{n-1})-2\Cat\pp(A_{n-1})$, use known formulas for $\Cat(B_n)$, $\Cat(A_{n-1})$, and $\Cat\pp(A_{n-1})$, and reindex a sum to obtain
\begin{equation}\label{rewrite and known}
3\Cat\pp(B_n)=\binom{2n}n+2\sum_{k=0}^n(-1)^k\frac{k}{n}\binom{2n-k-1}{n-1}
\end{equation}
Verifying the desired formula for $\Cat\pp(B_n)$ can thus be reduced to verifying the identity
\begin{equation}\label{verifying}
\sum_{k=0}^n(-1)^k\frac{3n-2k}n\binom{2n-k-1}{n-1}=\binom{2n}n.
\end{equation}
Rewriting $\frac{3n-2k}n\binom{2n-k-1}{n-1}$ as $2\binom{2n-k}n-\binom{2n-k-1}{n-1}=\binom{2n-k}n+\binom{2n-k-1}n$, we see that the alternating sum telescopes, collapsing to $\binom{2n}n$ as desired.

The formula for $\Cat\pp(D_n)$ follows from \eqref{Cat++ D q formula} and the usual formula for $\Cat(A_n)$.
The values for the exceptional types are easily computed.
\end{proof}

\subsection{The Type D biCatalan number}\label{type D sec}  
We now complete the proof of Theorem~\ref{enum thm} by proving the following theorem.

\begin{theorem}\label{D biCat}
For $n\ge 2$, the $D_n$-biCatalan number is 
\begin{equation}\label{D biCat formula}
\biCat(D_n)=6\cdot4^{n-2} - 2\binom{2n-4}{n-2}.
\end{equation}
\end{theorem}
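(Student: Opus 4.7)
The plan is to specialize the recursion from Proposition~\ref{q biCat recursion D} to $q=1$ and solve it via generating functions. Setting $q=1$ and substituting Proposition~\ref{Cat++ D q} (so that $\Cat\pp(D_n) = (n-2)\Cat(A_{n-2})$), the recursion becomes
\[
\biCat(D_n) = 2\biCat(D_{n-1}) + \sum_{i=1}^{n-3}\Cat\pp(A_i)\bigl[\biCat(D_{n-i}) + 2\biCat(D_{n-i-1})\bigr] + 8\Cat\pp(A_{n-1}) + 8\Cat\pp(A_{n-2}) + 2(n-2)\Cat(A_{n-2}).
\]
Introduce generating functions $F(x) = \sum_{n\ge 2}\biCat(D_n)x^n$, $P(x) = \sum_{n\ge 0}\Cat\pp(A_n)x^n$, and $C(x) = \sum_{n\ge 0}\Cat(A_n)x^n = (1-2x-\sqrt{1-4x})/(2x^2)$. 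The convolution term translates into $(P(x)-1)\bigl[(1+2x)F(x) - 4x^2\bigr]$, the two boundary $\Cat\pp(A_\cdot)$ terms contribute $(8x+8x^2)(P(x)-1)$, and the last summand $2(n-2)\Cat(A_{n-2})$ yields $2x^3 C'(x)$. Collecting all the pieces, the recursion bundles into
\[
F(x)\bigl[2 - (1+2x)P(x)\bigr] = 4x(x+2)P(x) - 8x + 2x^3 C'(x).
\]

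Next I would eliminate $P(x)$ using Proposition~\ref{Cat++ A q} at $q=1$, namely $2\Cat\pp(A_n)+\Cat\pp(A_{n-1}) = \Cat(A_{n-1})$, which translates to the crisp identity $(2+x)P(x) = 2 + xC(x)$, hence $P(x) = (2+xC(x))/(2+x)$. The key algebraic cancellation $4x(2+x)-(1+2x)^2 = -(1-4x)$ then simplifies the left factor to $\sqrt{1-4x}\bigl[(1+2x)-\sqrt{1-4x}\bigr]/[2x(2+x)]$. Differentiating the identity $x^2 C(x) = (1-2x-\sqrt{1-4x})/2$ gives $2xC(x)+x^2C'(x) = (1-\sqrt{1-4x})/\sqrt{1-4x}$; multiplying by $2x$ and combining, the right side collapses to $2x(1-\sqrt{1-4x})/\sqrt{1-4x}$.

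Solving for $F(x)$ and rationalizing by multiplying numerator and denominator by $(1+2x)+\sqrt{1-4x}$ (so that the new denominator is $4x(1-4x)(2+x)$) produces
\[
F(x) = \frac{2x^2\bigl[3 - \sqrt{1-4x}\bigr]}{1-4x} = \frac{6x^2}{1-4x} - \frac{2x^2}{\sqrt{1-4x}}.
\]
Reading off coefficients via $[x^k]\frac{1}{1-4x} = 4^k$ and $[x^k]\frac{1}{\sqrt{1-4x}} = \binom{2k}{k}$ yields $\biCat(D_n) = 6\cdot 4^{n-2} - 2\binom{2n-4}{n-2}$, as claimed. The main obstacle is the generating function bookkeeping: carefully handling the restricted convolution range $1\le i\le n-3$ so that $P(x)-1$ (rather than $P(x)$) appears with the right index shifts, extracting $2x^3 C'(x)$ from the product rule applied to $x^2 C(x)$, and then orchestrating the square-root manipulations so that the algebraic identity $4x(2+x)-(1+2x)^2 = -(1-4x)$ allows the $\sqrt{1-4x}$ factors to collapse into the simple rational-plus-radical form above.
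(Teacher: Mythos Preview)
Your argument is correct. Specializing Proposition~\ref{q biCat recursion D} to $q=1$, packaging the convolution as $(P(x)-1)\bigl[(1+2x)F(x)-4x^2\bigr]$, and then using $(2+x)P(x)=2+xC(x)$ together with $4x(2+x)-(1+2x)^2=-(1-4x)$ does indeed collapse the equation to $F(x)=\dfrac{2x^2(3-\sqrt{1-4x})}{1-4x}$, from which the formula follows immediately.

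Your route is genuinely different from the paper's. The paper does not solve the type-$D$ recursion in isolation; instead it forms the linear combination $Z_n=\biCat(D_n;q)-X\biCat(B_{n-1};q)+Y\biCat(A_{n-2};q)$, combines the three recursions of Propositions~\ref{q biCat recursion D}, \ref{q biCat recursion B}, and~\ref{q biCat recursion A}, substitutes the formulas for $\Cat\pp(D_n;q)$ and $\Cat\pp(B_{n-1};q)$ (the latter requiring the chain of type-$B$ identities in Propositions~\ref{q Cat FZ B}--\ref{Cat++ Bn-1}), and finally specializes $q\to 1$ with $X(1)=3$, $Y(1)=2$ so that the inhomogeneous terms cancel and the resulting homogeneous recursion forces $Z_n(1)=0$. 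Your direct generating-function approach is more self-contained---it avoids the entire type-$B$ detour and needs only the $q=1$ cases of Propositions~\ref{Cat++ A q} and~\ref{Cat++ D q}---at the price of a somewhat longer algebraic computation. The paper's approach, by contrast, is deliberately organized to keep $q$ in play as long as possible, with the stated hope of finding $X(q)$ and $Y(q)$ that would yield a closed form for $\biCat(D_n;q)$.
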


Since we have already established the type-A and type-B cases of Theorem~\ref{enum thm}, Theorem~\ref{D biCat} is the assertion that $\biCat(D_n) = 3\biCat(B_{n-1}) - 2\biCat(A_{n-2})$.
In preparation for the proof, we let $X=X(q)$ and $Y=Y(q)$ be any rational functions of $q$ and define, for each $n\ge2$, a rational function $Z_n=Z_n(q)$ given by
\[Z_n=\biCat(D_n;q)-X\biCat(B_{n-1};q)+Y\biCat(A_{n-2};q).\]
Combining \eqref{q biCat recursion D formula}, \eqref{q biCat recursion B formula}, and \eqref{q biCat recursion A formula}, we obtain the following recursion for $Z_n$ for $n\ge3$.
\begin{multline}\label{Zn recursion formula}
Z_n=(1+q)Z_{n-1}+\sum_{i=1}^{n-3}\Cat\pp(A_i;q)\left(Z_{n-i}+(1+q)Z_{n-i-1}\right)\\
+2\bigl((1+q)^2-X(1+q)+Y\bigr)\Cat\pp(A_{n-2})+4(1+q)\Cat\pp(A_{n-1})\\
+2\Cat\pp(D_n;q)-2X\Cat\pp(B_{n-1};q)
\end{multline}

One way to obtain a formula for $q$-biCatalan numbers $\biCat(D_n;q)$ would be to find a choice of $X$ and $Y$ that makes this recursion for $Z_n$ into something that can be solved.
We have thus far been unable to find a choice of $X$ and $Y$ that works.
Instead, we will prove Theorem~\ref{D biCat} by showing that if $X(1)=3$ and $Y(1)=2$, then $Z_n(1)=0$ for all $n\ge2$.
In the proof that follows, we take convenient choices of $X$ and $Y$ but delay specializing $q$ to $1$ until the end, because specializing earlier does not make the manipulations much easier, and because we hope that perhaps we are still getting closer to a formula for $\biCat(D_n;q)$.

\begin{proof}[Proof of Theorem~\ref{D biCat}]
Substituting \eqref{Cat++ Bn-1 formula} and \eqref{Cat++ D q formula} into \eqref{Zn recursion formula}, taking $X$ to be ${1+q+q^2}$, and taking $Y$ to be $2q-q^2+q^3$, we obtain
\begin{multline}\label{Zn recursion formula specialized}
Z_n=(1+q)Z_{n-1}+\sum_{i=1}^{n-3}\Cat\pp(A_i;q)\left(Z_{n-i}+(1+q)Z_{n-i-1}\right)\\
+2q(1-q)\Cat\pp(A_{n-2};q)+2(1-q)(1+q)\Cat\pp(A_{n-1};q)\\
-2q\bigl(1+(n-1)q\bigr)\Cat(A_{n-2};q)+2q\Cat(B_{n-1};q)
\end{multline}
We next apply \eqref{Cat++ A q formula} to rewrite the two double-positive $q$-Catalan numbers in \eqref{Zn recursion formula specialized} as a single $q$-Catalan number.
\begin{multline}\label{Zn recursion formula specialized more}
Z_n=(1+q)Z_{n-1}+\sum_{i=1}^{n-3}\Cat\pp(A_i;q)\left(Z_{n-i}+(1+q)Z_{n-i-1}\right)\\
+2q(1-q)\Cat(A_{n-1};q)-2q\bigl(1+(n-1)q\bigr)\Cat(A_{n-2};q)+2q\Cat(B_{n-1};q)
\end{multline}

Finally specializing $q$ to $1$ and using the fact that $\Cat(B_{n-1})=n\Cat(A_{n-2})$ for $n\ge3$ (which is immediate from the well-known formulas for the type-A and type-B Catalan numbers), we see that
\begin{equation}\label{Zn recursion formula specialized even more}
Z_n(1)=2Z_{n-1}(1)+\sum_{i=1}^{n-3}\Cat\pp(A_i)\left(Z_{n-i}(1)+2Z_{n-i-1}(1)\right)
\end{equation}  
We easily verify that $Z_2(1)=0$, and thus we have a simple inductive proof that $Z_n(1)=0$ for all $n\ge2$.
Since we chose $X$ and $Y$ to have $X(1)=3$ and $Y(1)=2$, we obtain the desired identity $\biCat(D_n) = 3\biCat(B_{n-1}) - 2\biCat(A_{n-2})$.
\end{proof}

\subsection{Type-D biNarayana numbers}\label{type D biNar sec}
Computational evidence suggests the following modest conjecture on the type-D biNarayana number $\biNar_k(D_n)$.

\begin{conj}\label{D biNar conj}
The type-D biNarayana number $\biNar_k(D_n)$ is a polynomial in~$n$ (for $n\ge2$) of degree $2k$ and leading coefficient $\displaystyle\frac{4^k}{(2k)!}$.
\end{conj}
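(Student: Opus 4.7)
The plan is to prove the conjecture by induction on $k$, using the recursion for $\biCat(D_n;q)$ from Proposition~\ref{q biCat recursion D}. The base cases $k=0$ and $k=1$ are immediate: $\biNar_0(D_n)=1$, and by Proposition~\ref{biNar 1}, $\biNar_1(D_n)=n(2n-3)$, matching the predicted form with leading coefficients $1$ and $2$, respectively. The key preliminary is that, for each fixed $j\ge 1$, $\Nar\pp_j(A_i)$ is a polynomial in $i$ of degree $2j-2$ with leading coefficient $\frac{1}{(j-1)!\,j!}$: extracting the $q^j$-coefficient from Proposition~\ref{Cat++ A q} gives the recurrence $\Nar\pp_j(A_i)+\Nar\pp_{j-1}(A_i)+\Nar\pp_{j-1}(A_{i-1})=\Nar_{j-1}(A_{i-1})$, and since $\Nar_{j-1}(A_{i-1})=\frac{1}{i}\binom{i}{j-1}\binom{i}{j}$ has leading term $\frac{i^{2j-2}}{(j-1)!\,j!}$, induction on $j$ establishes the claim. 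Proposition~\ref{Cat++ D q} gives $\Nar\pp_j(D_n)=(n-2)\Nar_{j-1}(A_{n-2})$, polynomial in $n$ of degree $2j-1$.

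Next, extract the coefficient of $q^k$ from Proposition~\ref{q biCat recursion D} to obtain
\[
\biNar_k(D_n)-\biNar_k(D_{n-1})=\biNar_{k-1}(D_{n-1})+\sum_{j=1}^{k}\sum_{i=1}^{n-3}\Nar\pp_j(A_i)\bigl[\biNar_{k-j}(D_{n-i})+\biNar_{k-j}(D_{n-i-1})+\biNar_{k-j-1}(D_{n-i-1})\bigr]+R_k(n),
\]
where $R_k(n)$ is the $q^k$-coefficient of $2(1+q)^2\Cat\pp(A_{n-2};q)+4(1+q)\Cat\pp(A_{n-1};q)+2\Cat\pp(D_n;q)$. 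By the inductive hypothesis, each $\biNar_{k-j}(D_m)$ for $j\ge 1$ is polynomial in $m$ of degree $2(k-j)$. The inner convolution in $i$ of a polynomial of degree $2j-2$ against one of degree $2(k-j)$ is polynomial in $n$ of degree $(2j-2)+2(k-j)+1=2k-1$. The dominant part of $R_k(n)$ is $[q^k]\,2\Cat\pp(D_n;q)=2(n-2)\Nar_{k-1}(A_{n-2})$, also polynomial of degree $2k-1$ in $n$. Thus the right side is a polynomial of degree $2k-1$ in $n$, and telescoping against the initial value $\biNar_k(D_2)=[q^k](1+q)^2$ exhibits $\biNar_k(D_n)$ as a polynomial in $n$ of degree $2k$.

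For the leading coefficient, the beta-integral asymptotic $\sum_{i=1}^{n-3}i^{2j-2}(n-i)^{2k-2j}\sim n^{2k-1}\frac{(2j-2)!\,(2k-2j)!}{(2k-1)!}$, combined with the inductive leading coefficient $\frac{4^{k-j}}{(2k-2j)!}$ of $\biNar_{k-j}(D_m)$ and the factor $2$ from the two top-degree copies of $\biNar_{k-j}$ in the bracket, shows that the convolution for fixed $j$ contributes leading term $\frac{2\cdot 4^{k-j}\binom{2j-2}{j-1}}{j\cdot(2k-1)!}n^{2k-1}$. Writing $C_{j-1}=\binom{2j-2}{j-1}/j$ for the Catalan number and adding the leading contribution $\frac{2}{(k-1)!\,k!}n^{2k-1}$ from $R_k(n)$, the leading coefficient of $\biNar_k(D_n)$, after dividing by $2k$ from the telescoping, is
\[
\frac{1}{k(2k-1)!}\sum_{j=1}^{k} 4^{k-j}C_{j-1}+\frac{1}{(k!)^2}.
\]
This simplifies to $\frac{4^k}{(2k)!}$ using the classical identity $\sum_{i\ge k}C_i/4^i=\frac{2\binom{2k}{k}}{4^k}$, obtained by evaluating $\sum_{i\ge 0}C_ix^i=\frac{1-\sqrt{1-4x}}{2x}$ at $x=1/4$.

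The hard part will be the bookkeeping: although each individual step is routine, coordinating the induction on $k$ with the beta-integral asymptotics and the Catalan identity involves many interacting pieces, and one must verify that the resulting polynomial agrees with $\biNar_k(D_n)$ for all $n\ge 2$ (not merely asymptotically), which is ensured by the rigid initial condition $\biNar_k(D_2)=[q^k](1+q)^2$ fixing the integration constant in the telescoping.
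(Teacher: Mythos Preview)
The statement you are addressing is a \emph{conjecture} in the paper, not a theorem: the authors explicitly leave it open, verifying only $k=0,1$ and remarking that ``with some effort, the $k=2$ case can be proved as well.'' There is therefore no proof in the paper to compare against; your proposal is an attempt to settle an open problem.

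Your plan is sound and, with the bookkeeping filled in, should succeed. The induction on $k$ via the $q^k$-coefficient of Proposition~\ref{q biCat recursion D} is the natural approach, and the three ingredients you assemble---polynomiality of $\Nar\pp_j(A_i)$ in $i$ from Proposition~\ref{Cat++ A q}, the closed form for $\Nar\pp_j(D_n)$ from Proposition~\ref{Cat++ D q}, and the Catalan tail identity $\sum_{i\ge k}C_i/4^i=2\binom{2k}{k}/4^k$---do combine to give the leading coefficient $4^k/(2k)!$. I checked your final simplification: one needs $\sum_{j=1}^{k}4^{k-j}C_{j-1}=\tfrac12\bigl(4^k-\binom{2k}{k}\bigr)$, and this is exactly what your generating-function identity yields after multiplying through by $4^{k-1}$.

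Two places deserve extra care when you write it up. First, $\Nar\pp_j(A_i)$ is only \emph{eventually} polynomial in $i$: for instance $\Nar\pp_1(A_1)=0$ while the polynomial value is $1$, and $\Nar\pp_2(A_2)=0$ while the polynomial gives $-1$. This does not break the argument---split the convolution $\sum_{i=1}^{n-3}$ into a fixed initial segment (each term polynomial in $n$ since $\biNar_{k-j}(D_{n-i})$ is polynomial by induction) and a tail where both factors match their polynomials---but the proof should say so explicitly. Second, your telescoping needs the difference $\biNar_k(D_n)-\biNar_k(D_{n-1})$ to agree with a single polynomial for \emph{all} $n\ge 3$, which in turn requires the inductive polynomials $P_{k-j}$ to be valid on the full range $m\ge 2$ hit by the convolution; you should check this range carefully (it does work: $n-i\ge 3$ and $n-i-1\ge 2$ throughout). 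With those two points nailed down, the argument goes through.
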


If Conjecture~\ref{D biNar conj} is true, then the following table shows $\frac{(2k)!}{2^k}\cdot\biNar_k(D_n)$ for small $k$.
\[\displaystyle\begin{array}{ll|ll}
k&&&\displaystyle\frac{(2k)!}{2^k}\cdot\biNar_k(D_n)\\[7pt]\hline&&\\[-9pt]
0&&&1\\[2pt]\hline&\\[-9pt]
1&&&2n^2-3n\\[2pt]\hline&\\[-9pt]
2&&&4n^4-20n^3+35n^2-7n-24\\[2pt]\hline&\\[-9pt]
3&&&8n^6-84n^5+365n^4-705n^3+212n^2+1104n-1080\\[2pt]\hline&\\[-9pt]
4&&&16n^8-288n^7+2268n^6-9576n^5+20349n^4\\
&&&\qquad\qquad\qquad\qquad\qquad\qquad-8022n^3-54133n^2+104826n-60480
\end{array}\]
The $k=1$ case is verified by Proposition~\ref{biNar 1}, and with some effort, the $k=2$ case can be proved as well.

\section*{Acknowledgments} \label{sec:ack}
Bruno Salvy's and Paul Zimmermann's package \texttt{GFUN} \cite{GFUN} was helpful in guessing a formula for the $D_n$-Catalan number.
John Stembridge's packages \texttt{posets} and \texttt{coxeter/weyl} \cite{StembridgePackages} were invaluable in counting antichains in the doubled root poset, in checking the distributivity of the doubled root poset, and in verifying the simpliciality of the bipartite biCambrian fan.  
The authors thank Christos A. Athanasiadis, Christophe Hohlweg, Richard Stanley, Salvatore Stella, and Bernd Sturmfels for helpful suggestions and questions.




\begin{thebibliography}{99}\label{bib}  

\bibitem{Ath}
C. A. Athanasiadis,
\textit{Generalized Catalan numbers, Weyl groups and arrangements of hyperplanes.}
Bull. London Math. Soc. \textbf{36} (2004), no. 3, 294--302. 

\bibitem{ABMW}
C. A. Athanasiadis, T. Brady, J. McCammond and C. Watt,
\textit{$h$-vectors of generalized associahedra and noncrossing partitions.}
Int. Math. Res. Not. \textbf{2006} Art. ID 69705, 28 pp. 

\bibitem{Ath-Sav}
C. A. Athanasiadis and C. Savvidou,
\textit{The local $h$-vector of the cluster subdivision of a simplex.}
S\'{e}m. Lothar. Combin. \textbf{66} (2011/12), Art. B66c, 21 pp. 

\bibitem{Ath-Tzan}
C. A. Athanasiadis and E. Tzanaki,
\textit{On the enumeration of positive cells in generalized cluster complexes and Catalan hyperplane arrangements.}
J. Algebraic Combin. \textbf{23} (2006), no. 4, 355--375. 

\bibitem{Armstrong}
D. Armstrong,
\textit{Generalized Noncrossing Partitions and Combinatorics of Coxeter Groups.}
Mem. Amer. Math. Soc. \textbf{202} (2009), no. 949.

\bibitem{Baxter}
G. Baxter,
\textit{On fixed points of the composite of commuting functions.}
Proc. Amer. Math. Soc. \textbf{15} (1964) 851--855. 

\bibitem{CombCoxeter}
A.~Bjorner, Brenti, \textit{The Combinatorics of Coxeter Groups.} 
Graduate Texts in Mathematics, 231, Springer, New York, 2005.

\bibitem{BCDK}
M. Bousquet-M\'{e}lou, A. Claesson, Anders, M. Dukes, and S. Kitaev,
\textit{$(2+2)$-free posets, ascent sequences and pattern avoiding permutations.}
J. Combin. Theory Ser. A \textbf{117} (2010), no. 7, 884--909. 

\bibitem{BWassoc}
T.~Brady and C.~Watt, 
\textit{From permutahedron to associahedron.}
Proc.\ Edinb.\ Math.\ Soc.\ (2) \textbf{53} (2010), no. 2, 299--310. 

\bibitem{gaPoly}
F. Chapoton, S. Fomin and A. Zelevinsky,
\textit{Polytopal realizations of generalized associahedra.}
Canad. Math. Bull. \textbf{45} (2002) no.~4, 537--566.

\bibitem{ChaPil} 
G. Ch\^{a}tel and V. Pilaud,
\textit{Cambrian Hopf algebras.}
Adv. Math. \textbf{311} (2017), 598--633. 

\bibitem{CGHK}
F. R. K. Chung, R. L. Graham, V. E. Hoggatt Jr. and M. Kleiman,
\textit{The number of Baxter permutations.}
J. Combin. Theory Ser. A \textbf{24} (1978), no. 3, 382--394. 

\bibitem{DIRRT}
L. Demonet, O. Iyama, N. Reading, I. Reiten, and H. Thomas,
\textit{Lattice theory of torsion classes.}
In preparation. 

\bibitem{DS}
E. Deutsch and L. Shapiro,
\textit{A survey of the Fine numbers.}
Discrete Math. \textbf{241} (2001), no. 1-3, 241--265. 

\bibitem{Dilks}
K. Dilks,
\textit{Involutions on Baxter Objects.}
Preprint, 2014. (arXiv:1402.2961)

\bibitem{DGStack}
S. Dulucq and O. Guibert,
\textit{Stack words, standard tableaux and Baxter permutations.}
Discrete Math. \textbf{157} (1996), no. 1--3, 91--106. 

\bibitem{FFNO}  
S. Felsner, \'{E}. Fusy, M. Noy and D. Orden,
\textit{Bijections for Baxter Families and Related Objects.}
J. Combin. Theory Ser. A \textbf{118} (2011), no. 3, 993--1020. 

\bibitem{rsga}
S.~Fomin and N.~Reading,
\textit{Root systems and generalized associahedra.}
IAS/Park City Math.\ Ser. \textbf{13}, 63--131.

\bibitem{gcccc}
S. Fomin and N. Reading,
\textit{Generalized cluster complexes and Coxeter combinatorics.}
Int. Math. Res. Not. \textbf{2005}, no.~44, 2709--2757. 

\bibitem{ga}
S. Fomin and A. Zelevinsky,
\textit{$Y$-systems and generalized associahedra.}
Ann. of Math. (2) \textbf{158} (2003), no. 3, 977--1018.

\bibitem{ca2}
S. Fomin and A. Zelevinsky,
\textit{Cluster Algebras II: Finite Type Classification.}
Inventiones Mathematicae \textbf{154} (2003), 63--121.

\bibitem{Giraudo}
S. Giraudo,
\textit{Algebraic and combinatorial structures on pairs of twin binary trees.}
J. of Algebra \textbf{360} (2012) 115--157.

\bibitem{Haiman}
M. D. Haiman,
\textit{Conjectures on the quotient ring by diagonal invariants.}
J. Algebraic Combin. \textbf{3} (1994), no. 1, 17--76. 

\bibitem{HohLan}
C. Hohlweg and C. E. M. C Lange,
\textit{Realizations of the associahedron and cyclohedron.}
Discrete Comput. Geom. \textbf{37} (2007), no. 4, 517--543. 

\bibitem{HLT}
C.~Hohlweg, C.~Lange and H.~Thomas, 
\textit{Permutahedra and generalized associahedra.}
Adv. Math. \textbf{226} (2011), no. 1, 608--640. 

\bibitem{rectangle}
S. E. Law and N. Reading,
\textit{The Hopf algebra of diagonal rectangulations.}
J. Combin. Theory Ser. A \textbf{119} (2012), no. 3, 788--824. 

\bibitem{MRZ}
R. Marsh, M. Reineke and A. Zelevinsky,
\textit{Generalized associahedra via quiver representations.}
Trans. Amer. Math. Soc. \textbf{355} (2003) no.~10, 4171--4186.

\bibitem{OEIS}
\textbf{The Online Encyclopedia of Integer Sequences}
(https:\!/\!/oeis.org).

\bibitem{Panyushev}
D. I. Panyushev,
\textit{$ad$-Nilpotent ideals of a Borel subalgebra: generators and duality. }
J. Algebra \textbf{274} (2004), no. 2, 822--846. 

\bibitem{PRW}
A. Postnikov, V. Reiner, and L. Williams,
\textit{Faces of generalized permutohedra.}
Doc. Math. \textbf{13} (2008), 207--273. 

\bibitem{congruence}
N.~Reading,
\textit{Lattice congruences of the weak order.}
Order \textbf{21} (2004) no.~4, 315--344. 
 
\bibitem{con_app}
N.~Reading,
\textit{Lattice congruences, fans and Hopf algebras.}
J. Combin. Theory Ser. A {\bf 110} (2005) no.~2, 237--273.

\bibitem{cambrian}
N.~Reading,
\textit{Cambrian lattices.} 
Adv.\ Math.\ \textbf{205} (2006) no.~2, 313--353.

\bibitem{sortable}
N.~Reading,
\textit{Clusters, Coxeter-sortable elements and noncrossing partitions.}
Trans.\ Amer.\ Math.\ Soc.\ \textbf{359} (2007), no.~12, 5931--5958.

\bibitem{sort_camb}
N.~Reading,
\textit{Sortable elements and Cambrian lattices.}
Algebra Universalis \textbf{56} (2007), no.~3-4, 411--437. 

\bibitem{shardint}
N. Reading, 
\textit{Noncrossing partitions and the shard intersection order.} 
 J. Algebraic Combin. \textbf{33} (2011), no. 4, 483--530.

\bibitem{arcs}
N.~Reading,
\textit{Noncrossing arc diagrams and canonical join representations.}
SIAM J. Discrete Math. \textbf{29} (2015), no. 2, 736--750. 

\bibitem{regions9} N. Reading, 
\textit{Lattice Theory of the Poset of Regions}, in Lattice Theory: Special Topics and Applications, Volume 2, ed. G. Gr\"{a}tzer and F. Wehrung, Birkh\"{a}user, Cham, Switzerland 2016.

\bibitem{camb_fan}
N.~Reading and D.~E.~Speyer,
\textit{Cambrian Fans} 
J. Eur. Math. Soc. (JEMS) \textbf{11} no.~2, 407--447.

\bibitem{typefree}
N.~Reading and D.~E.~Speyer,
\textit{Sortable elements in infinite Coxeter groups}
Trans. Amer. Math. Soc. 363 (2011) no. 2, 699-761. 

\bibitem{afframe}
N.~Reading and D.~E.~Speyer,
\textit{Cambrian frameworks for cluster algebras of affine type.}
To appear in Trans. Amer. Math. Soc.

\bibitem{GFUN}
B. Salvy and P. Zimmermann. 
\textit{GFUN: a Maple package for the manipulation of generating and holonomic functions in one variable.}
ACM Trans. Math. Softw. \textbf{20} (1994), no.~2, 163--177.

\bibitem{Sommers} 
E. N. Sommers,
\textit{$B$-stable ideals in the nilradical of a Borel subalgebra.}
Canad. Math. Bull. \textit{48} (2005), no. 3, 460--472. 

\bibitem{Stanley}
R. P. Stanley, 
\textit{Subdivisions and local h-vectors.}
J. Amer. Math. Soc. \textbf{5} (1992), 805--851.

\bibitem{EC1}
R. P. Stanley,
Enumerative combinatorics. Vol. 1, second edition.
Cambridge Studies in Advanced Mathematics \textbf{49}.
Cambridge University Press, Cambridge, 2012.

\bibitem{StembridgePackages}
J. Stembridge,
\textit{Maple packages for symmetric functions, posets, root systems, and finite Coxeter groups.}
Available at \href{http://www.math.lsa.umich.edu/~jrs/maple.html}{http://www.math.lsa.umich.edu/\,\raisebox{-3pt}{$\widetilde{\ }$}\,jrs/maple.html}.

\bibitem{StembridgeQuasi}
J. Stembridge,
\textit{Quasi-minuscule quotients and reduced words for reflections.}
J. Algebraic Combin. \textbf{13} (2001), no. 3, 275--293. 

\bibitem{West pers}
J. West,
personal communication, 2006.

\bibitem{Williams}
N. Williams,
\textit{Cataland.} 
Ph.D. Thesis, University of Minnesota, 2013.

\bibitem{YZ}  
S.~Yang and A.~Zelevinsky
\textit{Cluster algebras of finite type via Coxeter elements and principal minors.}
Transformation Groups \textbf{13} (2008),  no.~3--4, 855--895.

\bibitem{Ziegler}
G. Ziegler,
\textit{Lectures on polytopes.}
Graduate Texts in Mathematics, \textbf{152}. 
Springer-Verlag, New York, 1995.


\end{thebibliography}
\end{document}